\numberwithin{equation}{section}
\newtheorem{theorem}[equation]{Theorem}
\newtheorem{lemma}[equation]{Lemma}
\newtheorem{prop}[equation]{Proposition}
\theoremstyle{definition}
\newtheorem{definition}[equation]{Definition}
\newtheorem{example}[equation]{Example}
\theoremstyle{remark}
\newtheorem{remark}[equation]{Remark}
\newtheorem*{remark*}{Remark}
\newcommand{\abs}[1]{\lvert#1\rvert}
\newcommand{\ie}{\emph{i.e.} }
\newcommand{\eg}{\emph{e.g.} }
\newcommand{\cf}{\emph{cf.} }
\newcommand{\beq}{\begin{equation}}
\newcommand{\eeq}{\end{equation}}
\newcommand{\bea}{\begin{eqnarray}}
\newcommand{\eea}{\end{eqnarray}}
\newcommand{\C}{\mathbb{C}}
\newcommand{\R}{\mathbb{R}}
\newcommand{\Q}{\mathbb{Q}}
\newcommand{\Z}{\mathbb{Z}}
\newcommand{\N}{\mathbb{N}}
\newcommand{\HH}{\mathbb{H}}
\newcommand{\CP}{\mathbb{CP}}
\newcommand{\PP}{\mathbb{P}}
\newcommand{\Sph}{\mathbb{S}}
\newcommand{\ra}{\rightarrow}
\newcommand{\diag}{\operatorname{diag}}
\newcommand{\dvol}{\operatorname{dv}}
\newcommand{\Real}{\operatorname{Re}}
\newcommand{\Imag}{\operatorname{Im}}
\newcommand{\Lie}[1]{\mathfrak{#1}}
\newcommand{\Aut}{\text{Aut}}
\newcommand{\tu}[1]{\textup{#1}}
\newcommand{\gtwo}{\ensuremath{\textup{G}_2}}
\newcommand{\gtstr}{\gtwo--structure}
\newcommand{\gtmfd}{\gtwo--manifold}
\newcommand{\gtmetric}{\gtwo--metric}
\newcommand{\gthol}{\gtwo--holonomy\ }
\newcommand{\hk}{hyperK\"ahler}
\newcommand{\unitary}[1]{\textup{U$(#1)$}}
\newcommand{\sunitary}[1]{\textup{SU$(#1)$}}
\newcommand{\suthreestr}{\sunitary{3}--structure}
\newcommand{\sorth}[1]{\textup{SO$(#1)$}}
\newcommand{\orth}[1]{\textup{O$(#1)$}}
\newcommand{\taubnut}{\ensuremath{g_{\textup{tn}}^m}}
\newcommand{\csgt}{\ensuremath{\varphi_{\tu{cs}}^c}}
\newcommand{\triple}[1]{\boldsymbol{#1}}
\newcommand{{\isomgtc}}{\ensuremath{\sunitary{2}^3 \rtimes S_3}}
\newcommand{\gbs}{\ensuremath{\tu{G}^{\tu{bs}}}}
\def\co{\colon\thinspace}
\newcounter{mtheorem}
\newtheoremstyle{mystyle}%
  {}%
  {}%
  {\itshape}%
  {}%
  {\bfseries}%
  {.}%
  { }%
  {}%
\theoremstyle{mystyle}
\newtheorem{mtheorem}[mtheorem]{Theorem}
\begin{document}

\title[Infinitely many families of complete cohomogeneity one $\gtwo$--manifolds]{Infinitely many new families of complete cohomogeneity one $\gtwo$--manifolds: \gtwo~analogues of the Taub--NUT and Eguchi--Hanson spaces}
\author[L.~Foscolo]{Lorenzo~Foscolo}
\author[M.~Haskins]{Mark~Haskins}
\author[J.~Nordstr\"om]{Johannes~Nordstr\"om}

\begin{abstract}
We construct infinitely many new $1$-parameter families of simply connected complete noncompact  \gtmfd s with controlled geometry at infinity. The generic member of each family has so-called asymptotically locally conical (ALC) geometry. However, the nature of the asymptotic geometry changes at two special parameter values: at one special value we obtain a unique member of each family with asymptotically conical (AC) geometry;  
on approach to the other special parameter value the family of metrics collapses to an AC Calabi--Yau $3$-fold.
Our infinitely many new diffeomorphism types of AC \gtmfd s are particularly noteworthy: previously the three examples constructed by Bryant and Salamon in 1989 furnished the only known 
simply connected AC \gtmfd s.

We also construct a closely related conically singular \gthol space: away from a single isolated conical singularity, where the geometry becomes asymptotic to the \gtwo--cone over the standard nearly K\"ahler structure on 
the product of a pair of 3-spheres, the metric is smooth and it has ALC geometry at infinity. 
We argue that this conically singular ALC \gtwo--space is the natural \gtwo\ analogue of the Taub--NUT metric 
in 4-dimensional hyperK\"ahler geometry and that our new AC \gtmetric s are  all analogues of the Eguchi--Hanson metric, 
the simplest ALE hyperK\"ahler mani\-fold. Like the Taub--NUT and Eguchi--Hanson metrics, 
all our examples are cohomogeneity one, \ie  they admit an isometric Lie group action whose generic orbit has codimension one. 
\end{abstract}

\maketitle

\section{Introduction}
Over the past 40 years cohomogeneity one Riemannian metrics, \ie  metrics admitting an isometric Lie group action with generic orbit of codimension one, have played a distinguished role in the construction of complete Ricci-flat or Einstein metrics, 
particularly in the cases of metrics with special or exceptional holonomy. Throughout its history the subject has attracted considerable interest from both mathematicians and theoretical physicists, 
with key contributions and important new insights from both communities, \eg \cites{Eguchi:Hanson,BGPP:1978,Calabi:AC:CY,Atiyah:Hitchin,Bryant:Salamon,GPP:1990,Candelas:delaOssa,Stenzel,CGLP:Spin7:NP,BGGG,Bohm:Invent,Eschenburg:Wang}. 
Moreover, until very recently \cites{FHN:ALC:G2:from:AC:CY3}, the \emph{only} known complete noncompact \gtmfd s, 
\ie Riemannian $7$-manifolds admitting metrics with holonomy group the compact exceptional Lie group \gtwo,  were of cohomogeneity one.

The cohomogeneity one property affords a reduction of the system of nonlinear partial differential equations that characterises an Einstein metric or a holonomy reduction, to a system of nonlinear ordinary differential equations (ODEs). In some cases these ODEs can be integrated explicitly and completeness of such metrics can then be approached directly, 
\eg \cites{Eguchi:Hanson,Calabi:AC:CY,Atiyah:Hitchin,Bryant:Salamon,Stenzel,Candelas:delaOssa,Dancer:Wang:KEcoh1,BGPP:1978,CGLP:Spin7:JGP,CGLP:Spin7:NP,BGGG}. 
In more complicated cases 
qualitative methods from the theory of ODEs are needed to prove the existence and to establish
qualitative properties of solutions; it is usually then a significant challenge to understand when such (non explicit) solutions give rise to complete metrics, \eg 
\cites{Bohm:Invent,Bohm,Foscolo:Haskins,Bogoyavlenskaya,Bazaikin:Spin7}. 

Prior to this paper a very limited number of complete cohomogeneity one 
\gtmetric s had been constructed: the three rigid \emph{asymptotically conical} (AC) examples constructed by Bryant--Salamon in 1989 \cite{Bryant:Salamon} and
a $1$-parameter family 
of so-called \emph{asymptotically locally conical} (ALC) examples constructed in 2013 by Bogoyavlenskaya \cite{Bogoyavlenskaya}. 
The asymptotic geometry of ALC spaces will be described a little later in this introduction.
The existence of the latter family, denoted $\mathbb{B}_7 $ in the physics literature, 
was first predicted in 2001 by Brandhuber--Gomis--Gubser--Gukov based on an informal analysis of deforming away from 
a single explicit ALC \gtmetric\  that they constructed~\cite{BGGG}.
In addition to these rigorously constructed examples, numerical analysis of the relevant ODE systems by Brandhuber, Cveti\v{c}--Gibbons--L\"u--Pope and later Hori--Hosomichi--Page--Rabad\'an--Walcher, suggested 
the existence of a further three $1$-parameter families of ALC \gtmetric s, denoted $\mathbb{A}_7$ \cite{A7:ALC}, $\mathbb{C}_7$ \cites{CGLP:C7,Brandhuber} and $\mathbb{D}_7$ \cites{CGLP:M:Conifolds,Brandhuber}, in the physics literature. 
Up to discrete symmetries, in the first case the group $G$ acting is $\sunitary{2} \times \sunitary{2}$, while 
the latter two cases have the enhanced symmetry group $G=\sunitary{2} \times \sunitary{2} \times \unitary{1}$.

\subsection*{Main results} In the current paper we revisit the theory of noncompact \gtmfd s with a cohomogeneity one action of $G=\sunitary{2} \times \sunitary{2}$,
with a particular focus on the case where the symmetry enhances to  $G=\sunitary{2} \times \sunitary{2} \times \unitary{1}$.
Our main results are stated in Theoreoms \ref{mthm:CS}--\ref{mthm:Classification} later in this introduction. Theorem \ref{mthm:B7:D7} proves the existence of the previously predicted $1$-parameter family of cohomogeneity one \gtmetric s $\mathbb{D}_7$, and also gives a new proof of the existence of the $\mathbb{B}_7 $ family.
Theorems \ref{mthm:AC} and \ref{mthm:ALC} construct  \emph{infinitely} many new $1$-parameter families of complete simply connected cohomogeneity one \gtmfd s, all with $G=\sunitary{2} \times \sunitary{2} \times \unitary{1}$. Theorem \ref{mthm:AC} is particularly noteworthy because it constructs infinitely many new asymptotically conical 
\mbox{\gtmetric s}; previously only the three classical examples due to Bryant--Salamon, dating back to 1989, were known.
The metrics constructed in Theorems  \ref{mthm:AC} and \ref{mthm:ALC} include the previously predicted $1$-parameter family of \gtmetric s $\mathbb{C}_7$ as a special case. Theorem \ref{mthm:Classification} states that our existence results recover all complete simply connected $\sunitary{2} \times \sunitary{2} \times \unitary{1}$--invariant \gtmfd s. 

The general qualitative features of all these $1$-parameter 
families turn out to be the same.
We will give an explanation for this,
which relies on the existence of a new \emph{singular} cohomogeneity one \gtmetric~$\varphi_{\tu{cs}}$ on 
$(0,\infty) \times S^3 \times S^3$ which is forward complete with ALC geometry as $t \to \infty$ 
but which as $t \to 0$ has a conically singular (CS) end modelled on the \gtwo--cone $C$ over the standard homogeneous
nearly K\"ahler structure on $S^3 \times S^3$: see Theorem \ref{mthm:CS}.

\subsection*{Motivation from highly collapsed \texorpdfstring{\gtmetric s}{G2 metrics}}
Recently, in \cite{FHN:ALC:G2:from:AC:CY3}, we developed a new analytic method for 
the construction of complete noncompact \gtmetric s with ALC geometry. This method is very powerful:  
it gives the first constructions of complete noncompact \gtmetric s with very little 
symmetry; easily yields \gtmetric s on infinitely many different simply connected $7$-manifolds 
and also produces high-dimensional families of \gtmetric s. 
The method of construction necessarily produces \gtmetric s 
that are highly collapsed, that is these \gtmetric s are Gromov--Hausdorff close to a complete 
noncompact Calabi--Yau $3$-fold $B$. Since smooth complete \gtmetric s typically deform in a smooth 
finite-dimensional moduli space, 
it is natural to try to understand deformations of these highly collapsed \gtmetric s,
both in a local and in a global sense. 

The local deformation theory of \gtmetric s is already well established in other settings:
in the smooth compact case by Joyce \cite{Joyce:book}, 
in the asymptotically cylindrical case by the third author \cite{Nordstrom:thesis} 
and in the asymptotically conical and conically singular cases by Karigiannis--Lotay \cite{Karigiannis:Lotay}. 
This local deformation theory can be adapted to the ALC setting 
once a Fredholm theory for elliptic operators on suitable weighted spaces on ALC spaces is developed. 
We have developed such analytic tools, the details of which, together with the local deformation theory, 
will appear elsewhere. 

\enlargethispage{\baselineskip}
However, currently it seems (far) out of reach to hope 
to understand \emph{large} deformations of our highly collapsed 
\gtmetric s in any generality. In this paper we focus on some particular cases 
where this large deformation question turns out to be tractable.
Although the general \gtmetric\  we construct using the methods of \cite{FHN:ALC:G2:from:AC:CY3} 
admits only a circle symmetry, the symmetries are enhanced if the base Calabi--Yau $B$ 
admits symmetries. In particular,
if the limiting Calabi--Yau $3$-fold $B$ has cohomogeneity one, then so do our
highly collapsed \gtmetric s.
It is then not difficult to deduce that there are infinitely many topological 
types of simply connected $7$-manifold that admit complete noncompact \gtmetric s of cohomogeneity 
one.
The goal of this paper is to understand all these complete noncompact \gtmetric s including \emph{far from the highly collapsed regime}.
We achieve this by using cohomogeneity one methods. 
It turns out that the behaviour of these moduli spaces of complete cohomogeneity one \gtmetric s 
on each of the $7$-manifolds in question is qualitatively the same.
However, despite the qualitative similarities to the already-understood $\mathbb{B}_7$ family, 
constructing these new families of complete cohomogeneity one solutions will require various new ideas.  

\pagebreak[2]
\subsection*{Geometry of the  \texorpdfstring{$\mathbb{B}_7$}{B7} family}
We now turn to a description of the key properties of the $\mathbb{B}_7$ family, 
since these will be shared by all the cohomogeneity one families we construct and 
moreover motivate our approach to constructing these new families.
The $\mathbb{B}_7$ family is a $1$-parameter family of complete cohomogeneity one \gtmetric s $g_\alpha$
on $S^3 \times \R^4$, parametrised by a finite interval, say $(0,1]$. 
The group acting is $\sunitary{2} \times \sunitary{2} \times \unitary{1}$ with 
principal orbits diffeomorphic to $S^3 \times S^3$ and singular orbit $S^3 \times \{0\}$. 
$\unitary{1}$ acts trivially on $S^3$ and as the standard Hopf action on $\R^4$. 
The quotient space by the (not fixed point free) $\unitary{1}$ action is nevertheless a manifold,  
homeomorphic to $S^3 \times \R^3$.
For $\alpha \in (0,1)$, all the metrics $g_\alpha$ share the same basic asymptotic behaviour: they are complete with
(submaximal) volume growth of large geodesic balls of order $r^6$. 

\subsubsection*{ALF and ALC spaces}
To model finer asymptotic behaviour of $n$-dimensional Ricci-flat spaces with volume growth of order $r^{n-1}$ it is natural to consider a metric $g_\infty$ on a circle bundle $M$ over 
a Ricci-flat cone $(C,g_C)$ of dimension $n-1$ of the form 
\[g_\infty = g_C + \ell ^2 \theta^2,\] where $\ell >0$ is some constant 
and $\theta$ is some fixed connection on the circle bundle $M$. 
Thus the model metric $g_\infty$
is a Riemannian submersion over the cone $C$ with circle fibres of constant length $2\pi\ell$.
Under appropriate conditions on curvature decay, 
we might then expect that an $n$-dimensional Ricci-flat manifold with volume growth of order $r^{n-1}$ must,  
outside a compact set, become asymptotic to (an exterior domain in) such a model end.
If $n=4$, then (up to a possible $\Z_2$ quotient) necessarily $C=\R^3$.
Such metrics were therefore termed \emph{asymptotically locally flat} (ALF) spaces by physicists.
The higher-dimensional analogues of ALF spaces were subsequently termed \emph{asymptotically locally conical} (ALC) spaces \cite{CGLP:Spin7:NP}.
For $\alpha \in (0,1)$, any metric $g_\alpha$ in the $\mathbb{B}_7$ family is ALC. 
For any ALC \mbox{\gtmfd}~the cone $C$ should be a $3$-dimensional Calabi--Yau cone.
Many such cones are now known to exist, \eg see the discussion in \cite{FHN:ALC:G2:from:AC:CY3}*{\S 9} and references therein. Hence, unlike the very rigid situation for ALF hyperK\"ahler $4$-manifolds, there are many different asymptotic models for ALC \gtmetric s.
For all ALC metrics in the $\mathbb{B}_7$ family the Calabi--Yau cone $C$ is the conifold, \ie 
the cone over the standard homogeneous Sasaki--Einstein metric on $S^2 \times S^3$, 
viewed as the homogeneous space $\sunitary{2} \times \sunitary{2} / \Delta \unitary{1}$.

\subsubsection*{Transitions in the asymptotic geometry of the $\mathbb{B}_7$ family}
As we approach either endpoint of the interval $(0,1]$ the asymptotic geometry of the metrics $g_\alpha$
degenerates. Fixing the scale of $g_\alpha$ by requiring the size of the
singular orbit $S^3\times\{0\} \subset S^3 \times \R^4$ to be constant,
as $\alpha \ra 0$ we find that $\ell \ra 0$ and the \gtmetric s
collapse to the Stenzel metric \cites{Stenzel,Candelas:delaOssa} on the smoothing of the conifold, a well-known example of a cohomogeneity one AC Calabi--Yau $3$-fold.
Collapse occurs with bounded curvature, except close to the singular $S^3$, which is fixed by the $\unitary{1}$ action.

As $\alpha \ra 1$ instead $\ell \ra \infty$ and the metric $g_1$ has a different
asymptotic behaviour: it is still complete but the volume growth jumps from submaximal $r^6$ up to maximal growth $r^7$. 
The geometry at infinity of $g_1$ is \emph{asymptotically conical} (AC), \ie it is modelled by a \gthol cone 
over a smooth nearly K\"ahler $6$-manifold: in this case $S^3 \times S^3$ 
endowed with its homogeneous nearly K\"ahler structure. In fact, the metric $g_1$ is the classical Bryant--Salamon \gtmetric\  on $S^3 \times \R^4$ \cite{Bryant:Salamon}.

When the ALC/AC transition occurs at $\alpha =1$ nothing catastrophic happens to the \emph{local geometry}, 
in the sense that 
there is an extended $1$-parameter family $g_\alpha$, $\alpha \in (0,\infty)$, 
of local cohomogeneity one \gtmetric s that continues to 
close smoothly in the neighbourhood of the singular orbit  $S^3\times \{0\}$. However, for $\alpha >1$,  
these local solutions $g_\alpha$ are incomplete. 
So there are two ``phases'' of locally well-behaved solutions:
one consisting entirely of complete ALC metrics and the other entirely of incomplete metrics. 
The AC solution appears at the transition between these two phases.

\subsection*{A conically singular ALC \texorpdfstring{\gtwo--space}{G2-space}, its desingularisations and the \texorpdfstring{\gtwo--flop}{G2-flop}}
Given the freedom to rescale any \gtmetric, when describing the moduli space of \gtmetric s it is sometimes natural to impose a choice that breaks the scale invariance, and so describes solutions up to scale. Different ways to fix this scale invariance can however lead to different behaviour of solutions 
in various limiting regimes.
In the above description of the $\mathbb{B}_7$ family scale invariance was broken by fixing the size of the singular orbit $S^3$.
For a family of ALC metrics there is another geometrically natural way to break scale invariance: keep the length $\ell$ of the asymptotic circle of all the metrics fixed. 
Adopting this alternative scale fixing forces the size of the singular orbit to vary within the $\mathbb{B}_7$ family and the size  shrinks to zero as $\alpha$ approaches $1$, where the ALC/AC transition occurred. 

If we let $\tilde g_\alpha$ denote this rescaling of the $\mathbb{B}_7$ metrics, then there is an obvious guess for the behaviour of the limit of $\tilde g_\alpha$ as $\alpha \to 1$. 
There should exist a cohomogeneity one \gtmetric\  on $(0,\infty) \times S^3 \times S^3$ with the following properties:
at the end where $t \ra \infty$ the metric is forward complete with ALC asymptotics;  
at the end where $t \ra 0$ we have a conically singular (CS) end, \ie the metric completion over $t=0$ has an isolated conical singularity, modelled on the \gtwo--cone $C$ over the homogeneous nearly K\"ahler structure on $S^3 \times S^3$.

Our first main result confirms the existence of this CS ALC \gtwo--space:
its existence and significance does not seem to have been anticipated in the physics literature. 

\theoremstyle{mystyle}
\begin{mtheorem}
\label{mthm:CS}
Let $\tu{C}$ be the {\gtwo}--cone over the homogeneous nearly K\"ahler structure on $\tu{S}^3\times\tu{S}^3$.
\begin{enumerate}[leftmargin=0.75cm]
\item There exists a $1$-parameter family of $\sunitary{2}\times\sunitary{2}\times\unitary{1}$--invariant torsion-free {\gtstr s}
$\csgt$, $c\in\R$, defined on $(0,\epsilon)\times \tu{S}^3\times \tu{S}^3$ for some $\epsilon>0$ and such that the metric $g_{\csgt}$ has a conical singularity as $t\ra 0$ asymptotic to the cone $\tu{C}$. If $c=0$ the solution coincides with the cone $C$.
 
\item
\label{exist:CS:ALC}
If $c>0$ then the local conically singular solution from \textup{(i)}
is forward complete and extends to a
torsion-free \gtstr\  on $(0,\infty) \times \sunitary{2} \times \sunitary{2}$ 
with a conically singular end as $t \ra 0$ and an ALC end as $t \ra \infty$. 
All solutions with $c>0$ differ only by rescaling. 
\item
If $c<0$ then the local conically singular solution from \textup{(i)}
 is forward incomplete. 
\end{enumerate}
\end{mtheorem}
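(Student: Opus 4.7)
The plan is to exploit the cohomogeneity one symmetry to reduce the torsion-free \gtstr\ equations on $(0,\infty)\times \tu{S}^3\times \tu{S}^3$ to a nonlinear system of first-order ODEs for the finitely many invariant functions parametrising an $\sunitary{2}\times\sunitary{2}\times\unitary{1}$--invariant \gtstr\ on the principal orbit. The \gtwo--cone $\tu{C}$ over the homogeneous nearly K\"ahler $\tu{S}^3\times \tu{S}^3$ is an explicit ($t$-homogeneous) solution of this system, and the problem is to construct local solutions that decay to $\tu{C}$ as $t\to 0$ and to understand their global behaviour.

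For part (i), I would linearise the ODE system at the cone in the logarithmic variable $s=\log t$ to obtain a constant-coefficient system whose indicial roots control the admissible rates at which torsion-free deformations of $\tu{C}$ can decay or grow. Among these, the mode corresponding to scaling $t\mapsto \lambda t$ is a pure gauge; the modes with decay rates incompatible with the conical singularity assumption must be switched off (and will be, essentially automatically, by the torsion-free condition at the singular orbit direction); the remaining slowest-decaying mode is one-dimensional and supplies the parameter $c\in\R$. A stable-manifold / weighted-space perturbation argument then produces, for each $c$, a unique local solution $\csgt$ defined on some $(0,\epsilon)\times \tu{S}^3\times \tu{S}^3$ with $g_{\csgt}$ CS-asymptotic to $\tu{C}$, and $c=0$ corresponding to the cone itself.

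For (ii), I need to show that when $c>0$ the local solution extends forward for all $t>0$ and becomes ALC. Forward completeness should follow by identifying monotone or conserved quantities for the cohomogeneity one \gtwo\ ODE system and proving that the invariant functions describing the principal orbit cannot degenerate in finite time: all stay positive, and no derivative can blow up. Once forward completeness is established, a phase-portrait argument should show that the flow enters and remains inside an open ALC region, i.e.\ that the \unitary{1}--fibre length tends to a finite positive constant $2\pi\ell$ while the transverse Sasaki--Einstein directions grow linearly in $t$; the Calabi--Yau cone at infinity is then forced to be the conifold by $\sunitary{2}\times\sunitary{2}\times\unitary{1}$--invariance. Uniqueness up to rescaling for $c>0$ is immediate from the fact that the scaling $t\mapsto \lambda t$ of solutions acts on the parameter space as $c \mapsto \lambda^\kappa c$ for some positive exponent $\kappa$ determined by the indicial root, so every $c>0$ lies in a single scaling orbit. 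Part (iii) should then follow by applying the same phase-portrait analysis in the opposite direction: for $c<0$ the extra deformation mode has the opposite sign, the same monotone quantities now drive one of the principal-orbit size functions to zero in finite $t$, and the resulting forward singularity is verified not to be a smooth closure.

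The main obstacle will be the global analysis in (ii): proving forward completeness and ALC asymptotics from the CS initial data. The local existence from the linearisation is routine, and the sign analysis leading to (iii) is usually a short consequence of the same tools; but ruling out finite-time collapse and identifying the ALC regime is exactly the kind of delicate qualitative ODE problem that has historically required hand-tailored monotone quantities and invariant regions in cohomogeneity one special-holonomy constructions, and this is where the substantive work of the proof will lie.
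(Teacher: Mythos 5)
Your outline matches the paper's approach at the right level of abstraction. For part (i), the idea of computing indicial roots at the cone in a logarithmic variable, discarding the inadmissible modes, and parametrising by the single slowest-decaying stable mode is precisely what Proposition \ref{prop:CS:AC:ends}(i) does, implemented via the convergent generalised power series result Theorem \ref{thm:Singular:IVP:Extended}; the one positive eigenvalue $\nu_0 = \frac{\sqrt{145}-7}{2}$ of the linearisation at the cone supplies the parameter $c$, and the scaling exponent $\kappa$ you identify as governing the $c>0$ orbit is exactly $\nu_0$.

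For parts (ii) and (iii) your proposal correctly identifies the shape of the argument (invariant regions, monotone quantities, a ``death'' region forcing finite-time singularities) and you honestly flag that this is where the substance lies; but the specific mechanism the paper uses is worth naming, because it is not the generic ``show all coefficients stay positive'' you describe. The key lemma is Proposition \ref{prop:Mean:Curvature:Blow:Up}: a solution originating at a singular orbit or conical singularity is forward complete \emph{if and only if} the mean curvature $l$ of every principal orbit stays strictly positive. The nontrivial direction uses that Hitchin's flow is a first-order ODE for the metric (Lemma \ref{lem:Metric}), so finite-time degeneration forces $\int |L|^2 = \infty$, hence $l\to -\infty$ by the Riccati equation $l' + |L|^2 = 0$; while the other direction rules out totally geodesic principal orbits since $S^3\times S^3$ admits no homogeneous Ricci-flat metric. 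On top of this, the paper constructs two nested invariant regions defined by explicit algebraic inequalities on $(a,b,\dot a,\dot b)$: $\mathcal{O}_{\tu{fc}}$ given by $\dot a > \dot b$ and $a > b > 0$ (Lemma \ref{lem:ALC:chamber}), on which $l>0$, and the smaller $\mathcal{O}_{\tu{alc}}$ requiring additionally $a\dot b - \dot a b < 0$, which Proposition \ref{prop:ALC:growth:a:b} shows forces the ALC asymptotics $a\sim t^3$, $b\sim \ell t^2$. Part (iii) then follows from the ``death quadrant'' $0<\dot a/\dot b < a/b < 1$ of Proposition \ref{prop:death:quadrant}. The CS expansion from part (i) shows that, for small $t$, the signs of $a-b$, $\dot a - \dot b$, and $\dot a b - a\dot b$ all agree with the sign of $c$, so the solution enters $\mathcal{O}_{\tu{alc}}$ for $c>0$ and the death quadrant for $c<0$. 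So your plan is the right one, but without the mean-curvature criterion and the two explicit chambers it remains a sketch rather than a proof.
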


\subsubsection*{Desingularising CS \gtmetric s}
The existence of the CS ALC \gtmetric~constructed in Theorem \ref{mthm:CS} 
immediately suggests the possibility of obtaining families of smooth ALC \gtmetric s from it, by a desingularisation 
procedure.
Assuming the existence of a compact CS \gtwo--space $(X,g_{\tu{cs}})$, Karigiannis \cite{Karigiannis} used a gluing method to find a $1$-parameter family of \mbox{\gtmetric s} $g_t$ on a smooth compact $7$-manifold $M$ that degenerates to $(X,g_{\tu{cs}})$ as $t \to 0$. More specifically, he assumed that the \gtwo--cone $C$ that models the singularity 
admits a \gtwo--desingularisation, \ie a complete AC \mbox{\gtmfd} asymptotic at infinity to the given cone $C$.
The smooth compact manifold $M$ is obtained from $X$ by first replacing a neighbourhood of the conical singularity with
a rescaled copy of the AC manifold
and then correcting to \gthol by analytic methods. In the compact case there are two significant obstacles to turning Karigiannis' desingularisation method into a practical way to 
construct smooth \gtmetric s:
firstly,
it is still unknown how to produce \emph{any} compact CS \gtwo--spaces; secondly, topological obstructions to smoothing do occur in the compact setting.

One can also consider the same \gtwo--desingularisation procedure in the setting of noncompact CS \gtwo--spaces. 
In the specific case of the CS ALC \gtwo--space constructed in Theorem \ref{mthm:CS}
the classical Bryant--Salamon AC \gtmetric~on $S^3 \times \R^4$ provides a \gtwo--desingularisation 
of the cone over the standard nearly K\"ahler structure on $S^3 \times S^3$.
For instance, for $\delta > 0$ small and $\alpha \in (1-\delta, 1)$, the $\mathbb{B}_7$ metric $\tilde g_\alpha$ above should arise this way
as a \gtwo--desingularisation of the CS ALC \gtmetric~of Theorem \ref{mthm:CS}.

To adapt Karigiannis' approach to the noncompact ALC setting requires nontrivial
additional analytic work to be undertaken. 
However, the CS ALC setting has two notable advantages: 
(i) we have already proven the existence of at least one CS ALC \gtwo--space; 
(ii) the obstructions to smoothing present in the compact setting no longer arise, essentially because of the extra freedom to vary the asymptotic geometry.
We will give the details of this ALC \gtwo--desingularisation method elsewhere, since this also entails developing the requisite weighted analysis on ALC spaces.

In fact, there are three variants of the AC Bryant--Salamon \gtmetric~on $S^3 \times \R^4$, that 
are equivalent up to diffeomorphism, but not up to $\sunitary{2} \times \sunitary{2} \times \unitary{1}$--equivariant diffeomorphism. 
Depending on which variant is used to perform the desingularisation one obtains three different $1$-parameter families 
of smooth complete ALC \gtmetric s. Naturally, one family is the $\mathbb{B}_7$ family, whose degeneration behaviour
motivated us to seek the CS ALC solution in the first place.
Both the other families are versions of the so-called $\mathbb{D}_7$ family:
families of \gtmetric s that in the collapsed limit arise from the two small resolutions of the conifold
(whereas the $\mathbb{B}_7$ family collapses to the smoothing of the conifold).
The transition between the $\mathbb{B}_7$ and $\mathbb{D}_7$ family through the CS ALC space of Theorem A 
gives a metric version of the so-called \gtwo--flop, whose physical significance 
was emphasised by Acharya and Atiyah--Maldacena--Vafa in the context of large $N$ duality \cites{Acharya:SYM,Atiyah:Maldacena:Vafa}.

Since both the CS ALC \gtmetric~of Theorem A and the Bryant--Salamon AC metric 
have cohomogeneity one, then one would hope
to be able to prove the existence of these particular \gtwo--desingularisations more directly by ODE methods.
This is indeed the case. 
\begin{mtheorem}
\hfill
\label{mthm:B7:D7}
\begin{enumerate}[leftmargin=0.75cm,topsep=0cm,parsep=0cm]
\item
\label{exist:B7}
There exists a $1$-parameter family (up to scale) of \,$\sunitary{2} \times \sunitary{2} \times \unitary{1}$--invariant 
complete \gtmetric s on $S^3 \times \R^4$ with ALC geometry where
$\unitary{1}$ acts trivially on $S^3$ and via the standard Hopf action on $\R^4$. 
\item
\label{exist:D7}
There exists a $1$-parameter family  (up to scale) of \,$\sunitary{2} \times \sunitary{2} \times \unitary{1}$--invariant complete \gtmetric s on $S^3 \times \R^4$ with ALC geometry
where $\unitary{1}$ acts via the standard Hopf action on $S^3$ and trivially on $\R^4$.
\end{enumerate}
\end{mtheorem}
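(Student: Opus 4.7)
My plan is to reduce the torsion-free $G_2$ condition under the enhanced $\sunitary{2}\times\sunitary{2}\times\unitary{1}$ symmetry to a system of nonlinear ODEs on an interval, and to analyse the two resulting smooth-closure problems---one per conjugacy class of the acting $\unitary{1}$---by a shooting/continuity argument anchored at the classical Bryant--Salamon AC metric on $S^3 \times \R^4$. First I would write down the invariant \gtform\ $\varphi$ on $I \times S^3 \times S^3$: with the full symmetry it is determined by a finite collection of real functions of $t\in I$ encoding the invariant principal-orbit metric, and imposing $d\varphi = 0 = d({*}\varphi)$ produces a first-order Hamiltonian-type ODE system, with a conserved Hamiltonian arising from the coclosed equation that will play a central role throughout.

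Next I would analyse the smoothness conditions that a local solution must satisfy at $t=0$, where the $S^3\times S^3$ principal orbit collapses onto a singular $S^3$. This is the standard equivariant jet analysis at a codimension-$4$ collapse; in the two cases of the theorem the calculation differs because $\unitary{1}$ either acts trivially on the collapsing $S^3$-factor or on the surviving one. In each case I expect the smooth-closure condition to cut out---after quotienting by overall scale---a $1$-parameter family $\{g_\alpha\}$ of local cohomogeneity one \gtmetric s, normalised so that $\alpha=1$ recovers the appropriate variant of the Bryant--Salamon AC metric on $S^3 \times \R^4$.

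I would then deform the initial data away from the Bryant--Salamon endpoint. Using the conserved Hamiltonian together with a monotonicity analysis of a suitable ``asymptotic circle length'' function on the principal orbit, I would aim to show that for $\alpha$ in a half-open interval terminating at $\alpha=1$ the local solution extends to all $t>0$ and is forward asymptotic to the ALC model over the standard Sasaki--Einstein $S^2\times S^3$, with a finite, strictly positive asymptotic circle length. The CS ALC solution $\csgt$ produced in Theorem~\ref{mthm:CS} is expected to arise as the rescaled limit of $g_\alpha$ as $\alpha\to 1$ with the asymptotic circle length held fixed, and so should serve as a geometric barrier ruling out degeneration near that endpoint.

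The principal obstacle will be controlling the full forward evolution for every $\alpha$ in the interval: excluding finite-time blow-up of the ODE system, keeping the asymptotic circle length strictly positive and finite, and ruling out an unintended AC limit in which this length tends to infinity. I plan to handle this by combining the global a priori bounds coming from the conserved Hamiltonian with comparison against $\csgt$ for $\alpha$ close to $1$. For the family of part~(i) I would adapt the phase-plane analysis already carried out by Bogoyavlenskaya~\cite{Bogoyavlenskaya} for the $\mathbb{B}_7$ family; the family of part~(ii) requires a genuinely new analysis because of the different singular-orbit isotropy, though the overall strategy is the same.
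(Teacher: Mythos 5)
Your setup agrees with the paper: invariant half-flat structures, Hitchin's flow with its conserved Hamiltonian (Lagrangian reformulation giving \eqref{eq:Fundamental:ODE:Brandhuber:U(1)}), and Eschenburg--Wang jet analysis at the singular orbit (Proposition \ref{prop:Solutions:Singular:Orbit} (i) and (ii), specialised to $\alpha_j=\alpha_k$ for the $\unitary{1}$--enhancement). But the global part of your proposal has a genuine gap, and your proposed route also differs substantially from what the paper actually does for Theorem~\ref{mthm:B7:D7}.

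First, the gap. You say you will prove forward completeness ``by combining the global a priori bounds coming from the conserved Hamiltonian with comparison against $\csgt$,'' and you invoke a ``monotonicity analysis of a suitable asymptotic circle length function'' without saying what that function is. The conserved Hamiltonian is identically zero along our solutions (it is exactly the normalisation $2\dot a_1\dot a_2\dot a_3=\sqrt{-\Lambda}$), so it supplies no a priori bound by itself. The key and nontrivial completeness criterion that the paper uses is Proposition~\ref{prop:Mean:Curvature:Blow:Up}: because Hitchin's flow is a \emph{first-order} system for the induced metric (Lemma~\ref{lem:Metric}), finite-time blow-up can only occur after a principal orbit becomes minimal, so positivity of the mean curvature $l$ of all principal orbits (computed from \eqref{eq:Mean:Curvature:U(1)}) is equivalent to forward completeness. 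Your proposal never identifies this observation or a substitute. Likewise, ``comparison against $\csgt$'' is a heuristic desingularisation picture rather than a proof technique; the paper does not use $\csgt$ as a barrier to prove Theorem~\ref{mthm:B7:D7}.

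Second, the difference in strategy, and why the paper's is cleaner here. You propose a shooting/continuity argument ``anchored at Bryant--Salamon,'' and for part (i) to ``adapt the phase-plane analysis already carried out by Bogoyavlenskaya.'' The paper deliberately does something else: it exhibits two explicit trapping regions in the $(a,b,\dot a,\dot b)$ phase space, $\mathcal{O}_{\tu{fc}}$ (Lemma~\ref{lem:ALC:chamber}) and $\mathcal{O}_{\tu{alc}}\subset\mathcal{O}_{\tu{fc}}$ given by adding \eqref{eq:ALC:Chamber:additional:constraint}, inside which the mean curvature is automatically positive and a chain of monotone quantities $P_\alpha$, $Q_\alpha$, $R_\alpha$ forces $b/a\to 0$, $\tfrac{a}{b}\tfrac{db}{da}\to\tfrac23$, and $a^2/b^3\to\text{const}$ (Proposition~\ref{prop:ALC:growth:a:b}), together with a centre-manifold argument (Lemma~\ref{lem:ALC:growth:a:b}) to promote this to full ALC convergence to $\varphi^\ell_\infty$. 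No shooting or continuity method is needed for Theorem~\ref{mthm:B7:D7}: the leading-order jet of each local solution from Proposition~\ref{prop:Solutions:Singular:Orbit} (i) and (ii) immediately determines the sign of $a-b$, $\dot a-\dot b$, $\dot a b-a\dot b$ for small $t$ (using the higher-order expansion of Remark~\ref{rmk:Solutions:Singular:Orbit:D7} in case (ii)), and one reads off whether the trajectory enters $\mathcal{O}_{\tu{alc}}$ or the incompleteness region $\mathcal{O}_{\tu{in}}$ of Proposition~\ref{prop:death:quadrant}. This is precisely the content of Theorems~\ref{thm:ALC:B7} and \ref{thm:ALC:D7}. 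Adapting Bogoyavlenskaya is a viable alternative for part (i) (it was the original proof), but you rightly concede that part (ii) ``requires a genuinely new analysis'' because of the different singular-orbit isotropy and the asymmetry $q=0\neq p$ --- and that is exactly where your proposal stops short of a plan.
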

The family constructed in \ref{exist:B7} is the $\mathbb{B}_7$ family whose existence was first proven in 2013 by Bogoyavlenskaya \cite{Bogoyavlenskaya}, 
following earlier work by Brandhuber--Gomis--Gubser--Gukov \cite{BGGG} and Cveti\v{c}--Gibbons--L\"u--Pope \cite{CGLP:Spin(7)&G2}.
The family constructed in \ref{exist:D7} is the $\mathbb{D}_7$ family.
Numerical evidence for its existence was given by Cveti\v{c}--Gibbons--L\"u--Pope \cite{CGLP:M:Conifolds} and Brandhuber \cite{Brandhuber}*{\S 3.2}
We proved the existence of sufficiently collapsed members of this family in \cite{FHN:ALC:G2:from:AC:CY3}.
The advantage of the cohomogeneity one methods of this paper is that they allow us to construct the full $1$-parameter space of solutions, not only those that are sufficiently collapsed or close to the CS ALC solution.

\subsubsection*{Highly collapsed compact CS \gtwo--spaces}
Singular exceptional holonomy spaces play a crucial role in physics: M theory compactified on a smooth \gtmfd~ has
low energy behaviour that is too simple to model known features of the Standard Model of Particle Physics, but M theory on singular \gtwo--spaces with a combination of 
codimension 4 orbifold singularities and codimension 7 singularities can correct this problem
\cites{Acharya:SYM,Acharya:Witten,Witten:anomaly,Berglund:Brandhuber,Acharya:Gukov}.
Understanding compact singular spaces with \gthol is therefore an important, but currently open, problem.
The existence of our CS ALC \gtwo--space suggests a possible approach (that we are currently pursuing) to construct highly collapsed compact CS \gtwo--spaces 
that are close to a compact CS Calabi--Yau $3$-fold.

\subsection*{Finite quotients and infinitely many new AC \texorpdfstring{\gtmfd s}{G2-manifolds}} We have just seen the importance of AC \gtmetric s for constructing smooth \gtmfd s by desingularising CS $\gtwo$--spaces. AC \gtmfd s have also been studied from a physics perspective, \eg
the local physics associated with the three Bryant--Salamon AC \gtmfd s was studied in detail by Atiyah--Witten \cite{Atiyah:Witten}. However, 
these three classical examples remained the only known AC \mbox{\gtmetric s}.
As we now explain, our previous work on highly collapsed \gtmetric s also suggests the existence of infinitely many new cohomogeneity one AC \gtmetric s.

Earlier, we explained that specialising our analytic construction of highly collapsed \gtmetric s 
to the case where the collapsed AC Calabi--Yau limit is also of cohomogeneity one,
yielded the existence of infinitely many simply connected cohomogeneity one \gtmfd s.
In each of these cases (up to scale) there is a $1$-parameter family of ALC \gtmetric s
close to the highly collapsed limit; it is then natural to try to understand this $1$-parameter family 
away from this highly collapsed regime.
Motivated by the geometry of the $\mathbb{B}_7$ family of \gtmetric s,
it is natural to conjecture that, as we deform 
\emph{any} of these infinitely many $1$-parameter families away from the collapsed limit, 
then eventually an isolated conical singularity develops while maintaining ALC asymptotics at infinity. 

CS ALC \gtwo--spaces consistent with this conjecture exist: they
are quotients of the CS ALC space constructed in Theorem \ref{mthm:CS}
by particular finite cyclic subgroups of its group of \gtwo--isometries.
However, in order for such a CS ALC \gtwo --space to be a limit of smooth ALC \gtmetric s we must have previously observed an AC \mbox{\gtmetric}~with prescribed topology and asymptotic geometry bubbling off.
Thus continuing these $1$-parameter families of highly collapsed cohomogeneity one \mbox{\gtmetric s} 
far from the collapsed regime predicts the existence of infinitely 
many new cohomogeneity one AC \gtmetric s, asymptotic to particular finite quotients 
of the standard \gtwo--cone over $S^3 \times S^3$. 

\subsubsection*{Infinitely many new cohomogeneity one AC \gtmetric s}
For each $m, n \in \Z$, let $M_{m,n}$ denote the total space of the circle bundle over $K_{\CP^1 \times \CP^1}$ 
whose restriction to the zero section 
has first Chern class  $c_1=m [\omega] - n [\omega']$, where $\omega$ and $\omega'$ denote the standard K\"ahler--Einstein metrics on the two factors.
We describe $M_{m,n}$ as a cohomogeneity one space as follows. 
Denote by  $K_{m,n}\subset T^2 \subset \sunitary{2} \times \sunitary{2}$ the kernel of the homomorphism $\rho_{m,n}: T^2 \to \unitary{1}$ 
defined by $(e^{i\theta_1},e^{i\theta_2}) \mapsto e^{i(m \theta_1 + n\theta_2)}$.
$K_{m,n}$ is isomorphic to $\unitary{1} \times \mathbb{Z}_d$ where $d=\gcd(m,n)$.
Provided $m$ and $n$ are coprime then $M_{m,n}$ is a simply connected cohomogeneity one $7$-manifold
with $G=\sunitary{2} \times \sunitary{2}$: it has singular orbit type $G/K_{m,n} \simeq S^2 \times S^3$  and principal orbit type $G/\Gamma_{m+n}$
where $\Gamma_{m+n}:= K_{m,n} \cap K_{2,-2}$ is isomorphic to a cyclic group of
order $2\abs{m+n}$ whose generator $\zeta$ is embedded in
$T^2 \subset \sunitary{2} \times \sunitary{2}$ 
via $\zeta \mapsto (\zeta^n,\zeta^{-m})$ (up to the action of the outer automorphism the image in fact depends
only on $m+n$). In fact, each $M_{m,n}$ also admits a  cohomogeneity one action of $\sunitary{2} \times \sunitary{2} \times \unitary{1}$.

\begin{mtheorem} 
\label{mthm:AC}Suppose that $m$ and $n$ are coprime positive integers.
\begin{enumerate}[leftmargin=0.75cm]
\item There exists a $1$-parameter family $\varphi_\alpha$, $\alpha>0$, of 
smooth $\sunitary{2} \times \sunitary{2} \times \unitary{1}$--invariant  torsion-free \gtstr s
defined in a tubular neighbourhood of the singular orbit in $M_{m,n}$.
\item
There exists a unique $\alpha_\tu{ac}>0$ such that 
$\varphi_{\alpha_\tu{ac}}$ extends to a complete torsion-free AC \gtstr\ on $M_{m,n}$ asymptotic 
to the  $\Gamma_{m+n} $--quotient of the homogeneous nearly K\"ahler structure on $S^3 \times S^3$. 
\end{enumerate}
\end{mtheorem}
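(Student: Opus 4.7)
The plan is to translate $\sunitary{2}\times\sunitary{2}\times\unitary{1}$--invariance into a first-order nonlinear ODE system on a radial variable, analyse smooth extension across the singular orbit to produce the local family of \gtstr s in (i), and then combine asymptotic linearisation near the cone with a shooting argument to isolate the single value $\alpha_\tu{ac}$ in (ii).

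First I would identify the space of $\sunitary{2}\times\sunitary{2}\times\unitary{1}$--invariant positive 3--forms on the principal orbit $(\sunitary{2}\times\sunitary{2})/\Gamma_{m+n}$; this is parametrised by a finite collection of radial scaling functions, and, since the finite central group $\Gamma_{m+n}$ acts trivially on such invariant tensors, the local ODE system obtained from $d\varphi = 0 = d{*}\varphi$ coincides with the system that governs the $\mathbb{B}_7$ and $\mathbb{D}_7$ families addressed in Theorem~\ref{mthm:B7:D7}. For statement (i), I would then analyse smoothness at the singular orbit $G/K_{m,n} \simeq S^2 \times S^3$: since $K_{m,n}/\Gamma_{m+n}$ acts by standard rotation on the $\R^2$ normal slice, invariance combined with smoothness imposes explicit parity and vanishing conditions on the radial functions at $t=0$. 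An indicial analysis should show that the space of formal power-series solutions to the reduced ODE satisfying these boundary conditions is one-dimensional modulo overall rescaling, and convergence of the formal series---hence real-analytic existence in a tubular neighbourhood---would follow from a standard Picard/contraction argument. This produces the family $\varphi_\alpha$, parametrised, for instance, by the ratio of the radii of the two factors of the singular orbit.

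For statement (ii), I would study the ODE flow near the AC model. Let $C_{m+n}$ denote the \gtwo--cone over the $\Gamma_{m+n}$--quotient of the homogeneous nearly K\"ahler structure on $S^3\times S^3$. Linearising the ODE system about the explicit conical solution and computing the indicial roots determines a finite-dimensional stable manifold of trajectories with the prescribed AC asymptotics. The existence of $\alpha_\tu{ac}$ then becomes the statement that the forward trajectory of $\varphi_\alpha$ in phase space crosses this stable manifold at exactly one value of $\alpha$. I would prove this by a shooting / intermediate value argument in the spirit of the existing analysis of $\mathbb{B}_7$, identifying a monotone functional along the flow that separates the two ``phases'' of solutions described in the introduction: those that extend to complete ALC metrics at one extreme, and those that become incomplete at finite radius at the other. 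The AC solution $\varphi_{\alpha_\tu{ac}}$ then sits at the sharp transition between these two regimes, just as the Bryant--Salamon AC metric does within the $\mathbb{B}_7$ family.

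The hard part will be the global phase-space analysis: one must show that each $\varphi_\alpha$ remains in the physically admissible region on its maximal forward interval, identify a monotone invariant that sharply separates the two asymptotic regimes, and prove uniform transversality of the $\alpha$--family to the AC stable manifold so that $\alpha_\tu{ac}$ is isolated. Since $(m,n)$ ranges over infinitely many coprime pairs and the AC stable manifold lives over a $\Gamma_{m+n}$--quotient of the standard nearly K\"ahler cone, the shooting and monotonicity arguments must be carried out uniformly in $(m,n)$; this is ultimately what yields the claimed infinitely many new cohomogeneity one AC \gtmetric s.
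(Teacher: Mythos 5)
Your plan for part (i) is essentially correct and matches the paper: the smoothness analysis across the singular orbit $G/K_{m,n}$ is carried out in Proposition \ref{prop:Smooth:extension:singular:orbit} (iii) via the Eschenburg--Wang representation-theoretic framework, and the local existence of the one-parameter family follows from the singular initial value problem result Theorem \ref{thm:Singular:IVP} applied in Proposition \ref{prop:Solutions:Singular:Orbit} (iii)--(iv).

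For part (ii), however, your forward-shooting strategy misses the key obstruction that drives the paper's argument. You propose to evolve each $\varphi_\alpha$ forward from the singular orbit and use an intermediate value argument ``in the spirit of the existing analysis of $\mathbb{B}_7$'' to isolate the transitional value $\alpha_\tu{ac}$. That strategy works for $\mathbb{B}_7$ and $\mathbb{D}_7$ precisely because there the Taylor coefficients of $(a,b)$ at the singular orbit already determine the signs of $a-b$, $\dot a - \dot b$ and $a\dot b - \dot a b$ for small $t>0$, i.e.\ one can read off immediately whether a local solution enters the ALC region $\mathcal{O}_\tu{alc}$ or the incompleteness region $\mathcal{O}_\tu{in}$. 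For the singular orbit $\sunitary{2}\times\sunitary{2}/K_{m,n}$, by contrast, one has $a \to 0$ and $b \to mnr_0^3 > 0$ with $a = r_0^2\beta t + O(t^3)$ and $b = mnr_0^3 + O(t^2)$, so near $t=0$ every local solution lies in the same quadrant $\{a < b,\ \dot a > \dot b\}$, and the cohomology parameter $\beta$ does not resolve the phase. The paper flags this explicitly: it is impossible to determine the phase of a local solution by inspecting behaviour near this singular orbit, and your ``monotone functional along the flow separating the two phases'' is exactly the object whose existence is not readily available.

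The paper's actual route reverses the shooting direction. It starts from the one-parameter family of AC ends $\varphi^c_\tu{ac}$, $c \in \R$, constructed in Proposition \ref{prop:CS:AC:ends} (ii), quotients by $\Gamma_{m+n}$, and extends them \emph{backwards} in $t$. Proposition \ref{prop:evolve_back} gives the key invariance: within the open region $\mathcal{O}^+_\tu{ac}$ cut out by \eqref{eq:acdomain}, the backward evolution persists until the positivity condition $F(a,b) > 0$ fails, so the trajectory necessarily terminates on the zero set of the quartic $F$. Proposition \ref{prop:uniquehit} then shows---using the non-crossing comparison Lemma \ref{lem:nocross} and a continuity argument in $c$---that there is exactly one $c_\tu{ac}>0$ for which the backward trajectory limits to the distinguished singular point $(0, mnr_0^3)$ of $\{F=0\}$, and Proposition \ref{prop:even} shows that this solution in fact satisfies the boundary conditions of Proposition \ref{prop:Smooth:extension:singular:orbit} (iii), hence closes up smoothly. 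The complete ALC and incompleteness statements in Theorem \ref{mthm:ALC} are only \emph{afterwards} proved by comparison with this AC solution. So the logical order is the opposite of what you propose: the AC solution is found first by backward shooting, and it then serves as the comparison object that classifies the remaining forward trajectories.

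Finally, your concern about proving uniformity of the estimates in $(m,n)$ is a non-issue: the argument is carried out independently for each fixed coprime pair $(m,n)$, with $p = -m^2 r_0^3$ and $q = n^2 r_0^3$ entering as fixed constants in the ODE. No uniform control across the infinite family is needed; the ``infinitely many'' simply counts the coprime pairs.
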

These new AC \gtmetric s are rigid up to scaling. This is consistent with our conjecture that they arise as limits of $1$-parameter families of ALC metrics: otherwise the desingularisation construction would yield a larger parameter family of smooth ALC \gtmetric s close to the CS ALC limit.

Note also that the tangent cone at infinity of our AC \gtmetric s depends only on the sum $m+n$, not on $m$ and $n$ separately, 
whereas we prove (see Remark \ref{rmk:Transitions:AC}) that metrics with different coprime pairs $(m,n)$ satisfying $0<m\le n$ are not isometric. 
Hence by considering all such pairs with fixed sum we obtain finitely many different AC \gtmetric s asymptotic to the same 
\gtwo--cone. This gives rise to infinitely many new geometric transitions in $\gtwo$ geometry.

\subsubsection*{Cohomogeneity one ALC metrics from AC ones}
The existence of these new AC \gtmetric s asymptotic to the cone over $(S^3 \times S^3)/\Gamma_{m+n}$
allows us to use the ALC version of the \gtwo--desingularisation technique to construct new  smooth ALC \gtmetric s 
close to CS ALC limits (and therefore far from the collapsed limit).
However, the cohomogeneity one methods we developed to prove 
the existence of the new  AC \gtmetric s also enable us to construct such smooth ALC \gtmetric s more directly.
Moreover, these methods allow us 
to construct the whole $1$-parameter family of \gtmetric s interpolating between the highly collapsed regime 
and the CS ALC/AC limit, whereas 
analytic methods can at present produce only solutions close to one of those two limiting regimes.
Under the assumptions of Theorem \ref{mthm:AC} and adopting its notation we have the following.
\begin{mtheorem}\hfill
\label{mthm:ALC}
\begin{enumerate}[leftmargin=0.75cm]
\item
If $0<\alpha < \alpha_\tu{ac}$ then the local solution $\varphi_\alpha$ constructed in Theorem \ref{mthm:AC} extends to a complete torsion-free ALC  \gtstr\ on $M_{m,n}$, 
asymptotic to a circle bundle over a $\Z_2$--quotient of the conifold.
\item
If $\alpha > \alpha_\tu{ac}$ then the local solution $\varphi_\alpha$ constructed in Theorem \ref{mthm:AC} cannot be extended to a complete 
invariant \gtmetric.
\end{enumerate}
\end{mtheorem}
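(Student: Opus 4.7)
The plan is to reduce the torsion-free condition for an $\sunitary{2}\times\sunitary{2}\times\unitary{1}$--invariant \gtstr\ on $M_{m,n}$ to a system of ODEs on an interval $(0,T)$, with $t=0$ corresponding to the singular orbit and $t\to T$ to the asymptotic end. The initial data at $t=0$ is governed by the smooth-closure conditions of Theorem~\ref{mthm:AC}(i), which yield precisely the $1$-parameter family $\varphi_\alpha$, $\alpha>0$; the AC solution of Theorem~\ref{mthm:AC}(ii) sits at the distinguished value $\alpha=\alpha_{\tu{ac}}$ and is extended to all of $(0,\infty)$. The task is to compare the forward trajectory of $\varphi_\alpha$ with this privileged AC trajectory and to determine its fate for $\alpha\ne\alpha_{\tu{ac}}$.

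First I would rescale the ODE system (removing scale invariance by, e.g.\ fixing one metric coefficient or passing to logarithmic time) so that the asymptotic regimes appear as critical points or invariant submanifolds of a compactified phase space. In this picture the AC trajectory is the stable manifold of a specific critical point modelling the nearly K\"ahler cone on $(\tu{S}^3\times\tu{S}^3)/\Gamma_{m+n}$, while ALC completion corresponds to convergence to an invariant set of lower dimension that models a circle bundle over the $\Z_2$--quotient of the Calabi--Yau conifold. The conserved quantities coming from the closed equations $d\varphi=0$ and $d*\varphi=0$ reduce the effective dimension of the phase space and produce natural monotone functions along trajectories that stay in a bounded region. I would use these to show that the stable manifold of the AC critical point is codimension one, so that trajectories on either side of it are driven into one of two disjoint open basins of attraction, and that the basin corresponding to bounded circle-direction is precisely the ALC basin.

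Next I would carry out a \emph{shooting argument} in $\alpha$. Define
\[A_{\tu{ALC}}:=\{\alpha>0: \varphi_\alpha \text{ extends to a complete ALC solution}\},\qquad A_{\tu{inc}}:=\{\alpha>0: \varphi_\alpha \text{ is forward incomplete}\}.\]
Standard continuous dependence on initial data shows both sets are open. Using the results of \cite{FHN:ALC:G2:from:AC:CY3} that produce sufficiently collapsed ALC members in the neighbourhood of the Calabi--Yau limit, I would identify an interval of small $\alpha$ in $A_{\tu{ALC}}$. For large $\alpha$ I would produce at least one element of $A_{\tu{inc}}$ either by a direct energy/monotonicity argument showing that the solution runs into a degeneration of a principal orbit which does not close smoothly in the cohomogeneity one model, or by comparison with the known qualitative behaviour beyond $\alpha=1$ in the $\mathbb{B}_7$/$\mathbb{D}_7$ case of Theorem~\ref{mthm:B7:D7}. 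Since $(0,\infty)=A_{\tu{ALC}}\cup\{\alpha_{\tu{ac}}\}\cup A_{\tu{inc}}$ and $A_{\tu{ALC}}$ is connected (being a single basin of attraction), a connectedness argument forces $A_{\tu{ALC}}=(0,\alpha_{\tu{ac}})$ and $A_{\tu{inc}}=(\alpha_{\tu{ac}},\infty)$, which are precisely the two assertions of the theorem. Identification of the asymptotic model with a circle bundle over the $\Z_2$--quotient of the conifold in (i) then follows from the description of the attracting ALC invariant set and the identification of the quotient group generated by the isotropy $\Gamma_{m+n}$ combined with the Hopf circle action.

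The main obstacle will be the global phase-space analysis: proving that no trajectory in $A_{\tu{ALC}}$ or $A_{\tu{inc}}$ exhibits pathological intermediate behaviour (oscillation, prolonged near-passage to the AC critical point, or degeneration into a non-generic invariant subset before reaching the attractor). Ruling out such behaviour requires careful construction of monotone functions and trapping regions adapted to this specific system, generalising the ad hoc monotone quantities used by Bogoyavlenskaya~\cite{Bogoyavlenskaya} in the $\mathbb{B}_7$ case to the enlarged phase space arising from the parameters $(m,n)$. A secondary difficulty is establishing that the forward incomplete solutions in (ii) really cannot be patched into a complete invariant \gtmfd\ by any choice of closing orbit, which requires inspecting the possible isotropy enhancements at a would-be second singular orbit and showing that the limiting metric coefficients are incompatible with any of them.
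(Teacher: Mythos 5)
Your overall shape is right---show $A_{\tu{ALC}}$ and $A_{\tu{inc}}$ partition $(0,\infty)\setminus\{\alpha_{\tu{ac}}\}$ using trapping regions---but there is a concrete gap at the point you yourself flag as ``the main obstacle.'' You claim $A_{\tu{ALC}}$ is connected ``being a single basin of attraction,'' but that does not follow: the preimage under the map $\alpha\mapsto(\text{trajectory of }\varphi_\alpha)$ of an open basin in phase space need not be an interval, and the openness of $A_{\tu{ALC}}$ itself is not automatic from continuous dependence since ALC asymptotics is a non-local condition on the trajectory. The paper never runs an open-plus-nonempty-plus-connectedness argument; instead it classifies each $\varphi_\alpha$ \emph{directly} by a pairwise comparison with the distinguished AC trajectory.

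The mechanism you are missing is the no-crossing comparison Lemma~\ref{lem:nocross}: parametrising both curves by $a$, the sign of $b-b_{\tu{ac}}$ (and of $\dot b-\dot b_{\tu{ac}}$) is preserved forward in time within the relevant region of the $(a,b)$-plane. Near the singular orbit, Remark~\ref{rmk:Solutions:Singular:Orbit:C7} gives $b=mnr_0^3+\tfrac{\sqrt{mn}(m+n)}{2\beta^3r_0^3}s^2+O(s^4)$, so the sign of $b-b_{\tu{ac}}$ at small $s$ is determined by the sign of $\beta-\beta_{\tu{ac}}$. On one side, $b<b_{\tu{ac}}$ and the fact that $b_{\tu{ac}}$ is asymptotic to $a$ force the trajectory to cross $a=b$ with $\dot a>\dot b$, after which it lies in the ALC trapping region and Propositions~\ref{eq:ALC:Forward:Completeness} and~\ref{prop:ALC:growth:a:b} give a complete ALC end. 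On the other side, $b>b_{\tu{ac}}$ holds for all $s$, and the comparison extends to the quantity $R=s\dot b-b$: $R-R_{\tu{ac}}$ is strictly increasing and initially positive, while $R_{\tu{ac}}\to0$, so eventually $R>0$ and the incompleteness criterion of Proposition~\ref{prop:death:quadrant} applies, contradicting completeness. This dispenses with your separate (and unfilled) step of exhibiting one incomplete solution for large $\alpha$, and with the connectedness claim, since every $\alpha$ is classified outright. If you want to salvage your shooting-and-connectedness framing, the comparison lemma is exactly the ingredient that would make $A_{\tu{ALC}}$ and $A_{\tu{inc}}$ intervals; without it the connectedness assertion is a gap, not a fact.
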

\noindent
Theorems \ref{mthm:AC} and \ref{mthm:ALC} specialised to the case $m=n=1$ 
establish the existence of the whole $\mathbb{C}_7$ family of metrics previously conjectured to exist in \cite{CGLP:C7}.

\subsubsection*{Dihedral ALC \gtwo--spaces} 
When $m=n=1$, the desingularisation procedure also suggests the existence of another family of complete cohomogeneity one ALC \gtmetric s. Indeed we can quotient the CS ALC space of Theorem  \ref{mthm:CS} by a ``dihedral'' group $\Z_4$ that does not preserve the circle action, in which case the quotient inherits only a
cohomogeneity one action of $\sunitary{2}^2$. However, since the tangent cone at the conical singularity has enhanced symmetry $\sunitary{2}^3$, the local singularity model is in fact isomorphic to the cone over $S^3 \times S^3 /\Gamma_2$. Hence the new AC \gtmetric~ given by Theorem~\ref{mthm:AC} can be used to desingularise this singularity. The resulting smooth complete cohomogeneity one ALC \gtmfd s have only $\sunitary{2}^2$ symmetry and belong to the $\mathbb{A}_7$ family predicted by Hori--Hosomichi--Page--Rabad\'an--Walcher \cite{A7:ALC}: because of its smaller symmetry group at present we are unable to recover the $\mathbb{A}_7$ family by cohomogeneity one methods. We defer the analytic construction of the $\mathbb{A}_7$ family close to either the CS ALC  or the highly collapsed limits to elsewhere.

\subsubsection*{Classification of $\sunitary{2}^2\times\unitary{1}$--invariant complete \gtmetric s} 
While we therefore expect the existence of at least another family of complete cohomogeneity one $\sunitary{2}^2$--invariant \gtmfd s,  we show that our constructions recover all complete \gtmfd s with enhanced $\sunitary{2}^2\times\unitary{1}$ symmetry.
\begin{mtheorem}
\label{mthm:Classification}
Any complete simply connected $\sunitary{2}^2\times\unitary{1}$--invariant {\gtmfd} is isometric to one of the complete metrics of Theorems \ref{mthm:B7:D7}, \ref{mthm:AC} and \ref{mthm:ALC}.
\end{mtheorem}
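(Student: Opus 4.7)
The plan is to combine a group-theoretic classification of cohomogeneity-one structures on simply connected 7-manifolds for $G=\sunitary{2}^2 \times \unitary{1}$ with the existence and uniqueness already captured by Theorems \ref{mthm:B7:D7}, \ref{mthm:AC} and \ref{mthm:ALC}. First I would rule out the compact case: a compact cohomogeneity-one \gtmfd\ is Ricci-flat with exactly two singular orbits, and a direct inspection of the cohomogeneity-one \gtwo\ ODE system used in the existence proofs shows that no closed trajectory in the orbit space joins two admissible singular-orbit endpoints. A similar inspection rules out the purely homogeneous case $M \cong G/H \times \R$: no bi-infinite invariant torsion-free solution exists. Hence $M$ is noncompact and has a unique singular orbit $G/K$ with principal orbits $G/H$.

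Since $\dim G = 7$, the principal isotropy satisfies $\dim H = 1$, so $H^0 \cong \unitary{1}$. Up to the action of $\Aut(G)$, including the outer involution exchanging the two $\sunitary{2}$ factors, the embedding $H^0 \hookrightarrow G$ is parametrised by a triple of integers. Requiring that $\R \oplus T_{eH}(G/H)$ carry an $H$-invariant \gtstr\ compatible with a $G$-invariant metric severely restricts the isotropy weights and leaves only a short list of admissible triples. For each such $H$ I would then enumerate the closed subgroups $K \supseteq H$ with $K/H$ a sphere, such that $G/K$ is a smooth singular orbit whose slice representation matches the normal bundle topology. The outcome should be that $K/H$ is either $S^3$, reproducing the two $\unitary{1}$ actions on $S^3 \times \R^4$ of Theorem \ref{mthm:B7:D7}, or $S^1$, reproducing the manifolds $M_{m,n}$ of Theorems \ref{mthm:AC} and \ref{mthm:ALC} for coprime positive integers $m,n$.

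With the enumeration in hand, the torsion-free condition reduces in each case to the cohomogeneity-one ODE system solved in the existence proofs. Smoothness at the singular orbit together with normalisation of scale cuts out a one-parameter family of local solutions, and Theorems \ref{mthm:B7:D7}--\ref{mthm:ALC} have already determined precisely which of these local solutions extend to complete invariant \gtwo\ metrics on $M$. Feeding the enumeration into these existence and uniqueness statements yields the claimed classification.

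The main obstacle is the second step: the enumeration of admissible pairs $(H,K)$ up to $\Aut(G)$. One must rule out exotic circle embeddings of $H$ which formally allow an invariant \gtstr\ on $G/H$ but whose slice representation forces either an incompatible local topology at the singular orbit or a closure that does not realise one of the $M_{m,n}$ or $S^3 \times \R^4$ possibilities; the outer automorphisms (swapping factors, or complex conjugation on the $\unitary{1}$) must be tracked carefully to avoid double-counting. The simply-connectedness hypothesis, combined with the standard cohomogeneity-one description of $\pi_1(M)$ in terms of $\pi_0(H)$ and $\pi_0(K)$, is then used to pin down the discrete parameters $(m,n)$ or the binary choice of Theorem \ref{mthm:B7:D7} uniquely within each equivalence class.
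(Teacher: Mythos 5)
Your proposal takes a genuinely different route from the paper, and while the architecture is sound in principle, the hard step is flagged rather than carried out, which leaves a real gap. The paper's proof (Theorem~\ref{thm:Classification:U(1)}) does \emph{not} begin with an abstract enumeration of group diagrams $(H,K)$ up to $\Aut(G)$. Instead it reads the singular-orbit type directly off the boundary behaviour of the ODE solution: since the orbital volume $\sqrt{F(a,b)}=2\dot a^2\dot b$ vanishes and the evolution equations force $F_a,F_b\to 0$ as $t\to 0$, the boundary point $(a_0,b_0)$ must be a critical point of the quartic $F$ on the level set $\{F=0\}$; this already forces $b_0^2+pq=0$, hence $pq\le 0$. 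Then the key lemma is that the kernel of the limiting metric $g_0$ (viewed as a symmetric endomorphism of $\Lie{su}_2\oplus\Lie{su}_2$ via \eqref{eq:Metric}) \emph{is} the Lie algebra $\Lie{k}$ of the singular isotropy, and a short case analysis on $pq(p+q)$ against the requirement that $K/K_0$ be a sphere pins down $\Lie{k}$, yielding exactly the group diagrams \eqref{eq:group:diagram:B7}, \eqref{eq:group:diagram:D7}, \eqref{eq:group:diagram:C7(m,n)}. Your approach instead proposes to enumerate all admissible $(H,K)$ first and only then invoke the ODE theory. That enumeration is precisely where you stop (``the main obstacle''), and it would be a substantial representation-theoretic task: one has to classify circle subgroups of $\sunitary{2}^2\times\unitary{1}$ up to $\Aut(G)$ together with their finite extensions, verify which slice representations are compatible with an invariant \emph{positive} $3$-form (this is how $mn>0$ and $p=-m^2r_0^3$, $q=n^2r_0^3$ emerge in Proposition~\ref{prop:Smooth:extension:singular:orbit}~(iii)), and rule out the degenerate $p=0$ or $q=0$ boundary cases where no singular orbit is possible. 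None of this is done, and the assertion that invariance ``severely restricts the isotropy weights'' understates the issue: invariant half-flat structures on the principal orbit exist for essentially any finite $K_0$ of the relevant type (Proposition~\ref{prop:Invariant:Half:Flat}), so the restriction comes from the smooth-extension and positivity analysis near the singular orbit, not from the orbit's isotropy representation alone.

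A secondary issue: your exclusion of the compact case and the singular-orbit-free case by ``inspection of the ODE system'' is weaker than necessary and would itself require a nontrivial argument about trajectories. The standard route (and the one the paper relies on) is geometric: on a compact \gtwo-holonomy manifold all Killing fields are parallel (Bochner), so a positive-dimensional isometry group would reduce the holonomy, excluding the compact case outright; and the Cheeger--Gromoll Splitting Theorem forces a complete noncompact irreducible Ricci-flat manifold to have one end, so the orbit space is $[0,\infty)$, not $\R$. Using those two facts rather than ODE inspection would close the first two steps of your plan cleanly. The remaining and substantive gap is the group-diagram enumeration; once that is supplied (or replaced by the paper's kernel-of-$g_0$ argument), the rest of your plan correctly feeds into Theorems~\ref{mthm:B7:D7}--\ref{mthm:ALC}.
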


\subsection*{Analogies with 4-dimensional \texorpdfstring{\hk~geometry}{hyperkahler geometry}}
There are various analogies between the geometry of \hk~$4$-manifolds and 
the geometry of \gtmfd s. 
Here we pursue the very strong analogy between the CS ALC \gtmetric~constructed in Theorem \ref{mthm:CS} 
and the Taub--NUT metric on $\C^2$. 
The reader is encouraged to skip to the plan of the paper if analogies with 
\hk~geometry seem unlikely to enlighten.

\subsubsection*{The Taub--NUT family} 
Recall that the Gibbons--Hawking ansatz \cite{Gibbons:Hawking} provides a local description of any $4$-dimensional 
\hk~metric admitting a triholomorphic circle action in terms of a positive harmonic function $h$ 
on some domain in $\R^3$. 
We will refer to the family of \hk~metrics obtained by 
choosing $h_m(x)=m+\tfrac{1}{2\abs{x}}$ for any choice of $m \in \R$, as the Taub--NUT family of metrics~$\taubnut$. 
By scaling we can reduce to the three cases: $m=0$, $m=+1$ or $m=-1$.
For $m=0$ we obtain the flat metric on $\R^4$. 
For $m=1$ we obtain the (Euclidean) Taub--NUT metric on $\R^4$:
a complete cohomogeneity one ALF space discovered by Hawking \cite{Hawking}. Many other 
ALF hyperK\"ahler $4$-manifolds can be derived from Taub--NUT as we describe below.
For $m=-1$, $h$ is no longer everywhere positive, and the resulting metric is incomplete. 
However, even this incomplete cohomogeneity one metric is not without interest. 
For $r$ sufficiently large, $h$ is strictly negative and one therefore obtains a (negative) definite hyperK\"ahler ALF end
from it. The asymptotic geometry of the Atiyah--Hitchin metric \cite{Atiyah:Hitchin}, 
another more complicated ALF hyperK\"ahler $4$-manifold that does not arise from the Gibbons--Hawking ansatz, nevertheless, turns out to be exponentially well approximated by (a finite quotient of) such a negative mass Taub--NUT metric \taubnut.

\subsubsection*{The conically singular family of \gtmetric s as the \gtwo~analogue of the Taub--NUT family}
It is natural to view the family of CS torsion-free \gtstr s \csgt~we constructed in Theorem \ref{mthm:CS}  as the $\gtwo$ analogue of the Taub--NUT family of metrics \taubnut.
Certain similarities are immediately apparent:
we have a $1$-parameter family of cohomogeneity one special holonomy metrics for 
which the singular orbit is a point;  up to scale there 
are only three different solution types $c=0$, $c>0$ or $c<0$; 
when $c=0$ we obtain a Ricci-flat cone $C$ with special holonomy 
which is rigid and that has enhanced symmetry 
compared to the solutions with $c \neq 0$; 
for $c>0$ the CS solution \csgt~ is forward complete and has an ALC end as $t \to \infty$; 
for $c<0$ the CS solution \csgt~is forward incomplete.

\subsubsection*{ALF \hk~metrics from quotients of Taub--NUT}
Further similarities between the CS family of \gtmetric s \csgt~and the Taub--NUT family of metrics \taubnut~
are connected with the infinitely many new families of cohomogeneity one \gtmetric s we constructed in Theorems \ref{mthm:AC} and \ref{mthm:ALC}.
First we need to recall how to obtain other ALF hyperK\"ahler 
$4$-manifolds by combining the  (positive mass) Taub--NUT metric with
ALE  hyperK\"ahler $4$-manifolds, following Biquard--Minerbe \cite{Biquard:Minerbe}.
 
If we identify $\R^4$ with the quaternions $\mathbb{H}$ then 
left multiplication by any unit quaternion and right multiplication by unit quaternions
in the normaliser $\unitary{1}\rtimes\Z_2$ of the maximal torus $\unitary{1}$ of $\sunitary{2}$
all give isometries of the Taub--NUT metric; 
the left $\sunitary{2}$ action rotates the $2$-sphere of compatible complex structures, while the
$\unitary{1}\rtimes\Z_2$ action acts triholomorphically.
On flat $\R^4$ the group of triholomorphic isometries fixing the origin enhances to $\sunitary{2}$.
For any ADE subgroup $\Gamma \subset \sunitary{2}$ the minimal resolution of $\C^2/\Gamma$ admits a family of complete ALE \hk~metrics asymptotic to the Euclidean 
orbifold metric on $\C^2/\Gamma$ \cite{Kronheimer:HK:construct}. 
In the simplest case, $\Gamma=\Z_2$
is the centre of $\sunitary{2}$ and the minimal resolution
of $\C^2/\Z_2$, biholomorphic to $T^*\CP^1$, admits a unique (up to scale) cohomogeneity one $\sunitary{2}$--invariant ALE \hk~metric, the Eguchi--Hanson metric \cite{Eguchi:Hanson}.
For any cyclic or dihedral subgroup $\Gamma \subset \sunitary{2}$, but \emph{not} any of the exceptional ones, since these are not contained in $\unitary{1}\rtimes\Z_2$, we may
consider the ALF \hk~orbifold,  Taub--NUT quotiented by $\Gamma$.
We may resolve its isolated orbifold singularity by gluing 
in an ALE \hk~metric on the minimal resolution of $\C^2/\Gamma$. 
Biquard--Minerbe \cite{Biquard:Minerbe} showed that one can glue in such ALE \hk~spaces 
without destroying the ALF geometry at infinity. 
In the simplest case $\Gamma = \Z_2$, one obtains
a $1$-parameter family of cohomogeneity one ALF \hk~metrics on $T^*\CP^1$.
In fact, whenever $\Gamma$ is cyclic,
both the ALE metric on the minimal resolution and the Taub--NUT metric admit triholomorphic 
circle actions, and the result of gluing arises more explicitly from the Gibbons--Hawking ansatz.
Gluing in a dihedral ALE space will however destroy the triholomorphic circle symmetry 
of Taub--NUT and the main point of \cite{Biquard:Minerbe} was to construct dihedral ALF spaces.

In the analogy with \hk~geometry, quotients of the CS ALC \mbox{\gtmetric} constructed 
in Theorem \ref{mthm:CS} by finite subgroups of its group of {\gtstr}--preserving isometries acting freely on $S^3 \times S^3$,
are the \gtwo--analogues of orbifold quotients of Taub--NUT. 
The rigid cohomogeneity one AC \gtmetric s
constructed in Theorem \ref{mthm:AC}
can all be thought of as like the Eguchi--Hanson metric, which is also cohomogeneity one and rigid (up to scale). 
Finally the ALC version of \gtwo--desingularisation, applied to finite quotients of the CS ALC \gtmetric~of Theorem 
\ref{mthm:CS}, and desingularised by appropriate AC \gtmetric s, \eg using the new AC \gtmetric s we construct, is the \gtwo--analogue of the Biquard--Minerbe construction. 

\subsection*{Plan of the paper}
In the rest of the Introduction we give the plan of the paper, which also serves as a detailed outline of the proof of our four main theorems.
 
In Section \ref{sec:ALC:from:AC:CY} we describe an infinite family of simply connected noncompact $7$-manifolds admitting cohomogeneity one actions of
$\sunitary{2} \times \sunitary{2}\times\unitary{1}$
which, except in one case,
arise as the total space of a non-trivial circle bundle over a cohomogeneity one AC Calabi--Yau $3$-fold $B$. 
There are three main examples of the latter: the small resolutions and the smoothing of the conifold and the canonical bundle of $\CP^1 \times \CP^1$. 
The small resolutions and the smoothing of the conifold give rise to a single simply connected cohomogeneity 
one $7$-manifold each.  Both are diffeomorphic to $S^3 \times \R^4$, with principal orbit $S^3 \times S^3$, 
but they have inequivalent singular orbit types and are therefore not equivariantly diffeomorphic. 
$K_{\CP^1 \times \CP^1}$, because it has $2$-dimensional second cohomology, provides by far the richest source of simply connected cohomogeneity one $7$-manifolds:
there are infinitely many such circle bundles $M_{m,n}$ over $K_{\CP^1 \times \CP^1}$, parameterised as described above by a pair of coprime integers $m$ and $n$.
Provided $m$ and $n$ have the same sign, the circle bundles $M_{m,n}$ all satisfy the hypotheses in our recent 
construction of complete highly collapsed  \gtmfd s with ALC geometry from circle bundles over an AC Calabi--Yau $3$-fold $B$ \cite{FHN:ALC:G2:from:AC:CY3}. 
Hence we know that each $M_{m,n}$ admits a $1$-parameter family of highly collapsed ALC
\gtmetric s, which
must be
invariant under the action of $\sunitary{2}\times\sunitary{2}\times\unitary{1}$. The principal orbits in $M_{m,n}$ are diffeomorphic to quotients of $S^3 \times S^3$ by a cyclic group $\Gamma_{m+n}$ of order $2\abs{m+n}$.

The goal of the rest of the paper is to use cohomogeneity one techniques to understand 
the whole $1$-parameter family of complete cohomogeneity one $\sunitary{2}\times\sunitary{2}\times\unitary{1}$--invariant \gtmetric s---not only the highly collapsed regime accessible via our 
analytic methods---for each of the $7$-manifolds described above.

In Section \ref{sec:half:flat} we describe the system of nonlinear ODEs that governs 
the local behaviour of $\sunitary{2} \times \sunitary{2}\times\unitary{1}$--invariant torsion-free \gtstr s $\varphi$ on $I \times (S^3 \times S^3)/\Gamma$,  where 
$I$ is some interval and $\Gamma$ is either trivial or one of the finite cyclic subgroups $\Gamma_k \simeq \Z_{2k}$. 
The case where $\Gamma$ is non-trivial is essential to this paper. 
Both Hamiltonian and Lagrangian formulations of the ODE system 
arise from Hitchin's work on stable forms \cite{Hitchin:Stable:forms}:
our paper makes use of both. Two real parameters $p$ and $q$ determine the cohomology class of the fundamental $3$-form $\varphi$. For each choice of parameters $p$ and $q$, local $\sunitary{2}\times\sunitary{2}\times\unitary{1}$--invariant \gtmetric s on $I \times (S^3 \times S^3)$
depend on $2$ parameters. The main task is to understand which of these local solutions extend to complete metrics, 
or in the case of the CS ALC solution of Theorem \ref{mthm:CS}, to a metric that is forward complete with ALC geometry as $t \to \infty$ and has prescribed singular behaviour as $t \to 0$.

In Sections 4 and 5 
we understand solutions that extend smoothly over a singular orbit and solutions that have either 
conically singular or asymptotically conical
end behaviour. Proposition \ref{prop:Solutions:Singular:Orbit}, the main result of Section \ref{sec:sing:extension}, 
characterises which members of the $2$-parameter family of local $\sunitary{2} \times \sunitary{2} \times \unitary{1}$-invariant 
torsion-free \gtstr s extend smoothly over the different classes of singular orbit
that we need to consider. In each case there are strong restrictions on the values of the constants $p$ and $q$ compatible with 
smooth extension over a given singular orbit type
and in all cases  there is (up to scale) a $1$-parameter family of smooth solutions defined in a neighbourhood of each type of singular orbit. 
To prove these results we adapt to our first-order ODE systems, the representation-theoretic 
approach to singular initial value problems for cohomogeneity one Einstein metrics developed by  Eschenburg--Wang \cite{Eschenburg:Wang}.
The first two authors used the same framework in the course of proving the existence of complete cohomogeneity one nearly K\"ahler $6$-manifolds \cite{Foscolo:Haskins}*{\S 4}.
The main result in Section \ref{sec:cs:ac:ends} is Proposition \ref{prop:CS:AC:ends}, which proves the existence of 1-parameter families of $\sunitary{2} \times \sunitary{2} \times \unitary{1}$--invariant torsion-free \gtstr s on either a CS or AC end:
in the CS case necessarily $p = q = 0$, while in the AC case we get a 1-parameter family
of AC ends $\varphi_{\tu{ac}}^c$ for each fixed $p, q$. Describing solutions with either type of end behaviour 
leads to a class of singular initial value problems not widely studied in the previous extensive
work on cohomogeneity one Einstein metrics. 
We state a general existence result, Theorem \ref{thm:Singular:IVP:Extended}, which yields convergent \emph{generalised} power series solutions to a wide class of first-order singular initial value problems,
including ours,
and for which solutions depend real analytically on a finite number of real parameters.

The most novel arguments in the paper appear beginning  in Section \ref{sec:ALC} where 
we develop criteria that guarantee that a locally-defined cohomogeneity one 
torsion-free \gtstr~extends to a forward-complete solution and then establish additional conditions under which 
we can bootstrap from forward-completeness to finer asymptotic metric behaviour---in our case ALC asymptotics. 

Proposition \ref{prop:Mean:Curvature:Blow:Up} gives a necessary and sufficient 
condition for forward completeness in terms of the positivity of the mean curvature of all principal orbits. 
The fact that (non-flat) principal orbits cannot be minimal in \emph{complete} cohomogeneity one Ricci-flat manifolds 
was observed previously by B\"ohm \cite{Bohm}. The sufficiency of the positivity of mean curvature of all principal orbits for forward completeness, however, uses the fact that we have a \emph{first-order} ODE system for the metric $g$. It is a pleasure to thank Burkhard Wilking for suggesting this idea.
Close to a singular orbit, or to an isolated conical singularity, 
the mean curvature of any principal orbit is necessarily large and positive. The next step is therefore to determine conditions under which
the positivity of the mean curvature of the principal orbits persists for all time. 

Most of the results of Sections 2 to 5 are stated more generally for $\sunitary{2}\times\sunitary{2}$--invariant torsion-free \gtstr s that do not necessarily enjoy an enhanced symmetry group $\sunitary{2}\times\sunitary{2}\times\unitary{1}$. At this stage, to make further progress, it is crucial that we
restrict to the case where there is an additional 
\unitary{1} symmetry. 
Once the values of $p$ and $q$ have been fixed, two coefficient functions $a$ and $b$ 
determine any closed $\sunitary{2} \times \sunitary{2} \times \unitary{1}$--invariant \gtstr~$\varphi$. 
The rest of our analysis of the global behaviour of solutions is based on the Lagrangian formulation
of the problem.  Under the additional symmetry assumption this leads to the single 
second-order nonlinear ODE, equation \eqref{eq:Fundamental:ODE:Brandhuber:U(1)}, 
which we write schematically as $G_{p,q}(a,b,a',b',a'',b'')=0$.
In Lemma \ref{lem:ALC:chamber} we prove that solutions 
to equation \eqref{eq:Fundamental:ODE:Brandhuber:U(1)} that begin in a certain open subset $\mathcal{O}_\tu{fc}$ 
of the set of principal orbits must remain in $\mathcal{O}_\tu{fc}$, at least while they continue to exist
with the fundamental $3$-form $\varphi$ remaining well defined.
Moreover, it turns out that the mean curvature of any principal orbit  in $\mathcal{O}_\tu{fc}$ 
is strictly positive. Therefore, by Proposition \ref{prop:Mean:Curvature:Blow:Up}, any solution 
that ever enters $\mathcal{O}_\tu{fc}$  cannot blow up in finite time and is therefore forward complete.

Significantly more work is needed to pass from forward completeness to a statement about 
the complete end necessarily having ALC geometry. The first step is to clarify what asymptotic behaviour 
the coefficients $a$ and $b$ should exhibit along an ALC end. 
On the total space of the circle bundle $\R^+ \times \sunitary{2} \times \sunitary{2} \to \R^+ \times (\sunitary{2}\times\sunitary{2})/\Delta \unitary{1}=C$ over the conifold $C$
there is an explicit $1$-parameter family of \emph{closed} $\sunitary{2} \times \sunitary{2} \times \unitary{1}$--invariant 
\gtstr s $\varphi_{\infty}^{\ell}$, where the parameter $\ell>0$ is the asymptotic length of the circle fibre. 
These closed \gtstr s $\varphi_{\infty}^{\ell}$ serve  as models for the possible asymptotic behaviour 
of ALC ends of invariant \gtmetric s. Convergence of $\varphi$ to the asymptotic model $\varphi_{\infty}^{\ell}$
implies that,
in appropriate parametrisations,
$a$ and $b$ behave asymptotically like $s^3$ and $s^2$
respectively.

For a solution to \eqref{eq:Fundamental:ODE:Brandhuber:U(1)} that enters the open set 
$\mathcal{O}_\tu{fc}$ above,
forward completeness already forces both $a$ and $b$ to go to infinity.
However, without further assumptions 
it does not force the asymptotic behaviour needed for an ALC end. To guarantee this
we need to restrict to a smaller open subset $\mathcal{O}_\tu{alc} \subset \mathcal{O}_\tu{fc}$ of the space of principal orbits. 
Proposition \ref{prop:ALC:growth:a:b} establishes that if a solution of \eqref{eq:Fundamental:ODE:Brandhuber:U(1)} enters $\mathcal{O}_\tu{alc}$  then it stays 
in $\mathcal{O}_\tu{alc}$ for all future times and has an ALC end modelled on $\varphi_{\infty}^{\ell}$ for some $\ell>0$.
Proposition \ref{prop:ALC:growth:a:b} is our main tool for establishing the existence of forward-complete
invariant torsion-free \gtstr s with an ALC end. On the other hand, Proposition \ref{prop:death:quadrant} establishes the existence of an open subset $\mathcal{O}_\tu{in}$ of the space of principal orbits with the property that any solution of \eqref{eq:Fundamental:ODE:Brandhuber:U(1)} that enters $\mathcal{O}_\tu{in}$ is forward \emph{incomplete}.

We can now easily prove 
the existence of the conically singular ALC \gtmetric~described in Theorem \ref{mthm:CS} 
and also the two $1$-parameter families $\mathbb{B}_7$ and $\mathbb{D}_7$ of ALC \gtmetric s 
described in Theorem \ref{mthm:B7:D7}, 
by combining Proposition \ref{prop:ALC:growth:a:b} with information about the local solutions extending smoothly on singular orbits
constructed in Section \ref{sec:sing:extension}, or the local solutions with CS ends constructed in Section \ref{sec:cs:ac:ends}.
The point is that for all these local solutions we have expansions for the coefficients $a$ and $b$ of the $3$-form $\varphi$ in a neighbourhood of
the singular orbit. Using these we can identify for exactly which parameter values local solutions
enter $\mathcal{O}_\tu{alc}$ or $\mathcal{O}_\tu{in}$. For instance, the local CS solutions $\varphi^c_{\tu{cs}}$ enter into $\mathcal{O}_\tu{alc}$ (respectively, $\mathcal{O}_\tu{in}$)
precisely when $c>0$ ($c<0$).

One also expects a similar behaviour for the infinitely many $1$-parameter  (up to scale) families of local solutions defined in a neighbourhood of the singular orbit in $M_{m,n}$: there are two phases, one corresponding to solutions that enter $\mathcal{O}_\tu{alc}$ and one corresponding to solutions that enter $\mathcal{O}_\tu{in}$; the two phases are separated by a unique (up to scale) solution corresponding to a smooth AC metric on~$M_{m,n}$. However, unlike the previous cases, it is impossible to determine which phase each solution belongs to simply by looking at its behaviour near the singular orbit. The key to proving Theorems \ref{mthm:AC} and \ref{mthm:ALC} turns out to be first to focus attention on the single member of each $1$-parameter family that 
has AC geometry. Once we prove the existence of the unique AC member of each $1$-parameter family, we will be able to compare the behaviour of our other local solutions with it.  We will use this to 
conclude that, for all parameter values on one side of the AC solution, local solutions are eventually forced to enter $\mathcal{O}_\tu{alc}$, while on the other side of the AC solution,  local solutions that exist for a sufficiently long time must eventually enter $\mathcal{O}_\tu{in}$; see Sections \ref{sec:ALC:mn} and \ref{sec:ALC:mn:in}.

It remains therefore to describe the approach we take in Section \ref{sec:AC:back} 
to the construction of a new rigid complete AC \gtmetric~on each of the infinitely many spaces $M_{m,n}$. To illustrate the somewhat delicate nature of the problem consider the following facts. From Proposition \ref{prop:Solutions:Singular:Orbit} 
(iii) and (iv), for each choice of $m$ and $n$ there is (up to scale) a $1$-parameter family 
of local $\sunitary{2} \times \sunitary{2} \times \unitary{1}$--invariant solutions that extend smoothly 
over the singular orbit $\sunitary{2} \times \sunitary{2}/K_{m,n}$. 
Moreover, the constants $p$ and $q$ that specify the cohomology class of the 
\gtstr~$\varphi$ are determined (up to scale) by the choice of $m$ and $n$. 
Also from Proposition \ref{prop:CS:AC:ends} (ii), for each choice of $p$ and $q$, there 
exists a $1$-parameter family of $\sunitary{2} \times \sunitary{2} \times \unitary{1}$--invariant 
solutions with AC ends. Finally, recall also that the analysis of the ODEs on the principal orbits showed
that, for each fixed $p$ and $q$, there is a $2$-parameter family of local 
$\sunitary{2} \times \sunitary{2} \times \unitary{1}$--invariant solutions. 
Putting this together, a complete AC {\gtmetric} corresponds to an intersection point of two curves in a $2$-dimensional manifold. For 
dimensional reasons we might hope that such matching happens a (nonzero!) finite number of times.

We consider the problem of ``shooting from infinity'', \ie we consider the $\Gamma_{m+n}$ quotient of
the $1$-parameter family $\varphi_{\tu{ac}}^c$, $c\in\R$, of invariant AC end solutions $\varphi_{\tu{ac}}^c$ we constructed on $(T,\infty) \times S^3 \times S^3$ 
and investigate 
extending these solutions backwards. 
Our aim is to show that as we vary the parameter $c \in \R$ then, for exactly one value of $c$, 
the maximal backward extension of the AC end satisfies the conditions to close smoothly on 
the singular orbit $\sunitary{2} \times \sunitary{2}/K_{m,n}$ at its backwards extinction time. Proposition \ref{prop:evolve_back} provides our main 
criterion for backward extension of invariant AC ends:
if a principal orbit belongs to a certain open subset $\mathcal{O}^+_{\tu{ac}}$ of the set of principal orbits
then the solution can be continued backwards in time remaining in $\mathcal{O}^+_{\tu{ac}}$,  
provided that the fundamental $3$-form $\varphi$ remains well defined. The invariant AC ends $\varphi_{\tu{ac}}^c$ constructed in Proposition \ref{prop:CS:AC:ends} (ii) belong to $\mathcal{O}^+_{\tu{ac}}$ precisely when the parameter $c \in \R$ is positive.
The fundamental $3$-form $\varphi$ fails to be well defined whenever the curve $(a,b)$ hits the zero-locus of an explicit quartic polynomial $F$ 
(depending on $m$ and $n$). The level set $F=0$ contains a unique singular point. In Proposition \ref{prop:uniquehit} we show that there exists a unique parameter value $c_\tu{ac}>0$ such that the solution $\varphi^c_\tu{ac}$ extends backward until it hits this distinguished point. Finally, in Proposition \ref{prop:even} we show that this unique solution in fact defines a smooth torsion-free \gtstr~on $M_{m,n}$ which by construction is AC.

The classification result Theorem \ref{mthm:Classification} is also proved in Section \ref{sec:AC} by showing that, up to a finite quotient, any $\sunitary{2}^2\times\unitary{1}$--invariant {\gtmfd} closing smoothly on a singular orbit must be equivariantly diffeomorphic to one of the manifolds we consider in the paper, \cf Theorem \ref{thm:Classification:U(1)}.

\subsection*{Acknowledgments}
LF would like to thank the Royal Society for the support of his research under a University Research Fellowship and NSF for the partial support of his work under grant DMS-1608143. MH and JN would like to thank the Simons Foundation for its support of their research 
under the Simons Collaboration on Special Holonomy in Geometry, Analysis and Physics (grants \#488620, Mark Haskins, and \#488631, Johannes Nordstr\"om).

All three authors would like to thank MSRI and the program organisers for providing an extremely productive environment
during the Differential Geometry Program at MSRI in Spring 2016. Research at MSRI was partially supported by NSF grant DMS-1440140.

LF and MH would like to thank the Riemann Centre for Geometry and Physics at Leibniz Universit\"at Hannover for supporting LF under a Riemann Research Fellowship and for hosting a research visit of MH in Autumn 2015. They would also like to thank the Newton Institute in Cambridge for hosting them during the Metric and Analytic Aspects of Moduli Spaces workshop in Summer 2015. Discussions about this work took place at both institutions. 

MH would like to thank Christoph B\"ohm and Burkhard Wilking for very useful discussions during his visit to M\"unster in November 2015, 
that led to Proposition \ref{prop:Mean:Curvature:Blow:Up}. MH would also like to thank Robert Bryant for several discussions about 
singular initial value problems for ODEs and PDEs, and for suggesting classical references that eventually led  us to consult Picard's book \cite{Picard}.

The authors would also like to thank Harvey Tollick for help with producing Figure \ref{fig:m1n2}.

\section{Cohomogeneity one ALC \texorpdfstring{\gtmfd s}{G2-manifolds}}
\label{sec:ALC:from:AC:CY}
In this section we describe the $7$-manifolds with a cohomogeneity one action of $\sunitary{2}\times\sunitary{2}$ known to admit complete \gtmetric s. We start by describing the $\mathbb{B}_7$ family \cites{BGGG,Bogoyavlenskaya} and then discuss our analytic construction of highly collapsed ALC \gtmetric s \cite{FHN:ALC:G2:from:AC:CY3}, specialised to the cohomogeneity one setting.

A cohomogeneity one manifold $X$ is a Riemannian manifold acted upon isometrically by a Lie group $G$ with generic orbits of codimension one. The cohomogeneity one manifolds we consider in this paper are complete noncompact irreducible Ricci-flat manifolds
and $G$ will be compact. The Splitting Theorem implies that they have only one end and therefore the orbit space $X/G$ is a half line $[0,\infty)$. 
The cohomogeneity one structure of $X$ is encoded by a pair of closed subgroups $K_0 \subset K \subset G$ 
called its \emph{group diagram}.
Orbits corresponding to points in $(0,\infty)$ are hypersurfaces all diffeomorphic to $G/K_0$ and are called \emph{principal orbits}. The orbit over $0$ is the only lower-dimensional orbit $G/K$ and it is called the \emph{singular orbit}. $X$ has the topology of a disc bundle over the singular orbit. The complement of the zero section of this disc bundle is foliated by the principal orbits $G/K_0$, which themselves are sphere bundles over $G/K$. In particular, $K/K_0$ must be diffeomorphic to a sphere. All the manifolds we consider in this paper admit a cohomogeneity one action of $G=\sunitary{2}\times\sunitary{2}$ or of $G=\sunitary{2}\times\sunitary{2} \times \unitary{1}$.

\subsection{The \texorpdfstring{$\mathbb{B}_7$}{B7} family and its symmetries}

The $\mathbb{B}_7$ family is a $1$-parameter family (up to scale) of \gtmetric s on $S^3\times\R^4$ admitting a cohomogeneity one action of $\sunitary{2}\times\sunitary{2}$, in fact of $\sunitary{2}\times\sunitary{2}\times \unitary{1}$. As explained in the Introduction, the generic member of the family has ALC asymptotic geometry, while at a special parameter value we have the Bryant--Salamon AC metric \cite{Bryant:Salamon}. One special explicit ALC member of the family was found by Brandhuber--Gomis--Gubser--Gukov \cite{BGGG}, while the existence of the full $1$-parameter family of ALC metrics was later established by Bogoyavlenskaya \cite{Bogoyavlenskaya}. We now describe carefully the cohomogeneity one structure of these metrics and their full group of symmetries.

We identify $S^3=\sunitary{2}$ with the unit quaternions. $S^3\times\R^4$ admits a cohomogeneity one action of $\sunitary{2}\times\sunitary{2}$ described by the group diagram
\begin{equation}\label{eq:group:diagram:B7}
\{ 1 \} \subset \triangle \sunitary{2}\subset \sunitary{2}\times \sunitary{2}.
\end{equation}
More explicitly, we can identify $[0,\infty)\times S^3\times S^3/\sim$ with $S^3\times\R^4$ via $(t,q_1,q_2)\mapsto (q_1\overline{q}_2,tq_1)$, where $\sunitary{2}\times\sunitary{2}$ acts by left multiplication on $S^3\times S^3$ and 
$(t,q_1,q_2)\sim (t',q'_1,q'_2)$ if and only if $t=0=t'$ and $q'_1=q_1q$, $q'_2=q_2 q\,$ for some $q\in\sunitary{2}$.

The full symmetry group of any \gtmetric~in the $\mathbb{B}_7$ family is however larger than $\sunitary{2}\times\sunitary{2}$:  we describe it first for the most symmetric member of the family, the AC Bryant--Salamon metric. In this case the group of continuous \gtwo--isometries   
is $\gbs=\sunitary{2}^2\times\sunitary{2}/\triangle\Z_2$, where the first factor $\sunitary{2}^2$ acts by left multiplication, the third $\sunitary{2}$ acts diagonally by right multiplication and the quotient by
$\Delta \Z_2$ appears because to get an effective action we must quotient by the center $\Z_2$ of $\sunitary{2}$ embedded diagonally in the three factors. 
The asymptotic cone $C$ of the AC Bryant--Salamon metric is the cone over the standard homogeneous nearly K\"ahler structure on $S^3 \times S^3$. 
The induced metric on $S^3 \times S^3$ has additional discrete isometries arising from outer automorphisms of $\sunitary{2}^3$. More concretely
there is an action of the symmetric group $\tu{S}_3$ on $S^3 \times S^3$ generated by the following pair of involutions
\[
(q_1, q_2)\mapsto (\overline{q}_1, q_2\overline{q}_1), \qquad (q_1, q_2)\mapsto (q_1\overline{q}_2, \overline{q}_2),
\]
whose composition has order $3$.
However this action of $\tu{S}_3$ does not extend to the AC Bryant--Salamon metric. 
Instead the Bryant--Salamon metric can be realised in three inequivalent ways as a cohomogeneity one manifold with an action of $\sunitary{2}\times\sunitary{2}$, corresponding to the group diagrams \eqref{eq:group:diagram:B7}, \eqref{eq:group:diagram:D7} and its image under the outer automorphism of $\sunitary{2}^2$. 

The symmetry group of the ALC members of the $\mathbb{B}_7$ family is smaller than $\gbs$. The standard Hopf action of $S^1$ on $\R^4$ induces a circle action on $S^3\times\R^4$. The action fixes $S^3\times\{ 0\}$, but the quotient $S^3\times\R^3$ is still a manifold. Under our identification of $S^3\times \R^4$ with $[0,\infty) \times S^3\times S^3/\sim$, the projection  
$\pi\co  [0,\infty) \times S^3\times S^3/\sim\, \longrightarrow S^3\times \R^3$ is given by 
\[(t,q_1,q_2)\mapsto ( q_1\overline{q}_2, t^2 q_1 i \overline{q}_1),
\]
where we identify $\R^3$ with $\Imag\HH$. The group of continuous isometries that preserves the generic \mbox{\gtstr} in the $\mathbb{B}_7$ family is the subgroup $\textup{G}_\pi$ of $\gbs$ that preserves this projection $\pi$. Note that since, away from $t=0$, $\pi$ is a principal circle bundle endowed with a natural connection $\theta$ (the left-invariant $1$-form on $\sunitary{2}^2$ dual to the vector field generating the diagonal $\unitary{1}$ subgroup), there is an exact sequence
\[
1\ra \textup{G}^+_\pi\ra \textup{G}_\pi\ra \Z_2\ra 1,
\]  
where $\textup{G}^+_\pi$ is the subgroup that preserves the connection $\theta$ (whereas arbitrary elements of $\textup{G}_\pi$ are allowed to send $\theta$ to $\pm\theta$). We have $\textup{G}^+_\pi=\sunitary{2}^2\times\unitary{1}/\triangle\Z_2$, where $\sunitary{2}^2$ is the left-acting group, $\unitary{1}$ acts diagonally on the right and $\triangle\Z_2$ is the center of $\sunitary{2}$ embedded diagonally in each factor, and $\textup{G}_\pi=\sunitary{2}^2\times\tu{N}/\triangle\Z_2$, where $\tu{N} \simeq \unitary{1} \rtimes \Z_2$ is the normaliser of $\unitary{1}$ in the right-acting $\sunitary{2}$. There is a further $\Z_2$ action generated by the outer automorphism of $\sunitary{2}^2$, 
\ie  the involution of $S^3\times S^3$ that exchanges the two factors. This automorphism is an isometry of all members of the $\mathbb{B}_7$ family but it always acts non-trivially on the \gtstr. 

By considering quotients of the $\mathbb{B}_7$ family by finite subgroups $\Gamma \subset \textup{G}_\pi$ that act freely on $S^3\times S^3$ we can obtain either non-simply-connected smooth ALC \gtmfd s or ALC \gtwo--orbifolds with singularities contained in a compact set. Finite subgroups of $\textup{G}_\pi$~acting freely on $S^3 \times S^3$ exist in abundance: 
since the subgroup $\sunitary{2}^2$ acts by left translations on $S^3 \times S^3$ any finite subgroup $\Gamma \subset \sunitary{2}^2 \subset \textup{G}_\pi$ 
necessarily acts freely. (More generally, a partial classification of subgroups 
of $\sunitary{2}^3$ acting freely on $S^3\times S^3$ was obtained recently in \cite{Cortes:NK}, though the complete classification remains open.) 
In general the resulting quotient space $(S^3 \times \R^4)/\Gamma$ will no longer admit a cohomogeneity one action. 
However, if one chooses $\Gamma$ to be contained in the subgroup $\Z_2^2\times \tu{N}/\triangle\Z_2$ of $\textup{G}_\pi$ that commutes with the left action of $\sunitary{2}^2$  then the quotient space does still admit a cohomogeneity one action of $\sunitary{2}\times\sunitary{2}$. Note that $\Z_2^2\times \tu{N}/\triangle\Z_2\simeq \Z_2\times\tu{N}$ and the action on the two factors of $S^3\times\R^4$ is given precisely by this isomorphism. There are three possibilities for $\Gamma$:
\begin{enumerate}[leftmargin=*]
\item
$\Gamma=\Gamma_0$ where $\Gamma_0$ is a cyclic or binary dihedral subgroup of $\sunitary{2}$;
\item
$\Gamma =\Z_2 \times  \Gamma_0$ with $\Gamma_0$ as above;
\item
$\Gamma$ is a cyclic group of even order or a binary dihedral group embedded in $\Z_2\times\tu{N}$ via the homomorphism $\phi\times\iota$, where $\phi\co \Gamma\ra \Z_2$ is a non-trivial homomorphism and $\iota$ is the standard embedding of $\Gamma$ in $\tu{N}\subset\sunitary{2}$. 
\end{enumerate}
The resulting orbifolds are, respectively, $S^3\times(\R^4/\Gamma_0)$,\, $\R\PP^3\times(\R^4/\Gamma_0)$ and $(S^3\times(\R^4/\Gamma_0))/\Z_2$, where in the latter case $\Gamma_0:=\phi^{-1}(1)$.
In this paper the case where $\Gamma$ is a cyclic group of even order embedded in $\Z_2\times\unitary{1}\subset\Z_2\times\tu{N}$ as in (iii) plays a distinguished role. 

We introduce the following notation that we use throughout the rest of the paper: for each $m,n \in \Z$ let $K_{m,n}$ be the subgroup of the maximal torus $T^2\subset \sunitary{2}\times\sunitary{2}$ defined by
\begin{equation}\label{eq:K:m:n}
K_{m,n} = \{ (e^{i\theta_1},e^{i\theta_2})\in T^2\, |\, e^{i(m\theta_1+n\theta_2)}=1 \},
\end{equation}
\ie $K_{m,n}$ is the kernel of the group homomorphism $(e^{i\theta_1},e^{i\theta_2})\mapsto e^{i(m\theta_1+n\theta_2)}$ from $T^2$ to $\unitary{1}$. In particular, $K_{1,-1}$ is the diagonally embedded circle $\triangle \unitary{1}$ in $\sunitary{2}\times\sunitary{2}$ and the subgroup $\Z_2\times\unitary{1}$ of elements in $G_\pi^+$ that commute with the left $\sunitary{2}^2$ action on $S^3\times S^3$ can be identified with the right-acting $K_{2,-2}$. Indeed, note that $K_{m,n}$ is isomorphic to the direct product $\unitary{1}\times \Z_{\gcd{ (m,n)}}$ via the map $(e^{i\theta},\zeta) \mapsto (e^{ik\theta}\zeta^{r}, e^{-ih\theta}\zeta^s)$, where $m=\gcd{(m,n)h}$, $n=\gcd{(m,n)}k$ and $rh+sk=1$. We also note that conjugation by $j\in \sunitary{2}$ (or any other unit quaternion orthogonal to $1$ and $i$) on the first (second) factor identifies $K_{m,n}$ with $K_{-m,n}$ ($K_{m,-n}$) since
$-je^{i\theta}j = e^{-i\theta}$.

Now, if $\Gamma_k\simeq\Z_{2k}$ is the subgroup of $K_{2,-2}=\Z_2\times\unitary{1}$ defined by the embedding $\zeta\mapsto (\zeta^k,\zeta)$, then the \emph{orbifold} $(S^3\times\R^4)/\Gamma_k$ admits a $1$-parameter family of cohomogeneity one 
\gtmetric s (the quotients of the $\mathbb{B}_7$ family of metrics) with group diagram
\[
\Gamma_k \subset \triangle\sunitary{2}\subset \sunitary{2}\times\sunitary{2}.
\] 
We can think of this orbifold as a \emph{partial} resolution of the cone over $(S^3\times S^3)/\Gamma_k$. In the next section we will see that there are \emph{smooth} $7$-manifolds that topologically resolve the same cone and that are known to carry complete cohomogeneity one ALC \gtmetric s.

\begin{remark}\label{rmk:Gamma:k}
When $k$ is odd, the isomorphism $\Z_{2k}\ra \Z_2\times\Z_{k}$ defined by $\zeta\mapsto (\zeta^k,\zeta^2)$ and the outer automorphism of $\sunitary{2}\times\sunitary{2}$ that exchanges the two factors yield an isomorphism between $\Gamma_k$ and the subgroup $\Z_2\times\Z_{2k}\subset\Z_2\times\unitary{1}$ of type (ii). 
\end{remark}
   
\subsection{Highly collapsed ALC \texorpdfstring{\gtmetric s}{G2-metrics}}

The starting point for this paper is our construction of complete \gtmfd s from AC Calabi--Yau $3$-folds \cite{FHN:ALC:G2:from:AC:CY3} and its specialisation to the cohomogeneity one case. Our analytic construction yields the existence of infinitely many new families of cohomogeneity one \gtmetric s, but recovers only metrics close to a certain degenerate (collapsed) limit. In this section we describe these cohomogeneity one manifolds. The main goal of the paper is to describe the full moduli space of invariant \gtmetric s on each of the $7$-manifolds in question.

\subsubsection*{Cohomogeneity one AC Calabi--Yau 3-folds}
The cross-sections of Calabi--Yau cones are called Sasaki--Einstein manifolds. The only homogeneous Sasaki--Einstein $5$-manifolds are the round $5$-sphere and $S^2\times S^3 = \sunitary{2}\times\sunitary{2}/\triangle\unitary{1}$, or finite quotients thereof \cite{Sparks:SE}*{\S 2.2}. The case of the round $5$-sphere and its quotients is not relevant for this paper and therefore we consider only AC Calabi--Yau $3$-folds $B$ asymptotic to (quotients of) the \emph{conifold}, \ie the Calabi--Yau cone $\tu{C}(\Sigma)$ over $\Sigma = S^2\times S^3$ endowed with its $\sunitary{2}\times\sunitary{2}$--invariant Sasaki--Einstein structure. If we insist that the Calabi--Yau $3$-fold $B$ has a cohomogeneity one action then there are three known possibilities. In the following we use the notation $K_{m,n}$ introduced in \eqref{eq:K:m:n}.

\begin{enumerate}[leftmargin=*]
\item $B$ is one of the two small resolutions of the conifold \cite{Candelas:delaOssa}. They have group diagrams
\[
K_{1,-1} \subset \unitary{1}\times \sunitary{2} \subset \sunitary{2}\times\sunitary{2}, \ \  K_{1,-1} \subset \sunitary{2}\times \unitary{1} \subset \sunitary{2}\times\sunitary{2}.
\]
The two resolutions are in fact isomorphic under the outer automorphism of $\sunitary{2}\times\sunitary{2}$ that exchanges the two factors. Without loss of generality we can therefore concentrate only on the first case, which from now on we will refer to as \emph{the} small resolution of the conifold.
\item $B$ is the canonical line bundle of $\CP^1\times \CP^1$. Its group diagram is
\[
K_{2,-2}\subset T^2 \subset \sunitary{2}\times\sunitary{2}.
\] 
In this case the tangent cone at infinity is a (free) $\Z_2$--quotient of the conifold. Moreover, while the AC Calabi--Yau metric on the small resolution of the conifold is unique (up to scale), in this case there is a $2$-parameter family of AC Calabi--Yau metrics on $B$, parametrised by the K\"ahler class of the corresponding K\"ahler form. The metrics corresponding to the unique (up to scale) compactly supported K\"ahler class  are due to Calabi \cite{Calabi:AC:CY}; the full $2$-parameter family of AC Calabi--Yau metrics was first considered in \cite{Zayas:et:al}.
\item $B=T^\ast S^3$ is the smoothing of the conifold \cites{Candelas:delaOssa,Stenzel}. Its group diagram is
\[
K_{1,-1}\subset  \triangle \sunitary{2} \subset \sunitary{2}\times\sunitary{2}.
\]
\end{enumerate}

\begin{remark*}
In (ii), it would perhaps be more natural to define $S^2\times S^3/\Z_2$ as $\sunitary{2}\times\sunitary{2}/K_{2,2}$. Our choice is motivated by the desire to use the same conventions adopted in the physics literature, in particular those of \cite{Brandhuber}. The main confusing consequence of our choice is that the complex structures on the two factors in $\CP^1\times \CP^1 = \sunitary{2}\times \sunitary{2}/T^2$ are conjugate to each other. In particular, with our choices K\"ahler classes on $\CP^1\times \CP^1$ are parametrised by pairs $(\alpha,-\beta)$ with $\alpha,\beta>0$.
\end{remark*}

\subsubsection*{Highly collapsed \texorpdfstring{\gtmetric s}{G2-metrics} on circle bundles over AC Calabi--Yau 3-folds}
Let $(B,g_0,\omega_0,\Omega_0)$ be one of the three simply connected cohomogeneity one AC Calabi--Yau $3$-folds just described. Here $\omega_0$ is the K\"ahler form of the AC Calabi--Yau metric $g_0$ and $\Omega_0$ is a holomorphic complex volume form normalised so that $\omega_0^3=\tfrac{3}{2}\Real\Omega_0\wedge\Imag\Omega_0$. We now consider a \emph{non-trivial} circle bundle $M$ over~$B$: therefore we consider only cases  \tu{(i)} and \tu{(ii)}, since the second cohomology vanishes in case \tu{(iii)}.
\begin{enumerate}[leftmargin=*]
\item When $B$ is the small resolution of the conifold, up to finite quotients and a change of orientation, there is only one possible choice of line bundle. The total space $M$ is $S^3\times\R^4$, endowed with an $\sunitary{2}\times\sunitary{2}$ action with group diagram
\begin{equation}\label{eq:group:diagram:D7}
\{ 1 \} \subset \{ 1\} \times \sunitary{2} \subset \sunitary{2}\times \sunitary{2}.
\end{equation}
\item When $B=K_{\CP^1\times \CP^1}$ we have an infinite family of examples: for a pair $(m,n)$ of coprime integers we consider the simply connected manifold $M_{m,n}$ described by the group diagram
\begin{equation}\label{eq:group:diagram:C7(m,n)}
K_{m,n}\cap K_{2,-2} \subset K_{m,n}\subset \sunitary{2}\times \sunitary{2}.
\end{equation}
The subgroup $K_{m,n}\cap K_{2,-2}$ is isomorphic to $\Z_{2|n+m|}$ embedded in $T^2\subset\sunitary{2}\times\sunitary{2}$ via $\zeta \mapsto (\zeta^n,\zeta^{-m})=(\zeta^n,\zeta^{n+m}\zeta^n)$. If $n$ is odd then $\gcd{(n,2|n+m|)}=1$ since $n$ and $m$ are coprime and therefore $\zeta\mapsto\zeta^n$ is an isomorphism: $K_{m,n}\cap K_{2,-2}$ is then isomorphic to the group $\Gamma_{|n+m|}$ defined in the previous section. If $n=2n'$ is even, then $\gcd (n',|n+m|)=1$ and $K_{m,n}\cap K_{2,-2}$ is isomorphic to the subgroup $\{ (\xi,\epsilon\,\xi)\in T^2\, |\, \xi\in\Z_{|n+m|}, \epsilon\in\Z_2\}$ via the composition of the isomorphism $\Z_{2|n+m|}\simeq \Z_{|n+m|}\times\Z_2$ given by $\zeta\mapsto (\zeta^2,\zeta^{|n+m|})$ and the automorphism of $\Z_{|n+m|}$ defined by $\xi\mapsto \xi^{n'}$. As explained in Remark \ref{rmk:Gamma:k} the outer automorphism that exchanges the two factors of $\sunitary{2}\times\sunitary{2}$ identifies this group with $\Gamma_{|n+m|}$ (in fact, note that if $n$ is even then $m$ must be odd).
\end{enumerate}
In all cases there exists a unique $\sunitary{2}\times\sunitary{2}$--invariant connection $\theta$ on the circle bundle $M\ra B$ (the left-invariant $1$-form dual to the vector field that generates the diagonal $\unitary{1}$ subgroup) and therefore a $1$-parameter family of $\sunitary{2}^2\times\unitary{1}$--invariant metrics
\[
\overline{g}_\epsilon = g_0 + \epsilon^2 \theta^2.
\]
Note that the model metric $\overline{g}_\epsilon$ has ALC asymptotic geometry. Furthermore, for $\epsilon>0$ sufficiently small, $\overline{g}_\epsilon$ is approximately $\gtwo$. In \cite{FHN:ALC:G2:from:AC:CY3} we use analytic methods to show that, under the necessary topological condition $c_1(M)\cup [\omega_0]=0\in H^4(B)$, for all $\epsilon>0$ sufficiently small, we can perturb $\overline{g}_\epsilon$ to an $\sunitary{2}^2\times\unitary{1}$--invariant ALC \gtmetric~$g_\epsilon$. The metric $g_\epsilon$ is \emph{highly collapsed}: $g_\epsilon$ is arbitrarily close (in certain weighted H\"older spaces) to the model metric $\overline{g}_\epsilon$ and therefore $(M,g_\epsilon)$ collapses to $(B,g_0)$ as $\epsilon\ra 0$ (with globally bounded curvature). The specialisation of the main result of \cite{FHN:ALC:G2:from:AC:CY3} to the cohomogeneity one setting therefore yields the following theorem. 

\begin{theorem}\hfill
\begin{enumerate}[leftmargin=0.75cm]
\item The manifold $M=S^3\times\R^4$ described by the group diagram \eqref{eq:group:diagram:D7} carries a $1$-parameter family (up to scale) of highly collapsed $\sunitary{2}^2\times\unitary{1}$--invariant ALC \gtmetric s.
\item For every pair of coprime positive integers $m,n$ the manifold $M_{m,n}$ described by the group diagram \eqref{eq:group:diagram:C7(m,n)} carries a $1$-parameter family (up to scale) of highly collapsed $\sunitary{2}^2\times\unitary{1}$--invariant ALC \gtmetric s.
\end{enumerate}
\end{theorem}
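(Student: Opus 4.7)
The plan is to invoke the main perturbation theorem of \cite{FHN:ALC:G2:from:AC:CY3} for each circle bundle $M \to B$ in the list, and verify that the construction respects the cohomogeneity one symmetry so that the output automatically inherits $\sunitary{2}^2 \times \unitary{1}$--invariance.

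The first step is to check the topological hypothesis $c_1(M) \cup [\omega_0] = 0 \in H^4(B)$. In case (i), the small resolution $B$ deformation retracts onto the zero section $\CP^1$, hence $H^4(B) = 0$ and the hypothesis holds trivially; since the AC Calabi--Yau metric on $B$ is unique up to scale, no further choice of input is needed. In case (ii), $B = K_{\CP^1 \times \CP^1}$ retracts onto $\CP^1 \times \CP^1$, so $H^4(B) \simeq \Z$ is generated by $[\omega]\cup[\omega']$; the pairing $c_1(M_{m,n}) \cup [\omega_0]$ is then a nontrivial linear functional of the K\"ahler parameters $(\alpha,\beta)$ of the $2$-parameter family of AC Calabi--Yau metrics from \cite{Zayas:et:al}, and for any coprime positive integers $(m,n)$ its zero set meets the K\"ahler cone in a ray. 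This cuts out a $1$-parameter family of admissible AC Calabi--Yau inputs $(B, g_0)$, unique up to scale.

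For each admissible $(B, g_0)$, the approximate metric $\overline{g}_\epsilon = g_0 + \epsilon^2 \theta^2$ is $\sunitary{2}^2 \times \unitary{1}$--invariant by construction, where $\theta$ is the unique $\sunitary{2}^2$--invariant connection on $M \to B$ and the additional $\unitary{1}$ acts by fibrewise rotation. Applying the main theorem of \cite{FHN:ALC:G2:from:AC:CY3} then yields, for every sufficiently small $\epsilon > 0$, a torsion-free ALC \gtwo--structure $g_\epsilon$ on $M$ that is a controlled perturbation of $\overline{g}_\epsilon$ in suitable weighted H\"older norms. The rescaling $(g_0, \epsilon) \mapsto (\lambda^2 g_0, \lambda \epsilon)$ shows that the scale of $g_0$ and the parameter $\epsilon$ are interchangeable, so modulo overall rescaling these solutions form the claimed $1$-parameter family.

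The only point requiring attention beyond a direct citation is the equivariance of the perturbation: one needs the correction producing $g_\epsilon$ to be $\sunitary{2}^2 \times \unitary{1}$--invariant. Since both the approximate solution and the linearised torsion-free \gtwo\ operator are $G$--equivariant, running the implicit function argument in $G$--invariant function spaces (obtained by averaging over the compact group) delivers an invariant correction. I expect this to be the main, though still essentially routine, obstacle, as the analytic framework of \cite{FHN:ALC:G2:from:AC:CY3} must be verified to descend to the $G$--invariant subspaces on both the approximate-solution and linear-analysis sides.
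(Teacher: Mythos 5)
Your proposal is correct and follows essentially the same approach as the paper: reduce to the main perturbation theorem of \cite{FHN:ALC:G2:from:AC:CY3} and verify the topological obstruction $c_1(M)\cup[\omega_0]=0\in H^4(B)$, trivially in case (i) and by computing the cup product on the singular orbit $\CP^1\times\CP^1$ in case (ii), which picks out the ray $(\alpha,\beta)\propto(m,n)$ in the K\"ahler cone. Your additional remark about running the implicit function argument in $G$--invariant function spaces is a reasonable clarification of a point the paper takes as understood from \cite{FHN:ALC:G2:from:AC:CY3}.
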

\begin{proof}
Given the main result of \cite{FHN:ALC:G2:from:AC:CY3}, we need only explain why the necessary topological constraint \mbox{$c_1(M)\cup [\omega_0]=0\in H^4(B)$} is satisfied. 
The uniqueness (modulo diffeomorphism) of metrics in the construction of~\cite{FHN:ALC:G2:from:AC:CY3} implies that the continuous isometries $\sunitary{2} \times \sunitary{2}$  of the Calabi--Yau base $B$ extend to $M$, 
and rotation in the circle fibres provides the additional $\unitary{1}$. 
In case (i) the constraint is automatically satisfied since $H^4(B)=0$. In case (ii) note that $B=K_{\C\PP^1\times\C\PP^1}$ retracts onto its singular orbit $D=\CP^1\times \CP^1$. Hence the $7$-manifold $M_{m,n}$ retracts onto its singular orbit $M_{m,n}|_D$, a principal circle bundle over~$D$. We can then understand the constraint $c_1(M_{m,n})\cup [\omega_0]=0$ by restriction to this singular orbit.

Fix a basis of left-invariant $1$-forms $e_1,e_2, e_3,e'_1,e'_2,e'_3$ (the dual vector fields will be denoted by $E_i, E'_i$) on $\sunitary{2}\times \sunitary{2}$ with the property that
\begin{equation}\label{eq:Maurer:Cartan}
de_i=-e_j\wedge e_k, \qquad de'_i=-e'_j\wedge e'_k
\end{equation}
for $(ijk)$ any cyclic permutation of $(123)$. When identifying $\Lie{su}_2$ with $\Imag{\HH}$ we adopt the convention that $E_1=\frac{i}{2}$, $E_2=\frac{j}{2}$, $E_3=\frac{k}{2}$: indeed by the first Maurer--Cartan structure equation, we must have $[E_i,E_j]=E_k$ in order for \eqref{eq:Maurer:Cartan} to be satisfied. In particular, note that $E_i, E'_i$ have period $4\pi$. Without loss of generality we can assume that the maximal torus $T^2$ in $\sunitary{2}\times\sunitary{2}$ is generated by $E_3$ and $E'_3$. The area forms of the two factors of $\CP^1\times \CP^1 = \sunitary{2}\times\sunitary{2}/T^2$ are
\[
\omega _1 = -\tfrac{1}{2}de_3 = \tfrac{1}{2}e_1\wedge e_2, \qquad \omega_2 = -\tfrac{1}{2}de'_3=\tfrac{1}{2}e'_1\wedge e'_2.
\]
Now,
$c_1(M_{m,n}|_D)=m[\omega_1]+n[\omega_2]$, $[\omega_0|_D]=\alpha [\omega_1]-\beta[\omega_2]$ for some $\alpha,\beta>0$ and we must have $m\beta - n\alpha=0$, \ie $(\alpha,\beta)=a(m,n)$ for some $a\neq 0$. In particular, $m$ and $n$ must have the same sign and so up to changing the circle bundle to its dual we can assume that both are positive.
\end{proof}

\section{Invariant half-flat structures and Hitchin's flow}
\label{sec:half:flat}

Let $M$ be a cohomogeneity one $7$-manifold acted upon by $\sunitary{2}\times\sunitary{2}$ and described by one of the group diagrams  \eqref{eq:group:diagram:B7}, \eqref{eq:group:diagram:D7} or \eqref{eq:group:diagram:C7(m,n)}. The first step of our analysis is to describe torsion-free \gtstr s on the open dense subset of principal orbits, \ie on a cyclinder of the form $(0,\infty)\times\sunitary{2}\times\sunitary{2}/K_0$, where $K_0$ is either trivial or the finite cyclic subgroup $\Z_{2|m+n|}=K_{m,n}\cap K_{2,-2}$ for coprime integers $m,n$. Any cohomogeneity one {\gtstr} on $(0,\infty)\times\sunitary{2}\times\sunitary{2}/K_0$ can be thought of as a $1$-parameter family of invariant $\sunitary{3}$--structures on the principal orbit $\sunitary{2}\times\sunitary{2}/K_0$. The condition that the {\gtstr} be torsion-free can then be written as ``static'' and ``evolution'' equations for the corresponding family of $\sunitary{3}$--structures. The ``static'' equations constrain the torsion of the $\sunitary{3}$--structures: solutions are called \emph{half-flat} structures. The ``evolution'' equations form a system of first-order ODEs known as \emph{Hitchin's flow}. The notions of stable forms and volume functionals introduced by Hitchin \cite{Hitchin:Stable:forms} allow one to interpret the ODE system as a Hamiltonian system on the space of invariant half-flat structures. We also give an alternative description of cohomogeneity one torsion-free \gtstr s based on Hitchin's description of torsion-free \gtstr s as critical points of a volume functional on the space of closed stable $3$-forms on a $7$-manifold in a fixed cohomology class. Palais' Principle of Symmetric Criticality (which holds in our context since the symmetry group is compact \cite{Palais}*{Theorem 5.4}) allows us to give an alternative Lagrangian formulation of Hitchin's flow.       

\subsection{Invariant half-flat structures on \texorpdfstring{$\sunitary{2}\times\sunitary{2}$}{SU(2) times SU(2)}}

The holonomy reduction of a Riemannian $7$-manifold to $\gtwo$ is conveniently expressed as the existence of a closed and coclosed (in fact, parallel) $3$-form $\varphi$ with special algebraic properties at each point. The natural action of $\tu{GL}(7,\R)$ on $\Lambda^3(\R^7)^\ast$ has two open orbits; one of these is isomorphic to $\tu{GL}(7,\R)/\gtwo$ and we say that a $3$-form $\varphi$ on a $7$-manifold $M$ is positive if $\varphi_x$ lies in this orbit for every $x\in M$. Since the stabiliser of a positive $3$-form is conjugate to $\gtwo$, the existence of $\varphi$ is equivalent to the reduction of the frame bundle of $M$ to $\gtwo$. Moreover, since $\gtwo$ is a subgroup of $\sorth{7}$ every positive $3$-form $\varphi$ defines a Riemannian metric $g_\varphi$ and volume form $\dvol_\varphi$ on $M$.

A {\gtstr} on a family of parallel hypersurfaces such as the principal orbits in a cohomogeneity one manifold is described by a $1$-parameter family of \emph{half-flat} $\sunitary{3}$--structures.

\begin{definition}\label{dfn:Hald:Flat}
An {\suthreestr} on a $6$-manifold is a pair of smooth differential forms $(\omega,\Omega)$, where $\omega$ is a non-degenerate $2$-form and $\Omega$ is a complex volume form, satisfying the algebraic constraints 
\begin{equation}\label{eq:SU(3):structure:Constraints}
\omega\wedge\Real\Omega=0, \qquad \tfrac{1}{6}\omega^3 = \tfrac{1}{4}\Real\Omega\wedge\Imag\Omega.
\end{equation}
A \emph{half-flat structure} is an $\sunitary{3}$--structure $(\omega,\Omega)$ such that
\begin{equation}\label{eq:Half:Flat}
d\omega\wedge\omega=0=d\Real\Omega.
\end{equation}
\end{definition}

Invariant half-flat structures on $\sunitary{2}\times\sunitary{2}$ have been studied by Schulte-Hengesbach \cite{Schulte-Hengesbach}*{Chapter 5} and Madsen--Salamon \cite{Madsen:Salamon}. We now briefly summarise their results. It is useful to recall first the formal geometric set-up introduced by Hitchin in \cite{Hitchin:Stable:forms}*{\S 6}. Let $N$ be a compact $6$-manifold. Let $\mathcal{U}$ and $\mathcal{V}$ be the space of closed \emph{stable} $4$-forms and $3$-forms representing a fixed pair of cohomology classes on $N$ of degree $4$ and degree $3$ respectively. Here a $4$-form on $N$ is stable if it can be written as $\frac{1}{2}\omega^2$ for a non-degenerate $2$-form $\omega$ (uniquely determined up to sign) and a $3$-form is stable if it is the real part of a holomorphic volume form $\Omega$ (in this case the imaginary part of $\Omega$ is uniquely determined by its real part). The tangent space of $\mathcal{U}\times\mathcal{V}$ at any point $(\frac{1}{2}\omega^2, \Real\Omega)$ is the product of affine spaces $\Omega^4_{exact}\oplus\Omega^3_{exact}$. There is a non-degenerate pairing 
\[
\langle \sigma, \rho \rangle = \int_N {\alpha\wedge \rho}= - \int_N{\sigma\wedge\beta},
\] 
where $\sigma=d\alpha$ is an exact $4$-form and $\rho=d\beta$ an exact $3$-form, which can be used to define a symplectic form $\Psi$ on $\mathcal{U}\times\mathcal{V}$.

The diffeomorphism group of $N$ acts naturally on $\mathcal{U}\times\mathcal{V}$ preserving $\Psi$. Given $(\frac{1}{2}\omega^2, \Real\Omega)\in\mathcal{U}\times\mathcal{V}$ and a vector field $X$ on $N$, the infinitesimal action $v_X$ of $X$ on $\mathcal{U}\times\mathcal{V}$ is by Lie derivative and therefore
\[
\left( v_X\lrcorner\Psi \right) (\sigma,\rho)=\int_{N}{ \left( X\lrcorner\tfrac{1}{2}\omega^2 \right) \wedge\rho  - \sigma\wedge\left( X\lrcorner\Real\Omega\right)}.
\]
We claim that $v_X$ is a Hamiltonian vector field on $\mathcal{U}\times\mathcal{V}$. Indeed, it is clear that the function $\mu_X$ on $\mathcal{U}\times\mathcal{V}$ defined by
\[
\mu_X (\tfrac{1}{2}\omega^2,\Real\Omega) = \int_N { \left( X\lrcorner\tfrac{1}{2}\omega^2 \right) \wedge\Real\Omega }= - \int_N {\tfrac{1}{2}\omega^2\wedge\left( X\lrcorner\Real\Omega\right)}
\]
satisfies $d\mu _X=v_X\lrcorner\Psi$. Moreover, since $\omega$ is non-degenerate $X\mapsto X\lrcorner\,\omega$ is an isomorphism: rewriting\[
\mu_X \left(\tfrac{1}{2}\omega^2,\Real\Omega\right) = \int_N { (X\lrcorner\,\omega)\wedge\omega\wedge\Real\Omega}
\]
we see that the vanishing of the moment map $\mu_X$ for all $X$ is equivalent to $\omega\wedge\Real\Omega=0$.

Hitchin defines volume functionals $V(\tfrac{1}{2}\omega^2)$ and $V(\Real\Omega)$ and a diffeomorphism-invariant functional $H$ by taking a certain linear combination of these:
\[
H(\tfrac{1}{2}\omega^2,\Real\Omega)=\int_{N}{ 2\omega^3-3\Real\Omega\wedge\Imag\Omega }.
\]
By diffeomorphism invariance $H$ descends to the symplectic quotient 
\[\mathcal{M}=\mu^{-1}(0)/\text{Diff}_0(N).\] The zero-level set of $H$ in $\mathcal{M}$ can almost be identified with the moduli space of half-flat structures on $N$ in the given cohomology classes: the second constraint in \eqref{eq:SU(3):structure:Constraints} is only satisfied in an integral sense. However, if $N$ is a homogeneous space and we restrict to invariant forms (and invariant diffeomorphisms) then the zero-level set of $H$ in $\mathcal{M}$ does indeed parametrise invariant half-flat structures on $N$ with fixed cohomology classes.

We now specialise this general framework to the case where $N=\sunitary{2}\times\sunitary{2}$ (or, later, a finite free quotient of this). The group of Lie algebra inner automorphisms of $\Lie{su}_2\oplus\Lie{su}_2$ (\ie the group of invariant diffeomorphisms of $N$ isotopic to the identity) is $\Aut=\sorth{3}\times\sorth{3}$. Fix $(p,q)\in\R^2$. We consider the space $\mathcal{U}$ of invariant non-degenerate $2$-forms $\omega$ such that $d\omega\wedge\omega=0$ (since $H^4(N)=0$ the closed $4$-form $\omega^2$ is necessarily exact) and the space $\mathcal{V}$ of invariant closed stable $3$-forms $\Real\Omega$ of the form
\[
\Real\Omega = p\, e_1 \wedge e_2\wedge e_3 + q\, e'_1 \wedge e'_2\wedge e'_3 + d\eta
\]  
for some invariant $2$-form $\eta$. Here $e_i$, $e'_i$ denote the left-invariant $1$-forms defined 
in \eqref{eq:Maurer:Cartan}. By \cite{Schulte-Hengesbach}*{Chapter 5, Lemmas 1.1 and 1.3} $\mathcal{U}$ and $\mathcal{V}$ are each identified with open subsets of the space $M_{3\times 3}$ of real $3\times 3$ matrices via
\[
\omega = \sum_{i,j}A_{ij}\, e_i\wedge e'_j, \qquad \eta =\sum_{i,j} B_{ij}\, e_i\wedge e'_j.
\]
Via the double-cover $\sorth{4}\ra \sorth{3}\times\sorth{3}$, we identify the $\sorth{3}\times\sorth{3}$--representation $M_{3\times 3}$ with the $\sorth{4}$--representation $\tu{Sym}^2_0(\R^4)$ \cite{Madsen:Salamon}*{Lemma 1}. Then $(\mathcal{U}\times\mathcal{V},\Psi)$ can be identified with $T^\ast \tu{Sym}^2_0(\R^4)$ endowed with its canonical symplectic form. The vanishing of the moment map $\mu$ for the action of $\sorth{4}$ guarantees that the two matrices $A$ and $B$, thought of as traceless symmetric $4\times 4$ matrices, commute \cite{Madsen:Salamon}*{Theorem 1}. By singular symplectic reduction $\mu^{-1}(0)/\sorth{4}$ is the cotangent space of $\tu{Sym}^2_0(\R^4)/\sorth{4}$ and is identified with $\R^3 \times \R^3/W$, where $W$ is the symmetric group on $3$ elements acting diagonally on $\R^3\times \R^3$ \cite{Madsen:Salamon}*{Corollary 1}.

Concretely \cite{Schulte-Hengesbach}*{Chapter 5, Theorem 1.4} up to the action of $\Aut$ we can assume that
\begin{equation}\label{eq:Invariant:Half:Flat}
\begin{gathered}
\omega = \alpha_1\, e_1\wedge e'_1 + \alpha_2\, e_2\wedge e'_2 +\alpha_3\, e_3\wedge e'_3,\\
\Real\Omega = p\, e_1 \wedge e_2\wedge e_3 + q\, e'_1 \wedge e'_2\wedge e'_3 + d\left( a_1\, e_1\wedge e'_1 + a_2\, e_2\wedge e'_2 +a_3\, e_3\wedge e'_3\right).
\end{gathered}
\end{equation}
The $2$-form $\omega$ is non-degenerate and $\Real\Omega$ is stable if and only if, respectively, $\alpha_1, \alpha_2,\alpha_3>0$ and
\begin{multline}\label{eq:Stable:3:Form}
\Lambda (a_1,a_2,a_3)=a_1^4+a_2^4+a_3^4 - 2a_1^2 a_2^2 - 2a_2^2 a_3^2 - 2a_3^2 a_1^2 + \\4(p-q) a_1 a_2 a_3 + 2pq (a_1^2 + a_2^2 + a_3^2) +p^2 q^2 <0.
\end{multline}
Moreover, the second constraint in \eqref{eq:SU(3):structure:Constraints} forces
\[
2\alpha_1\alpha_2\alpha_3 = \sqrt{-\Lambda (a_1,a_2,a_3)}\,.
\]
Note that there is a residual ambiguity in the parametrisation \eqref{eq:Invariant:Half:Flat}: the Weyl group $W$ acts permuting $(\alpha_1,\alpha_2,\alpha_3)$ and $(a_1,a_2,a_3)$ simultaneously.

\begin{remark*}
Due to different choices of basis of $\Lie{su}_2$, the definition of $\Lambda$ in \eqref{eq:Stable:3:Form} is different from the one given by Schulte-Hengesbach in \cite{Schulte-Hengesbach}*{Chapter 5, Equation (1.7)}. The two formulas are related by $a_i\mapsto -a_i$ for all $i=1,2,3$.
\end{remark*}

\begin{remark}\label{rmk:Extra:U(1):SU(2):symmetry} In addition to the (left) $\sunitary{2}\times\sunitary{2}$--invariance, when $\alpha_1=\alpha_2$ and $a_1=a_2$, the half-flat structure \eqref{eq:Invariant:Half:Flat} is invariant under the right action of $\triangle\unitary{1}$ (in fact, of the normaliser $\tu{N}$ of $\unitary{1}$ in $\sunitary{2}$ if we don't restrict to continuous symmetries) on $\sunitary{2}\times\sunitary{2}$. Similarly, if $\alpha_1=\alpha_2=\alpha_3$ and $a_1=a_2=a_3$ then the half-flat structure is invariant under the right action of $\triangle\sunitary{2}$. The Bryant--Salamon AC {\gtmetric} on the spinor bundle of $S^3$ \cite{Bryant:Salamon} has the additional $\triangle \sunitary{2}$ symmetry, \cf Example \ref{ex:Bryant:Salamon}. All the global examples we construct in this paper will have the additional $\triangle\unitary{1}$ symmetry. 
\end{remark}

\smallskip
\noindent \emph{The case of non-trivial stabiliser of the principal orbits.}\label{sec:Half:Flat:Stabiliser}
We now extend the previous discussion to the case where the stabiliser $K_0$ of the principal orbits is non-trivial. In the situation of interest, the isotropy representation of $K_0=K_{m,n}\cap K_{2,-2}\simeq \Z_{2|m+n|}$ on $\Lie{su}_2\oplus\Lie{su}_2$ is generated by the automorphism
\begin{equation}\label{eq:Isotropy:Representation}
T=\diag (\zeta^{2|n|},1,\zeta^{2|n|},1),
\end{equation}
where we identify $\tu{span}(E_1,E_2)$ and $\tu{span}(E'_1,E'_2)$ with $\C$ and $\zeta$ is a generator of $\Z_{2|m+n|}$. The subgroup $\Aut_T$ of $\Aut = \sorth{3}\times\sorth{3}$ of inner automorphisms that commute with $T$ is $\Aut_T =\orth{2}\times\orth{2}$, where $\orth{2}$ is the subgroup of $\sorth{3}$ that fixes $E_3$. Similarly, an invariant $2$-form
\[
\sum_{i,j}C_{ij}\, e_i\wedge e'_j
\]
on $\sunitary{2}\times\sunitary{2}$ determined by a $3\times 3$ matrix $C$ descends to $\sunitary{2}\times\sunitary{2}/K_0$ if and only if $T^t\left( \begin{array}{cc} 0 & C \\ -C^t & 0 \end{array}\right) T= \left( \begin{array}{cc} 0 & C \\ -C^t & 0 \end{array}\right)$. A computation shows that
\[
C=\left( \begin{array}{ccc} c_{11} & c_{12} & 0\\ c_{21} & c_{22} & 0 \\ 0 & 0 & c_{33}\end{array}\right),
\]
with the upper-left $2\times 2$ block commuting with the rotation of $\R^2$ of angle $\frac{2\pi|n|}{|n+m|}$. The latter condition forces $c_{11}=c_{22}$ and $c_{12}=-c_{21}$ unless $e^{\frac{2\pi i|n|}{|n+m|}}=\pm 1$. This can only happen if there exists $d\in\Z$ such that $(d+1)m + (d-1)n=0$.

The constraints that $K_0$--invariant forms must satisfy and the smaller group of automorphisms $\Aut_T$ play off against each other and, as in the case of trivial stabiliser of the principal orbit, we deduce the following proposition.

\begin{prop}\label{prop:Invariant:Half:Flat} 
Up to the action of $\Aut_T$ any invariant half-flat structure $(\omega,\Omega)$ on $\sunitary{2}\times\sunitary{2}/K_{m,n}\cap K_{2,-2}$ can be put in the normal form 
\[
\begin{gathered}
\omega = \alpha_1\, e_1\wedge e'_1 + \alpha_2\, e_2\wedge e'_2 +\alpha_3\, e_3\wedge e'_3,\\
\Real\Omega = p\, e_1 \wedge e_2\wedge e_3 + q\, e'_1 \wedge e'_2\wedge e'_3 + d\left( a_1\, e_1\wedge e'_1 + a_2\, e_2\wedge e'_2 +a_3\, e_3\wedge e'_3\right) 
\end{gathered}
\]
with $\alpha_1, \alpha_2,\alpha_3,-\Lambda (a_1,a_2,a_3)>0$ and $2\alpha_1\alpha_2\alpha_3 = \sqrt{-\Lambda (a_1,a_2,a_3)}$. Furthermore, $\alpha_1=\alpha_2$ and $a_1=a_2$ unless there exists $d\in\Z$ such that $(d+1)m + (d-1)n=0$.
\end{prop}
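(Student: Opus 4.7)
The plan is to adapt the moment-map/quotient template used in the trivial-stabiliser case described immediately above; the main new ingredient is $K_0$-equivariance.

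First, I would use the block-matrix description of $K_0$-invariant mixed $2$-forms derived just before the statement: the coefficient matrices $A$ of $\omega$ and $B$ of $\eta$ (in $\Real\Omega=p\,e_1\wedge e_2\wedge e_3+q\,e'_1\wedge e'_2\wedge e'_3+d\eta$) must have the block shape $\left(\begin{smallmatrix} C_{11} & 0 \\ 0 & c_{33}\end{smallmatrix}\right)$, with $C_{11}$ a $2\times 2$ matrix commuting with rotation by $\theta:=2\pi|n|/|n+m|$. In the generic case $\theta\notin\pi\Z$---equivalent to the failure of $(d+1)m+(d-1)n=0$ for every $d\in\Z$---this forces $C_{11}=aI+bJ$, so each of $A,B$ depends on three real parameters.

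Next I would impose the remaining half-flat equation $\omega\wedge\Real\Omega=0$. Via the Hitchin moment-map viewpoint and the identification $M_{3\times 3}\cong\tu{Sym}^2_0(\R^4)$, this asserts that the $4\times 4$ representatives $\tilde A,\tilde B$ commute. Both are $T$-invariant; under the double cover $\sunitary{2}\times\sunitary{2}/\Z_2\cong\sorth{4}$, the element $T$ acts on $\R^4$ as a rotation on a $2$-plane and the identity on the orthogonal $2$-plane, so both $\tilde A$ and $\tilde B$ are block-diagonal with scalar restriction to the rotation block. Their commutator therefore reduces to the commutator of the two symmetric $2\times 2$ blocks on the trivial plane, and this vanishes precisely when their traceless parts are collinear. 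Translating back to the $(a,b)$-coordinates gives the single scalar equation $a_Ab_B-b_Aa_B=0$.

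Now $\Aut_T$ takes over. The diagonal $\sorth{2}\subset\Aut_T$ acts trivially on $K_0$-invariant matrices (both $C_{11}$ and the scalar $c_{33}$ commute with rotations), while the antidiagonal $\sorth{2}$ rotates the pairs $(a_A,b_A)$ and $(a_B,b_B)$ simultaneously by a common angle. Thanks to the collinearity just proved, a single rotation sets $b_A=b_B=0$ at once, placing $A$ and $B$ in the diagonal form $A=\diag(a_A,a_A,c_A)$, $B=\diag(a_B,a_B,c_B)$. Relabelling $\alpha_1=\alpha_2=a_A$, $\alpha_3=c_A$, $a_1=a_2=a_B$, $a_3=c_B$ yields the stated normal form with $\alpha_1=\alpha_2$ and $a_1=a_2$; the remaining conditions $\alpha_i>0$, $\Lambda(a_1,a_2,a_3)<0$ and $2\alpha_1\alpha_2\alpha_3=\sqrt{-\Lambda}$ are immediate from the definition of an $\sunitary{3}$--structure. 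In the exceptional case the rotation generating $K_0$ is $\pm I$, $C_{11}$ is unrestricted, and $\Aut_T=\orth{2}\times\orth{2}$ has enough freedom for the trivial-stabiliser simultaneous-diagonalisation argument to apply verbatim to the pair of commuting $2\times 2$ blocks; this yields the general normal form without the equalities $\alpha_1=\alpha_2$ and $a_1=a_2$.

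The main technical point is the reduction of the moment-map equation to the single scalar equation $a_Ab_B-b_Aa_B=0$ in the second paragraph. Once the block-diagonalisation of $T$-invariant $\tilde A,\tilde B$ is known, the only potentially non-vanishing component of $[\tilde A,\tilde B]$ is the commutator of the two symmetric matrices on the trivial $2$-plane, which is proportional to the skew-symmetric determinant of their traceless parts; the remaining components vanish automatically because they would transform non-trivially under $K_0$ and so cannot contribute to a $K_0$-invariant constraint.
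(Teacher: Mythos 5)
Your argument follows essentially the same route the paper alludes to (the Madsen--Salamon identification $M_{3\times 3}\cong\tu{Sym}^2_0(\R^4)$, with the moment map condition becoming commutativity of the $4\times 4$ representatives), but you flesh out the details the paper leaves implicit, and the reasoning is sound.

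One small imprecision is worth flagging, though it does not invalidate the argument. You assert that under $\sunitary{2}\times\sunitary{2}/\Z_2\cong\sorth{4}$ the generator $T$ of $K_0$ ``acts on $\R^4$ as a rotation on a $2$-plane and the identity on the orthogonal $2$-plane.'' Writing $\R^4\cong\HH=\C\oplus\C j$ and $\zeta=e^{\pi i/(m+n)}$, the image of the generator $(\zeta^n,\zeta^{-m})$ acts by $x\mapsto\zeta^n x\zeta^m$, \ie by $\zeta^{n+m}=-1$ on $\C$ and by $\zeta^{n-m}$ on $\C j$. So for coprime $m,n>0$ with $m\neq n$ the action on the first plane is $-I$, not the identity, and the rotation by $\tfrac{(n-m)\pi}{m+n}$ on the second plane is the one that forces that block of $\tilde A,\tilde B$ to be scalar. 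Since $\pm I$ impose identical constraints on symmetric matrices (and the off-diagonal $T$-invariants vanish, as you observe), the block structure you deduce---one unconstrained symmetric $2\times 2$ block, one scalar block, commutator reducing to the first block---is correct, and the rest of the $\Aut_T$ diagonalisation goes through exactly as you describe, including the exceptional case $m=n=1$ where the rotation degenerates to $\pm I$ on both planes and the two-torus lift of $\sorth{2}\times\sorth{2}$ suffices to diagonalise both blocks.
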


\subsection{The fundamental ODE system}

We now introduce the evolution equations for a $1$-parameter family of invariant half-flat structures on $\sunitary{2}\times\sunitary{2}/K_0$ to define a torsion-free \gtstr. 

\subsubsection*{The Hamiltonian formulation: Hitchin's flow}
Consider a {\gtstr} $\varphi = dt\wedge\omega + \Real\Omega$ on a cylinder $(0,t_0)\times N$. Here $(\omega,\Omega)$ is a $1$-parameter family of $\sunitary{3}$--structures on the $6$-manifold $N$. Then we have $\ast\varphi = -dt\wedge\Imag\Omega + \frac{1}{2}\omega^2$ and the condition that $\varphi$ be closed and coclosed is equivalent to the half-flat equations \eqref{eq:Half:Flat} for $(\omega,\Omega)$ together with the evolution equations
\begin{equation}\label{eq:Half:Flat:Evolution}
\partial_t\Real\Omega = d\omega, \qquad \partial_t(\omega^2)=-2d\Imag\Omega.
\end{equation}

We specialise now to the case of a cohomogeneity one {\gtstr} on $M=(0,t_0)\times N$ with $N=\sunitary{2}\times \sunitary{2}/K_0$. Fix $p,q\in \R$ and consider a closed $\sunitary{2}\times \sunitary{2}$--invariant $3$-form
\begin{equation}\label{eq:Invariant:Closed:G2}
\varphi = p\, e_1\wedge e_2\wedge e_3 + q\, e'_1\wedge e'_2\wedge e'_3 + d\left( a_1 \, e_1\wedge e'_1 + a_2 \, e_2\wedge e'_2 + a_3 \, e_3\wedge e'_3\right),  
\end{equation}
where $d$ denotes the differential in $7$ dimensions. We rewrite $\varphi$ as $\varphi= dt\wedge\omega+\Real\Omega$ for a $1$-parameter family of pairs of differential forms
\[
\begin{gathered}
\omega = \dot{a}_1 \, e_1\wedge e'_1 + \dot{a}_2 \, e_2\wedge e'_2 + \dot{a}_3 \, e_3\wedge e'_3,\\
\Real\Omega = p\, e_1\wedge e_2\wedge e_3 + q\, e'_1\wedge e'_2\wedge e'_3 + d\left( a_1 \, e_1\wedge e'_1 + a_2 \, e_2\wedge e'_2 + a_3 \, e_3\wedge e'_3\right),
\end{gathered}
\]
where now $d$ is the differential on the $6$-manifold $\sunitary{2}\times\sunitary{2}/K_0$ and $\dot{a}_i=\frac{da_i}{dt}$. Assuming that $t$ is the arc-length parameter along a geodesic meeting all principal orbits orthogonally, the $3$-form $\varphi$ is a {\gtstr} if and only if the pair $(\omega,\Omega)$ defines an $\sunitary{3}$--structure for all $t$, \ie
\[
\dot{a}_i>0, \qquad \Lambda (a_1,a_2,a_3)<0, \qquad 2\dot{a}_1\dot{a}_2\dot{a}_3 = \sqrt{-\Lambda (a_1,a_2,a_3)}.
\]

\begin{remark}\label{rmk:arc:length}
The last condition is equivalent to the requirement that $t$ be the arc-length parameter along a geodesic meeting all principal orbits orthogonally. In the following it will sometimes be convenient to drop this constraint. In that case, we will only require $\dot{a}_i>0$ and $\Lambda (a_1,a_2,a_3)<0$.
\end{remark}

Set $y_i = a_i$ and $x_i = \dot{a}_j\dot{a}_k$. Then we have
\[
\begin{gathered}
\tfrac{1}{2}\omega^2 = x_1\, e_2\wedge e'_2\wedge e_3\wedge e'_3 + x_2\, e_3\wedge e'_3\wedge e_1\wedge e'_1 +x_3\, e_1\wedge e'_1\wedge e_2\wedge e'_2,\\
\Real\Omega = p\, e_1\wedge e_2\wedge e_3 + q\, e'_1\wedge e'_2\wedge e'_3 + d\left( y_1\, e_1\wedge e'_1 + y_2\, e_2\wedge e'_2 + y_3\, e_3 \wedge e'_3\right).
\end{gathered}
\] 
We are going to rewrite \eqref{eq:Half:Flat:Evolution} as an ODE system for the pair $(x,y)\in \R^3\times \R^3$. In Hitchin's formalism, the evolution equations \eqref{eq:Half:Flat:Evolution} are interpreted as the Hamiltonian flow of the Hamiltonian function $H$ on the space $\mathcal{U}\times\mathcal{V}$ of half-flat structures in fixed cohomology classes \cite{Hitchin:Stable:forms}*{Theorem 8}. Specialising to the $\sunitary{2}\times\sunitary{2}$--invariant setting, the Hamiltonian $H$ is given by
\begin{equation}\label{eq:Hamiltonian}
H(x,y)=\sqrt{-\Lambda(y_1,y_2,y_3)}-2\sqrt{x_1 x_2 x_3},
\end{equation}
where $\Lambda$ was defined in \eqref{eq:Stable:3:Form}. Then \eqref{eq:Half:Flat:Evolution} is equivalent to the Hamiltonian system
\begin{equation}
\begin{aligned}
\label{eq:Fundamental:ODE}
\dot{x_i}&=\frac{\partial H}{\partial y_i}= \frac{2}{\sqrt{-\Lambda (y_1,y_2,y_3)}}\left( y_i (-y_i^2 + y_j^2 + y_k^2-pq) - (p-q) y_j y_k\right),\\
\dot{y}_i &=-\frac{\partial H}{\partial x_i}=\frac{x_j x_k}{\sqrt{x_1 x_2 x_3}}.
\end{aligned}
\end{equation}
In particular, $H$ is constant along the flow. In fact $H=0$ along the flow since we require the normalisation $2\,\omega^3=3\Real\Omega\wedge\Imag\Omega$.

\begin{remark*}
Alternatively, one can use \cite{Schulte-Hengesbach}*{Corollary 1.5} (with the usual change of sign due to our different choice of basis of $\Lie{su}_2$) to write
\begin{align*}
\sqrt{-\Lambda}\Imag\Omega &= \left( 2y_1 y_2 y_3 -p(y_1^2 + y_2 ^2 + y_3 ^2 + pq)\right) e_1\wedge e_2 \wedge e_3\\
&+ \left( 2y_1 y_2 y_3 +q(y_1^2 + y_2 ^2 + y_3 ^2 + pq)\right) e'_1\wedge e'_2 \wedge e'_3\\
&+ \left( y_i (y_i^2 - y_j ^2 - y_k ^2 + pq) -2qy_j y_k\right) e_i\wedge e'_j \wedge e'_k\\
&+ \left( y_i (y_i^2 - y_j ^2 - y_k ^2 + pq) +2py_j y_k\right) e'_i\wedge e_j \wedge e_k,
\end{align*}
with the convention that we sum over cyclic permutations $(ijk)$ of $(123)$. The equivalence between \eqref{eq:Half:Flat:Evolution} and \eqref{eq:Fundamental:ODE} is then immediate.
\end{remark*}

\begin{example}\label{ex:Bryant:Salamon} By Remark \ref{rmk:Extra:U(1):SU(2):symmetry}, solutions of \eqref{eq:Fundamental:ODE} satisfying $x_1=x_2=x_3=x$ and $y_1=y_2=y_3=y$ correspond to $\sunitary{2}^3$--invariant torsion-free \gtstr s. In this highly symmetric setting, \eqref{eq:Fundamental:ODE} reduces to algebraic equations. Indeed, the vanishing of the Hamiltonian function $H$ describes a curve $4x^3=3y^4-4(p-q)y^3-6pqy^2-p^2 q^2$ in the $(x,y)$--plane. The solution with $p=0=q$ induces a conical metric: the $\gtwo$--cone $\tu{C}$ over the homogeneous nearly K\"ahler structure on $S^3 \times S^3$. Since the asymptotic behaviour of the curve $H(x,y)=0$ for large $y$ is independent of $p$ and $q$, solutions for arbitrary $p$ and $q$ have one AC end asymptotic to $\tu{C}$. As observed in \cite{Brandhuber}*{\S 2.1}, only when $(p,q)=(r_0^3,-r_0^3), (-r_0^3,0)$ or $(0,r_0^3)$ for some $r_0>0$, do solutions close smoothly on a singular orbit. Up to scale and (not necessarily $\sunitary{2}\times\sunitary{2}$--equivariant) diffeomorphisms there exists a unique complete AC metric asymptotic to the cone $\tu{C}$: the Bryant--Salamon metric on the spinor bundle of $S^3$ 
\cite{Bryant:Salamon}*{\S 3}.
\end{example}

\subsubsection*{The Lagrangian formulation}
Later in the paper it will be useful to have a different formulation of \eqref{eq:Fundamental:ODE}, first introduced by Brandhuber in \cite{Brandhuber}.

Let $\varphi$ be a closed {\gtstr} on a $7$-manifold $M$ and denote by $g_\varphi$ the induced Riemannian metric. Hitchin \cite{Hitchin:Stable:forms}*{Theorem 1} showed that the equation $d^\ast\varphi=0$ is the Euler--Lagrange equation for the volume functional $\varphi \mapsto \tu{Vol}(M,g_\varphi)$ restricted to variations of $\varphi$ amongst closed $3$-forms with fixed cohomology class. Assume that $M=(0,s_0)\times\sunitary{2}\times\sunitary{2}/K_0$, with coordinate $s$ on the first factor, and let $\varphi$ be a closed $\sunitary{2}\times \sunitary{2}$--invariant {\gtstr} of the form \eqref{eq:Invariant:Closed:G2}, \ie
\[
\varphi = p\, e_1\wedge e_2\wedge e_3 + q\, e'_1\wedge e'_2\wedge e'_3 + d\left( a_1\, e_1\wedge e'_1 + a_2\, e_2\wedge e'_2 + a_3\, e_3 \wedge e'_3\right).
\]
Then up to a constant we have
\begin{equation}\label{eq:Volume:Functional}
\tu{Vol}(M,g_\varphi) = \int_0^{s_0}{ L(a,a')\, ds},\qquad L(a,a') = \left( - a'_1 a'_2 a'_3 \Lambda (a_1, a_2, a_3) \right)^{\frac{1}{3}}.
\end{equation}
Here $a'_i = \frac{da_i}{ds}$. In order to prove \eqref{eq:Volume:Functional} observe that the fixed parameter $s$ and the arc-length parameter $t$ are related by
\[
2\left( \frac{ds}{dt}\right) ^3 a'_1 a'_2 a'_3 = \sqrt{-\Lambda (a_1, a_2, a_3)}
\]
and that the volume form of $g_\varphi$ is
\[
\dvol_{g_\varphi} = \tfrac{1}{6}dt\wedge\omega^3= \dot{a}_1 \dot{a}_2\dot{a}_3\, dt\wedge e_1\wedge e'_1\wedge e_2\wedge e'_2\wedge e_3\wedge e'_3.
\]
By the Principle of Symmetric Criticality, we conclude that $\varphi$ is coclosed if and only if the triple $a=(a_1, a_2, a_3)$ satisfies the second-order Lagrangian system
\[
\left( \partial_{a'_i}L(a,a')\right)' - \partial_{a_i}L(a,a')=0.
\] 
However, since Hitchin's volume functional is invariant under diffeomorphisms, we know \emph{a priori} that $\left( \partial_{a'_i}L(a,a')\right)'-\partial_{a_i}L(a,a')$ is orthogonal to the vector field $(a'_1, a'_2, a'_3)$. We therefore reduce to an ODE system of two second-order equations in three variables. This reformulation makes sense since we have the freedom to change the parametrisation $s$.

We carry out the relevant calculations explicitly in the case where $a_1=a_2$, \ie when there is an additional $\unitary{1}$ symmetry, \cf Remark \ref{rmk:Extra:U(1):SU(2):symmetry}. All the global solutions we find in this paper will admit this additional $\unitary{1}$ symmetry. Set $a:=a_1=a_2$, $b:=a_3$ and
\[
F(a,b):=-\Lambda (a,a,b) = 4a^2 (b-p)(b+q) - (b^2 +pq)^2.
\]
Let $F_a$ and $F_b$ denote the partial derivatives of $F$. Then we calculate:
\[
\begin{gathered}
9L^5\left(\frac{d}{ds}\left( \frac{\partial L}{\partial a'}\right) - \frac{\partial L}{\partial a} \right)= (a')^2 b' F \left( 2F (a' b'' - b' a'') - a' b' (a' F_a - 2b' F_b)\right),\\
9L^5\left(\frac{d}{ds}\left( \frac{\partial L}{\partial b'}\right) - \frac{\partial L}{\partial b} \right)= (a')^3 F \left( -2F (a' b'' - b' a'') + a' b' (a' F_a - 2 b' F_b)\right).
\end{gathered}
\]
Thus the pair of functions $(a,b)$ yields a torsion-free $\sunitary{2}\times\sunitary{2}\times\unitary{1}$--invariant {\gtstr} if and only if
\begin{equation}\label{eq:Fundamental:ODE:Brandhuber:U(1)}
2F (a'b''-b'a'') - a'b' \left( a' F_a - 2b' F_b  \right)=0.
\end{equation}

As a sanity check, note that \eqref{eq:Fundamental:ODE:Brandhuber:U(1)} can be immediately derived from \eqref{eq:Fundamental:ODE}. Indeed, in the presence of the additional $\unitary{1}$--symmetry, \eqref{eq:Fundamental:ODE} becomes the ODE system
\begin{equation}\label{eq:Fundamental:ODE:U(1)}
\begin{aligned}
\dot{x}_1 &= \frac{F_a(y_1,y_2)}{4\sqrt{F(y_1,y_2)}}, \qquad & \dot{x}_2 &= \frac{F_b(y_1,y_2)}{2\sqrt{F(y_1,y_2)}},\\
\dot{y}_1 &= \frac{x_1 x_2}{\sqrt{x_1^2 x_2}}, \qquad & 
\dot{y}_2 &= \frac{x_1^2}{\sqrt{x_1^2 x_2}},
\end{aligned}
\end{equation}
for the four functions $x_1 = \dot{a}\dot{b}$, $x_2 = \dot{a}^2$, $y_1=a$, $y_2=b$. Then \eqref{eq:Fundamental:ODE:Brandhuber:U(1)} is an immediate consequence of \eqref{eq:Fundamental:ODE:U(1)}. Moreover, the variable $t$ in \eqref{eq:Fundamental:ODE:U(1)} is the arc-length parameter along a geodesic meeting all principal orbits orthogonally and therefore we have the further normalisation $2\dot{a}^2\dot{b}=\sqrt{F(a,b)}$, \cf Remark \ref{rmk:arc:length}. 

\subsection{The induced metric}

For later use, we briefly discuss properties of the map $\varphi\mapsto g_\varphi$ in the $\sunitary{2}\times\sunitary{2}$-invariant setting. Lemma \ref{lem:Metric} below shows that Hitchin's flow \eqref{eq:Fundamental:ODE} can be regarded as an evolution equation for the family of Riemannian metrics induced by the $1$-parameter family of half-flat structures. In fact, cohomogeneity one $\sunitary{2}\times\sunitary{2}$--invariant \gtmfd s are described by a first-order ODE system for the metric coefficients in the work of Brandhuber--Gomis--Gubser--Gukov \cite{BGGG}, Cveti\v{c}--Gibbons--L\"u--Pope \cites{CGLP:Spin(7)&G2,CGLP:C7,CGLP:C7:tilde,CGLP:M:Conifolds}, Hori--Hosomichi--Page--Rabad\'an--Walcher \cite{A7:ALC} and Bazaikin--Bogoyavlenskaya \cites{Bazaikin:Bogoyavlenskaya,Bogoyavlenskaya}.

Consider the $\sunitary{2}\times\sunitary{2}$--invariant closed \gtstr
\[
\varphi = p\, e_1\wedge e_2\wedge e_3 + q\, e'_1 \wedge e'_2 \wedge e'_3 + d\left( a_1\, e_1 \wedge e'_1 + a_2\, e_2 \wedge e'_2 + a_3 \, e_3 \wedge e'_3\right).
\]
The induced metric $g_\varphi$ takes the form $g_\varphi = dt^2 + g_t$, where $t$ is the arc-length parameter along a geodesic meeting all $\sunitary{2}\times\sunitary{2}$--orbits orthogonally and $g_t$ is a $1$-parameter family of $\sunitary{2}\times\sunitary{2}$--invariant metrics on the principal orbits. By \cite{Schulte-Hengesbach}*{Corollary 1.5} (with the usual change of signs)
\begin{equation}\label{eq:Metric}
\tfrac{1}{2}\sqrt{-\Lambda}\, g_t = \dot{a}_i \left( a_j a_k - p a_i\right) e_i\otimes e_i + \dot{a}_i \left( a_j a_k + q a_i\right) e'_i\otimes e'_i + \dot{a}_i \left( a_i^2 - a_j ^2 - a_k ^2 -pq\right) e_i\otimes e'_i,
\end{equation}
where $(ijk)$ runs over cyclic permutations of $(123)$.

\begin{remark*}
When $p+q=0$ the metric $g_t$ is invariant under the  involution generated by the outer automorphism of $\sunitary{2}\times\sunitary{2}$ that exchanges the two factors. This additional symmetry however does not preserve the {\gtstr} $\varphi$.
\end{remark*}

\begin{lemma}\label{lem:Metric}
Fix $p,q\in\R$ and assume that $\dot{a}_1,\dot{a}_2,\dot{a}_3$ and $2 \dot{a}_1\dot{a}_2\dot{a}_3=\sqrt{-\Lambda (a_1,a_2,a_3)}$ are all positive. Then $(\dot{a}_1,\dot{a}_2,\dot{a}_3,a_1,a_2,a_3)$ is uniquely determined by the metric $g_t$ (up to discrete symmetries when $p+q=0$). In other words, the map that associates to each invariant half-flat structure in a given cohomology class $(p,q)$ its induced metric is a (local) diffeomorphism.
\proof
By \eqref{eq:Metric}, up to the action of the automorphism group of $\Lie{su}_2\oplus\Lie{su}_2$, the $\sunitary{2}\times\sunitary{2}$--invariant metrics $g$ that could possibly be induced by an invariant half-flat structure must be of the form
\[
g= A_i\, e_i\otimes e_i + B_i\, e'_i\otimes e'_i + C_i\, e_i\otimes e'_i.
\]
Furthermore, $A_i=B_i$ if $p+q=0$. If the coefficients $A_i, B_i, C_i$ are given by \eqref{eq:Metric} then a straightforward computation shows that
\[
4\dot{a}_j\dot{a}_k = \sqrt{\left( 4A_j B_j -C_j^2  \right)\left( 4A_k B_k - C_k^2\right)}.
\]
Thus $\dot{a}_1, \dot{a}_2, \dot{a}_3$ are uniquely determined by the metric $g$. Furthermore, $(p+q)a_i = \dot{a}_j\dot{a}_k (B_i-A_i)$ and therefore $a_1, a_2, a_3$ are also uniquely determined whenever $p+q\neq 0$.

If $p+q=0$ we calculate instead
\[
\begin{aligned}
\dot{a}_j\dot{a}_k\left( A_i + B_i + C_i \right) &= (a_j-a_k-a_i+p)(a_k-a_j-a_i +p), \\
-\dot{a}_j\dot{a}_k\left( A_i + B_i - C_i \right) &= (a_1+a_2+a_3+p)(a_i-a_j-a_k +p),
\end{aligned}
\]
from which it follows, using $A_i=B_i$, that
\[
\Lambda (a_1, a_2, a_3) = (a_1+a_2+a_3+p)(a_1-a_2-a_3 +p)(a_2-a_3-a_1+p)(a_3 - a_1 - a_2 +p).
\]
Together with $2 \dot{a}_1\dot{a}_2\dot{a}_3=\sqrt{-\Lambda (a_1,a_2,a_3)}$, these relations allow us to determine $a_i-a_j-a_k +p$ up to sign:
\[
4(a_i-a_j-a_k +p)^2 = (A_i + B_i -C_i)(A_j + B_j +C_j)(A_k + B_k +C_k).\qedhere
\]
\end{lemma}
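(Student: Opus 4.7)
The strategy is to explicitly invert the map \eqref{eq:Metric} from half-flat data $(\dot a_i, a_i)$ to metric coefficients, exploiting the algebraic constraints.

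First I would work in the normal form identified in the statement: up to the action of $\Aut = \sorth{3}\times\sorth{3}$, any invariant metric compatible with \eqref{eq:Metric} is diagonal in the basis $\{e_i\otimes e_i, e'_i\otimes e'_i, e_i\otimes e'_i\}$, so write $g = A_i\, e_i\otimes e_i + B_i\, e'_i\otimes e'_i + C_i\, e_i\otimes e'_i$. The normalisation $2\dot a_1\dot a_2\dot a_3 = \sqrt{-\Lambda}$ converts \eqref{eq:Metric} into the clean system
\begin{align*}
\dot a_j \dot a_k\, A_i &= a_j a_k - p\,a_i, \\
\dot a_j \dot a_k\, B_i &= a_j a_k + q\,a_i, \\
\dot a_j \dot a_k\, C_i &= a_i^2 - a_j^2 - a_k^2 - pq.
\end{align*}

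Next I would recover the derivatives $\dot a_i$. The natural combination is $4A_i B_i - C_i^2$, a Gram-type determinant of the $(e_i,e'_i)$-block. A direct expansion using the definition \eqref{eq:Stable:3:Form} of $\Lambda$ should yield the polynomial identity
\[
(\dot a_j \dot a_k)^2\bigl(4A_i B_i - C_i^2\bigr) \;=\; -\Lambda(a_1,a_2,a_3) \;=\; 4\,\dot a_1^2\dot a_2^2\dot a_3^2,
\]
which implies $4A_i B_i - C_i^2 = 4\dot a_i^2$. Together with the assumption $\dot a_i > 0$ this pins down each $\dot a_i$, and products give $\dot a_j\dot a_k$ explicitly in terms of the metric.

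With $\dot a_i$ known, recovering $a_i$ splits according to whether $p+q$ vanishes. In the generic case $p+q\neq 0$, subtracting the first two equations gives $\dot a_j\dot a_k (B_i - A_i) = (p+q)a_i$, so $a_i$ is determined uniquely and the map is a diffeomorphism.

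The delicate part is the degenerate case $p+q=0$. Here the formulas force $A_i=B_i$ automatically, reflecting the outer automorphism of $\sunitary{2}\times\sunitary{2}$ becoming an isometry, so the subtraction trick fails. My plan is to consider instead the sum and difference $\dot a_j\dot a_k (A_i+B_i\pm C_i)$: setting $q=-p$ these expressions are quadratic in the $a_i$ and in $p$, and I expect them to factor as differences of squares in $(a_i\pm p)$ and $(a_j\pm a_k)$. Multiplying the three cyclic versions of one of these combinations and matching against $4(\dot a_1\dot a_2\dot a_3)^2 = -\Lambda$ should force the factorisation
\[
\Lambda(a_1,a_2,a_3) \;=\; (a_1+a_2+a_3+p)\,(a_1-a_2-a_3+p)\,(a_2-a_3-a_1+p)\,(a_3-a_1-a_2+p),
\]
from which each linear factor $(a_i-a_j-a_k+p)$ is determined up to sign by the metric. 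The residual sign ambiguity is precisely the discrete symmetry asserted in the statement; once the four linear combinations are known up to sign, their pairwise differences recover the individual $a_i$ modulo this symmetry.

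The main obstacle will be verifying the polynomial identities, in particular the identity $4A_i B_i - C_i^2 = 4\dot a_i^2$ (equivalently $4(a_j a_k - pa_i)(a_j a_k + qa_i) - (a_i^2-a_j^2-a_k^2-pq)^2 = -\Lambda$) and the four-factor decomposition of $\Lambda$ when $p+q=0$. Both are purely algebraic and should fall out of straightforward expansions, with the genuine geometric content being the inversion argument itself.
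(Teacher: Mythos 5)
Your proposal is correct and follows essentially the same route as the paper: recover the $\dot a_i$ from the $2\times 2$ Gram determinants of the $(e_i,e'_i)$-blocks, recover $a_i$ by subtraction when $p+q\neq 0$, and in the degenerate case $p+q=0$ factor $A_i+B_i\pm C_i$ as differences of squares, deduce the four-linear-factor decomposition of $\Lambda$, and determine the linear forms $a_i - a_j - a_k + p$ up to sign. Your intermediate identity $4A_iB_i - C_i^2 = 4\dot a_i^2$ is a marginally cleaner way of stating the paper's product formula $4\dot a_j\dot a_k = \sqrt{(4A_jB_j - C_j^2)(4A_kB_k - C_k^2)}$, but the content and the argument are the same.
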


A Riemannian quantity that will play an important role later in the paper is the mean curvature of the principal orbits in an $\sunitary{2}\times\sunitary{2}$--invariant \gtmfd. Consider a $1$-parameter family of invariant half-flat structures satisfying Hitchin's flow \eqref{eq:Fundamental:ODE}. The shape operator $L$ of the hypersurface $\{ t=\text{const}\}$ is $L=\frac{1}{2}g_t^{-1}\dot{g}_t$, where the dot stands for time differentiation. The mean curvature is then $l=\text{tr}(L)$ or alternatively the time derivative of the logarithm of the orbital volume function $2\dot{a}_1 \dot{a}_2 \dot{a}_3=\sqrt{-\Lambda (a_1, a_2, a_3)}$. We easily calculate
\begin{equation}\label{eq:Mean:Curvature}
 l=\frac{1}{2(\dot{a}_1\dot{a}_2\dot{a}_3)^2}\sum_{i=1}^3{\dot{a}_i\left( a_i (-a_i^2+a_j^2+a_k^2 -pq) - (p-q)a_j a_k\right)}.
\end{equation}

\section{Local solutions in a neighbourhood of the singular orbit}
\label{sec:sing:extension}

We are interested in $7$-manifolds $M$ with a cohomogeneity one action of $G=\sunitary{2}\times\sunitary{2}$ and one singular orbit $Q=G/K$, where up to automorphisms of $G$ the stabiliser $K$ can be taken as one of the following:
\[
\triangle\sunitary{2}, \qquad \{ 1\} \times \sunitary{2}, \qquad K_{m,n}
\]
for two coprime integers $m,n$, with $K_{m,n}$ as defined in \eqref{eq:K:m:n}. Note that $Q\simeq S^3$ in the first two cases and $Q\simeq S^2 \times S^3$ when $K=K_{m,n}$. The only reason we restrict to coprime $m,n$ is that the resulting $7$-manifold $M$ is simply connected in this case. We recover the general case by taking finite quotients of the simply connected manifolds.

A tubular neighbourhood of the singular orbit $Q$ in $M$ is equivariantly diffeomorphic to a neighbourhood of the zero section in the vector bundle
\[
G\times_K V\ra Q,
\]
where $V$ is an orthogonal representation of $K$ of dimension $d=1+\dim (K)$: $V$ is the standard representation of $\sunitary{2}$ when $K=\triangle\sunitary{2}$ or $\{ 1\} \times \sunitary{2}$, while $V$ is the irreducible $2$-dimensional real representation with weight $2|m+n|$ when $K=K_{m,n}\simeq\sorth{2}$. Fix a vector $v_0$ in the unit sphere in $V$ and denote by $K_0$ the stabiliser of $v_0$ in $K$. Then $K_0$ is the principal orbit stabiliser: $K_0$ is trivial when $K=\triangle\sunitary{2}$ and $K=\{ 1\}\times\sunitary{2}$ and $K_0\simeq \Z_{2|m+n|}$ when $K=K_{m,n}$.

In this section we derive explicit conditions on $p, q$ and the $a_i$ so that the invariant $3$-form
\[
\varphi = p\, e_1\wedge e_2\wedge e_3 + q\, e'_1\wedge e'_2\wedge e'_3 + d\left( a_1\, e_1\wedge e'_1 + a_2\, e_2\wedge e'_2 + a_3\, e_3 \wedge e'_3\right),
\]
defines a smooth invariant closed positive $3$-form in a neighbourhood of the zero section in \mbox{$G\times_K V$}. We then use this analysis to set up and study singular initial value problems for the ODE system \eqref{eq:Fundamental:ODE} that correspond to local cohomogeneity one torsion-free \gtstr s defined in a neighbourhood of the three types of singular orbit. The main result of the section is Proposition \ref{prop:Solutions:Singular:Orbit} where we parametrise the space of cohomogeneity one torsion-free \gtstr s defined in a neighbourhood of each of the three types of singular orbit.  

\subsection{Smooth extension over the singular orbit}

An efficient way of understanding conditions for the smooth extension of tensors along a singular orbit $Q=G/K$ in a cohomogeneity one manifold $M=G\times_K V$ has been given by Eschenburg--Wang \cite{Eschenburg:Wang}*{\S 1}. We briefly recall their approach. Write $\Lie{g} = \Lie{k}\oplus\Lie{p}$ where $\Lie{g}$ and $\Lie{k}$ are the Lie algebras of $G$ and $K$ respectively. Given a point $q\in Q$ we can identify $T_q Q$ with $\Lie{p}$ and $T_q M$ with $\Lie{p}\oplus V$. By $G$--invariance, the $3$-form $\varphi$ is uniquely determined by its restriction to the fibre of $G\times _K V$ over $q$. We can therefore interpret $\varphi$ as a $K$--equivariant map $\varphi\co V\ra \Lambda^3 (\Lie{p}\oplus V)^\ast$. Furthermore, if we fix a point $v_0$ in the unit sphere $\Sph^{d-1}$ in $V$ and denote by $K_0$ its stabiliser in $K$, by $K$--equivariance $\varphi\co V\ra \Lambda^3 (\Lie{p}\oplus V)^\ast$ is uniquely determined by the curve $t\mapsto\varphi (tv_0)$, which must lie in the subspace of $K_0$--invariant $3$-forms on $\Lie{p}\oplus V$. Similarly, every $K$--equivariant map from the unit sphere in $V$ to $\Lambda^3 (\Lie{p}\oplus V)^\ast$ is uniquely determined by its value at $v_0$. Eschenburg and Wang show that $\varphi\co V\ra \Lambda^3(V\oplus\Lie{p})^\ast$ defines a smooth $G$--invariant $3$-form if and only if for all $p\geq 0$ there exists a homogeneous degree--$p$ polynomial $\varphi_p\co \Sph^{d-1}\ra\Lambda^3(V\oplus\Lie{p})^\ast$ with values in the subspace of $K_0$--invariant forms such that $t\mapsto \varphi (tv_0)$ has Taylor series $\sum_{p\geq 0}{\varphi_p (v_0) \, t^{p}}$.

\begin{prop}\label{prop:Smooth:extension:singular:orbit}
Set $G=\sunitary{2}\times\sunitary{2}$ and $(K,V)$ to be one of $\left( \triangle\sunitary{2},\C^2 \right)$, $\left( \{ 1\}\times \sunitary{2},\C^2 \right)$ or $\left( K_{m,n},\R^2_{2|m+n|}\right)$, where $\C^2$ denotes the standard representation of $\sunitary{2}$ and $\R^2_{2|m+n|}$ is the irreducible real $2$-dimensional representation of $K_{m+n}\simeq \sorth{2}$ with weight $2|m+n|$. Let
\[
\varphi = p\, e_1\wedge e_2\wedge e_3 + q\, e'_1\wedge e'_2\wedge e'_3 + d\left( a_1\, e_1\wedge e'_1 + a_2\, e_2\wedge e'_2 + a_3\, e_3 \wedge e'_3\right),
\]
be an invariant closed $3$-form defined on the complement of the zero section in a disc sub-bundle of the vector bundle $G\times_K V\ra G/K$.
\begin{enumerate}[leftmargin=*]
\item If $K=\triangle \sunitary{2}$ then $\varphi$ extends as a smooth positive $3$-form over the zero section of $G\times_K V$ if and only if
\begin{enumerate}
\item $p+q=0$;
\item for $i=1,2,3$, $a_i$ is an even function of $t$ with $a_i (t) = p + \frac{1}{2}\alpha t^2 + O(t^4)$ for some $\alpha\in\R$;
\item $8\alpha^3=p>0$.
\end{enumerate}
\item If $K=\{ 1\}\times\sunitary{2}$ then $\varphi$ extends as a smooth positive $3$-form over the zero section of $G\times_K V$ if and only if
\begin{enumerate}
\item $q=0$;
\item for $i=1,2,3$, $a_i$ is an even function of $t$ vanishing at the origin with $a_i = \frac{1}{2}\alpha_i t^2 + O(t^4)$ for some $\alpha_i>0$;
\item $8\alpha_1\alpha_2\alpha_3=-p>0$.
\end{enumerate}
\item If $K=K_{m,n}$ then $\varphi$ extends as a smooth positive $3$-form over the zero section of $G\times_K V$ if and only if
\begin{enumerate}
\item $mn>0$;
\item $p=-m^2r_0^3$ and $q=n^2 r_0^3$ for some $r_0\neq 0$;
\item $a_3$ is an even function of $t$ with $a_3(0)=mnr_0^3$ and $\ddot{a}_3(0)>0$;
\item $a_1+a_2$ is an odd function of $t$ and $\dot{a}_1 (0)+\dot{a}_2(0)>0$;
\item $a_1=a_2$ unless $m=n=\pm 1$; in the latter case $a_1-a_2$ is an even function of $t$ and $\alpha = \frac{1}{2}\left( a_1(0)-a_2(0)\right)$ satisfies $\alpha^2<r_0^6$.
\end{enumerate}
\end{enumerate}
\end{prop}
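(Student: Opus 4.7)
The plan is to apply the Eschenburg--Wang smooth-extension criterion \cite{Eschenburg:Wang}*{\S 1} recalled in the excerpt: a $G$-invariant $3$-form on the disc bundle $G\times_K V$ is smooth across the zero section if and only if the Taylor expansion of $\varphi(tv_0)$ along a radial ray is the restriction to $v_0$ of a sequence of $K$-equivariant homogeneous polynomials on $V$ with values in $\Lambda^3_{K_0}(\Lie{p}\oplus V)^*$. Since closedness of $\varphi$ holds automatically from the ansatz, the only conditions to derive are smoothness and pointwise positivity at $t=0$.

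For each of the three cases I would carry out the same three steps. First, compute $K_0$ and decompose $\Lambda^3(\Lie{p}\oplus V)^*$ into $K_0$-isotypic components, paying particular attention to parities in $t$: for every element $g\in K$ with $g\cdot v_0 = -v_0$ (the central element $-I$ in cases (i) and (ii); the generator of the $\Z_2\subset K_{m,n}$ sitting in the centre of $\sunitary{2}\times\sunitary{2}$ in case (iii)), the identity $\varphi(-tv_0) = g^*\varphi(tv_0)$ forces each $g$-eigenspace of $\Lambda^3(\Lie{p}\oplus V)^*$ to carry either an even or an odd Taylor series in $t$. Second, expand the explicit ansatz $\varphi = p\, e_1 e_2 e_3 + q\, e_1'e_2'e_3' + d(\sum a_i\, e_i\wedge e_i')$ into these components; the necessary conditions on $p$, $q$ and the Taylor coefficients of the $a_i$ are read off by matching each component against a $K$-equivariant polynomial on $V$ of the prescribed degree and parity. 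Third, impose the positivity of $\varphi$ at $t=0$ by expanding the discriminant $\Lambda$ of \eqref{eq:Stable:3:Form} to the leading nontrivial order and comparing with the arc-length normalisation $2\dot a_1\dot a_2\dot a_3 = \sqrt{-\Lambda}$ (Remark~\ref{rmk:arc:length}).

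For case (i) this procedure proceeds cleanly: $\Delta\sunitary{2}$-invariance of $\Lambda^3 \Lie p^*$ forces $a_1(0)=a_2(0)=a_3(0)$; the identity $\Lambda(p,p,p) = p^2(p+q)^2$ together with the requirement that $\Lambda$ vanish at $t=0$ only to the minimal order consistent with $a_i(t) = a_i(0) + \tfrac12\alpha t^2 + O(t^4)$ then forces $p+q=0$ and $a_i(0)=p$; evenness of $a_i$ comes from the $(-I)$-parity split, and the positivity constraint $8\alpha^3 = p > 0$ follows by balancing the order-$t^3$ terms in $2\dot a_1\dot a_2\dot a_3 = \sqrt{-\Lambda}$. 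Case (ii) is analogous: since $\{1\}\times\sunitary{2}$ is the stabiliser, $e_i'$ vanishes identically on $Q$, which directly forces $q=0$ and $a_i(0)=0$, and the same expansion yields $a_i = \tfrac12\alpha_i t^2 + O(t^4)$ with $8\alpha_1\alpha_2\alpha_3 = -p > 0$.

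The main obstacle is case (iii), where $K=K_{m,n}$ and $K_0 \simeq \Z_{2|m+n|}$. Here one must carefully track the $\sorth{2}$-weights encoded in \eqref{eq:Isotropy:Representation}: $V$ sits in weight $2|m+n|$, the four off-diagonal planes in $\Lie p$ carry weights $\pm(m\pm n)$ appropriately, and the diagonal $\Lie p$-direction is invariant. Matching the top-degree component of $\varphi$ against the volume form of the singular orbit $S^2\times S^3$ yields $p=-m^2 r_0^3$, $q=n^2 r_0^3$, $a_3(0)=mn r_0^3$, with $mn>0$ enforced by positive-definiteness of the leading-order metric. Parities of $a_3$ (even) and $a_1+a_2$ (odd) follow from the $K_0$-action. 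The genericity assertion $a_1 = a_2$ arises from the observation that $a_1-a_2$ carries twice the $\sorth{2}$-weight of $a_1+a_2$, so is paired with a $K$-equivariant polynomial on $V$ only when $4|n| \equiv 0 \pmod{2|m+n|}$, i.e.\ when $m=n=\pm 1$; in that exceptional case positivity of $F=-\Lambda$ at $t=0$, evaluated with $a_1(0)=mnr_0^3+\alpha$, $a_2(0)=mnr_0^3-\alpha$, $a_3(0)=mnr_0^3$, yields the constraint $\alpha^2<r_0^6$. Throughout this case the delicate step is the bookkeeping of $\sorth{2}$-weights together with the $K_0$-parities, and the identification of precisely when the resonance $m=n=\pm 1$ produces additional $K$-equivariant polynomial invariants.
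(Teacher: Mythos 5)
Your overall strategy---the Eschenburg--Wang smooth-extension criterion, parity analysis via central elements, $K_0$-weight bookkeeping, and positivity from the leading Taylor coefficients of $\Lambda$---is the paper's. However, two of your intermediate claims would not survive a careful execution. In case (i), $\Lambda^3\Lie{p}^*$ is one-dimensional and trivially $K$-invariant, so invariance of that summand imposes nothing; what forces $a_1(0)=a_2(0)=a_3(0)$ (and in fact pins down $a_i(0)=\tfrac{p-q}{2}$) is the absence of $K$-invariants in the $\R^3\otimes\Imag\HH$ summand of $\Lambda^3(\Lie{p}\oplus V)^*$, which forces each $V_i(0)=a_i(0)-a_j(0)-a_k(0)+\tfrac{p-q}{2}$ to vanish, while the common $t^2$ coefficient $\alpha$ comes separately from matching the $\operatorname{Sym}_0(\Imag\HH)$ component against a degree-$4$ equivariant polynomial. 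Once you know $a_i(0)=\tfrac{p-q}{2}$, the statement $a_i(0)=p$ is \emph{equivalent} to $p+q=0$, so invoking $\Lambda(p,p,p)=p^2(p+q)^2$ to derive both at once is circular as written; the paper instead reads $p+q=0$ directly off the blow-up of the $8(p+q)t^{-3}\,dx_1\wedge dx_2\wedge dx_3$ coefficient, which is the clean and non-circular route (and also derives this before any positivity discussion enters).

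In case (iii) the weight bookkeeping is off: the $K_{m,n}$-weights on $\Lie{p}\otimes\C$ are $0,\pm 2n,\pm 2m$ (from restricting the $T^2$-decomposition $\R\oplus\C_{2,0}\oplus\C_{0,2}$ along $e^{i\theta}\mapsto(e^{in\theta},e^{-im\theta})$), not $\pm(m\pm n)$; and the assertion that $a_1-a_2$ ``carries twice the $\sorth{2}$-weight of $a_1+a_2$'' is incorrect---after combining with $dz_V$, the $a_1-a_2$ data sits in components of $K_0$-weight $2|m-n|$, $4|m|$ and $4|n|$, while the $a_1+a_2$ data sits in weights $0$, $2|m+n|$ and $4|m+n|$. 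You nevertheless land on the correct criterion $4|n|\equiv 0\pmod{2|m+n|}$, which is equivalent to the paper's condition that $(d+1)m+(d-1)n=0$ for some $d\in\Z$, and which under $mn>0$ and $\gcd(m,n)=1$ reduces to $m=n=\pm 1$; so your conclusion is right, but the weight computation that is supposed to justify it needs to be redone.
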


\begin{remark*}
In Theorem \ref{thm:Classification:U(1)} we will prove that, up to finite quotients, the proposition covers all possible singular orbits of smooth \gtmetric s with enhanced $\sunitary{2}\times\sunitary{2}\times\unitary{1}$ symmetry.
\end{remark*}

We now prove the three cases of the proposition.

\subsubsection*{The case $K=\triangle\sunitary{2}$.}
The singular orbit is $Q=\sunitary{2}\times\sunitary{2}/\triangle \sunitary{2}\simeq S^3$ and the stabiliser of points on principal orbits is trivial. A tubular neighbourhood of the singular orbit $S^3$ is equivariantly diffeomorphic to $S^3\times \R^4$, where $\sunitary{2}\times\sunitary{2}$ acts on $S^3\times \R^4\subset \HH \oplus \HH$ by $(q_1,q_2)\cdot (x,y)=(q_2 xq_1^\ast,q_1 y)$. Therefore along the ray $\gamma (t) = (1,t)\in S^3\times\HH \subset \HH^2$ we have
\[
\begin{gathered}
E_1 = \tfrac{1}{2}(-i,it), \qquad E_2 = \tfrac{1}{2}(-j,jt),\qquad E_3 = \tfrac{1}{2}(-k,kt),\\
E'_1 = \tfrac{1}{2}(i,0), \qquad E'_2 = \tfrac{1}{2}(j,0), \qquad E'_3 = \tfrac{1}{2}(k,0).
\end{gathered}
\]
In particular, if we define $e_i^\pm = \frac{1}{2}(e_i \pm e'_i)$ and let $t,x_1,x_2,x_3$ be Euclidean coordinates on $\R^4$ we have
\[
e_1^+=2t^{-1}dx_1, \qquad e_2^+ = 2t^{-1}dx_2, \qquad e_3^+=2t^{-1}dx_3.
\]
For $t\neq 0$ the closed positive $3$-form $\varphi$ can then be written as
\[
\tfrac{1}{2}\varphi = V e^-_1\wedge e^-_2\wedge e^-_3 + \sum_{i=1}^3 {e^-_i \wedge \omega_i} + 8(p+q)t^{-3} dx_1\wedge dx_2 \wedge dx_3 + 2(p+q)t^{-1} \sum_{i=1}^3{dx_i\wedge e_j^-\wedge e_k^-}.
\]
Here $\omega_i =-2\dot{a}_i t^{-1} dt\wedge dx_i + 4V_i t^{-2}dx_j\wedge dx_k$ and
\begin{equation}\label{eq:V:Vi}
V=a_1 + a_2 + a_3 +\tfrac{p-q}{2}, \qquad V_i = a_i - a_j - a_k +\tfrac{p-q}{2}.
\end{equation}

If $\varphi$ extends smoothly at $t=0$ then clearly $p+q=0$, since otherwise the coefficient of $dx_1\wedge dx_2\wedge dx_3$ would blow-up. In order to detect subtler conditions for the smooth extension along the singular orbit, following Eschenburg--Wang's analysis we regard $\varphi$ as a map $\varphi\co \HH \ra \Lambda^3 (\HH \oplus \Imag\HH)$ identifying vector spaces with their duals using their standard metrics. As an $\sunitary{2}$--representation we have
\begin{align*}
\Lambda^3 (\HH \oplus \Imag\HH ) & = \Lambda ^3\HH \oplus (\Lambda ^+\HH\otimes\Imag\HH ) \oplus (\Lambda^-\HH\otimes\Imag\HH ) \oplus ( \HH\otimes\Lambda^2\Imag\HH ) \oplus ( \Lambda^3\Imag\HH )\\
 & \simeq \HH \oplus ( \Imag\HH\otimes\Imag\HH )  \oplus ( \R^3\otimes\Imag\HH ) \oplus ( \HH\otimes\Imag\HH ) \oplus\R,
\end{align*}
where $\HH$ and $\Imag\HH$ are the standard and adjoint representations of $\sunitary{2}$, respectively, and we used the fact that the induced action of $\sunitary{2}$ on $\Lambda^2\HH$ acts trivially on anti-self-dual forms and acts via the adjoint representation on the space of self-dual forms (with respect to the volume form $dt\wedge dx_1\wedge dx_2\wedge dx_3$). By applying Eschenburg--Wang's analysis we deduce conditions for the extension of $\varphi$ over the singular orbit as a smooth $3$-form:
\begin{enumerate}[leftmargin=*]
\item $e^-_1\wedge e^-_2\wedge e^-_3$ is $\sunitary{2}$--invariant and therefore $V$ must be even.
\item $e^-_i\wedge \omega^-_i \in \R^3\times\Imag\HH$ corresponds to a degree $2$ polynomial of the form $q\mapsto quq^\ast$ for $q\in\HH$ and some fixed $u\in\Imag\HH$. Here $\omega_i^-$ is the anti-self-dual part of $\omega_i$. We deduce that
\[
\frac{t\dot{a}_i + 2V_i}{t^2}
\]
must be even and vanish at $t=0$.
\item The triple $(e^-_1\wedge\omega^+_1, e^-_2\wedge\omega^+_2,e^-_3\wedge\omega^+_3)$ represents a map $\Imag\HH \ra\Imag\HH$ which is diagonal in the standard basis of $\Imag\HH$ and therefore symmetric. We decompose this map into a multiple of the identity and a traceless part. The equivariant polynomial $\HH \ra \text{Sym}(\Imag\HH)$ which corresponds to the identity by evaluation at $1$ is clearly the constant polynomial $q\mapsto [u\mapsto u]$. On the other hand, an equivariant map $\HH \ra \text{Sym}_0(\Imag\HH)$ which has value $A\in \text{Sym}_0(\Imag\HH)$ at $q=1$ must correspond to the degree $4$ polynomial $q\mapsto \text{Ad}_q \circ A \circ \text{Ad}_{q^\ast}$. We therefore conclude that
\[
\frac{t\dot{a}_i - 2V_i}{t^2}
\]
must have an even Taylor series expansion with $0$th and $2$nd  order coefficients independent of~$i$.
\end{enumerate}
In summary we must have
\[
p+q=0, \qquad a_i = p + \tfrac{1}{2}\alpha t^2 + O(t^4)
\]
for some $\alpha\in\R$.

Finally, the requirements that $\varphi$ be a positive $3$-form and $t$ be the arc-length parameter along a geodesic meeting orthogonally all principal orbits impose further constraints. As in the proof of Lemma \ref{lem:Metric}, it is useful to observe that when $q=-p$ we can factor $\Lambda (a_1, a_2, a_3)$ as $VV_1 V_2 V_3$, where $V, V_i$ are defined in \eqref{eq:V:Vi}. Since $V\approx 4p+\tfrac{3}{2}\alpha t^2$ and $V_i \approx -\tfrac{1}{2}\alpha t^2$ as $t\ra 0$, at leading order in $t$ the conditions $\dot{a}_i, -\Lambda (a_1, a_2, a_3)>0$ and $2\dot{a}_1\dot{a}_2\dot{a}_3=\sqrt{-\Lambda (a_1, a_2, a_3)}$ are equivalent to the requirement 
\[
8\alpha^3 = p>0.
\]

\subsubsection*{The case $K=\{ 1\} \times \sunitary{2}$}
In this case we have $Q=S^3$ and $M=S^3\times\R^4$ with action of $\sunitary{2}\times\sunitary{2}$ given by the left multiplication by unit quaternions on $S^3\subset \HH$ and $\R^4\simeq\HH$. Hence as $K$--representations we have $V=\HH$ and $\Lie{p}=\Imag\HH$. The $1$-forms $e_1, e_2, e_3$ define a coframe on $\Lie{p}$. If $t,x_1,x_2,x_3$ are Euclidean coordinates on $V$, then along the ray $t\in\HH\simeq V$ we have
\[
e'_i = 2t^{-1}dx_i, \qquad i=1,2,3.
\]
Hence we can write
\[
\begin{aligned}
\varphi &= p\, e_1\wedge e_2\wedge e_3 + 8qt^{-3}\, dx_1\wedge dx_2\wedge dx_3 +  \sum_{i=1}^3 t^{-1}\left( -\dot{a}_i + 2t^{-1}a_i \right) e_i \wedge \left( dt\wedge dx_i + dx_j \wedge dx_k\right)\\
&- t^{-1}\left( \dot{a}_i + 2t^{-1}a_i \right) e_i \wedge \left( dt\wedge dx_i - dx_j \wedge dx_k\right)- 2t^{-1}a_i\, dx_i \wedge e_j\wedge e_k,
\end{aligned}
\]
where for each $i=1,2,3$ we have $\epsilon_{ijk}=1$. Now, for $\varphi$ to be smooth we must certainly have $q=0$. Moreover, Eschenburg--Wang's analysis implies that necessary and sufficient conditions for the smoothness of $\varphi$ are that for all $i=1,2,3$
\begin{enumerate}[leftmargin=*]
\item $2t^{-1}\left( \dot{a}_i \pm  2t^{-1}a_i \right)$ has Taylor series involving only even powers of $t$ and also $2t^{-1}\left( \dot{a}_i - 2t^{-1}a_i \right)$ vanishes at the origin;
\item $2t^{-1}a_i$ has Taylor series only involving odd powers of $t$.
\end{enumerate}
These conditions are satisfied if and only if $a_1, a_2, a_3$ are smooth even functions vanishing at $t=0$. Finally, at leading order in $t$ the requirement that $\varphi$ be a positive $3$-form and $t$ be the arc-length parameter along a geodesic meeting all orbits imposes the further constraints
\[
\alpha_i>0, \qquad 8 \alpha_1 \alpha_2 \alpha_3 = -p>0,
\]
where we write $a_i (t) = \frac{1}{2}\alpha_i t^2 + O(t^4)$.

\subsubsection*{The case $K=K_{m,n}$}
In this case the singular orbit is a circle bundle over 
\[D=S^2\times S^2=\sunitary{2}\times\sunitary{2}/T^2\]
and the $7$-manifold $M$ is the total space of a $\C\times\Sph^1$--bundle over $D$. More precisely, $Q= G\times_{T^2}S^1$ and $M=G\times_{T^2}(S^1\times \C)$, where $T^2$ acts on $S^1$ and $\C$ with weights $(m,n)$ and $(2,-2)$ respectively. The $K_{m,n}$--representations $V$ and $\Lie{p}$ are naturally induced by the $T^2$--representations $V=\C_{2,-2}$ and $\Lie{p} = \R \oplus \C_{2,0}\oplus\C_{0,2}$. Since $m$ and $n$ are coprime $K_{m,n}\simeq \unitary{1}$ is embedded in $T^2$ via $e^{i\theta}\mapsto (e^{in\theta}, e^{-im\theta})$ and as real $K_{m,n}$--representations we have $V=\R^2_{2|n+m|}$ and $\Lie{p} = \R \oplus \R^2_{2|n|}\oplus\R_{2|m|}$.

It is convenient to introduce a new $\Z$--basis for the Lie algebra of the maximal torus $T^2$ in $\sunitary{2}\times\sunitary{2}$: fix integers $(r,s)$ with $mr+ns=1$ and consider the basis elements
\[
nE_3 - mE'_3,\qquad rE_3 +sE'_3.
\]
The dual basis of left-invariant $1$-forms is $se_3 -re'_3$, $m e_3 +n e'_3$. Note that
\[
e_3 = n(se_3 -re'_3) + r(me_3 +ne'_3), \qquad e'_3 = -m(se_3 -re'_3) + s(me_3 +ne'_3).
\]
Let $t,x$ be coordinates on $V\simeq\R^2$. Since $nE_3-mE'_3$ has period $\frac{2\pi}{|m+n|}$ in $\sunitary{2}\times\sunitary{2}/K_0$, along the ray $t$ in $V$ we must have (up to changing $x$ into $-x$)
\[
(n+m)(se_3 -re'_3) = t^{-1}dx.
\]
On the other hand $e_1, e_2$ and $e'_1, e'_2$ are coframes on $\Lie{n}=\R^2_{2|n|}$ and $\Lie{n}'=\R^2_{2|m|}$ respectively and $me_3 + ne'_3$ generates the trivial real factor in $\Lie{p}$.

Although we work with real representations of $K_{m,n}$, in order to apply Eschenburg--Wang's analysis it is convenient to complexify $V\oplus\Lie{p}$ and work with complex $\unitary{1}$--representations instead. We have
\[
V\otimes\C = \C_{2(n+m)}\oplus \C_{-2(n+m)}, \quad \R\otimes\C = \C_0, \quad \Lie{n}\otimes\C =\C_{2n}\oplus\C_{-2n}, \quad \Lie{n}'=\C_{-2m}\oplus\C_{2m}.
\]
Introduce the following basis of $(V\oplus\Lie{p})\otimes\C$ (identified with its dual) adapted to the decomposition of $(V\oplus\Lie{p})\otimes\C$ into $K_{m,n}$--representations:
\[
\begin{gathered}
dz_V = dt+idx = dt + i(n+m)(se_3-re'_3), \quad d\overline{z}_V = dt-idx = dt - i(n+m)(se_3-re'_3),\\ 
dz_\R = me_3 + ne'_3, \quad dz_{\Lie{n}} = e_1+ie_2, \quad d\overline{z}_{\Lie{n}} = e_1-ie_2,\quad dz_{\Lie{n}'} = e'_1+ie'_2, \quad d\overline{z}_{\Lie{n}'} = e'_1-ie'_2.
\end{gathered}
\]
We can then rewrite $\varphi$ as
\[
\begin{aligned}
\varphi = & \frac{i}{2}(pr-sa_3) dz_\R \wedge dz_{\Lie{n}}\wedge d\overline{z}_{\Lie{n}} + \frac{i}{2}(a_3r+sq) dz_\R \wedge dz_{\Lie{n}'}\wedge d\overline{z}_{\Lie{n}'}\\
&+\frac{i}{2}\frac{pn+ma_3}{(m+n)t} \Imag( dz_V )\wedge dz_{\Lie{n}}\wedge d\overline{z}_{\Lie{n}} + \frac{i}{2}\frac{a_3n-mq}{(m+n)t} \Imag( dz_V ) \wedge dz_{\Lie{n}'}\wedge d\overline{z}_{\Lie{n}'}\\
&+\frac{i}{2} \frac{\dot{a}_3}{(m+n)t}dz_\R\wedge dz_V\wedge d\overline{z}_V \\
&+ \frac{(r+s)}{2}(a_1-a_2)dz_\R\wedge \Imag\left( dz_{\Lie{n}}\wedge dz_{\Lie{n}'}\right) - \frac{(r-s)}{2}(a_1+a_2)dz_\R\wedge \Imag\left( dz_{\Lie{n}}\wedge d\overline{z}_{\Lie{n}'}\right)\\
&+\frac{t(\dot{a}_1+\dot{a}_2)-(a_1+a_2)}{4t} \Real\left(dz_V \wedge dz_{\Lie{n}}\wedge d\overline{z}_{\Lie{n}'}\right) \\
&+ \frac{t(\dot{a}_1+\dot{a}_2)+(a_1+a_2)}{4t} \Real\left(d\overline{z}_V \wedge dz_{\Lie{n}}\wedge d\overline{z}_{\Lie{n}'}\right)\\
&+\frac{t(\dot{a}_1-\dot{a}_2)+\tfrac{m-n}{m+n}(a_1-a_2)}{4t} \Real\left(dz_V \wedge dz_{\Lie{n}}\wedge dz_{\Lie{n}'}\right) \\
& +\frac{t(\dot{a}_1-\dot{a}_2)-\tfrac{m-n}{m+n}(a_1-a_2)}{4t} \Real\left(d\overline{z}_V \wedge dz_{\Lie{n}}\wedge dz_{\Lie{n}'}\right).
\end{aligned}
\]
Working with complex representations makes it very easy to understand which $\unitary{1}$--representation each component of $\varphi$ in this decomposition belongs to. We collect the weights of the corresponding real representations in the following table.
\begin{center}
\begin{tabular}{c|c||c|c}
$dz_\R \wedge dz_{\Lie{n}}\wedge d\overline{z}_{\Lie{n}}$  & $0$ & $dz_\R \wedge dz_{\Lie{n}'}\wedge d\overline{z}_{\Lie{n}'}$ & $0$ \\
 $\Imag( dz_V ) \wedge dz_{\Lie{n}}\wedge d\overline{z}_{\Lie{n}}$ & $2|m+n|$ &
$\Imag( dz_V ) \wedge dz_{\Lie{n}'}\wedge d\overline{z}_{\Lie{n}'}$ & $2|m+n|$ \\
$dz_\R\wedge dz_V\wedge d\overline{z}_V$ & $0$ & $dz_\R\wedge \Imag\left( dz_{\Lie{n}}\wedge dz_{\Lie{n}'}\right)$ & $2|m-n|$\\
$dz_\R\wedge \Imag\left( dz_{\Lie{n}}\wedge d\overline{z}_{\Lie{n}'}\right)$ & $2|m+n|$ & $\Real\left(dz_V \wedge dz_{\Lie{n}}\wedge d\overline{z}_{\Lie{n}'}\right)$ & $4|m+n|$ \\
$\Real\left(d\overline{z}_V \wedge dz_{\Lie{n}}\wedge d\overline{z}_{\Lie{n}'}\right)$ & $0$ & $\Real\left(dz_V \wedge dz_{\Lie{n}}\wedge dz_{\Lie{n}'}\right)$ & $4|n|$ \\
 $\Real\left(d\overline{z}_V \wedge dz_{\Lie{n}}\wedge dz_{\Lie{n}'}\right)$ & $4|m|$ & $$ & $$ \\
\end{tabular}
\end{center}

The principal orbits have non-trivial stabiliser $K_0=\Z_{2|m+n|}$ and all terms that are not $K_0$--invariant must vanish. Note that $\R^2_{2|m-n|}$, $\R^2_{4|m|}$ and $\R^2_{4|n|}$ are trivial $K_0$--representations if and only if there exists $d\in\Z$ such that $(d+1)m+(d-1)n=0$. If this is not the case we must have $a_1= a_2$, as we know already from Proposition \ref{prop:Invariant:Half:Flat}.

From the table above we read off the necessary and sufficient conditions for $\varphi$ to extend smoothly along the principal orbit:
\begin{enumerate}[leftmargin=*]
\item $a_3$ is even and there exists $r_0\in\R$ such that
\[
p=-m^2 r_0^3, \qquad q=n^2 r_0^3, \qquad a_3 (0)=mn r_0^3;
\]
\item $a_1 + a_2$ is odd;
\item if there exists $d\in\Z$ such that $(d+1)m+(d-1)n=0$ then the Taylor series of $a_1 - a_2$ only involves monomials of the form $t^{|d|+2l}$, $l \geq 0$ (in fact $l\geq 1$ if $d\neq 0$); otherwise $a_1=a_2$.
\end{enumerate}
Indeed, note that if $(d+1)m+(d-1)n=0$ then $m-n=-d(m+n)$, $2m = -(d-1)(m+n)$ and $2n=(d+1)(m+n)$. 

Finally, we determine constraints imposed by the requirement that $\varphi$ defines a {\gtstr} for small $t$. Note that the conditions (i), (ii) and (iii) above allow us to write
\[
a_1 = \alpha + \beta t +\gamma t^2 + O(t^3), \qquad a_2 = -\alpha + \beta t -\gamma t^2 + O(t^3), \qquad a_3 = mn r_0^3 + \delta t^2 + O(t^4),
\]
with $\alpha=0=\gamma$ unless $m=n=\pm 1$ (\ie $d=0$). Thus $\beta, \delta>0$ guarantee that $\dot{a}_i>0$ for $i=1,2,3$. Moreover, we calculate that the first three coefficients of the Taylor series of $\Lambda (a_1, a_2, a_3)$ at $t=0$ are
\[
4mnr_0^6 (m-n)^2\alpha^2, \qquad 0, \qquad - 4r_0^6\beta^2 mn(m+n)^2 + 16\alpha^2\beta^2.
\]
Note that the first coefficient, \ie the value of $\Lambda (a_1, a_2, a_3)$ at $t=0$, always vanishes, in agreement with the requirement $2\dot{a}_1\dot{a}_2\dot{a}_3=\sqrt{-\Lambda(a_1,a_2,a_3)}$. Hence $\Lambda (a_1, a_2, a_3)<0$ for small $t>0$ if and only if $mn>0$ and $\alpha^2<r_0^6$ when $n=m=\pm 1$. Note also that once we know that $m$ and $n$ have the same sign, then the only way that $(d+1)m + (d-1)n=0$ for some $d\in\Z$ is if $d=0$ and $m=n$ (and therefore $m=n=\pm 1$ since we assume that $\gcd (m,n)=1$).

\subsection{The singular IVP}

We now aim to construct and parametrise solutions to the fundamental ODE system \eqref{eq:Fundamental:ODE} satisfying the smoothness conditions of Proposition \ref{prop:Smooth:extension:singular:orbit}. The main tool is the following existence result for a special type of singular initial value problems, \cf 
\citelist{\cite{Malgrange}*{Theorem 7.1} \cite{Eschenburg:Wang}*{\S 5} \cite{Ferus:Karcher}*{\S 4}}.

\begin{theorem}
\label{thm:Singular:IVP}
Consider the singular initial value problem
\begin{equation}\label{eq:Singular:IVP}
\dot{y}=\frac{1}{t}M_{-1}(y)+M(t,y), \qquad y(0)=y_0,
\end{equation}
where $y$ takes values in $\R^k$, $M_{-1}\co \R^k\ra \R^k$ is a smooth function of $y$ in a neighbourhood of $y_0$ and $M\co\R\times\R^k\ra\R^k$ is smooth in $t,y$ in a neighbourhood of $(0,y_0)$. Assume that
\begin{enumerate}[leftmargin=*]
\item $M_{-1}(y_0)=0$;
\item $h\text{Id}-d_{y_0}M_{-1}$ is invertible for all $h \in \N$, $h \geq 1$.
\end{enumerate}
Then there exists a unique solution $y(t)$ of \eqref{eq:Singular:IVP}. Furthermore $y$ depends continuously on $y_0$ satisfying \tu{(i)} and \tu{(ii)}.
\end{theorem}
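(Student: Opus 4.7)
The strategy follows the classical template for singular initial value problems of this type (\cf\ the references to Malgrange, Eschenburg--Wang and Ferus--Karcher cited just above the statement): produce the unique formal power series solution at $t=0$ using the non-resonance condition (ii), then prove the series converges on a neighbourhood of $0$.

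\textbf{Formal solution.} Set $L := d_{y_0}M_{-1}$ and seek $y(t) = y_0 + \sum_{h\ge 1}y_h t^h$. Plugging in and Taylor expanding $M_{-1}$ at $y_0$ and $M$ at $(0,y_0)$, the coefficient of $t^{h-1}$ in $\tfrac{1}{t}M_{-1}(y(t))$ is $L\,y_h$ plus terms depending polynomially on $y_1,\dots,y_{h-1}$ and on finitely many derivatives of $M_{-1}$ at $y_0$. Matching against $h\,y_h$ on the left-hand side, and including the contribution of the Taylor expansion of $M(t,y(t))$, yields a recursion
\[
(h\,\mathrm{Id}-L)\,y_h \;=\; P_h(y_1,\dots,y_{h-1}),
\]
where $P_h$ is a polynomial with coefficients built from the Taylor coefficients of $M_{-1}$ and $M$. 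Hypothesis (ii) is exactly the statement that $h\,\mathrm{Id}-L$ is invertible for every integer $h\ge 1$, so the $y_h$ are inductively and uniquely determined.

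\textbf{Convergence.} The main analytic step is to show $\sum y_h t^h$ converges on some $[0,\epsilon)$. The key quantitative input from (ii) is the bound $\|(h\,\mathrm{Id}-L)^{-1}\|\le C/h$ for $h$ sufficiently large, with $C$ locally uniform in $y_0$. I would choose $N$ so large that no eigenvalue of $L-(N+1)\mathrm{Id}$ lies in the non-negative integers, write $y=\widetilde y+t^{N+1}z$ with $\widetilde y=y_0+\sum_{h=1}^N y_h t^h$ the Taylor polynomial of Step 1, and substitute back into \eqref{eq:Singular:IVP}. Using that $\widetilde y$ solves the equation to order $N$, one obtains a new singular IVP
\[
\dot z \;=\; \tfrac{1}{t}\bigl(L-(N+1)\,\mathrm{Id}\bigr)z+\widetilde M(t,z),
\]
with $\widetilde M$ continuous in $(t,z)$ near $(0,0)$. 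The linear part has an explicit Green's function, built from the matrix power $t^{\,L-(N+1)\mathrm{Id}}$, which is uniformly bounded on $[0,\epsilon]$ by the choice of $N$; this converts the IVP into a fixed-point problem on a small ball in $C^0([0,\epsilon],\R^k)$. For $\epsilon$ small the map is a contraction, and its unique fixed point provides the continuous $z$; bootstrapping then gives the regularity of $y$. (In the analytic category one may alternatively use the method of majorants to bound $|y_h|$ directly by the Taylor coefficients of a scalar solvable model problem.)

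\textbf{Uniqueness and dependence on $y_0$.} Any $C^1$ solution of \eqref{eq:Singular:IVP} with $y(0)=y_0$ has Taylor coefficients at $0$ obtained by iteratively differentiating the equation, and (ii) forces these to agree with the $y_h$ of Step 1; the contraction argument of Step 2 then shows the full solution must coincide with the one constructed. Continuous dependence on $y_0$ is automatic, since the formulas for $y_h$ and the contraction estimates are locally uniform on the open set of initial data satisfying (i) and (ii). I expect the convergence step to be the main obstacle: the $1/h$-decay of $\|(h\,\mathrm{Id}-L)^{-1}\|$ is the crucial input that allows one to absorb the polynomial growth of $P_h$ in $y_1,\dots,y_{h-1}$, and packaging this into a genuine fixed-point (or majorant) estimate is the essence of the proof.
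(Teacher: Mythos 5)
The paper does not prove this theorem: it is stated as a known result with a pointer to Malgrange, Eschenburg--Wang and Ferus--Karcher. Your proposal reconstructs the standard argument from those sources, and its overall architecture --- formal power series determined by the non-resonance condition, truncation, and a Green's-function/fixed-point argument for the remainder --- is the right one.

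There is, however, an off-by-one slip in the truncation step that actually breaks the contraction as written. With $\widetilde y$ the degree-$N$ Taylor polynomial, the residual $E(t):=\tfrac{1}{t}M_{-1}(\widetilde y)+M(t,\widetilde y)-\dot{\widetilde y}$ vanishes only to order $t^{N}$ (the coefficient of $t^{N}$ is exactly the quantity the recursion at index $h=N+1$ would eliminate, and $\widetilde y$ does not contain $y_{N+1}$). After substituting $y=\widetilde y+t^{N+1}z$ the forcing term in the equation for $z$ is $E(t)/t^{N+1}=O(1/t)$, so $\widetilde M$ is not continuous at $(0,0)$ and the integral $\int_0^t (t/s)^{L-(N+1)\mathrm{Id}}\,\widetilde M(s,z(s))\,ds$ diverges at $s=0$. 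The fix is simply to take $\widetilde y$ to degree $N+1$ before peeling off the factor $t^{N+1}$; since every positive integer is non-resonant this costs nothing. Two smaller imprecisions: $t^{\,L-(N+1)\mathrm{Id}}$ is \emph{not} bounded on $[0,\epsilon]$ --- it blows up as $t\to 0$, because the eigenvalues of $L-(N+1)\mathrm{Id}$ have negative real part --- whereas the Green's kernel $(t/s)^{\,L-(N+1)\mathrm{Id}}$ with $0<s\le t$ is what is uniformly bounded, and that is the bound the fixed-point argument actually uses; and the $1/h$ decay of $\|(h\,\mathrm{Id}-L)^{-1}\|$, which you flag as the crucial input, is not in fact used in your fixed-point route (it would be in a direct majorant estimate) --- only invertibility for $h\le N+1$ and the spectral bound $\mathrm{Re}\,\lambda<N+1$ enter. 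With these corrections the proof is correct and is essentially the argument of the cited references.
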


The main results of the section are summarised in the following proposition. In the statement $t$ is the arc-length parameter along a geodesic meeting all principal orbits orthogonally.

\begin{prop}
\label{prop:Solutions:Singular:Orbit}
Let $M$ be one of the cohomogeneity one $7$-manifolds considered in Proposition \ref{prop:Smooth:extension:singular:orbit}, labelled by the stabiliser $K$ of the singular orbit. Consider $\sunitary{2}\times\sunitary{2}$--invariant {\gtstr}s on $M$ of the form
\[
\varphi = p\, e_1\wedge e_2 \wedge e_3 + q\, e'_1\wedge e'_2 \wedge e'_3 + d\left( a_1\, e_1\wedge e'_1 + a_2\, e_2\wedge e'_2 + a_3\, e_3 \wedge e'_3\right).
\]
\begin{enumerate}[leftmargin=*]
\item If $K=\triangle\sunitary{2}$ then there exists a $3$-parameter family of torsion-free \gtstr s $\varphi$ defined in a neighbourhood of the singular orbit. The family is parametrised by $r_0>0$ and $\alpha_1, \alpha_2,\alpha_3\in\R$ such that
\[
64 r_0\left( \alpha_1+\alpha_2+\alpha_3\right)=1, \qquad p=-q = r_0^3, \qquad a_i (t) = r_0^3+ \tfrac{1}{4}r_0 t^2 + \alpha_i t^4 + O(t^6).
\] 
\item If $K=\{ 1\}\times\sunitary{2}$ then there exists a $3$-parameter family of torsion-free \gtstr s $\varphi$ defined in a neighbourhood of the singular orbit. The family is parametrised by $r_0>0$
and $\alpha_1, \alpha_2,\alpha_3\in\R$ such that
\[
\alpha_1\alpha_2\alpha_3=1, \qquad p=-r_0^3,\qquad q=0, \qquad a_i (t) = \tfrac{1}{4}r_0\alpha_i t^2 + O(t^4).
\] 
\item If $K=K_{1,1}$ then there exists a $3$-parameter family of torsion-free \gtstr s $\varphi$ defined in a neighbourhood of the singular orbit. The family is parametrised by $r_0,\alpha,\beta\in\R$ with $\beta>0$ via $p=-q=-r_0^3$ and
\[
a_1 = r_0^3\alpha + r_0^2\beta t + O(t^2), \qquad a_2 = -r_0^3\alpha +  r_0^2\beta t + O(t^2), \qquad a_3 = r_0^3 + O(t^2).
\]
\item If $K=K_{m,n}$ for coprime integers with $mn>1$ then there exists a $2$-parameter family of torsion-free \gtstr s $\varphi$ defined in a neighbourhood of the singular orbit. The family is parametrised by $r_0\in\R$ and $\beta>0$ via $p=-m^2 r_0^3$, $q=n^2r_0^3$ and
\[
a_1 = a_2 = r_0^2\beta t + O(t^3), \qquad a_3 = mn r_0^3 + O(t^2).
\]
\end{enumerate}
\end{prop}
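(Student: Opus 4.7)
The plan is uniform across the four cases: apply Theorem \ref{thm:Singular:IVP} to the Hitchin flow system \eqref{eq:Fundamental:ODE} (or, when the additional \unitary{1}--symmetry is present, to its reduction \eqref{eq:Fundamental:ODE:U(1)}) after a change of variables tailored to each singular orbit type via Proposition \ref{prop:Smooth:extension:singular:orbit}.

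First, for each case we fix $p,q$ and the leading-order expansion of the $a_i$ at $t=0$ prescribed by Proposition \ref{prop:Smooth:extension:singular:orbit}. Substituting this expansion into the Hamiltonian constraint $H=0$ (equivalently, $2\dot a_1\dot a_2\dot a_3=\sqrt{-\Lambda(a_1,a_2,a_3)}$) and matching in $t$ produces the algebraic relations on higher-order coefficients that appear in the statement: matching at order $t^5$ in case (i) yields $64r_0(\alpha_1+\alpha_2+\alpha_3)=1$, while the Hamiltonian constraint at leading order in case (ii) rearranges the smoothness condition $8A_1 A_2 A_3=-p$ of Proposition \ref{prop:Smooth:extension:singular:orbit}(ii)(c) into $\alpha_1\alpha_2\alpha_3=1$. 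In cases (iii) and (iv) no additional constraint arises because $\Lambda$ already vanishes to sufficient order at $t=0$ by the parity conditions.

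Next, one subtracts the prescribed leading expansion and rescales the remainder by an appropriate power of $t$, producing new dependent variables $Y$ in which \eqref{eq:Fundamental:ODE} takes the singular form $\dot Y=\tfrac{1}{t}M_{-1}(Y)+M(t,Y)$ of Theorem \ref{thm:Singular:IVP}, with base point $Y_0$ coordinatised by the free parameters listed in the proposition. The two hypotheses of Theorem \ref{thm:Singular:IVP} must then be checked: $M_{-1}(Y_0)=0$ holds by construction, but the spectrum of $d_{Y_0}M_{-1}$ must miss $\Z_{\ge 1}$. This spectral check is the principal technical obstacle; I would carry it out by decomposing $d_{Y_0}M_{-1}$ under the residual isotropy $K$ following the representation-theoretic approach of Eschenburg--Wang, paralleling the calculation in \cite{Foscolo:Haskins}*{\S 4}. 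The case (iii) is expected to require the most care, since the absence of the enhanced \unitary{1} symmetry means one must work with the full six-dimensional system \eqref{eq:Fundamental:ODE}.

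Finally, the smooth solution produced by Theorem \ref{thm:Singular:IVP} automatically satisfies the parity conditions of Proposition \ref{prop:Smooth:extension:singular:orbit}: the ansatz for $Y$ is built from an even/odd expansion of the correct type, and the formal power series matching at each order preserves this parity, so by uniqueness the analytic solution retains it. Consequently $\varphi$ extends smoothly across the singular orbit. Counting the free coordinates on $Y_0$ modulo the single Hamiltonian constraint then yields the stated dimensions of the solution spaces: three parameters in cases (i)--(iii), and two in case (iv), the reduction in (iv) being forced by the equality $a_1=a_2$ when $mn>1$ (together with the further relation $\ddot a_3(0)=\sqrt{mn}\,r_0^3(m+n)/\beta$ expressing the second-order coefficient of $a_3$ in terms of $r_0$ and $\beta$).
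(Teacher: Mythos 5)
Your proposal tracks the paper's proof: impose the leading-order Taylor data from Proposition \ref{prop:Smooth:extension:singular:orbit}, read off the algebraic constraints from the Hamiltonian $H=0$, change variables so that \eqref{eq:Fundamental:ODE} takes the form required by Theorem \ref{thm:Singular:IVP}, and verify $M_{-1}(y_0)=0$ together with the spectral condition on $d_{y_0}M_{-1}$, counting parameters at the end. Two small points of divergence: the paper carries out the spectral check by writing out $d_{y_0}M_{-1}$ explicitly and computing $\det\bigl(h\,\mathrm{Id}-d_{y_0}M_{-1}\bigr)$ directly rather than via the Eschenburg--Wang decomposition (which the paper uses earlier, for Proposition \ref{prop:Smooth:extension:singular:orbit}, not for this step), and in case (iv) the determined second-order coefficient is $\ddot a_3(0)=\sqrt{mn}\,(m+n)\lvert r_0\rvert/\beta$, not $\sqrt{mn}\,r_0^3(m+n)/\beta$ (cf.\ Remark \ref{rmk:Solutions:Singular:Orbit:C7}).
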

In the rest of the section we show how to apply Theorem \ref{thm:Singular:IVP} to prove the four cases of the proposition.

\subsubsection*{The case $K=\triangle\sunitary{2}$}
Fix $r_0>0$. According to Proposition \ref{prop:Smooth:extension:singular:orbit} (i) we must have $p=-q=r_0^3$ and
\[
x_i = \dot{a}_j\dot{a}_k = \tfrac{r_0^2}{4}t^2+ 2r_0 (\alpha_j + \alpha_k)t^4 + O(t^6), \qquad y_i = a_i = r_0^3 + \tfrac{r_0}{4}t^2+\alpha_i t^4+O(t^6).
\]
The constraint $H(x,y)=0$ is already satisfied up to fourth order at $t=0$. The vanishing of the fifth $t$--derivative of $H(x,y)$ at $t=0$ imposes the further constraint
\[
64r_0(\alpha_1 + \alpha_2 + \alpha_3)=1.
\]
Fix any triple $(\alpha_1,\alpha_2,\alpha_3)$ satisfying this constraint and write
\[
x_i = \tfrac{r_0^2}{4} t^2 + t^4 X_i,\qquad y_i = r_0^3 + \tfrac{r_0}{4}t^2 + t^4 Y_i,
\]
where $X_i(0)=2r_0 (\alpha_j + \alpha_k)$ and $Y_i(0)=\alpha_i$. Writing $O(t^k)$ for any (convergent) series $\sum _{i\geq k}{c_i t^i}$ with coefficients $c_i$ depending on $X_1, \dots, Y_3, r_0$, we find that $(X,Y)$ satisfies an ODE system
\[
\begin{aligned}
\dot{X}_i &= t^{-1}\left( -4 X_i +r_0 (-5Y_i + 3Y_j + 3Y_k)+\tfrac{5}{64}\right) + O(1),\\
\dot{Y}_i &= t^{-1}\left( \tfrac{1}{r_0}(-X_i + X_j + X_k) -4 Y_i\right) + O(1)
\end{aligned}
\]
with the same structure as the one considered in Theorem \ref{thm:Singular:IVP}. The condition $64 r_0 (\alpha_1 + \alpha_2 + \alpha_3)=1$ guarantees that the initial condition $y_0$ for $(X,Y)$ satisfies $M_{-1}(y_0)=0$. Moreover, we have
\[
d_{y_0} M_{-1}=\left( \begin{array}{cccccc}
-4 & 0 & 0 & -5r_0 & 3r_0  & 3r_0\\
0 & -4 & 0 & 3r_0 & -5r_0 & 3r_0 \\
0 & 0 & -4 & 3r_0 & 3r_0  & -5r_0\\
-\frac{1}{r_0} & \frac{1}{r_0} & \frac{1}{r_0} & -4 & 0 & 0\\
\frac{1}{r_0} & -\frac{1}{r_0} & \frac{1}{r_0} & 0 & -4 & 0\\
\frac{1}{r_0} & \frac{1}{r_0} & -\frac{1}{r_0} & 0 & 0 & -4\\
\end{array}\right)
\]
and therefore for all $h\geq 1$
\[
\text{det}\left( h\text{Id}-d_{y_0} M_{-1}\right) = h^2(h+3)(h+5)(h+8)^2>0.
\]

\subsubsection*{The case $K=\{ 1\} \times\sunitary{2}$}
Fix $r_0>0$. According to Proposition \ref{prop:Smooth:extension:singular:orbit} (ii) we must have $p=-r_0^3$, $q=0$ and
\[
x_i = \tfrac{r_0^2}{4}\alpha_j \alpha_k t^2+ O(t^4), \qquad y_i = \tfrac{r_0}{4}\alpha_i t^2+O(t^4)
\]
for $\alpha_1,\alpha_2,\alpha_3\in\R$ satisfying $\alpha_1\alpha_2\alpha_3=1$.

The pair $(X,Y)$ defined by $x_i = t^2 X_i$ and $y_i = t^2 Y_i$ satisfies an ODE system of the form of Theorem \ref{thm:Singular:IVP}:
\[
\dot{X}_i = t^{-1}\left( -2 X_i +\sqrt{\frac{r_0^3 Y_j Y_k}{Y_i}}\right) + O(1), \qquad \dot{Y}_i= t^{-1}\left( -2 Y_i +\sqrt{\frac{ X_j X_k}{X_i}}\right) + O(1).
\]
The initial condition $y_0=\left( \tfrac{r_0^2}{4}\alpha_j \alpha_k, \tfrac{r_0}{4}\alpha_i\right)$ for $(X,Y)$ satisfies $M_{-1}(y_0)=0$ precisely when $\alpha_1\alpha_2\alpha_3=1$. Moreover, for any such triple $(\alpha_1,\alpha_2,\alpha_3)$ we have
\[
d_{y_0} M_{-1}=\left( \begin{array}{cccccc}
-2 & 0 & 0 & -\alpha_2^2\alpha_3^2r_0 & -\alpha_3 r_0  & -\alpha_2 r_0\\
0 & -2 & 0 & -\alpha_3 r_0 & -\alpha_3^2\alpha_1^2r_0 & -\alpha_1 r_0 \\
0 & 0 & -2 & -\alpha_2 r_0 & -\alpha_1 r_0  & -\alpha_1^2\alpha_2^2r_0\\
-\frac{\alpha_1^2}{r_0} & -\frac{\alpha_1\alpha_2}{r_0} & -\frac{\alpha_3\alpha_1}{r_0} & -2 & 0 & 0\\
-\frac{\alpha_1\alpha_2}{r_0} & -\frac{\alpha_2^2}{r_0} & -\frac{\alpha_2\alpha_3}{r_0} & 0 & -2 & 0\\
-\frac{\alpha_3\alpha_1}{r_0} & -\frac{\alpha_2\alpha_3}{r_0} & -\frac{\alpha_3^2}{r_0} & 0 & 0 & -2\\
\end{array}\right)
\]
and therefore for all $h\geq 1$
\[
\text{det}\left( h\text{Id}-d_{y_0} M_{-1}\right) = h^2(h+4)^2(h+1)(h+3)>0.
\]

\begin{remark}\label{rmk:Solutions:Singular:Orbit:D7}
For future use we record higher-order expansions of $a_1, a_2, a_3$ in the special case where there is an additional $\unitary{1}$--symmetry:  
\[
a_1(t)=a_2(t) = \tfrac{1}{4}r_0\alpha_1 t^2 +\frac{8-5\alpha_3^3}{576\alpha_1 r_0}t^4 + O(t^6), \quad a_3(t) = \tfrac{1}{4}r_0\alpha_3 t^2 -\frac{(4-7\alpha_3^3)\alpha_3}{576\alpha_1^2 r_0}t^4 + O(t^6).
\]
\end{remark}

\subsubsection*{The case $K=K_{1,1}$}
Fix $r_0\in\R$. According to Proposition \ref{prop:Smooth:extension:singular:orbit} (iii) we must have $p=-r_0^3$, $q=r_0^3$ and
\[
y_1 = r_0^3\alpha + r_0^2\beta t + O(t^2), \qquad y_2 = -r_0^3\alpha +  r_0^2\beta t + O(t^2), \qquad y_3 = r_0^3 + O(t^2),
\]
for some $\alpha\in\R$ and $\beta>0$ (in particular we can assume that $y_1+y_2>0$ for small $t>0$; we will use this condition freely below to simplify square roots).

We now set
\[
\begin{aligned}
&x_1 =tX_1, \qquad &x_2 &= tX_2, \qquad &x_3 &= r_0^4\beta^2 + t^2 X_3,\\
&y_1 = r_0^3\alpha + tY_1, \qquad &y_2 &= -r_0^3\alpha + tY_2,\qquad &y_3 &= r_0^3 + t^2 Y_3.
\end{aligned}
\]

Then $(X,Y)$ satisfies an ODE system of the form of Theorem \ref{thm:Singular:IVP}:
\[
\begin{aligned}
\dot{X}_1 &=\frac{1}{t}\left( 2\epsilon r_0^3\sqrt{1-\alpha^2}-X_1\right) +O(1),\\
\dot{X}_2 &=\frac{1}{t}\left( 2\epsilon r_0^3\sqrt{1-\alpha^2}-X_2\right) +O(1),\\
\dot{X}_3 &=\frac{1}{t}\left( \frac{(Y_1+Y_2)^2-2Y_3r_0^3(1-\alpha^2)}{(Y_1+Y_2)\epsilon\sqrt{1-\alpha^2}}-2X_3\right) + O(1),\\
\dot{Y_1} &=\frac{1}{t}\left( \frac{r_0^2\beta X_2}{\sqrt{X_1 X_2}} - Y_1 \right) +O(1),\\
\dot{Y_2} &=\frac{1}{t}\left( \frac{r_0^2\beta X_1}{\sqrt{X_1 X_2}} - Y_2 \right) +O(1),\\
\dot{Y_3} &=\frac{1}{t}\left( \frac{X_1 X_2}{r_0^2\beta\sqrt{ X_1 X_2}} -2 Y_3 \right) +O(1),
\end{aligned}
\]
where $\epsilon$ is the sign of $r_0$.

There is a unique solution
\[
y_0=\left( 2\epsilon r_0^3\sqrt{1-\alpha^2}, 2\epsilon r_0^3\sqrt{1-\alpha^2}, \frac{\epsilon \beta r_0^2}{\sqrt{1-\alpha^2}}-\frac{r_0^2(1-\alpha^2)}{2\beta^2},r_0^2\beta, r_0^2\beta, \frac{\epsilon r_0\sqrt{1-\alpha^2}}{\beta}\right).
\]
to the equation $M_{-1}(y_0)=0$. Moreover, $d_{y_0} M_{-1}$ is given by
\[
\left( \begin{array}{cccccc}
-1 & 0 & 0 & 0 & 0 & 0\\
0 & -1 & 0 & 0 & 0 & 0\\
0 & 0 & -2 & \frac{2\epsilon\beta^3+(1-\alpha^2)\sqrt{1-\alpha^2}}{2\beta^3\sqrt{1-\alpha^2}} & \frac{2\epsilon\beta^3+(1-\alpha^2)\sqrt{1-\alpha^2}}{2\beta^3\sqrt{1-\alpha^2}} & -\frac{\epsilon r_0\sqrt{1-\alpha^2}}{\beta}\\
-\frac{\epsilon \beta}{4r_0\sqrt{1-\alpha^2}} & \frac{\epsilon \beta}{4r_0\sqrt{1-\alpha^2}} & 0 & -1 & 0 & 0\\
\frac{\epsilon \beta}{4r_0\sqrt{1-\alpha^2}} & -\frac{\epsilon \beta}{4r_0\sqrt{1-\alpha^2}} & 0 & 0 & -1 & 0\\
\frac{1}{2\beta r_0^2} & \frac{1}{2\beta r_0^2} & 0 & 0 & 0 & -2\\
\end{array}\right).
\]
and
\[
\text{det}\left(h\text{Id}-d_{y_0} M_{-1}\right) = (h+1)^4(h+2)^2>0
\]
for all $h\geq 1$.

\subsubsection*{The case $K=K_{m,n}$, $mn>1$}
This case is very similar to the previous one. Fix $r_0\in\R$. According to Proposition \ref{prop:Smooth:extension:singular:orbit} (iii) we must have $p=-m^2 r_0^3$, $q=n^2 r_0^3$ and
\[
y_1 = y_2 = r_0^2\beta t + O(t^2), \qquad y_3 = mn r_0^3 + O(t^2),
\]
for some $\beta>0$. Using $y_1=y_2$ and $x_1=x_2$, we now set
\[
x_1=tX_1,\qquad x_3 = r_0^4\beta^2 + t^2 X_3, \qquad y_1 = tY_1, \qquad y_3 = mn r_0^3 + t^2 Y_3.
\]

Then $(X_1,X_3,Y_1,Y_3)$ satisfies an ODE system of the form of Theorem \ref{thm:Singular:IVP}:
\[
\begin{aligned}
\dot{X}_1 &=\frac{1}{t}\left( \sqrt{mn}(m+n)\epsilon r_0^3-X_1\right) +O(1),\\
\dot{X}_3 &=\frac{1}{t}\left( \frac{(m+n)^2Y_1^2-2m^2 n^2 Y_3r_0^3}{\sqrt{mn}(m+n)\epsilon Y_1}-2X_3\right) + O(1),\\
\dot{Y_1} &=\frac{1}{t}\left( r_0^2\beta X_1 - Y_1 \right) +O(1),\\
\dot{Y_3} &=\frac{1}{t}\left( \frac{X_1}{r_0^2\beta} -2 Y_3 \right) +O(1),
\end{aligned}
\]
where $\epsilon$ is the sign of $r_0$.

There is a unique solution
\[
y_0=\left( \epsilon \sqrt{mn}(m+n)r_0^3, \frac{\epsilon (m+n)\beta r_0^2}{2\sqrt{mn}}-\frac{r_0^2m^2n^2}{2\beta^2},r_0^2\beta, \frac{\epsilon \sqrt{mn}(m+n)r_0}{2\beta}\right).
\]
to the equation $M_{-1}(y_0)=0$. Moreover,
\[
d_{y_0} M_{-1}=\left( \begin{array}{cccc}
-1 & 0 & 0 & 0\\
0 & -2 & \frac{\epsilon(m+n)\beta^3+m^2n^2\sqrt{mn}}{\sqrt{mn}\beta^3} & -\frac{2\epsilon r_0 mn \sqrt{mn}}{(m+n)\beta}\\
0 & 0 & -1 & 0\\
\frac{1}{2\beta r_0^2} & 0 & 0 & -2\\
\end{array}\right).
\]
and
\[
\text{det}\left(h\text{Id}-d_{y_0} M_{-1}\right) = (h+1)^2(h+2)^2>0
\]
for all $h\geq 1$.

\begin{remark}\label{rmk:Solutions:Singular:Orbit:C7}
For future use note that the proof yields
\[
y_1=r_0^2 \beta t + O(t^3), \qquad y_2=mnr_0^3 + \frac{\sqrt{mn}(m+n)|r_0|}{2\beta}t^2 + O(t^4)
\]
as $t\ra 0$.
\end{remark}

\begin{remark*}
A more geometric interpretation of the parameters in Proposition \ref{prop:Solutions:Singular:Orbit} is given by studying the leading order behaviour of the induced metric \eqref{eq:Metric} as $t\ra 0$. For example, in the case of Proposition \ref{prop:Solutions:Singular:Orbit} (ii) with $\alpha_1=\alpha_2$ (so that we have the enhanced $\unitary{1}$--symmetry) we have 
\[
g_t \approx r_0^2\alpha_1^2 (e_1\otimes e_1 + e_2\otimes e_2) + r_0^2 \alpha_3^2\, e_3\otimes e_3 + \tfrac{1}{4}t^2 \left( e'_1\otimes e'_1 + e'_2\otimes e'_2 + e'_3\otimes e'_3\right).
\]
The metric on the singular orbit is a Berger metric on the $3$-sphere. If we fix $r_0=1$ by scaling, the parameters $\alpha_1=\alpha_2$ and $\alpha_3$ determine, respectively, the size of the base $S^2$ and of the Hopf fibres. Similarly, in the case of Proposition \ref{prop:Solutions:Singular:Orbit} (iv) we calculate
\begin{multline*}
g_t \approx \frac{r_0^2\beta}{\sqrt{mn}} \left( m\left( e_1\otimes e_1 + e_2\otimes e_2 \right) +  \left( e'_1\otimes e'_1 + e'_2\otimes e'_2\right) \right) +\frac{mnr_0^2}{\beta^2} (m e_3 + ne'_3)^2  + t^2 (m+n)^2 (se_3-re'_3)^2
\end{multline*}
as $t\ra 0$. Fixing $r_0=1$ by scaling, we see that the metric on the singular orbit is a squashed metric on the principal circle bundle $S^2\times S^3\ra S^2 \times S^2$: the metric on the base $S^2\times S^2$ is a product of the round metrics with ratio $\frac{m}{n}$ between the areas of the two factors; the parameter $\beta$ determines the respective sizes of the base and the circle fibres.
\end{remark*}

\begin{remark*}
As an aside, we note that Alekseevsky--Dotti--Ferraris have classified invariant Einstein metrics on $\sunitary{2} \times \sunitary{2}/K_{m,n}$ \cite{Alekseevsky:Dotti:Ferraris}*{Theorem 4.1}. When $mn>1$ there exists a unique invariant Einstein metric, which coincides with the Einstein metric obtained by Wang--Ziller on any circle bundle over a product of K\"ahler--Einstein manifolds \cite{Wang:Ziller}*{Theorem 1.4}; when  $m=n=1$ there are two Einstein metrics, the product of round metrics and the Sasaki--Einstein metric. The restriction of $g_t$ to the singular orbit is never Einstein unless $m=n=1$ and $2\beta^6 =1$. 
\end{remark*}

\section{Conically singular and asymptotically conical ends}
\label{sec:cs:ac:ends}

In addition to the solutions of \eqref{eq:Fundamental:ODE} just constructed in Proposition \ref{prop:Solutions:Singular:Orbit} and which are defined in the neighbourhood of various classes of singular orbit we will need two further classes of local solutions to \eqref{eq:Fundamental:ODE}: (i) solutions with an isolated conical singularity modelled on the cone over the homogeneous nearly K\"ahler structure on $S^3\times S^3$ and (ii) solutions with an asymptotically conical end asymptotic to the same cone. Describing solutions with either type of end behaviour 
leads to a class of singular initial value problems not widely studied in the previous extensive
work on cohomogeneity one Einstein metrics. The closest work 
we are aware of is Dancer--Wang's work \cite{Dancer:Wang:Painleve} on 
Painlev\'e-type expansions of singular solutions to the cohomogeneity one 
Ricci-flat equations. In many cases their expansions construct continuous families of either AC or ALC Ricci-flat ends, subsets of which often have special or exceptional holonomy. However, because they consider only ``rational resonances'',
their approach often does not capture the full space of ends, \eg in \cite{Dancer:Wang:Painleve}*{Theorem 5.6} the only \gthol AC ends obtained arise from the 
original Bryant--Salamon AC metric.

In order to construct solutions with prescribed singular/asymptotic behaviour we will use the following extension of Theorem \ref{thm:Singular:IVP}. The theorem is proved in Chapter I of Volume 3 of Picard's treatise \cite{Picard} using the method of majorants, \cf in particular \cite{Picard}*{Chapter I, \S 13}.

\begin{theorem}\label{thm:Singular:IVP:Extended}
Consider the singular initial value problem
\begin{equation}\label{eq:Singular:IVP:Extended}
t\dot{y}=\Phi (y,t), \qquad y(0)=y_0,
\end{equation}
where $y$ takes values in $\R^k$ and $\Phi\co \R^k\times\R\ra \R^k$ is a real analytic function in a neighbourhood of $(y_0,0)$ with $\Phi(y_0,0)=0$. Assume also that $\partial_t\Phi (y_0,0)$ lies in the image of $\tu{Id}-d_{y_0}\Phi (\,\cdot\, ,0)$ and fix a preimage $y_1\in \R^k$. After possibly a change of basis, assume that $d_{y_0}\Phi (\,\cdot\, ,0)$ contains a diagonal block $\tu{diag}(\lambda_1,\dots,\lambda_m)$ in the upper-left corner. Furthermore assume that the eigenvalues $\lambda_1,\dots,\lambda_m$ satisfy:
\begin{enumerate}[leftmargin=*]
\item $\lambda_1,\dots, \lambda_m>0$;
\item for every $\triple{h}=(h_0,\dots, h_m)\in\Z^{m+1}_{\geq 0}$ with $|\triple{h}| = h_0+\dots +h_m\geq 2$ the matrix
\[
(\triple{h}\cdot\triple{\lambda})\, \tu{Id} - d_{y_0}\Phi (\,\cdot\, ,0)
\]
is invertible. Here $\triple{\lambda} = (1,\lambda_1,\dots, \lambda_m)$ and $\triple{h}\cdot\triple{\lambda} = \sum_{i=0}^m{h_i\lambda_i}$.
\end{enumerate}
Then for every $(u_1,\dots, u_m)\in\R^m$ there exists a unique solution $y(t)$ of \eqref{eq:Singular:IVP:Extended} given as a convergent generalised power series
\[
y(t) =y_0 + y_1 t+ \left( u_1 t^{\lambda_1}, \dots, u_m t^{\lambda_m},0,\dots,0\right)+\sum_{|\triple{h}|\geq 2}{y_{\triple{h}}\,t^{\triple{h}\cdot\triple{\lambda}}}.
\]
Furthermore, the solutions depend real analytically on $u_1,\dots, u_m$.
\end{theorem}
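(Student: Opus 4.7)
The plan is to combine a formal power series construction with the classical method of majorants due to Cauchy and Picard, treating the grading $|\triple{h}|$ as a weighted polynomial degree in auxiliary variables $v_0 = t$, $v_i = u_i t^{\lambda_i}$ for $i = 1, \dots, m$.

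First I would derive the formal solution. Substituting the ansatz $y(t) = y_0 + \sum_{|\triple{h}|\geq 1} y_{\triple{h}}\, t^{\triple{h}\cdot\triple{\lambda}}$ into $t\dot y = \Phi(y,t)$ and expanding $\Phi$ about $(y_0,0)$, comparing coefficients at order $t^{\triple{h}\cdot\triple{\lambda}}$ yields the linear recursion
\begin{equation*}
\bigl((\triple{h}\cdot\triple{\lambda})\,\tu{Id} - d_{y_0}\Phi(\cdot,0)\bigr)\, y_{\triple{h}} = R_{\triple{h}}\bigl(\{y_{\triple{k}}\}_{|\triple{k}|<|\triple{h}|}\bigr),
\end{equation*}
where $R_{\triple{h}}$ is a universal polynomial in the lower-order coefficients and the Taylor coefficients of $\Phi$, and vanishes when $|\triple{h}|=1$. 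For $\triple{h}=\triple{e}_i$ with $1\leq i\leq m$ the equation is homogeneous and its kernel contains $e_i$, so we set $y_{\triple{e}_i}=u_i\, e_i$; these furnish the $m$ free parameters in the statement. For every other $\triple{h}$ with $|\triple{h}|\geq 2$, hypothesis (ii) makes the shifted operator invertible and determines $y_{\triple{h}}$ uniquely from lower-order data, producing a unique formal solution for each $(u_1,\dots,u_m) \in \R^m$.

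The main work is to prove convergence by the method of majorants. I would reinterpret the formal solution as a power series $Y(v_0,\dots,v_m)$ with $y(t) = Y(t, u_1 t^{\lambda_1}, \dots, u_m t^{\lambda_m})$; since the ODE respects the weighted grading, the recursion above becomes an ordinary multi-indexed recursion for the Taylor coefficients of $Y$ in the $v$-variables. The crucial analytic ingredient is a uniform resolvent bound
\begin{equation*}
\bigl\|\bigl((\triple{h}\cdot\triple{\lambda})\,\tu{Id} - d_{y_0}\Phi(\cdot,0)\bigr)^{-1}\bigr\| \leq \frac{K}{|\triple{h}|+1},
\end{equation*}
valid for all $|\triple{h}|\geq 2$: hypothesis (i) forces $\triple{h}\cdot\triple{\lambda}\to\infty$, so the bound is immediate for large $|\triple{h}|$ by a Neumann series, while only finitely many $|\triple{h}|$ lie in the transitional range, and these are all non-resonant by (ii). Combined with Cauchy estimates for $\Phi$ on a polydisc centred at $(y_0,0)$, this resolvent bound dominates $\|y_{\triple{h}}\|$ term-by-term by the Taylor coefficients of a scalar majorant solution $\tilde Y$, obtained by replacing $d_{y_0}\Phi(\cdot,0)$ with a scalar and $\Phi$ with a standard geometric-series majorant. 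The scalar $\tilde Y$ is analytic near the origin by the implicit function theorem, so the formal series for $Y$ converges absolutely on a common polydisc, yielding $y(t)$ as a convergent generalised power series on an interval $(0,\epsilon)$ uniformly for $(u_1,\dots,u_m)$ in a bounded region.

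The hardest step is justifying the majorant domination in the weighted-exponent setting with possibly irrational $\lambda_i$, since the standard majorant bookkeeping applies to integer degrees. The resolution is that the recursion is defined purely in terms of the integer grading $|\triple{h}|$ and polynomial manipulations of lower-order coefficients, so the scalar estimates transport verbatim once the resolvent bound above is in hand. Analytic dependence on $(u_1,\dots,u_m)$ is then automatic: each $y_{\triple{h}}$ is polynomial in the $u_i$, and absolute convergence of the joint series on the majorant polydisc shows that $(u_1,\dots,u_m)\mapsto y(t)$ is real-analytic near the origin uniformly in small $t>0$. Uniqueness within the class of generalised power series with the prescribed $(u_1,\dots,u_m)$ is immediate from the unique solvability of the recursion at every order.
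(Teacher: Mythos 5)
Your proof is a correct reconstruction of the classical argument: the paper itself gives no proof, citing only Picard's treatise, which proceeds exactly as you describe --- a formal-power-series recursion whose linear operator at multi-index $\triple{h}$ is $(\triple{h}\cdot\triple{\lambda})\,\tu{Id}-d_{y_0}\Phi(\cdot,0)$, followed by a scalar majorant argument hinging on the resolvent decay $\|(\cdot)^{-1}\|\le K/(|\triple{h}|+1)$, which you obtain from the Neumann series for $|\triple{h}|$ large (using that every entry of $\triple{\lambda}$ is positive, so $\triple{h}\cdot\triple{\lambda}\ge c|\triple{h}|$) and from nonresonance hypothesis (ii) for the finitely many remaining $\triple{h}$. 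Your observation that passing to the auxiliary variables $v_0=t$, $v_i=u_it^{\lambda_i}$ converts the recursion into an ordinary multi-indexed Taylor recursion for an analytic germ $Y(v_0,\dots,v_m)$ is precisely the right device for handling irrational and rationally dependent exponents, and it also yields the analytic dependence on $(u_1,\dots,u_m)$ for free.
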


\begin{remark*}
It is clear that if $\Phi$ does not depend on $t$ then the solution $y$ has a generalised power series expansion in powers of $t^{\lambda_1}, \dots, t^{\lambda_m}$ only and we should take $h_0=0$ in condition (ii).
\end{remark*}

\begin{remark*}
The existence theorem can be extended to the case of smooth (rather than real analytic) $\Phi$ by truncation of a formal generalised power series solution to sufficiently high order and a contraction mapping argument.
\end{remark*}

We now use this existence result to construct $1$-parameter families of conically singular and asymptotically conical ends.

\begin{prop}\label{prop:CS:AC:ends}
Let $\tu{C}$ be the {\gtwo}--holonomy cone over the homogeneous nearly K\"ahler structure on $S^3\times S^3$ and set $\nu_0 = \frac{\sqrt{145}-7}{2}\approx 2.5$ and $\nu_\infty = \frac{\sqrt{145}+7}{2}\approx 9.5$.
\begin{enumerate}[leftmargin=*]
\item For every $c\in\R$ there exists a unique $\sunitary{2}\times\sunitary{2}\times\unitary{1}$--invariant torsion-free \gtstr
\[
\varphi = d\left( a\, (e_1\wedge e'_1 + e_2\wedge e'_2) + b\, e_3\wedge e'_3\right)
\]
defined on $(0,\epsilon)\times S^3\times S^3$ for some $\epsilon>0$, where the functions $t^{-3}a$ and $t^{-3}b$ admit convergent generalised power series expansions in powers of $t^{\nu_0}$ satisfying
\[
\tfrac{54}{\sqrt{3}}t^{-3}a(t) = 1+\tfrac{1}{2}ct^{\nu_0} + O( t^{2\nu_0}), \qquad \tfrac{54}{\sqrt{3}}t^{-3} b(t) = 1-ct^{\nu_0} +O( t^{2\nu_0}).
\]
In particular the associated metric $g_{\varphi}$ has a conical singularity as $t\ra 0$ asymptotic to the cone $\tu{C}$ with rate $\nu_0$.
\item Fix $p,q\in\R$. For every $c\in\R$ there exists a unique $\sunitary{2}\times\sunitary{2}\times\unitary{1}$--invariant torsion-free \gtstr
\[
\varphi = p\, e_1\wedge e_2\wedge e_3 + q\, e'_1\wedge e'_2\wedge e'_3 + d\left( a\, (e_1\wedge e'_1 + e_2\wedge e'_2) + b\, e_3\wedge e'_3\right)
\]
on $(T,\infty)\times S^3\times S^3$ for some $T>0$, where the functions $t^{-3}a$ and $t^{-3}b$ admit convergent generalised power series expansions in powers of $t^{-3}$ and $t^{-\nu_\infty}$ satisfying
\[
\tfrac{54}{\sqrt{3}}t^{-3}a = 1+O(t^{-3}), \quad \tfrac{54}{\sqrt{3}}t^{-3}b = 1+O(t^{-3}), \quad \tfrac{54}{\sqrt{3}}t^{-3}\left( b-a\right) = ct^{-\nu_\infty} + O(t^{-12}).
\]
In fact when $p=0=q$ then $t^{-3}a$ and $t^{-3}b$ admit convergent generalised power series expansions in powers of $t^{-\nu_\infty}$ only. In particular the associated metric $g_{\varphi}$ has a complete asymptotically conical end as $t\ra \infty$ asymptotic to the cone $\tu{C}$ with rate $-\nu_\infty$ if $p=0=q$ and rate $-3$ otherwise.
\end{enumerate}
\proof
First of all, note that, because of its invariance under $\sunitary{2}^3$ and scaling, the solution to the fundamental ODE system \eqref{eq:Fundamental:ODE} corresponding to the \gtwo--cone over the homogeneous nearly K\"ahler structure on $S^3\times S^3$ must satisfy $a=b=Ct^3$. It is then immediate to check that $C=\frac{\sqrt{3}}{54}$.

The proof of the existence of CS solutions is a straightforward application of Theorem \ref{thm:Singular:IVP:Extended}. We write $a=\frac{\sqrt{3}}{54}t^3(1+Y_1)$ and $b=\frac{\sqrt{3}}{54}t^3(1+Y_2)$. Define functions $X_1$ and $X_2$ by $\dot{a}\dot{b} = \frac{1}{108}t^4(1+X_1)$ and $\dot{a}^2 = \tfrac{1}{108}t^4(1+X_2)$. Then the $\unitary{1}$--enhanced ODE system \eqref{eq:Fundamental:ODE:U(1)} for $(\dot{a}\dot{b},\dot{a}^2,a,b)$ becomes an ODE system for $(X_1, X_2, Y_1, Y_2)$ of the form \eqref{eq:Singular:IVP:Extended}:
\[
\begin{aligned}
t\dot{X}_1 &=- 4X_1 + \frac{ 4\sqrt{3}(1+Y_1)(1+Y_2)^2}{\sqrt{4(1+Y_1)^2(1+Y_2)^2-(1+Y_2)^4}} -4,\\
t\dot{X}_2 &= - 4X_2 + \frac{ 4\sqrt{3}(1+Y_2) \left( 2(1+Y_1)^2-(1+Y_2)^2\right) }{\sqrt{4(1+Y_1)^2(1+Y_2)^2-(1+Y_2)^4}}-4,\\
t\dot{Y}_1 &= - 3Y_1 + \frac{ 3(1+X_1)(1+X_2)}{\sqrt{(1+X_1)^2(1+X_2)}}-3,\\
t\dot{Y}_2  &= - 3Y_2 + \frac{ 3(1+X_1)^2}{\sqrt{(1+X_1)^2(1+X_2)}}-3.
\end{aligned}
\]
The linearisation of $\Phi$ at $y_0=(0,0,0,0)$ is
\begin{equation}\label{eq:Linearisation:CS:IVP}
L=\left( \begin{array}{cccc}
-4 & 0 & -\tfrac{4}{3} & \tfrac{16}{3}\\
0 & -4 & \tfrac{32}{3} & -\tfrac{20}{3}\\
0 & \tfrac{3}{2} & -3 & 0\\
3 & -\tfrac{3}{2} & 0 & -3\\
\end{array}\right).
\end{equation}
$L$ has $4$ distinct eigenvalues $-1,-6,-\nu_\infty=-\nu_0 -7,\nu_0$. The corresponding eigenvectors are, respectively,
\[
(4,4,3,3), \quad (2,2,-1,-1), \quad \left( 3+\nu_0,-6-2\nu_0,-3,6\right), \quad \left( 4+\nu_0,-8-2\nu_0,3,-6\right).
\]
The proof of part (i) now follows immediately from Theorem \ref{thm:Singular:IVP:Extended}.

Constructing AC ends is more involved: Theorem \ref{thm:Singular:IVP:Extended} cannot be applied immediately because the nonresonance condition (ii) fails. As above we work with the system \eqref{eq:Fundamental:ODE:U(1)} for the $4$-tuple $(x_1=x_2=\dot{a}\dot{b}, x_3=\dot{a}^2, y_1=y_2=a,y_3=b)$ but it is now convenient to change variable $s=\tfrac{1}{t}$. Define functions $(X_1,X_2,Y_1,Y_2)$ by $s^3 y_1=\frac{\sqrt{3}}{54}(1+Y_1)$, $s^3 y_3=\frac{\sqrt{3}}{54}(1+Y_2)$, $s^4 x_1= \tfrac{1}{108}(1+X_1)$ and $s^4 x_3 = \tfrac{1}{108}(1+X_2)$. The $4$-tuple $(X_1,X_2,Y_1,Y_2)$ satisfies a system of the form $s\dot{y}=\Phi (y,s^3)$. $\Phi$ does not depend on $s^3$ if $p=0=q$. The linearisation of $\Phi(\,\cdot\, , 0)$ at $0$ is $-L$, with $L$ as in \eqref{eq:Linearisation:CS:IVP}.

Now, the presence of $1$ in the spectrum of $-L$ is explained by the fact that the original system \eqref{eq:Fundamental:ODE:U(1)} is $t$--invariant. By a translation $t\mapsto t+t_0$ we can therefore always reduce to the case when the eigenvalue $1$ is never excited. The eigenvalue $6$ should also not be excited. Indeed, we are interested in solutions of \eqref{eq:Fundamental:ODE:U(1)} satisfying the conservation law $H(x,y)=0$. Rewriting $H=H(s^3,X_1,X_2,Y_1,Y_2)$, a calculation shows that $H$ vanishes at leading order if and only if $3(2X_1+X_2) -4(2Y_1+Y_2)=0$.

The discussion above motivates us to look for a $1$-parameter family of AC solutions to the ODE system \eqref{eq:Fundamental:ODE:U(1)} given by convergent generalised power series in powers of $s^3, s^{\nu_\infty}$ and parametrised by the coefficient of $s^{\nu_\infty}$. A further change of variable $s\mapsto s^3$ justifies the fact that powers of $s^3$ and not $s$ should be considered.

Given that $\partial_s\Phi (0,0)=0$, the conditions to be satisfied in order to apply Theorem \ref{thm:Singular:IVP:Extended} are
\[
3 h_0 + \nu_\infty h_1 \neq 1,6,\nu_\infty, -\nu_0
\]
for every $h_0,h_1\in\Z_{\geq 0}$ with $h_0 + h_1\geq 2$. The nonresonance condition fails only when $(h_0,h_1)=(2,0)$. There is therefore a potential obstruction to solve the system: the coefficient $y_{2,0}$ of $s^6$ must satisfy an equation
\begin{equation}\label{eq:AC:end:obstruction:6}
(L+6)\, y_{2,0} = Q_{2,0}(y_0, y_{1,0}),
\end{equation}
where $Q_{2,0}$ is a real analytic function of the initial condition $y_0=0$ for $(X,Y)$ and the coefficient $y_{1,0}$ of $s^3$, which is uniquely determined by the equation. We know that $L+6$ has a one-dimensional kernel and cokernel. Only if $Q_{2,0}(y_0, y_{1,0})$ lies in the hyperplane $\tu{im}\, (L+6)$ can we solve \eqref{eq:AC:end:obstruction:6}. Assuming this is the case, we fix the choice of a solution $y_{2,0}$ to \eqref{eq:AC:end:obstruction:6} by imposing the vanishing of the coefficient of order $s^6$ of the Hamiltonian $H$. Since the linearisation of $H$ does not annihilate the eigenvector of $-L$ of eigenvalue $6$, this requirement fixes a unique choice for $y_{2,0}$. Once $y_{2,0}$ is uniquely determined, the iteration procedure to find a formal generalised power series solution to \eqref{eq:Fundamental:ODE:U(1)} can be continued without further obstructions and the majorisation argument in the proof of Theorem \ref{thm:Singular:IVP:Extended} still guarantees that the generalised power series converges.

The key observation now is that the potential obstruction to solve \eqref{eq:AC:end:obstruction:6} does in fact vanish. Instead of showing this by computation, we observe that for every $p,q\in\R$ there exists an AC solution of the ODE system \eqref{eq:Fundamental:ODE:U(1)} with an enhanced $\sunitary{2}$--symmetry, \ie with $a=b$. Using the conserved quantity $H=0$ we can describe such a solution by the curve $(x,y)\in\R^2$ defined by the equation $4x^3=3y^4-4(p-q)y^3-6pqy^2-p^2 q^2$. Recalling that $y=a=b$ and $x=\dot{a}^2$ we can rewrite this equation as the ODE 
\[
4\dot{a}^6 =3a^4-4(p-q)a^3-6pqa^2-p^2 q^2
\]
for the function $a$. Taking the sixth root of both sides of the equation ($\dot{a}>0$ with our conventions), changing variable $s=1/t$ and writing $a=\frac{\sqrt{3}}{54}s^{-3}(1+w)$, the equation can be written in the form $s\dot{w} = w + O(w^2 +s^3)$. Theorem \ref{thm:Singular:IVP:Extended} then guarantees that the solution admits a convergent power series expansion in $s^3=t^{-3}$, unique up to a time translation $t\mapsto t- t_0$. We conclude that all the coefficients $y_{h,0}$ in a formal generalised power series expansion of $y$ (in particular the coefficient $y_{2,0}$) are uniquely determined by the equation up to a time translation. 
\endproof
\end{prop}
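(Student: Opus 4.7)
\textbf{Proof proposal for Proposition \ref{prop:CS:AC:ends}.} My plan is to regard the cone $\tu{C}$ as a distinguished fixed point of the dynamical system \eqref{eq:Fundamental:ODE:U(1)} and reduce both statements to applications of Theorem \ref{thm:Singular:IVP:Extended}. A scaling/symmetry argument forces the cone solution to take the form $a=b=Ct^3$ for a computable constant $C$, and substituting into \eqref{eq:Fundamental:ODE:U(1)} pins down $C=\tfrac{\sqrt 3}{54}$. I would then change to logarithmic time (\ie write the system in the form $t\dot y=\Phi(y,t)$), linearise at the cone, and read off the admissible rates from the spectrum of the linearisation.

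For part (i), I would introduce normalised variables $Y_1,Y_2$ via $a=\tfrac{\sqrt 3}{54}t^3(1+Y_1)$, $b=\tfrac{\sqrt 3}{54}t^3(1+Y_2)$ (and similarly $X_1,X_2$ for the derivative quantities $\dot a\dot b$ and $\dot a^2$). This recasts \eqref{eq:Fundamental:ODE:U(1)} as $t\dot y=\Phi(y)$ with $\Phi$ real analytic near $0$ and $\Phi(0)=0$ because the cone is an exact solution; note that $p$ and $q$ play no role because the cone has $p=q=0$ and the form \eqref{eq:Invariant:Closed:G2} with $a,b$ vanishing to order three at $t=0$ forces $p=q=0$ for smoothness of the CS end. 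Computing the linearisation at the origin yields a $4\times 4$ matrix whose characteristic polynomial should be easy to factor; I expect the eigenvalues to be $-1,-6,-\nu_\infty,\nu_0$, so that only one positive exponent $\nu_0=\tfrac{\sqrt{145}-7}{2}$ remains. Theorem \ref{thm:Singular:IVP:Extended} then gives a one-parameter family of convergent generalised power series solutions in powers of $t^{\nu_0}$, parametrised by the coefficient $c$, and the non-resonance condition $h\nu_0\notin\{-1,-6,-\nu_\infty,\nu_0\}$ for $h\ge 2$ follows from the irrationality of $\nu_0$.

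For part (ii), I would substitute $s=1/t$ and renormalise in the same way. The linearisation at the cone end is the negative of the one above, so the admissible decay rates are $1,6,\nu_\infty$ and $-\nu_0$. The eigenvalue $1$ corresponds to the time-translation $t\mapsto t+t_0$ and can be killed by an appropriate choice of origin; the eigenvalue $6$ corresponds to a direction transverse to the Hamiltonian constraint $H=0$ and is killed by imposing that constraint. This leaves $\nu_\infty$ as the only genuine free parameter, as claimed. When $p$ or $q$ is nonzero, the right-hand side of the system picks up an explicit $s$-dependence starting at order $s^3$, which is why the expansion involves mixed powers of $s^3$ and $s^{\nu_\infty}$; when $p=q=0$ only $s^{\nu_\infty}$ appears.

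The step I expect to be the main obstacle is precisely the potential resonance at $2\cdot 3=6$, which falls exactly on the eigenvalue we needed to discard. Concretely, the coefficient $y_{2,0}$ of $s^6$ must satisfy an equation of the form $(L+6)\,y_{2,0}=Q(y_0,y_{1,0})$ with $L+6$ having a nontrivial cokernel; a priori there could be an obstruction to solving it. My plan to dispose of this obstruction is \emph{not} a direct computation but a symmetry argument: for any choice of $p,q$ the fully $\sunitary{2}^3$--symmetric reduction $a=b$ yields an AC end via the elementary first-order ODE $4\dot a^6=3a^4-4(p-q)a^3-6pqa^2-p^2q^2$ (a consequence of $H=0$), whose solution admits a convergent Taylor expansion in $t^{-3}$ by the ordinary version of Theorem \ref{thm:Singular:IVP:Extended}. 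Since this explicit solution must realise whatever value of $y_{2,0}$ is prescribed by the iteration, the obstruction vanishes and the iteration proceeds at all higher orders without further issue; convergence follows from the majorant method built into Theorem \ref{thm:Singular:IVP:Extended}. Finally, uniqueness of the resulting generalised power series solutions, and their real-analytic dependence on $c$, are direct consequences of that theorem.
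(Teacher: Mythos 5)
Your proposal follows essentially the same route as the paper's own proof: you normalise the variables around the cone solution $a=b=\tfrac{\sqrt3}{54}t^3$ to put the system in the form $t\dot y=\Phi(y,t)$, linearise to find the spectrum $\{-1,-6,-\nu_\infty,\nu_0\}$, kill the modes $1$ (time translation) and $6$ (Hamiltonian constraint) for the AC end, identify the resonance at $s^6$ as the key obstacle, and resolve the obstruction by comparison with the explicit $\sunitary{2}^3$--symmetric solution $a=b$ whose convergent $t^{-3}$--expansion forces $y_{2,0}$ to exist. This is precisely the argument in the paper; the only quibbles are cosmetic (you "expect" the eigenvalues rather than compute them, and the non-resonance for part (i) is automatic because $h\nu_0>0$ cannot equal any of $-1,-6,-\nu_\infty$ or $\nu_0$ for $h\ge 2$, so no irrationality is needed).
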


\begin{remark*}
In order to relate the Proposition to the general deformation theory of CS and AC \gtmfd s developed by Karigiannis--Lotay in \cite{Karigiannis:Lotay}, one can check that $\sigma = e_1\wedge e'_1 + e_2 \wedge e'_2 - 2 e_3 \wedge e'_3$ is a coclosed primitive $(1,1)$--form with
\[
\triangle\sigma = (\nu_0 +3)(\nu_0+4)\sigma = (-\nu_\infty +3)(-\nu_\infty+4)\sigma = 36\sigma
\]
on $S^3\times S^3$ endowed with its homogeneous nearly K\"ahler structure. It follows that $d(t^{\nu_0+3}\sigma)$ and $d(t^{-\nu_\infty+3}\sigma)$ are closed and coclosed $3$-forms of type $27$ on the \gtwo--cone $\tu{C}$ and therefore infinitesimal deformations of $\tu{C}$ as a \gtmfd. On the other hand, differentiating our solutions with respect to the parameter $c$ in the construction yields solutions $d(t^{\nu_0+3}\sigma)$ and $d(t^{-\nu_\infty+3}\sigma)$ of the linearisation of \eqref{eq:Fundamental:ODE:U(1)} at the conical solution $a=b=\frac{\sqrt{3}}{54}t^3$.
\end{remark*}

\section{Existence of ALC metrics}
\label{sec:ALC}

In this section we obtain our first main (global) existence results, Theorems \ref{mthm:CS} and \ref{mthm:B7:D7} in the Introduction: we prove the existence of two $1$-parameter families of complete ALC \gtmetric s and the existence of an ALC \gtwo--space with an isolated conical singularity.
One of the two families of ALC \gtmetric s we obtain, named $\mathbb{B}_7$ in the physics literature, is already known to exist thanks to work of Bogoyavlenskaya \cite{Bogoyavlenskaya} (one member of this family is explicit and was found earlier by Brandhuber--Gomis--Gubser--Gukov \cite{BGGG}); the other family, named $\mathbb{D}_7$ in the physics literature, is currently only known numerically or in the collapsed limit of \cite{FHN:ALC:G2:from:AC:CY3}. The existence of the CS ALC space seems not to have been anticipated in the physics literature: as explained in the Introduction it can be used, together with the Bryant--Salamon AC metric on $\tu{S}^3\times\R^4$, to explain the existence of both the  $\mathbb{B}_7$ and $\mathbb{D}_7$ families of ALC manifolds by a gluing construction.   

\subsection{A criterion for forward completeness}

In the following proposition we relate the forward completeness of a
cohomogeneity one torsion-free {\gtstr} to the sign of the mean curvature $l$
of the principal orbits. One direction is very closely related to a result of
B\"ohm \cite{Bohm}*{Proposition~3.2}, while the other implication, which plays
a crucial role in our analysis, was suggested to us by Wilking.

\begin{prop}\label{prop:Mean:Curvature:Blow:Up}
Let $(M,\varphi)$ be a cohomogeneity one $\sunitary{2}\times\sunitary{2}$--invariant {\gtmfd} (not necessarily complete). Assume that $l(t_0)>0$ for some initial time $t_0$ corresponding to a principal orbit. Then the solution blows up in finite time if and only if there exists $t_\ast>t_0$ such that $l(t_\ast)=0$, \ie if and only if there exists a principal orbit that is a minimal hypersurface.
\proof
Since the $7$-dimensional metric $dt^2 + g_t$ induced by $\varphi$ is Ricci-flat, the mean curvature $l$ of the principal orbits satisfies $0=l'+|L|^2=l' + \frac{1}{6}l^2 + |\mathring{L}|^2$, where $\mathring{L}$ denotes the traceless part of the second fundamental form $L$ of the principal orbits, \cf for example \cite{Eschenburg:Wang}*{Proposition 2.1}.

Note that $M$ cannot contain a totally geodesic principal orbit $\mathcal{O}$. To see this, let $\psi$ denote the parallel spinor on $M$ and recall that using Clifford multiplication the covariant derivative of $\psi|_\mathcal{O}$ can be identified with the second fundamental form of $\mathcal{O}$. If $\mathcal{O}$ were totally geodesic, it would therefore carry a parallel spinor and hence a homogeneous Calabi--Yau (and therefore Ricci-flat) metric. This is impossible since homogeneous Ricci-flat metrics must be flat, while the principal orbits of $M$, which are finite quotients of $S^3\times S^3$, do not carry flat metrics. In particular, $l(t_\ast)=0$ implies $l(t)<0$ for $t>t_\ast$. Then comparison with the solution of $u'+\frac{1}{6}u^2=0$ shows that $l$ must blow-up in finite time.

Conversely, assume that the solution exists only on a finite interval $[t_0,T)$. By Lemma \ref{lem:Metric} we can regard Hitchin's flow as a \emph{first-order} ODE system for the metric $g$. Since it satisfies a first-order equation, the metric must degenerate as we approach the maximal existence time $T$ (otherwise we could use local existence to extend the solution beyond $t=T$). Hence as $t\ra T$ the norm of a Jacobi field $J$, \ie a vector field satisfying $J' = L J$, either converges to zero or infinity. In either case, we deduce that $\int_{t_0}^{T}{|L|} \geq \lim_{t\ra T} \log{|J|(t)}-\log{|J|(t_0)}=\infty$. Since $T<\infty$, by H\"older's inequality we also have $\int_{t_0}^T {|L|^2}=\infty$. Integration of $l'+|L|^2=0$ shows that $l(t)\ra -\infty$ as $t\ra T$. Since $l(t_0)>0$ we deduce there must exist $t_\ast\in (t_0,T)$ such that $l(t_\ast)=0$.
\endproof
\end{prop}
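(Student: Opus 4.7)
\medskip

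\noindent\textbf{Proof proposal.} The plan is to exploit the fact that for our cohomogeneity one Ricci-flat metric $dt^2+g_t$ the mean curvature $l$ of the principal orbits satisfies the Riccati-type equation $l'+|L|^2=0$, or equivalently $l'+\tfrac{1}{6}l^2+|\mathring L|^2=0$ where $\mathring L$ is the traceless part of the shape operator $L$.

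First I would prove the ``if'' direction. Suppose $l(t_\ast)=0$ for some $t_\ast>t_0$. Since $l'=-|L|^2\le 0$, the mean curvature is nonincreasing once it reaches zero, so $l(t)\le 0$ for $t\ge t_\ast$. The key geometric input is that no principal orbit can be totally geodesic: if it were, the restriction of the parallel $\gtwo$--spinor would be a parallel spinor on a homogeneous quotient of $\sunitary{2}\times\sunitary{2}$, forcing that quotient to carry a homogeneous Ricci-flat (hence flat) metric, which is impossible since finite quotients of $S^3\times S^3$ have no flat metric. Therefore $|L|^2(t_\ast)=|\mathring L|^2(t_\ast)>0$, so $l$ becomes strictly negative immediately past $t_\ast$, and comparison with the ODE $u'+\tfrac{1}{6}u^2=0$ (whose negative solutions blow up in finite time) forces $l\to-\infty$ in finite time; in particular the metric degenerates and the solution cannot be continued.

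For the ``only if'' direction I would argue by contradiction: assume the solution exists only on a finite interval $[t_0,T)$ but $l>0$ throughout. The crucial observation, which is what makes Hitchin's formulation so useful here, is that by Lemma \ref{lem:Metric} the torsion-free condition is a \emph{first-order} ODE system for the metric $g_t$ itself. Hence a solution can fail to extend past $T$ only if $g_t$ actually degenerates as $t\to T$, meaning that some eigenvalue of $g_t$ (relative to a fixed background) tends to $0$ or $\infty$. Translated into Jacobi fields $J$ along the geodesic, this means $|J|(t)\to 0$ or $\infty$, and since $(\log|J|)'\le |L|$, one concludes $\int_{t_0}^T|L|\,dt=\infty$. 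As the interval is finite, H\"older's inequality upgrades this to $\int_{t_0}^T|L|^2\,dt=\infty$, and integrating $l'=-|L|^2$ gives $l(t)\to-\infty$. By the intermediate value theorem $l$ must vanish at some $t_\ast\in(t_0,T)$, contradicting the hypothesis.

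The most delicate step is the ``only if'' direction: specifically the passage from ``solution cannot be extended'' to ``the metric actually degenerates''. This relies essentially on having a first-order ODE system (so that boundedness of $g_t$ would give local existence past $T$); it would fail for a general second-order setup, where derivatives could blow up with the metric staying bounded. The second-most delicate step is the nonexistence of a totally geodesic principal orbit, where the parallel spinor argument is clean but requires one to know that $S^3\times S^3$ (and its finite quotients) admit no flat homogeneous metric.
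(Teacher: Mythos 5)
Your proposal is correct and follows essentially the same approach as the paper's proof: the Riccati equation for the mean curvature, the parallel spinor argument ruling out a totally geodesic orbit, and—for the converse—the first-order nature of Hitchin's flow forcing metric degeneration at the finite time $T$, which is then detected via Jacobi fields and H\"older's inequality. The only cosmetic differences are that you frame the converse as a contradiction (rather than directly invoking the intermediate value theorem) and you spell out slightly more explicitly why $l$ becomes strictly negative immediately past $t_\ast$; neither changes the substance.
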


\begin{remark*}
If $M$ closes smoothly on a singular orbit or has an isolated conical singularity at $t=0$ then $\lim_{t\ra 0}{l(t)}=+\infty$ so the assumption about the positivity of the mean curvature at an initial time is certainly satisfied in these cases.
\end{remark*}

\begin{remark*}
Finite time blow-up when there exists a minimal principal orbit can be deduced as in the Proposition from any condition that would exclude the existence of a totally geodesic principal orbit. For example, in \cite{Bohm}*{Proposition 3.2} B\"ohm shows that in a complete cohomogeneity one Ricci-flat manifold that does not contain a line and is not flat principal orbits cannot be minimal.
\end{remark*}

\subsection{The \texorpdfstring{$\unitary{1}$--enhanced}{U(1) enhanced} symmetric system}

Let us now specialise to the setting where we assume an additional $\unitary{1}$--symmetry. In the rest of the paper we will give a detailed qualitative analysis of the single second-order equation \eqref{eq:Fundamental:ODE:Brandhuber:U(1)} arising from the Lagrangian formulation of the problem. Hence in the rest of the paper $\,\dot{\,}$ denotes differentiation with respect to an arbitrary parameter $s$. We reserve the freedom to change parametrisation (compatible with a fixed orientation) and will specify the choice of a parametrisation when additional properties are needed. We consider pairs of functions $(a,b)$ satisfying
\[
2F \left( \dot{a} \ddot{b} - \dot{b} \ddot{a} \right) = -\dot{a}\dot{b} \left( 2\dot{b} F_b - \dot{a} F_a \right),
\]
where $F=4a^2 (b-p)(b+q) - (b^2+pq)^2$ and $F_a, F_b$ denote its partial derivatives, and the constraints
\[
\dot{a},\dot{b}>0, \qquad F(a,b)>0.
\] 
Note that the latter condition forces $a$ to have a definite sign and $b-p, b+q$ to have the same definite sign.

The following formulas will play an important role in our analysis:
\begin{subequations}\label{eq:Identities:F}
\begin{equation}
2F_b - F_a = 8 (a-b)\left( a(2b+q-p)+b^2+pq\right),
\end{equation}
\begin{equation}
2bF_b - aF_a = 8(a-b)(a+b)(b^2+pq).
\end{equation}
\end{subequations}
Furthermore the formula \eqref{eq:Mean:Curvature} for the mean curvature $l$ of the principal orbits reads
\begin{equation}\label{eq:Mean:Curvature:U(1)}
l = \frac{\dot{a} F_a + \dot{b} F_b}{2F}.
\end{equation}

We also observe that in all cases we are interested in $p$ and $q$ satisfy $pq\leq 0$. By Theorem \ref{thm:Classification:U(1)} below this is no accident: every $\sunitary{2}\times\sunitary{2}\times\unitary{1}$--invariant torsion-free {\gtstr} closing smoothly on a singular orbit must satisfy this condition.

\begin{lemma}\label{lem:ALC:chamber}
Assume that $(a,b)$ is a solution to \eqref{eq:Fundamental:ODE:Brandhuber:U(1)} with $pq\leq 0$. Then the conditions
\begin{equation}\label{eq:ALC:Chamber}
\dot{a}>\dot{b}, \qquad a>b>\max{\left(p,-q,\sqrt{-pq}\right)}\geq 0
\end{equation}
are preserved as long as the solution exists with $\dot{a},\dot{b},F>0$.
\proof
Since $\dot{a},\dot{b}>0$ the inequalities $a>0$ and $b>\max{(p,-q,\sqrt{-pq})}\geq 0$ are certainly preserved. If $\dot{a}>\dot{b}$ then also $a>b$ is preserved. We therefore must show that the condition $\dot{a}-\dot{b}>0$ is preserved as long as the solution satisfies the open constraints to define a \gtstr. Now, at a point where $\dot{a}=\dot{b}$ \eqref{eq:Fundamental:ODE:Brandhuber:U(1)} yields
\[
2F \left( \ddot{a}-\ddot{b} \right)  = \dot{a}^2 (2F_b - F_a).
\]  
Hence by (\ref{eq:Identities:F}a) $\ddot{a}-\ddot{b}>0$ as long as $a>b>\max{(p,-q,\sqrt{-pq})}\geq 0$.
\endproof
\end{lemma}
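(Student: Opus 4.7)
The plan is to establish that the open region \eqref{eq:ALC:Chamber} is a forward-invariant (trapping) set for the flow of \eqref{eq:Fundamental:ODE:Brandhuber:U(1)}, so long as the $\gtwo$--structure stays nondegenerate. The general strategy is the standard boundary-analysis argument: show that on each face of the boundary of \eqref{eq:ALC:Chamber} the flow points strictly inward (or at least tangentially with an inward jet). Then, assuming \eqref{eq:ALC:Chamber} holds initially, the orbit cannot first leave through any face.

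Two of the inequalities cost us nothing. Since $\dot{a}>0$ and $\dot{b}>0$ are part of the standing assumptions, both $a$ and $b$ are monotone increasing along the solution; hence if $a>0$ and $b>\max(p,-q,\sqrt{-pq})\ge 0$ initially, these lower bounds persist for all future times on which the solution exists. Moreover, once we know $\dot a-\dot b>0$ is preserved, the inequality $a>b$ is automatically preserved as well, being the integrated form of the same statement. Thus the whole preservation problem reduces to a single assertion: the function $\dot a-\dot b$ cannot reach zero from above.

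For this I would examine \eqref{eq:Fundamental:ODE:Brandhuber:U(1)} at a hypothetical first point where $\dot a=\dot b$ (while the other inequalities still hold). Substituting this equality into the ODE and cancelling a factor of $\dot a>0$ gives
\[
2F(\ddot a-\ddot b)=\dot a^{\,2}\,(2F_b-F_a).
\]
Since $F>0$, the sign of $(\dot a-\dot b)^{\displaystyle \cdot}$ at that point coincides with the sign of $2F_b-F_a$. Using identity (\ref{eq:Identities:F}a), it then suffices to show that both factors of
\[
2F_b-F_a=8(a-b)\bigl(a(2b+q-p)+b^{2}+pq\bigr)
\]
are strictly positive under the standing hypotheses. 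The first factor is positive because $a>b$ at that moment. For the second, the lower bounds $b>p$ and $b>-q$ give $2b+q-p>0$, while $b>\sqrt{-pq}$ together with $pq\le 0$ gives $b^{2}+pq>0$; since also $a>0$, the whole bracket is strictly positive. Hence $\ddot a>\ddot b$ there, contradicting the assumption that $\dot a-\dot b$ had just reached zero from above.

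The main subtlety—and the only place the sign hypothesis $pq\le 0$ is really used—is the positivity of the bracket $a(2b+q-p)+b^{2}+pq$. This is precisely why the threshold in \eqref{eq:ALC:Chamber} is taken to be $\max(p,-q,\sqrt{-pq})$ rather than something smaller: these three lower bounds are exactly what is needed to force each term of identity (\ref{eq:Identities:F}a) to have a definite sign. No further structural property of the ODE is needed.
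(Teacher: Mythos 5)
Your proposal is correct and follows essentially the same route as the paper's proof: both reduce the claim to showing $\dot a - \dot b$ cannot first vanish, evaluate \eqref{eq:Fundamental:ODE:Brandhuber:U(1)} at a point where $\dot a = \dot b$ to get $2F(\ddot a - \ddot b) = \dot a^2 (2F_b - F_a)$, and conclude positivity from identity (\ref{eq:Identities:F}a). Your write-up merely makes explicit the sign analysis of the two factors $8(a-b)$ and $a(2b+q-p)+b^2+pq$, which the paper leaves to the reader.
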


\begin{prop}\label{eq:ALC:Forward:Completeness}
If the conditions \eqref{eq:ALC:Chamber} are satisfied then the solution $(a,b)$ is forward complete. In particular, $a\ra \infty$ as we approach the complete end.
\proof
By \eqref{eq:Mean:Curvature:U(1)} the mean curvature of the principal orbit has the same sign as $2\left( \dot{a}F_a+\dot{b}F_b\right)= \dot{b}(2F_b-F_a) + (2\dot{a}+\dot{b})F_a>0$ if the conditions \eqref{eq:ALC:Chamber} are satisfied (note that $F_a = 8a (b-p)(b+q)$ is strictly positive). By Proposition \ref{prop:Mean:Curvature:Blow:Up} the solution cannot blow up in finite time.

In order to prove that $a\ra \infty$, parametrise with respect to the arc-length parameter $t$ along a geodesic meeting all principal orbits orthogonally. Note that since $\dot{a}>\dot{b}$ we have $2\dot{a}^3 > 2\dot{a}^2 \dot{b}$. Up to a positive multiplicative constant, the right-hand side is the orbital volume function $\tu{Vol}(t)$. Since the mean curvature $l$ is positive, $\tu{Vol}(t) \geq \tu{Vol}(t_0)>0$ for all $t\geq t_0$. Hence $\dot{a}$ is bounded below by a positive constant. Since $t$ is unbounded along the complete end, so is $a$.
\endproof
\end{prop}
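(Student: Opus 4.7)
The plan is to combine the invariance of the chamber (Lemma \ref{lem:ALC:chamber}) with the completeness criterion of Proposition \ref{prop:Mean:Curvature:Blow:Up}, via the explicit mean curvature formula \eqref{eq:Mean:Curvature:U(1)}. Lemma \ref{lem:ALC:chamber} already guarantees that the open conditions \eqref{eq:ALC:Chamber} persist for as long as $\dot{a}, \dot{b}, F > 0$, so the only way the solution could fail to be forward complete is if the fundamental $3$-form degenerates in finite time. By Proposition \ref{prop:Mean:Curvature:Blow:Up} this is equivalent to the appearance of a principal orbit on which $l = 0$. The first and main task is therefore to verify that the conditions \eqref{eq:ALC:Chamber} force $l > 0$ strictly.

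First I would rewrite $2lF = \dot{a} F_a + \dot{b} F_b$ in the suggestive form $\dot{b}(2F_b - F_a) + (2\dot{a}+\dot{b}) F_a$. Under \eqref{eq:ALC:Chamber} one has $a>0$ and $b>\max(p,-q)$, so $F_a = 8a(b-p)(b+q) > 0$. For the remaining term I would apply identity (\ref{eq:Identities:F}a): $2F_b - F_a = 8(a-b)\bigl(a(2b+q-p) + b^2 + pq\bigr)$. Here $a - b > 0$ by the chamber conditions; moreover $b > \max(p,-q,\sqrt{-pq})$ combined with $pq \leq 0$ (which is in force) gives $2b + q - p > 0$ and $b^2 + pq \geq 0$, whence the second factor is strictly positive. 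Thus $l > 0$ on the whole maximal interval of existence, and Proposition \ref{prop:Mean:Curvature:Blow:Up} yields forward completeness.

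For the assertion $a \to \infty$, I would switch to the arc-length parameter $t$ in which the $\sunitary{3}$--structure normalisation $2\dot{a}^2 \dot{b} = \sqrt{F}$ reads as a constant multiple of the orbital volume function $\tu{Vol}(t)$. Since $l = (\log \tu{Vol})^{\cdot}$ is strictly positive, $\tu{Vol}(t) \geq \tu{Vol}(t_0)$ for all future $t$; combining this with the chamber inequality $\dot{a} > \dot{b}$ gives $2\dot{a}^3 \geq 2\dot{a}^2 \dot{b} = \tu{Vol}(t) \geq \tu{Vol}(t_0) > 0$, so $\dot{a}$ admits a uniform positive lower bound. Forward completeness then ensures $t$ ranges over an infinite interval, and integration of this lower bound gives $a(t) \to \infty$.

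The main obstacle is simply the bookkeeping of signs in the inequality $\dot{a} F_a + \dot{b} F_b > 0$: a direct estimate is awkward because $F_b = 4a^2(2b + q - p) - 4b(b^2 + pq)$ can have either sign, and it is only after the rewriting that exploits identity (\ref{eq:Identities:F}a) and the standing assumption $pq \leq 0$ that the positivity of $l$ becomes transparent. Once this algebraic observation is made, everything else is immediate given Lemma \ref{lem:ALC:chamber} and Proposition \ref{prop:Mean:Curvature:Blow:Up}.
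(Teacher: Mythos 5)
Your proposal is correct and takes essentially the same route as the paper: rewrite $2F l = \dot a F_a + \dot b F_b$ as $\dot b (2F_b - F_a) + (2\dot a + \dot b) F_a$, use identity (\ref{eq:Identities:F}a) together with the chamber inequalities \eqref{eq:ALC:Chamber} to see that both terms are positive, invoke Proposition \ref{prop:Mean:Curvature:Blow:Up} for forward completeness, and then use $2\dot a^3 > 2\dot a^2 \dot b = \sqrt F$ plus the monotonicity of the orbital volume to get a uniform lower bound on $\dot a$ and hence $a \to \infty$. You are a bit more explicit than the paper in two places -- citing Lemma \ref{lem:ALC:chamber} for the persistence of the chamber and spelling out the sign check via (\ref{eq:Identities:F}a) -- but the argument is the same one.
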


\begin{remark*}
In the course of proving ALC asymptotics in Proposition \ref{prop:ALC:growth:a:b} we will show that the assumptions \eqref{eq:ALC:Chamber} also force $b\ra \infty$ along the complete end.
\end{remark*}

\subsection{ALC asymptotics}

We are now going to show that under an additional assumption the complete ends of Proposition \ref{eq:ALC:Forward:Completeness} are ALC. First note that the conical Calabi--Yau structure $(\omega_\tu{C},\Omega_C)$ on the conifold $\tu{C}=\tu{C}(\Sigma)$, $\Sigma = \sunitary{2}\times\sunitary{2}/K_{1,-1}$, is given by
\[
\Real\Omega_\tu{C} = d\left(  \tfrac{1}{18}t^3 (e_1\wedge e'_1 + e_2\wedge e'_2) \right),\qquad \omega_\tu{C} = -d\left( \tfrac{1}{6}t^2(e_3-e'_3)\right).
\]
Moreover, $\theta = \frac{1}{2}(e_3+e'_3)$ is the (unique up to gauge transformations) Hermitian Yang--Mills connection on the circle bundle $\R_+ \times \sunitary{2}\times\sunitary{2}\ra \tu{C}$, \ie $d\theta\wedge\omega_\tu{C}^2=0=d\theta\wedge\Omega_\tu{C}$. Fix $\ell>0$ and consider the closed {\gtstr} $\varphi_\infty$ on the total space of this circle bundle
\begin{equation}\label{eq:Model:Coho1:ALC}
\varphi_\infty = d\left( \tfrac{1}{18}t^3 (e_1\wedge e'_1 + e_2\wedge e'_2) +\tfrac{1}{6}\ell t^2e_3\wedge e'_3\right).
\end{equation}
Since $\theta$ is Hermitian Yang--Mills, $d\ast_{\varphi_\infty}\varphi_\infty = O(t^{-2})$. Moreover, up to terms that decay as $t^{-1}$ the metric induced by $\varphi_\infty$ is
\[
g_{\varphi_\infty} = dt^2 + t^2 g_{\tu{se}} + \ell^2 \theta^2 +O(t^{-1}),
\] 
where $g_{\tu{se}}$ is the (pull-back to the total space of the circle bundle of the) Sasaki--Einstein metric on $\Sigma=\sunitary{2}\times\sunitary{2}/\triangle\unitary{1}$. (In particular here $t$ is the arc-length parameter along a geodesic meeting all principal orbits orthogonally.) We therefore regard $\varphi_\infty$ as the asymptotic model for an $\sunitary{2}\times\sunitary{2}$--invariant torsion-free {\gtstr} inducing an ALC metric. 

\begin{lemma}\label{lem:ALC:growth:a:b}
Let 
\[
\varphi = p\, e_1\wedge e_2\wedge e_3 + q\, e'_1 \wedge e'_2 \wedge e'_3 + d\left( a \, (e_1 \wedge e'_1 + e_2 \wedge e'_2) + b\, e_3 \wedge e'_3\right)
\]
be an $\sunitary{2}\times\sunitary{2}\times\unitary{1}$--invariant torsion-free {\gtstr} defined on $(t_0,\infty)\times \sunitary{2}\times\sunitary{2}$. Assume that $a,b$ are positive increasing functions with $\lim_{t\ra\infty}a=\infty$, $\lim_{t\ra\infty} \frac{a^2}{b^3}=\frac{2}{3\ell^3}$ for some $\ell>0$ and $\lim_{t\ra\infty}{\frac{a}{b}\frac{db}{da}}=\frac{2}{3}$. Then the functions $\tilde{a}(t) = 18 t^{-3}a(t)-1$ and $\tilde{b}(t)=6\ell^{-1}t^{-2}b(t)-1$ satisfy $\tilde{a}^{(k)}(t)=O(t^{-k-1})$ and $\tilde{b}^{(k)}(t) = O(t^{-k-1})$ as $t\ra\infty$ for all $k\geq 0$. In particular, $\varphi$ converges to the closed {\gtstr} $\varphi_\infty$ of \eqref{eq:Model:Coho1:ALC} in $C^\infty$ as $t\ra\infty$. 
\proof
Since $a$ is a positive increasing function of $t$ we can introduce a parameter $s$ such that $a=\tfrac{1}{18}s^3$. Then by our assumptions on $a$ and $b$, $b\approx \frac{1}{6}\ell s^2$ in $C^0$. Hence, $\frac{2}{\ell s}\frac{db}{ds}\approx\frac{a}{b}\frac{db}{da}\approx\frac{2}{3}$ and therefore $b-\frac{1}{6}\ell s^2$ converges to zero in $C^1$. Now, the arc-length parameter $t$ is related to $s$ by
\[
2\left( \frac{ds}{dt}\right)^3 \left( \frac{da}{ds}\right)^2\frac{db}{ds}=\sqrt{F(a,b)}.
\]
As $s\ra \infty$ we therefore have $\frac{ds}{dt}\approx 1$ and by integration $s\approx t$ in $C^1$. Thus $a(t)-\tfrac{1}{18}t^3$ and $b(t)-\frac{1}{6}\ell t^2$ converge to zero in $C^1$.

Now consider the system \eqref{eq:Fundamental:ODE:U(1)} for the $4$ functions $x_1=\dot{a}\dot{b}$, $x_2=\dot{a}^2$, $y_1=a$, $y_2=b$. Write $x_1 = \tfrac{\ell}{18}t^3(1+X_1)$, $x_2 = \tfrac{1}{36}t^4(1+X_2)$, $y_1 = \tfrac{1}{18}t^3(1+Y_1)$ and $y_2 = \frac{\ell}{6}t^2(1+Y_2)$. After changing variable $e^\tau = t$, one can check that $(X_1,X_2,Y_1,Y_2)$ is a solution to an initial value problem of the form
\begin{equation}\label{eq:ODE:ALC:end}
\begin{aligned}
\dot{X}_1 &= -3(1+X_1) + 3\frac{(1+Y_1)(1+Y_2)^2 + O(e^{-2\tau})}{\sqrt{(1+Y_1)^2(1+Y_2)^2 + O(e^{-2\tau})}},\\
\dot{X}_2 &=  -4(1+X_2) + 4\frac{(1+Y_1)^2(1+Y_2) + O(e^{-2\tau})}{\sqrt{(1+Y_1)^2(1+Y_2)^2 + O(e^{-2\tau})}},\\
\dot{Y}_1 &= -3(1+Y_1) +3\sqrt{1+X_2},\\
\dot{Y}_2 &= -2(1+Y_2) +2\frac{1+X_1}{\sqrt{1+X_2}},
\end{aligned}
\end{equation}
where $O(e^{-2\tau})$ indicates a real analytic function of $(X_1,X_2,Y_1,Y_2)$ with coefficients depending real analytically on $e^{-\tau}$ and vanishing at $e^{-\tau}=0$ at least with order $2$.

Define $Z=(X_1,X_2,Y_1,Y_2,e^{-\tau})$ so that \eqref{eq:ODE:ALC:end} can be rewritten as an autonomous system $\frac{dZ}{ds}=\Phi(Z)$ for a real analytic map $\Phi$. The linearisation $d_0\Phi$ has a $1$-dimensional kernel and $4$ negative eigenvalues $-1$ (with multiplicity $2$), $-5$ and $-6$. The presence of a $1$-dimensional kernel is explained by the freedom to change $\ell$. In fact $\{ (c,0,0,c,0)\, |\,  c\in\R\}$ is the center manifold of the system $\frac{dZ}{ds}=\Phi(Z)$. Hence standard centre manifold theory \cite{Carr}*{\S 2.4, Theorem 2(b)} implies that any solution $Z$ that stays in a neighbourhood of $0$ must satisfy $Z = (c,0,0,c,0)+O(e^{-\tau})$ as $\tau\ra \infty$ for some $c\in\R$. In particular, if $(x_1,x_2,y_1,y_2)$ is a solution to \eqref{eq:Fundamental:ODE:U(1)} asymptotic to $\left( \tfrac{\ell}{18}t^3,\tfrac{1}{36}t^4, \tfrac{1}{18}t^3, \frac{\ell}{6}t^2\right)$ for some $\ell>0$ then
\[
\left( t^{-3}x_1,t^{-4}x_2, t^{-3}y_1,t^{-2}y_2\right) = \left( \tfrac{\ell}{18},\tfrac{1}{36}, \tfrac{1}{18}, \tfrac{\ell}{6}\right) + O(t^{-1}).
\]
The statement about the decay of derivatives of $t^{-3}a$ and $t^{-2}b$ then follows from a bootstrap argument.
\endproof
\end{lemma}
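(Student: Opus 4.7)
The plan is to reduce the statement to an autonomous real analytic ODE system whose equilibrium locus is a one-parameter family corresponding precisely to rescalings of the asymptotic model $\varphi_\infty$, and then to use centre manifold theory to obtain polynomial decay of all derivatives.

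First I would convert the three limiting hypotheses into a rough $C^1$-estimate on $(a,b)$ in terms of the arc-length parameter $t$. Since $a$ is positive and increasing with $a\to\infty$, I can introduce the auxiliary parameter $s$ with $a=\tfrac{1}{18}s^3$; the assumptions $a^2/b^3\to\tfrac{2}{3\ell^3}$ and $(a/b)(db/da)\to\tfrac{2}{3}$ then force $b-\tfrac{1}{6}\ell s^2\to 0$ in $C^1$ as $s\to\infty$. Using the arc-length normalisation $2(ds/dt)^3(da/ds)^2(db/ds)=\sqrt{F(a,b)}$ together with the leading behaviour $F(a,b)\sim 4a^2b^2$, one sees $ds/dt\to 1$, so after integration $s=t+o(t)$ and hence $a(t)-\tfrac{1}{18}t^3$ and $b(t)-\tfrac{\ell}{6}t^2$ converge to zero in $C^1$.

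The central step is then to rewrite the first-order system \eqref{eq:Fundamental:ODE:U(1)} in the logarithmic time variable $\tau=\log t$ and the normalised variables $X_1,X_2,Y_1,Y_2$ introduced in the statement via
\[
x_1=\tfrac{\ell}{18}t^3(1+X_1),\quad x_2=\tfrac{1}{36}t^4(1+X_2),\quad y_1=\tfrac{1}{18}t^3(1+Y_1),\quad y_2=\tfrac{\ell}{6}t^2(1+Y_2).
\]
Augmenting with $Z_0=e^{-\tau}$ produces an autonomous real analytic system $dZ/d\tau=\Phi(Z)$ on $\R^5$ with $\Phi(0)=0$, the explicit form of which is precisely \eqref{eq:ODE:ALC:end}. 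I would compute the linearisation $d_0\Phi$ and check that its spectrum consists of $0$ (with multiplicity one), $-1$ (multiplicity two), $-5$ and $-6$. The one-dimensional kernel is generated by the direction $(c,0,0,c,0)$, which simply reflects the freedom to rescale $\ell$; in particular the line $\{(c,0,0,c,0):c\in\R\}$ is tangent to a smooth one-dimensional invariant centre manifold, parametrising the family of asymptotic models $\varphi_\infty$.

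Once this set-up is in place, standard centre manifold theory (\cf \cite{Carr}*{\S 2.4, Theorem 2(b)}) applies: the $C^1$-estimate from the first step guarantees that the trajectory $Z(\tau)$ stays in an arbitrarily small neighbourhood of the origin for $\tau$ large, so it is attracted to the centre manifold exponentially in $\tau$, with rate governed by the most negative stable eigenvalue in play; translating back, this yields $\tilde a(t),\tilde b(t)=O(t^{-1})$. The main obstacle I anticipate is the linear algebra bookkeeping: one must make sure the centre direction really corresponds to the $\ell$-rescaling (so that choosing the correct $\ell$ makes the projection onto the centre vanish to leading order) and verify that no positive eigenvalue is hidden in the coupling to $Z_0=e^{-\tau}$. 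Granted the $O(t^{-1})$ bound on $(\tilde a,\tilde b)$, decay of higher derivatives is a routine bootstrap: differentiating the analytic ODE $k$ times and using the already established $O(t^{-1})$ bound inductively produces $\tilde a^{(k)},\tilde b^{(k)}=O(t^{-k-1})$. Finally, the $C^\infty$ convergence $\varphi\to\varphi_\infty$ follows directly from these estimates since the coefficients of $\varphi-\varphi_\infty$ are polynomial combinations of $\tilde a,\tilde b$ and their derivatives.
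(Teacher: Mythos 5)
Your proposal is correct and follows essentially the same route as the paper: the reparametrisation $a=\tfrac{1}{18}s^3$ to get $C^1$ control, the normalised variables $(X_1,X_2,Y_1,Y_2)$ with $\tau=\log t$ augmented by $Z_0=e^{-\tau}$, the eigenvalue computation $\{0,-1,-1,-5,-6\}$, identification of the centre manifold $\{(c,0,0,c,0)\}$, application of centre manifold theory, and a bootstrap for higher derivatives. One minor wording slip: the exponential rate of attraction to the centre manifold is set by the stable eigenvalue \emph{closest} to zero (here $-1$), not the most negative one, but your stated conclusion $O(t^{-1})$ is consistent with that.
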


\begin{remark*}
In terms of a new independent variable $s=\frac{1}{\log{\tau}}$, the system \eqref{eq:ODE:ALC:end} takes the form \mbox{$s\dot{z}=\Phi(z,s^2)$} of Theorem \ref{thm:Singular:IVP:Extended}. The linearisation of $\Phi(\, \cdot\, , 0)$ at the origin has $4$ distinct eigenvalues $0,1,5,6$. We have already observed that the $1$-dimensional kernel is due to the freedom of choosing $\ell>0$. The eigenvector with eigenvalue $1$ is due to the $t$--invariance of the original system. The eigenvectors with eigenvalue $5,6$ are, respectively, $\left( -3,0,0,2\right)$ and $ \left( -1,2,-1,1\right)$. Moreover, the Hamiltonian constraint $H(x,y)=0$ is satisfied at leading order as $(s,z)\ra 0$ if and only if $2X_1 + X_2 =2(Y_1+Y_2)$. The eigenvector with eigenvalue $5$ does not satisfy this constraint. One could then try to use Theorem \ref{thm:Singular:IVP:Extended} to show that, up to translations in $t$ and the scaling freedom to fix the asymptotic length $\ell$ of the circle fibre, there exists a $1$-parameter family of $\sunitary{2}\times\sunitary{2}\times\unitary{1}$--invariant ALC $\gtwo$--holonomy ends. However, since the non-resonance condition (ii) in Theorem \ref{thm:Singular:IVP:Extended} is not satisfied, 
this does not follow immediately from that theorem. We do not pursue the matter further here since it is not necessary in our analysis.
\end{remark*}

Thanks to Lemma \ref{lem:ALC:growth:a:b}, in order to prove that a complete end is ALC it is enough to control the quantities $\frac{a^2}{b^3}$ and $\frac{a}{b}\frac{db}{da}$.

Let us fix the notation that will be used throughout the rest of the section. Let $\lambda$ denote the ratio $\lambda= \frac{\dot{a}}{\dot{b}}$. Then \eqref{eq:Fundamental:ODE:Brandhuber:U(1)} can be rewritten as
\[
2F\dot{\lambda} = \lambda\dot{b} \left( 2F_b - \lambda F_a\right).
\]
We fix a parameter $s$ that satisfies $s\ra \infty$ along the complete end (for example we can take $s=a$ or $s=t$, the arc-length parameter along a geodesic meeting all principal orbits orthogonally).

For $\alpha\in\R$ introduce the following pair of (positive) ratios
\[
P_\alpha = \frac{b^{1+\alpha}}{a}, \qquad Q_\alpha = \frac{b^\alpha}{\lambda}.
\]
Using \eqref{eq:Fundamental:ODE:Brandhuber:U(1)} as rewritten above we find that these two quantities satisfy
\begin{subequations}\label{eq:ALC:growth:a:b}
\begin{equation}
a^2\dot{P}_\alpha= b^\alpha\dot{b} R_\alpha, \qquad 2F\dot{Q}_\alpha = b^{-1}\dot{b}Q_\alpha \left( S_\alpha - F_a R_\alpha\right),
\end{equation}
where
\[
R_\alpha = (1+\alpha) a - b\lambda, \qquad S_\alpha = \alpha (2F + aF_a) - (2b F_b -aF_a).
\]
Moreover,
\begin{equation}
2F\dot{R}_\alpha = \lambda \dot{b} \left( S_\alpha - F_a R_\alpha\right).
\end{equation}
\end{subequations}

\begin{prop}\label{prop:ALC:growth:a:b}
Assume that either $q\geq p$ or $q=-p\leq 0$. If $(a,b)$ is a solution to \eqref{eq:Fundamental:ODE:Brandhuber:U(1)} satisfying \eqref{eq:ALC:Chamber} and additionally
\begin{equation}\label{eq:ALC:Chamber:additional:constraint}
a\dot{b}-\dot{a}b<0
\end{equation}
holds at some initial time, then along the complete end the ratio $\frac{a^2}{b^3}$ converges to a constant and $\frac{a}{b}\frac{db}{da}$ converges to $\frac{2}{3}$.
\proof
The proof is based on the study of the behaviour of the positive ratios $P_\alpha$ and $Q_\alpha$ introduced above for $\alpha\geq 0$.

We begin by analysing (\ref{eq:ALC:growth:a:b}b) for $\alpha$ sufficiently large. Since $a>0$, for all $\alpha$ sufficiently large $R_\alpha$ is positive at the initial time $s_0$. We claim that, assuming $\alpha$ even larger if necessary, $R_\alpha$ remains positive for all $s\geq s_0$. Indeed, using $2\alpha F>0$ we find
\[
\begin{aligned}
S_\alpha &> (1+\alpha) a F_a - 2b F_b \\& = 8a^2 \left( (1+\alpha) (b-p)(b+q) - (b-p)-(b+q)\right) + 8b^2 (b^2 +pq)\\
&> 8a^2 \left( (1+\alpha) (b-p)(b+q) - (b-p)-(b+q)\right).
\end{aligned}
\] 
If $\alpha > \frac{1}{2b-p+q}$ then $(1+\alpha) (b-p)(b+q) - (b-p)-(b+q)$ is an increasing function of $b$. Since $\dot{b}>0$ we conclude that $S_\alpha>0$ provided we choose $\alpha$ so that 
\[(1+\alpha) (b-p)(b+q) - (b-p)-(b+q)>0, \quad \alpha > \frac{1}{2b-p+q}
\]
are satisfied at the initial time $s_0$. Then (\ref{eq:ALC:growth:a:b}b) shows that for large enough $\alpha$  $\dot{R}_\alpha>0$ whenever $R_\alpha=0$. We conclude that $R_\alpha (s)>0$ for all $s\geq s_0$ and all sufficiently large $\alpha$ as claimed. Hence by (\ref{eq:ALC:growth:a:b}a), for all sufficiently large $\alpha$, $P_\alpha$ is an increasing function of $s$, and in particular it is bounded away from $0$. In particular $b$, as well as $a$, is unbounded along the complete end. For otherwise $P_\alpha = \frac{b^{1+\alpha}}{a}\leq \frac{c}{a}\ra 0$. Note that we have not yet made use of assumption \eqref{eq:ALC:Chamber:additional:constraint}, only of assumptions~\eqref{eq:ALC:Chamber}.

We now consider the equation (\ref{eq:ALC:growth:a:b}b) for small $\alpha$. Note that our assumption \eqref{eq:ALC:Chamber:additional:constraint} is equivalent to the assumption that $R_0$ is negative at $s_0$. Hence by continuity $R_\alpha$ is also negative at $s_0$ for $\alpha>0$ sufficiently small. We want to show that for all $\alpha\geq 0$ sufficiently small $R_\alpha$ remains negative for all $s\geq s_0$. First consider the case $\alpha=0$. We have $S_0 = -(2b F_b - aF_a)<0$ by (\ref{eq:Identities:F}b) and \eqref{eq:ALC:Chamber}. Hence (\ref{eq:ALC:growth:a:b}b) shows that at a point where $R_0=0$ we must have $\dot{R}_0<0$. We therefore conclude that $R_0 (s)<0$ for all $s\geq s_0$. In particular, $P_0$ is strictly decreasing by (\ref{eq:ALC:growth:a:b}a).

Now, since $S_0<0$, for all $s\geq s_0$ there exists $\alpha_s$ such that $S_\alpha (s)<0$ for all $0\leq \alpha<\alpha_s$. In order to show that we can choose $\alpha_s$ independent of $s$, using $a,b\ra\infty$ we calculate
\[
\lim_{s\ra\infty} {S_\alpha} = \lim_{s\ra \infty}{\alpha (16 a^2 b^2 -2b^4)-8a^2b^2 + 8b^4}. 
\]
Thus $\lim_{s\ra\infty} {S_\alpha}<0$ if and only if
\[
\alpha < \lim_{s\ra \infty}{\frac{4(a^2-b^2)}{(8a^2-b^2)}} = \lim_{s\ra \infty}{\frac{4(1-P_0^2)}{8-P_0^2}}.
\]
Note that this rational function of $P_0$ is monotone decreasing for $P_0$ in $(0,1)$ with range $(0,\frac{1}{2})$. Since $P_0$ takes values in $(0,1)$ and is decreasing in $s$ we conclude that $\lim_{s\ra\infty}{\alpha_s}>0$ and therefore $\alpha_s$ is bounded below.

As in the case $\alpha=0$, we now conclude that $R_\alpha<0$ for all $s\geq s_0$ and any $\alpha\geq 0$ sufficiently small. Then $P_\alpha$ is decreasing and therefore bounded above. We conclude that $\frac{b}{a}\ra 0$ as $s\ra \infty$ since otherwise $P_\alpha = \frac{b}{a}b^\alpha \geq c b^\alpha$ could not be bounded.   

We can now study the behaviour of $R_\alpha$ for all $\alpha\in\R$. Using the fact that $a,b\ra \infty$ and $a$ dominates $b$ to show that $2F\approx 8a^2 b^2$ and $S_\alpha \approx 8(2\alpha-1)a^2 b^2$ along the complete end, we now conclude that for every $\epsilon>0$ there exists $s_\epsilon>0$ such that
\[
\frac{dR_\alpha}{da} \leq (2\alpha-1+\epsilon) \mbox{ if } R_\alpha\geq 0, \qquad \frac{dR_\alpha}{da} \geq (2\alpha-1-\epsilon) \mbox{ if } R_\alpha\leq 0
\]
for all $s>s_\epsilon$. We conclude that $R_\alpha$ eventually becomes strictly negative if $\alpha<\frac{1}{2}$ and strictly positive if $\alpha>\frac{1}{2}$. In other words, for all $\delta>0$ there exists $s_\delta>0$ such that
\[
\pm \left( \tfrac{3}{2}a-\lambda b \pm \delta a\right)>0
\]
for all $s>s_\delta$. Rearranging and writing $\lambda^{-1}=\frac{db}{da}$, we therefore have $\lim_{t\ra\infty}{\frac{a}{b}\frac{db}{da}}=\frac{2}{3}$. Note that so far we have not used the assumption that either $q \geq p$ or $q=-p \leq 0$.

In order to show that $P_\frac{1}{2}$ converges we now consider the two quantities $P_{\frac{1}{2}}$ and $Q_\frac{1}{2}$ at the same time. First note that along the end $S_\frac{1}{2}\approx 7b^4 + 8(q-p)a^2 b$.

Assume now that $q\geq p$ so that $S_\frac{1}{2}$ is eventually positive. In particular, $R_\frac{1}{2}$ has a definite sign along the complete end, since if it becomes positive for large enough $s$ then it must remain positive by (\ref{eq:ALC:growth:a:b}b).

If $R_\frac{1}{2}$ is eventually negative, then by (\ref{eq:ALC:growth:a:b}a) $P_\frac{1}{2}$ is eventually decreasing, and in particular bounded above, and $Q_\frac{1}{2}$ is eventually increasing and therefore bounded below. Moreover, since $R_\frac{1}{2}<0$ can be rewritten as $3Q_\frac{1}{2} \leq 2 P_\frac{1}{2}$, we conclude that $P_\frac{1}{2}$ and $Q_\frac{1}{2}$ are monotone and bounded and therefore convergent. If $R_\frac{1}{2}$ is eventually positive, then eventually $3Q_\frac{1}{2} \geq 2 P_\frac{1}{2}$, $P_\frac{1}{2}$ is increasing, while $Q_\frac{1}{2}$ satisfies
\[
\frac{d\log{Q_\frac{1}{2}}}{db} \leq \frac{S_\frac{1}{2}}{2bF}\approx \frac{7b}{8a^2} + \frac{(q-p)}{b^2}.
\]
Now, since we know that $b^\gamma/a$ converges to zero along the end for all $\gamma<\frac{3}{2}$, we conclude that for $b$ sufficiently large
\[
\frac{d\log{Q_\frac{1}{2}}}{db} \leq c_\gamma \left( b^{1-2\gamma} + b^{-2}\right)
\]
for some constant $c_\gamma>0$. Choosing $2\gamma \in (2,3)$ makes the right-hand side integrable in $b$ as $b\ra \infty$. Thus $Q_\frac{1}{2}$ is bounded along the end. We conclude that $P_\frac{1}{2}$ is bounded above and increasing and therefore convergent. The proof in the case $q\geq p$ is now complete.

When $q=-p<0$ the final part of the argument, \ie the convergence of $P_{\frac{1}{2}}$, breaks down because $S_\frac{1}{2}$ does not necessarily have a definite sign. We modify the argument as follows. For $\alpha\geq 0$ we now consider functions
\[
P_\alpha = \frac{(b-p)^{1+\alpha}}{a-p}, \qquad Q_\alpha = \frac{(b-p)^\alpha}{\lambda}.
\]
The analogues of \eqref{eq:ALC:growth:a:b} are
\[
(a-p)^2\dot{P}_\alpha= (b-p)^\alpha\dot{b} R_\alpha, \qquad 2F\dot{Q}_\alpha = (b-p)^{-1}\dot{b}Q_\alpha \left( S_\alpha - F_a R_\alpha\right),
\]
but now we must define
\[
R_\alpha = (1+\alpha) (a-p) - (b-p)\lambda, \qquad S_\alpha = \alpha (2F + aF_a-pF_a) - (2b F_b -aF_a) + p(2F_b-Fa).
\]
With these new definitions we also have
\[
2F\dot{R}_\alpha = \lambda \dot{b} \left( S_\alpha - F_a R_\alpha\right).
\]
Using $q=-p$ we now calculate
\[
\frac{S_\alpha}{2(b-p)^2}  = 4(2\alpha-1)a^2 -4(1+\alpha) p a +(4-\alpha) (b+p)^2 -4p(b+p).
\]
Indeed, when $q=-p$ then $F=(b-p)^2(2a - b-p)(2a+b+p)$ and the formulas in \eqref{eq:Identities:F} become
\[
2F_b-F_a = 8(a-b)(2a+b+p)(b-p),\qquad 
2bF_b -a F_a = 8(a-b)(a+b)(b-p)(b+p).
\]
Since $a,b, a/b\ra\infty$ as $s\ra \infty$, we see that $S_\alpha \approx 8(2\alpha-1)a^2 b^2 + 2(4-\alpha)b^4$ for large $s$. In particular $S_\alpha$ is eventually positive for all $\alpha\geq\frac{1}{2}$. The proof now proceeds exactly as before.
\endproof
\end{prop}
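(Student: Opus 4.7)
Proof proposal for Proposition \ref{prop:ALC:growth:a:b}: By Proposition \ref{eq:ALC:Forward:Completeness}, the solution is forward complete and $a\to\infty$. To reach the hypotheses of Lemma \ref{lem:ALC:growth:a:b}, I need to show $a^2/b^3$ converges to a positive constant and $\frac{a}{b}\frac{db}{da}\to\frac{2}{3}$. The natural strategy is to extract monotonicity from the one-parameter families $P_\alpha=b^{1+\alpha}/a$ and $Q_\alpha=b^\alpha/\lambda$ (with $\lambda=\dot{a}/\dot{b}$) governed by \eqref{eq:ALC:growth:a:b}, whose signs are controlled by $R_\alpha=(1+\alpha)a-b\lambda$ and $S_\alpha=\alpha(2F+aF_a)-(2bF_b-aF_a)$. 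The whole argument then becomes a careful bootstrap in the parameter $\alpha$.

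First, for $\alpha$ chosen sufficiently large I would show that $R_\alpha>0$ is propagated: on $\{R_\alpha=0\}$, the ODE for $R_\alpha$ reduces to a multiple of $S_\alpha$, which can be forced positive by using the trivial lower bound $2\alpha F>0$ together with the expansion $aF_a=8a^2(b-p)(b+q)$, provided the chamber conditions \eqref{eq:ALC:Chamber} hold. Then $P_\alpha$ is increasing, hence bounded below, which forces $b\to\infty$ as well. Symmetrically, the hypothesis $a\dot{b}-\dot{a}b<0$ at $s_0$ reads $R_0(s_0)<0$, and identity (\ref{eq:Identities:F}b) combined with \eqref{eq:ALC:Chamber} gives $S_0<0$, so the ODE for $R_0$ propagates $R_0<0$; by continuity this extends to $R_\alpha<0$ for $\alpha\in[0,\alpha_\ast)$ once one checks that the threshold $\alpha_s$ at which $S_\alpha$ first vanishes stays bounded below (this follows because the limiting expression depends on $P_0\in(0,1)$, which is itself decreasing). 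Consequently $P_\alpha$ is decreasing for small positive $\alpha$, and hence $b/a\to 0$.

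With $b/a\to 0$ established, the dominant balance $2F\sim 8a^2b^2$ and $S_\alpha\sim 8(2\alpha-1)a^2b^2$ kicks in. Substituting into (\ref{eq:ALC:growth:a:b}b) I obtain, for every $\varepsilon>0$, differential inequalities of the form $dR_\alpha/da\leq 2\alpha-1+\varepsilon$ when $R_\alpha\geq 0$ and $dR_\alpha/da\geq 2\alpha-1-\varepsilon$ when $R_\alpha\leq 0$. Hence $R_\alpha$ is eventually strictly negative for $\alpha<\tfrac{1}{2}$ and strictly positive for $\alpha>\tfrac{1}{2}$; translating back via $\lambda^{-1}=db/da$ yields $\frac{a}{b}\frac{db}{da}\to\tfrac{2}{3}$, independently of the sign hypothesis on $(p,q)$.

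The hard step is convergence of $P_{1/2}=b^{3/2}/a$, which is where the assumption on $(p,q)$ is essential: the subleading term in $S_{1/2}\sim 7b^4+8(q-p)a^2b$ has sign $q-p$ and can a priori compete with the $b^4$ term. When $q\geq p$, $S_{1/2}$ is eventually positive, so $R_{1/2}$ has an eventual definite sign; in either case one of $P_{1/2}$ or $Q_{1/2}$ is monotone and the other is bounded (the latter via integration of $d\log Q_{1/2}/db\leq 7b/(8a^2)+(q-p)/b^2$, whose right side is integrable in $b$ by comparison with $b^\gamma/a\to 0$ for $\gamma\in(1,\tfrac{3}{2})$), from which convergence follows. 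In the degenerate case $q=-p\leq 0$, the factorisation $F=(b-p)^2(2a-b-p)(2a+b+p)$ suggests replacing $P_\alpha,Q_\alpha$ by shifted versions built from $a-p$ and $b-p$; the identities in \eqref{eq:Identities:F} acquire the factor $(b-p)$, the asymptotic $S_\alpha\sim 2(4-\alpha)b^4+8(2\alpha-1)a^2b^2$ once again has the correct sign structure at $\alpha=\tfrac{1}{2}$, and the same monotonicity-plus-integrability argument closes the proof. I expect this final adaptation to the $q=-p$ case to be the main obstacle, since it requires guessing the right shifted variables so that the structural features of the direct argument survive.
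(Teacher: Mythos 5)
Your proposal follows the paper's own proof essentially line by line: the same ladder of ratios $P_\alpha$, $Q_\alpha$ controlled via $R_\alpha$ and $S_\alpha$, the same large-$\alpha$ argument to force $b\to\infty$, the same small-$\alpha$ argument exploiting $R_0(s_0)<0$ and the monotonicity of $P_0$ to force $b/a\to 0$, the same asymptotic balance giving $\frac{a}{b}\frac{db}{da}\to\frac{2}{3}$, the same case split on the sign of $R_{1/2}$ with the $d\log Q_{1/2}/db$ integrability estimate, and the same shifted variables $(a-p,b-p)$ in the degenerate case $q=-p\leq 0$. The argument is correct and is the paper's argument; there is nothing to compare.
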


\subsection{Incompleteness}

The tools we have developed to prove forward completeness and ALC asymptotics can also be used to prove that certain solutions yield incomplete metrics.

\begin{prop}\label{prop:death:quadrant}
Let $(a,b)$ be a solution to \eqref{eq:Fundamental:ODE:Brandhuber:U(1)} with $a>0$, $b>\max{\left(p,-q,\sqrt{-pq}\right)}$ and $\dot{a}, \dot{b},F(a,b)>0$. If there exists a time such that
\begin{equation}\label{eq:death:quadrant}
0<\frac{\dot{a}}{\dot{b}}<\frac{a}{b}<1
\end{equation}
then the solution cannot be forward complete.
\proof
Recall the functions $P_\alpha, R_\alpha$ and $S_\alpha$ introduced just before Proposition \ref{prop:ALC:growth:a:b} and the evolution equations \eqref{eq:ALC:growth:a:b} for $P_\alpha$ and $R_\alpha$. We will assume that the solution is complete and derive a contradiction by studying the behaviour of the function $P_\alpha$ for small enough $\alpha<0$.

We first show that the conditions \eqref{eq:death:quadrant} are preserved for all time the solution exists and satisfies $\dot{a}, \dot{b},F(a,b)>0$. The fact that the conditions $a<b$ and $\dot{a}<\dot{b}$ persist is proved as in Lemma \ref{lem:ALC:chamber} exploiting the fact that $2F_b-F_a<0$ whenever $0<a<b$ and $b>\max{\left(p,-q,\sqrt{-pq}\right)}$ by (\ref{eq:Identities:F}a). In order to prove the persistence of the condition $\frac{\dot{a}}{\dot{b}}<\frac{a}{b}$ consider the quantity $R_0$ and note that $\frac{\dot{a}}{\dot{b}}<\frac{a}{b}$ is equivalent to $R_0>0$. It is enough to observe that $\dot{R}_0>0$ at any point where $R_0=0$. Indeed, by (\ref{eq:ALC:growth:a:b}b) at a point where $R_0=0$, $\dot{R}_0$ has the same sign as $S_0 = aF_a - 2bF_b$. By (\ref{eq:Identities:F}b), $S_0>0$ whenever $0<a<b$ and $b>\max{\left(p,-q,\sqrt{-pq}\right)}$. We conclude that $R_0>0$ for all time. Note that in particular $P_0=\frac{b}{a}$ is strictly increasing by (\ref{eq:ALC:growth:a:b}a).

Assume now for a contradiction that the solution is complete. Using $\dot{a}<\dot{b}$ we conclude that $b\ra \infty$ along the complete end in the same way that we proved that $a$ was unbounded in Proposition \ref{eq:ALC:Forward:Completeness}. Since $F(a,b)>0$ for all time we must also have
\begin{equation}\label{eq:a:unbounded}
4\left( \frac{a}{b}\right)^2 > \frac{(b^2+pq)^2}{b^2 (b-p)(b+q)}\longrightarrow 1
\end{equation}
as $b\ra \infty$. Hence $a$ is also unbounded along the complete end. Moreover, since $P_0=\frac{b}{a}$ is increasing and initially $P_0>1$ by \eqref{eq:death:quadrant}, $a\approx k b$ for some $\frac{1}{2}\leq k < 1$ as $b\ra\infty$.

We are now going to derive a contradiction to the completeness assumption by studying the behaviour of the function $P_\alpha = \frac{b^\alpha}{a}$ for small enough $\alpha<0$. Since $S_0>0$ as observed earlier, there exists $\alpha_s>0$ such that $S_\alpha (s)>0$ for all $| \alpha|<\alpha_s$. In order to show that $\alpha_s$ can be chosen independent of $s$, using $a\approx kb\ra\infty$ we calculate
\[
\lim_{s\ra\infty} {\frac{S_\alpha}{b^4}} = \lim_{s\ra \infty}{\frac{\alpha (16 a^2 b^2 -2b^4)-8a^2b^2 + 8b^4}{b^4}}= 2\alpha (8 k^2 -1)+ 8(1-k^2). 
\]
Since $\frac{1}{2}\leq k<1$, both of these coefficients are positive and we conclude that $\lim_{s\ra\infty} {S_\alpha}>0$ for all $\alpha > -\frac{4(1-k^2)}{8k^2-1}$. We conclude that $S_\alpha>0$ for all $\alpha$ sufficiently small as claimed.

Now, the condition $R_0(s_0)>0$ for some initial time $s_0$, which holds by assumption \eqref{eq:death:quadrant}, forces $R_\alpha (s_0)>0$ for all small enough $\alpha$ by continuity. Since $S_\alpha>0$ we can now conclude that $R_\alpha>0$ for all time and any $\alpha$ sufficiently small as we did above in the case $\alpha=0$. In particular $P_\alpha$ is positive and increasing for $\alpha$ sufficiently small. If $\alpha<0$ we now reach a contradiction since $P_\alpha = \frac{b}{a}b^\alpha \approx \frac{1}{k}b^\alpha\ra 0$.
\endproof
\end{prop}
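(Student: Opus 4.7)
The plan is to show that the region cut out by \eqref{eq:death:quadrant} is forward-invariant under \eqref{eq:Fundamental:ODE:Brandhuber:U(1)}, and then to contradict forward completeness by tracking the evolution of ratios of the form $b^\alpha/a$ for small $\alpha \le 0$, in parallel with the method of Propositions \ref{eq:ALC:Forward:Completeness} and \ref{prop:ALC:growth:a:b}.

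For the invariance the three inequalities $a<b$, $\dot a < \dot b$ and $\dot a/\dot b < a/b$ should each be preserved. At any moment where $\dot a = \dot b$, the ODE forces $\ddot a - \ddot b$ to have the sign of $2F_b - F_a$, which by the first of the identities \eqref{eq:Identities:F} is strictly negative when $a<b$ and $b > \max(p,-q,\sqrt{-pq})$; hence $\dot a < \dot b$ and therefore $a<b$ persist. For the ratio condition I would introduce $R_0 := a - (\dot a/\dot b)\,b$, whose vanishing is equivalent to equality in \eqref{eq:death:quadrant}. A direct computation from \eqref{eq:Fundamental:ODE:Brandhuber:U(1)} shows that the sign of $\dot R_0$ at $R_0 = 0$ matches that of $a F_a - 2 b F_b$, which by the second of the identities \eqref{eq:Identities:F} is strictly positive in this regime. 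Thus $R_0 > 0$ is preserved and $a/b$ is strictly decreasing along the flow.

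Now assume for contradiction that the solution is forward complete. The invariant $\dot a < \dot b$, combined with Proposition \ref{prop:Mean:Curvature:Blow:Up} and the volume-type estimate from Proposition \ref{eq:ALC:Forward:Completeness}, forces $\dot b$ to stay bounded below on the infinite end, so $b \to \infty$. The pointwise constraint $F(a,b)>0$, rearranged as $(2a/b)^2(1-p/b)(1+q/b) > (1+pq/b^2)^2$, then yields the asymptotic lower bound $a/b \ge 1/2$; combined with the strictly decreasing, bounded-above ratio, $a/b \to k$ for some $k \in [1/2, 1)$.

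Finally I would introduce $P_\alpha := b^\alpha/a$ and $R_\alpha := (1+\alpha)a - (\dot a/\dot b)\,b$, which satisfy evolution equations of the form \eqref{eq:ALC:growth:a:b} with $S_\alpha := \alpha(2F + aF_a) - (2bF_b - aF_a)$. As $b \to \infty$ with $a \sim kb$, the asymptotic sign of $S_\alpha$ is that of $2\alpha(8k^2 - 1) + 8(1-k^2)$, which remains positive whenever $\alpha > -4(1-k^2)/(8k^2-1)$. For a sufficiently small $\alpha < 0$ in this range, continuity in $\alpha$ of the invariance argument from the first step gives $R_\alpha > 0$ on the complete end, so $P_\alpha$ is eventually strictly increasing; but $P_\alpha = (b/a)\, b^\alpha \to k^{-1}\cdot 0 = 0$ as $b \to \infty$, yielding the desired contradiction. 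The main technical point is that the threshold $-4(1-k^2)/(8k^2-1)$ must be strictly negative, which rests on the strict upper bound $k < 1$ guaranteed by the invariant $a/b < 1$; this is the place where the third inequality in \eqref{eq:death:quadrant} enters in an essential way.
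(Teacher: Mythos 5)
Your proposal follows the paper's proof essentially verbatim: you establish forward-invariance of the region \eqref{eq:death:quadrant} using exactly the two identities \eqref{eq:Identities:F} (sign of $2F_b-F_a$ for the $\dot a<\dot b$ invariance, sign of $aF_a-2bF_b$ for $R_0>0$), derive $b\to\infty$ and $a/b\to k\in[\tfrac12,1)$ from $F>0$ and the monotonicity of $b/a$, and then run the same $(P_\alpha,R_\alpha,S_\alpha)$ machinery of \eqref{eq:ALC:growth:a:b} with the asymptotic sign computation $\lim S_\alpha/b^4=2\alpha(8k^2-1)+8(1-k^2)$ to find a working $\alpha<0$. The argument is correct and coincides with the one in the paper, including the closing observation that the strict inequality $k<1$ is what makes the threshold negative.
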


\subsection{Global behaviour}
We now use Propositions \ref{prop:ALC:growth:a:b} and \ref{prop:death:quadrant} to describe the global behaviour of the local cohomogeneity one torsion-free \gtstr s of Proposition \ref{prop:Solutions:Singular:Orbit} (i) and (ii) and of Proposition \ref{prop:CS:AC:ends} (i) under an enhanced $\unitary{1}$--symmetry assumption.

\begin{theorem}\label{thm:ALC:B7}
Let $(ijk)$ be a cyclic permutation of $(123)$. Consider the local cohomogeneity one torsion-free \gtstr s of Proposition \ref{prop:Solutions:Singular:Orbit} (i) and the subfamily defined by $\alpha_j=\alpha_k$.
\begin{enumerate}[leftmargin=*]
\item If $\alpha_i<\alpha_j=\alpha_k$ then the torsion-free {\gtstr} extends to a complete $\sunitary{2}\times\sunitary{2}\times\unitary{1}$--invariant ALC {\gtmetric} on $S^3\times\R^4$. Here the $\unitary{1}$--action is the Hopf circle action on the second factor.
\item If $\alpha_i=\alpha_j=\alpha_k$ then the torsion-free {\gtstr} is the Bryant--Salamon's complete $\sunitary{2}^3$--invariant AC {\gtmetric} on $S^3\times\R^4$.
\item If $\alpha_i>\alpha_j=\alpha_k$ then the torsion-free {\gtstr} is incomplete.
\end{enumerate}  
\end{theorem}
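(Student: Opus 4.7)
The plan is to reduce to the $\unitary{1}$--enhanced ODE \eqref{eq:Fundamental:ODE:Brandhuber:U(1)} with $p=-q=r_0^3$ and then match each of the three cases against one of Proposition~\ref{eq:ALC:Forward:Completeness}, Proposition~\ref{prop:ALC:growth:a:b} and Proposition~\ref{prop:death:quadrant}. The key preliminary observation is that the assumption $\alpha_j=\alpha_k$ forces $a_j\equiv a_k$, by the permutation symmetry of \eqref{eq:Fundamental:ODE} combined with the uniqueness clause in the singular IVP of Proposition~\ref{prop:Solutions:Singular:Orbit}(i). Setting $a:=a_j=a_k$ and $b:=a_i$, this pair satisfies \eqref{eq:Fundamental:ODE:Brandhuber:U(1)}, and the expansions in Proposition~\ref{prop:Solutions:Singular:Orbit}(i) give
\[
a(t)-b(t)=(\alpha_j-\alpha_i)t^4+O(t^6),\qquad a\dot b-\dot a\,b=4r_0^3(\alpha_i-\alpha_j)t^3+O(t^5),
\]
which provides exactly the data needed to locate the solution inside, or outside, each of the invariant regions of Section~\ref{sec:ALC}.

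Case (ii) is immediate: with $\alpha_1=\alpha_2=\alpha_3$ the same symmetry/uniqueness argument now collapses all three of $a_1,a_2,a_3$ into a single function, so the solution is fully $\sunitary{2}^3$--invariant and therefore coincides, at the parameters $p=-q=r_0^3$, with the classical Bryant--Salamon metric discussed in Example~\ref{ex:Bryant:Salamon}.

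For case (i), $\alpha_i<\alpha_j$ yields $a>b$, $\dot a>\dot b$, and $a\dot b-\dot a\,b<0$ near $t=0$; combined with $b>r_0^3=\max(p,-q,\sqrt{-pq})$ for all $t>0$, this places the solution in the region \eqref{eq:ALC:Chamber} and it also satisfies \eqref{eq:ALC:Chamber:additional:constraint} at some small initial time. Lemma~\ref{lem:ALC:chamber} propagates \eqref{eq:ALC:Chamber} for all forward time, Proposition~\ref{eq:ALC:Forward:Completeness} then gives forward completeness, and Proposition~\ref{prop:ALC:growth:a:b} (via the branch $q=-p\leq 0$, which applies since $r_0>0$) yields $a^2/b^3\to 2/(3\ell^3)$ for some $\ell>0$ and $\tfrac{a}{b}\tfrac{db}{da}\to\tfrac{2}{3}$. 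Lemma~\ref{lem:ALC:growth:a:b} then upgrades these asymptotics to smooth $C^\infty$--convergence to the ALC model \eqref{eq:Model:Coho1:ALC}, completing the proof of (i). For case (iii), $\alpha_i>\alpha_j$ reverses the inequalities to $r_0^3<a<b$, $0<\dot a<\dot b$, and $a\dot b-\dot a\,b>0$; the last rearranges to $0<\dot a/\dot b<a/b<1$, which is exactly hypothesis \eqref{eq:death:quadrant} of Proposition~\ref{prop:death:quadrant}. That proposition immediately yields forward incompleteness.

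I do not expect a major technical obstacle: the whole argument is a matter of matching the local expansions at the singular orbit, read off from Proposition~\ref{prop:Solutions:Singular:Orbit}(i), against the forward-invariant regions already constructed in Section~\ref{sec:ALC}. The only mild points of care are (a) to invoke permutation symmetry of \eqref{eq:Fundamental:ODE} plus singular-IVP uniqueness in case (ii) rather than redoing any analysis, and (b) to note in case (i) that $b$ (as well as $a$) goes to infinity along the complete end, which is already contained in the proof of Proposition~\ref{prop:ALC:growth:a:b} and is needed to legitimately apply Lemma~\ref{lem:ALC:growth:a:b}.
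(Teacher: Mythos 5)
Your proposal is correct and follows essentially the same route as the paper's proof: reduce to the $\unitary{1}$--enhanced scalar equation \eqref{eq:Fundamental:ODE:Brandhuber:U(1)} with $a=a_j=a_k$, $b=a_i$, read off the signs of $a-b$, $\dot a - \dot b$ and $a\dot b - \dot a b$ from the fourth-order expansions of Proposition~\ref{prop:Solutions:Singular:Orbit}(i), and then invoke Lemma~\ref{lem:ALC:chamber}, Propositions~\ref{eq:ALC:Forward:Completeness}, \ref{prop:ALC:growth:a:b} (via the branch $q=-p\leq 0$) and Lemma~\ref{lem:ALC:growth:a:b} for case~(i), Example~\ref{ex:Bryant:Salamon} for case~(ii), and Proposition~\ref{prop:death:quadrant} for case~(iii). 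One small thing you make explicit that the paper leaves implicit: the appeal to permutation symmetry of \eqref{eq:Fundamental:ODE} plus the uniqueness in Theorem~\ref{thm:Singular:IVP} to conclude that $\alpha_j=\alpha_k$ forces $a_j\equiv a_k$ for all $t$ (and likewise that $\alpha_1=\alpha_2=\alpha_3$ forces $a_1\equiv a_2\equiv a_3$); this is a welcome clarification. The only looseness is the remark that ``$a\dot b - \dot a b>0$ rearranges to $0<\dot a/\dot b < a/b < 1$'': in fact that single inequality only gives the middle comparison, while $a<b$ and $\dot a,\dot b>0$ (which you have separately) supply the outer bounds; but all three ingredients are in place, so the argument goes through exactly as in the paper.
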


\begin{theorem}\label{thm:ALC:D7}
Let $(ijk)$ be a cyclic permutation of $(123)$. Consider the local cohomogeneity one torsion-free \gtstr s of Proposition \ref{prop:Solutions:Singular:Orbit} (ii) and the subfamily defined by $\alpha_j=\alpha_k$.
\begin{enumerate}[leftmargin=*]
\item If $\alpha_i<\alpha_j=\alpha_k$ then the torsion-free {\gtstr} extends to a complete $\sunitary{2}\times\sunitary{2}\times\unitary{1}$--invariant ALC {\gtmetric} on $S^3\times\R^4$. Here the $\unitary{1}$--action is the Hopf circle action on the first factor.
\item If $\alpha_i=\alpha_j=\alpha_k$ then the torsion-free {\gtstr} is the Bryant--Salamon's complete $\sunitary{2}^3$--invariant AC {\gtmetric} on $S^3\times\R^4$.
\item If $\alpha_i>\alpha_j=\alpha_k$ then the torsion-free {\gtstr} is incomplete.
\end{enumerate}  
\end{theorem}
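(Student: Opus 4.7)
The plan is to adapt the strategy of Theorem \ref{thm:ALC:B7} to the setting $K=\{1\}\times\sunitary{2}$. Enhanced $\unitary{1}$ symmetry forces two of the $\alpha_l$ to coincide (Remark \ref{rmk:Extra:U(1):SU(2):symmetry}); up to relabelling I take $\alpha_1=\alpha_2$ and $i=3$, and write $a:=a_1=a_2$ and $b:=a_3$. With $p=-r_0^3$ and $q=0$, the constraint $\alpha_1\alpha_2\alpha_3=1$ becomes $\alpha_1^2\alpha_3=1$, so the three cases of the theorem correspond respectively to $\alpha_3<1<\alpha_1$, $\alpha_1=\alpha_3=1$, and $\alpha_3>1>\alpha_1$. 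Case (ii) is immediate from Example \ref{ex:Bryant:Salamon}: the unique $\sunitary{2}^3$-invariant solution closing on this singular orbit is the Bryant--Salamon AC metric.

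For cases (i) and (iii) the idea is to verify the hypotheses of Propositions \ref{eq:ALC:Forward:Completeness}, \ref{prop:ALC:growth:a:b} and \ref{prop:death:quadrant} directly from the local expansions of Proposition \ref{prop:Solutions:Singular:Orbit}(ii). Since $pq=0$, $q\ge p$, and $\max(p,-q,\sqrt{-pq})=0$, condition \eqref{eq:ALC:Chamber} reduces to $a>b>0$ and $\dot{a}>\dot{b}>0$, \eqref{eq:ALC:Chamber:additional:constraint} becomes $a\dot{b}-\dot{a}b<0$, and \eqref{eq:death:quadrant} becomes $0<\dot{a}/\dot{b}<a/b<1$. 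From the leading terms $a\sim\tfrac{r_0\alpha_1}{4}t^2$ and $b\sim\tfrac{r_0\alpha_3}{4}t^2$ one reads off that $a>b$ and $\dot{a}>\dot{b}$ for small $t>0$ if and only if $\alpha_1>\alpha_3$, and the relation $\alpha_1^2\alpha_3=1$ yields $F(a,b)\sim\tfrac{r_0^6}{16}t^6>0$ (the positivity of $F$ along the evolution is then preserved so long as $a>b>0$).

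The single nontrivial calculation is the sign of $a\dot{b}-\dot{a}b$, whose $t^3$-coefficient vanishes identically. Substituting the $O(t^4)$ corrections from Remark \ref{rmk:Solutions:Singular:Orbit:D7} and simplifying via $\alpha_1^2\alpha_3=1$ yields
\begin{equation*}
a\dot{b}-\dot{a}b=\frac{\alpha_3(\alpha_3^3-1)}{96\,\alpha_1}\,t^5+O(t^7),
\end{equation*}
so $a\dot{b}-\dot{a}b<0$ precisely when $\alpha_3<1$ and $>0$ when $\alpha_3>1$. In case (i), \eqref{eq:ALC:Chamber} and \eqref{eq:ALC:Chamber:additional:constraint} are therefore satisfied for small $t>0$: Proposition \ref{eq:ALC:Forward:Completeness} gives forward completeness, Proposition \ref{prop:ALC:growth:a:b} (applicable since $q\ge p$) yields convergence of $a^2/b^3$ to a positive constant and of $\tfrac{a}{b}\tfrac{db}{da}$ to $\tfrac{2}{3}$, and Lemma \ref{lem:ALC:growth:a:b} promotes this to $C^\infty$-convergence to an ALC model $\varphi_\infty^\ell$. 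In case (iii) the reversed sign gives $\dot{a}/\dot{b}<a/b$ while $a/b<1$, so \eqref{eq:death:quadrant} holds and Proposition \ref{prop:death:quadrant} forces forward incompleteness.

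It remains to identify the circle action. In the construction of Proposition \ref{prop:Smooth:extension:singular:orbit}(ii) the stabiliser $K=\{1\}\times\sunitary{2}$ acts trivially on the first $\sunitary{2}$-factor of $S^3\times\R^4$ and by left quaternion multiplication on $V=\HH$, so the diagonal right $\unitary{1}\subset\sunitary{2}\times\sunitary{2}$ that commutes with this action descends to the Hopf action on the $S^3$-factor and acts trivially on $\R^4$---in contrast to Theorem \ref{thm:ALC:B7}, where the Hopf $\unitary{1}$ acts on the $\R^4$-factor. The main obstacle is the bookkeeping in the $t^5$-cancellation above; once the sign of $a\dot{b}-\dot{a}b$ is pinned down, the theorem follows from the machinery already assembled in this section.
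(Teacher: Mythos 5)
Your proposal is correct and follows essentially the same route as the paper: reduce to the $\unitary{1}$-invariant case with $a_1=a_2=a$, $b=a_3$, verify $q\geq p$ and $b>\max(p,-q,\sqrt{-pq})=0$, read off the signs of $a-b$ and $\dot a-\dot b$ from the $t^2$ terms, and pin down the sign of $a\dot b-\dot a b$ at order $t^5$ using the expansion in Remark \ref{rmk:Solutions:Singular:Orbit:D7} together with the normalisation $\alpha_1^2\alpha_3=1$, then feed the resulting membership in $\mathcal{O}_{\tu{alc}}$ or $\mathcal{O}_{\tu{in}}$ into Propositions \ref{prop:ALC:growth:a:b} and \ref{prop:death:quadrant}. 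Your $t^5$-coefficient $\frac{\alpha_3(\alpha_3^3-1)}{96\alpha_1}$ agrees with the paper's (which records the opposite quantity $\dot a b-a\dot b$ with the opposite sign), your analysis of the constraint $\alpha_1^2\alpha_3=1$ correctly places $\alpha_3$ and $\alpha_1$ on opposite sides of $1$, and your identification of the Hopf circle acting on the $S^3$ factor is right.
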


\begin{theorem}\label{thm:ALC:CS}
Consider the conically singular $\sunitary{2}\times\sunitary{2}\times\unitary{1}$--invariant torsion-free {\gtstr} of Proposition \ref{prop:CS:AC:ends} (i) parametrised by $c\in\R$.
\begin{enumerate}[leftmargin=*]
\item If $c>0$ then the solution extends to a torsion-free {\gtstr} on $(0,\infty)\times S^3\times S^3$ with a CS end as $t\ra 0$ and an ALC end as $t\ra \infty$.
\item If $c=0$ then the solution is the $\gtwo$--cone over the $\sunitary{2}^3$--invariant nearly K\"ahler structure over $S^3\times S^3$.
\item If $c<0$ then the solution is incomplete.
\end{enumerate}  
\end{theorem}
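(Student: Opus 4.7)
The plan is to feed the local expansions from Proposition \ref{prop:CS:AC:ends}(i) into the global criteria of Propositions \ref{prop:ALC:growth:a:b} and \ref{prop:death:quadrant}. Since $p=q=0$ in this setting, the hypothesis ``$q\ge p$ or $q=-p\le 0$'' of Proposition \ref{prop:ALC:growth:a:b} is automatic, and the positivity constraints $b>\max(p,-q,\sqrt{-pq})=0$ and $F(a,b)>0$ follow at once from the leading behaviour $a,b\sim \tfrac{\sqrt{3}}{54}t^3$ for small $t>0$.

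The case $c=0$ is immediate: $a=b=\tfrac{\sqrt3}{54}t^3$ solves \eqref{eq:Fundamental:ODE:Brandhuber:U(1)} on all of $(0,\infty)$ and, as observed in Example \ref{ex:Bryant:Salamon}, corresponds to the \gtwo--cone $\tu{C}$ over the homogeneous nearly K\"ahler structure on $S^3\times S^3$; by uniqueness in Proposition \ref{prop:CS:AC:ends}(i) this is the $c=0$ member of the family.

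For $c\neq 0$, the main step is an explicit leading-order computation as $t\to 0^+$. Substituting the expansions for $a$, $b$ (together with the differentiated expansions) into the three quantities that enter the hypotheses of Propositions \ref{prop:ALC:growth:a:b} and \ref{prop:death:quadrant}, I expect to obtain
\begin{align*}
a-b &= \tfrac{\sqrt3}{54}\cdot\tfrac{3c}{2}\,t^{3+\nu_0}+O(t^{3+2\nu_0}), \\
\dot a-\dot b &= \tfrac{\sqrt3}{54}\cdot\tfrac{3(3+\nu_0)c}{2}\,t^{2+\nu_0}+O(t^{2+2\nu_0}), \\
a\dot b-\dot a\,b &= -\bigl(\tfrac{\sqrt3}{54}\bigr)^{\!2}\cdot\tfrac{3\nu_0 c}{2}\,t^{5+\nu_0}+O(t^{5+2\nu_0}).
\end{align*}
For $c>0$ the signs of these three quantities are exactly those required by \eqref{eq:ALC:Chamber} together with the extra constraint \eqref{eq:ALC:Chamber:additional:constraint}; Proposition \ref{eq:ALC:Forward:Completeness} then yields forward completeness with $a\to\infty$, Proposition \ref{prop:ALC:growth:a:b} provides the limits of $a^2/b^3$ and $\tfrac{a}{b}\tfrac{db}{da}$, and Lemma \ref{lem:ALC:growth:a:b} upgrades these to smooth convergence to the model ALC \gtstr~$\varphi_\infty^\ell$ of \eqref{eq:Model:Coho1:ALC} for some $\ell>0$. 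For $c<0$ all three signs reverse, producing $0<a<b$, $0<\dot a<\dot b$ and $a\dot b>\dot a\,b$, which is precisely the hypothesis \eqref{eq:death:quadrant} of Proposition \ref{prop:death:quadrant}; that proposition then rules out forward completeness. The scaling symmetry $(t,a,b)\mapsto(\mu t,\mu^3 a,\mu^3 b)$ of \eqref{eq:Fundamental:ODE:Brandhuber:U(1)} rescales $c$ by $\mu^{-\nu_0}$, so up to rescaling one obtains a single ALC solution for $c>0$ and a single incomplete one for $c<0$.

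The only genuine technical obstacle is the third expansion, where the order-$t^5$ contributions from the conical background cancel and both expansions must be carried to the resonance order $\nu_0$ to extract the correct sign. The key structural point is $\nu_0=(\sqrt{145}-7)/2>0$: the deformation parameter $c$ enters at a definite, nonzero algebraic order rather than being absorbed into the cone data, so the coefficient in front of $t^{5+\nu_0}$ in $a\dot b-\dot a\,b$ is a nonvanishing rational multiple of $\nu_0 c$. Once these three signs are pinned down, the theorem reduces to a direct appeal to the machinery of Section \ref{sec:ALC}.
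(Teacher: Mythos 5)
Your proof is correct and follows essentially the same route as the paper. You substitute the generalised power series from Proposition \ref{prop:CS:AC:ends}(i) into the three comparison quantities $a-b$, $\dot a-\dot b$, $\dot a b-a\dot b$, read off that their signs at leading order $t^{\nu_0}$ are controlled by $c$ (your formula for $\dot a b-a\dot b$ matches the paper's $\tfrac{54^2(\dot a b-a\dot b)}{3t^5}\approx\tfrac32 c\,\nu_0\, t^{\nu_0}$), and then feed these into Lemma \ref{lem:ALC:chamber}, Propositions \ref{eq:ALC:Forward:Completeness}, \ref{prop:ALC:growth:a:b}, \ref{prop:death:quadrant} and Lemma \ref{lem:ALC:growth:a:b}; the $c=0$ case reduces to Example \ref{ex:Bryant:Salamon}. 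This is exactly the paper's argument.
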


\begin{remark*}
In all three cases one parameter can be fixed by scaling, hence we have found two $1$-parameter families of complete ALC metrics and a unique CS ALC manifold up to scale. The ALC family of Theorem \ref{thm:ALC:B7} is the $\mathbb{B} _7$ family of Brandhuber--Gomis--Gubser--Gukov \cite{BGGG} and Bogoyavlenskaya \cite{Bogoyavlenskaya}. The ALC family of Theorem \ref{thm:ALC:D7} is the conjectured $\mathbb{D} _7$ family in the physics literature.
\end{remark*}

Case (ii) in Theorems \ref{thm:ALC:B7}, \ref{thm:ALC:D7} and \ref{thm:ALC:CS} is characterised by an enhanced $\sunitary{2}$--symmetry and is therefore dealt with in Example \ref{ex:Bryant:Salamon}. We therefore concentrate on proving cases (i) and (iii) in each theorem. Up to a change of basis, we can assume that the additional $\unitary{1}$--action is generated by $E_3+E'_3$ in all cases, \ie $(ijk)=(312)$ in Theorems \ref{thm:ALC:B7} and \ref{thm:ALC:D7}.

Proposition \ref{prop:Solutions:Singular:Orbit} (i) and (ii) and Proposition \ref{prop:CS:AC:ends} (i) provide the leading-order behaviour of $a=a_1=a_2$ and $b=a_3$ as $t\ra 0$. We need to check that the hypotheses of Propositions \ref{prop:ALC:growth:a:b} and \ref{prop:death:quadrant} are satisfied, \ie the conditions on $p$ and $q$ together with the inequalities \eqref{eq:ALC:Chamber} and \eqref{eq:ALC:Chamber:additional:constraint} or \eqref{eq:death:quadrant}.
\begin{itemize}[leftmargin=*]
\item In the case of Theorem \ref{thm:ALC:B7}, $q=-p<0$ so $b-p, b+q, b-\sqrt{-pq}>0$. Moreover for small $t$ the signs of $a-b$, $\dot{a}-\dot{b}$ and $\dot{a}b-a\dot{b}$ are all the same as the sign of $\alpha_1-\alpha_3$.
\item In the case of Theorem \ref{thm:ALC:D7}, $q=0$ and $p=-r_0^3<0$. Moreover, $\alpha_3>0$ since $\alpha_1\alpha_2\alpha_3=1$. Then $b-p, b+q, b-\sqrt{-pq}>0$. The signs of $a-b, \dot{a}-\dot{b}>0$ are the same as the sign of $\alpha_1-\alpha_3$. In order to control the sign of $\dot{a}b-a\dot{b}$ for small $t$ we have to use the higher-order expansions of $a,b$ in Remark \ref{rmk:Solutions:Singular:Orbit:D7}. We find
\[
\dot{a}b-a\dot{b} = \frac{(1-\alpha_3^3)\alpha_3}{96\alpha_1}t^5 + O(t^7).
\]
Since $\alpha_1^2\alpha_3=1$, if $\alpha_3<\alpha_1$ then $0<\alpha_3<1<\alpha_1$, while $0<\alpha_1<1<\alpha_3$ if $\alpha_3>\alpha_1$.
\item In the case of Theorem \ref{thm:ALC:CS}, $p=0=q$ and for small $t>0$ we have $b>0$. The signs of $a-b,\dot{a}-\dot{b}>0$ are the same as the sign of $c$. In order to control the sign of $\dot{a}b-a\dot{b}$ we calculate
\[
\frac{54^2\left(\dot{a}b-a\dot{b}\right)}{3t^5} \approx \tfrac{3}{2}c\, \nu_0 t^{\nu_0}
\] 
as $t\ra 0$.
\end{itemize}

\begin{remark*}
Theorems \ref{thm:ALC:B7} and \ref{thm:ALC:D7} do not address the issue of what happens to solutions that do not enjoy an enhanced $\unitary{1}$--symmetry. Our expectation is that all of these local cohomogeneity one metrics are incomplete.
\end{remark*}

\section{Existence of AC metrics}\label{sec:AC}

In Proposition \ref{prop:Solutions:Singular:Orbit} we constructed four different families of local solutions to \eqref{eq:Fundamental:ODE} that close smoothly over various different singular orbits. In the previous section we have shown that a subset of the local solutions closing smoothly on a singular orbit $S^3$ constructed in Proposition  \ref{prop:Solutions:Singular:Orbit} (i) and (ii) extend to complete ALC metrics. None of the local solutions constructed in Proposition  \ref{prop:Solutions:Singular:Orbit} (iii) and (iv) is covered by these results: no choice of parameters there allows us to satisfy all the hypotheses of Propositions \ref{prop:ALC:growth:a:b} or \ref{prop:death:quadrant} for small positive $t$. In this section we use a different approach to study $\sunitary{2}\times\sunitary{2}\times\unitary{1}$--invariant \gtmfd s with singular orbit $\sunitary{2}\times\sunitary{2}/K_{m,n}$ and thus prove the following theorem.

\begin{theorem}\label{thm:ALC:C7:m:n}
Fix coprime positive integers $m,n$ and a real number $r_0>0$. For $\beta>0$, let
\[
\varphi_\beta = -m^2 r_0^3\, e_1\wedge e_2\wedge e_3 + n^2 r_0^3\, e'_1\wedge e'_2\wedge e'_3 + d\left( a\, (e_1\wedge e'_1 + e_2\wedge e'_2) + b\, e_3\wedge e'_3\right)
\]
be the (locally defined) $\sunitary{2}\times\sunitary{2}\times\unitary{1}$--invariant torsion-free {\gtstr} closing smoothly on $\sunitary{2}\times\sunitary{2}/K_{m,n}$ defined in Proposition  \ref{prop:Solutions:Singular:Orbit} (iii) (when $m=n=1$) or (iv) satisfying
\[
a = r_0^2\beta t + O(t^3), \qquad b = mn r_0^3 + O(t^2)
\]
as $t\ra 0$. There exists $\beta_\tu{ac}>0$ such that the following holds.
\begin{enumerate}[leftmargin=*]
\item If $\beta>\beta_\tu{ac}$ then $\varphi_\beta$ extends to a complete torsion-free ALC {\gtstr} asymptotic to a circle bundle over a $\Z_2$--quotient conifold.
\item If $\beta=\beta_\tu{ac}$ then $\varphi_\beta$ extends to a complete torsion-free AC {\gtstr} asymptotic to the cone over the $\Z_{2(m+n)}$--quotient of the homogeneous nearly K\"ahler structure on $S^3\times S^3$ with rate $-3$.  
\item If $\beta<\beta_\tu{ac}$ then $\varphi_\beta$ does not extend to a complete torsion-free {\gtstr}.
\end{enumerate}
\end{theorem}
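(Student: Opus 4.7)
The overall strategy, as flagged in the introduction and to be carried out in Section~\ref{sec:AC:back}, is to ``shoot from infinity'' rather than from the singular orbit. The starting point is the $1$-parameter family of $\sunitary{2}\times\sunitary{2}\times\unitary{1}$--invariant AC ends $\varphi_{\tu{ac}}^c$, $c\in\R$, provided by Proposition~\ref{prop:CS:AC:ends}~(ii) for the fixed cohomology class $p=-m^2 r_0^3$, $q=n^2 r_0^3$. Passing to the $\Gamma_{m+n}$--quotient gives AC ends on $M_{m,n}$, and the plan is to extend these backward in $t$ and detect for which $c$ the solution closes smoothly on the singular orbit $\sunitary{2}\times\sunitary{2}/K_{m,n}$. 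To make this work I first isolate an open subset $\mathcal{O}^+_{\tu{ac}}$ of the space of principal orbits that is invariant under \emph{backward} evolution of \eqref{eq:Fundamental:ODE:Brandhuber:U(1)} as long as $F(a,b)>0$, so that the $3$-form $\varphi$ remains positive. From the asymptotic expansions of Proposition~\ref{prop:CS:AC:ends}~(ii) one can read off that $\varphi_{\tu{ac}}^c$ lies in $\mathcal{O}^+_{\tu{ac}}$ precisely when $c>0$, so for these $c$ backward continuation can only terminate when $(a,b)$ meets the locus $\{F=0\}$.

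The next step is to study the geometry of $\{F(a,b)=0\}$. With the chosen $p,q$ this is an explicit quartic in $(a,b)$ and a direct computation shows it possesses a unique singular point, which turns out to lie at $(a,b) = (0,\, mn\, r_0^3)$---precisely the $(a,b)$--value forced at $t=0$ on any local solution closing smoothly on $\sunitary{2}\times\sunitary{2}/K_{m,n}$ by Proposition~\ref{prop:Solutions:Singular:Orbit}~(iii)--(iv). The main shooting argument then proceeds by a monotonicity and continuity analysis in the parameter $c$: as $c$ varies in $(0,\infty)$ the hitting point of the backward trajectory on $\{F=0\}$ sweeps out a curve that crosses the distinguished singular point for exactly one value $c = c_{\tu{ac}} > 0$. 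This is the content of Proposition~\ref{prop:uniquehit} and simultaneously yields existence and uniqueness of the AC solution, thereby determining the corresponding parameter value $\beta_{\tu{ac}}>0$ via the matching with the local expansion of Proposition~\ref{prop:Solutions:Singular:Orbit}.

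The main technical obstacle will be to upgrade the purely pointwise statement that the backward trajectory reaches $(0, mn\, r_0^3)$ to the statement that it actually extends to a smooth torsion-free \gtstr~on all of $M_{m,n}$. Proposition~\ref{prop:Solutions:Singular:Orbit}~(iii)--(iv) and the underlying smoothness analysis of Proposition~\ref{prop:Smooth:extension:singular:orbit} impose specific parities and leading coefficients on $a(t)$ and $b(t)$ at the terminal time, and these must be recovered from the backward solution rather than imposed on it. The plan (Proposition~\ref{prop:even}) is to analyse the singular initial value problem obtained by linearising \eqref{eq:Fundamental:ODE:Brandhuber:U(1)} at the singular point of $\{F=0\}$ and to show that its resonance structure forces any generalised power series solution landing at that point to consist only of the powers of $t$ permitted by smooth closure. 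This is an analysis in the spirit of Theorem~\ref{thm:Singular:IVP:Extended} but conducted at the endpoint of the backward flow, and verifying that no obstructed terms can be excited is the step I expect to be most delicate.

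Parts (i) and (iii) then follow by comparison with the distinguished AC solution. For $\beta > \beta_{\tu{ac}}$ the corresponding trajectory lies on one side of the AC separatrix and should be forced into the ALC region $\mathcal{O}_{\tu{alc}}$ of Proposition~\ref{prop:ALC:growth:a:b}, hence extends to a complete ALC \gtmetric~whose asymptotic model is a circle bundle over a $\Z_2$--quotient conifold determined by the $\unitary{1}$ stabiliser on the principal orbit. For $\beta < \beta_{\tu{ac}}$ the trajectory lies on the opposite side of the separatrix and must eventually enter the incompleteness region $\mathcal{O}_{\tu{in}}$ of Proposition~\ref{prop:death:quadrant}; by Proposition~\ref{prop:Mean:Curvature:Blow:Up} a minimal principal orbit then appears in finite time and the \gtstr~cannot be completed. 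The role of the AC solution as a separatrix between the ALC and incompleteness regimes is exactly the mechanism predicted by the analogy with the $\mathbb{B}_7$ family and by the desingularisation heuristic sketched in the introduction.
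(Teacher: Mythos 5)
Your overall strategy matches the paper's: shoot backward from infinity using the $1$-parameter family of AC ends from Proposition~\ref{prop:CS:AC:ends}~(ii) with $(p,q)=(-m^2r_0^3, n^2r_0^3)$; isolate a backward-invariant region $\mathcal{O}^+_{\tu{ac}}$ (Proposition~\ref{prop:evolve_back}, which is exactly your invariant set); show a unique $c_{\tu{ac}}>0$ drives the backward trajectory into the singular point $(0, mnr_0^3)$ of $\{F=0\}$ (Proposition~\ref{prop:uniquehit}); upgrade this to smooth closure over the singular orbit (Proposition~\ref{prop:even}); and then compare other initial values $\beta$ against the AC separatrix using Lemma~\ref{lem:nocross} together with Propositions~\ref{prop:ALC:growth:a:b} and~\ref{prop:death:quadrant} for parts (i) and (iii). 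All of that is correct.

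The one place your plan diverges from what the paper actually does is the method for Proposition~\ref{prop:even}, and as stated your version has a gap. You propose to linearise the ODE at the degenerate point of $\{F=0\}$ and run a resonance/generalised-power-series analysis in the spirit of Theorem~\ref{thm:Singular:IVP:Extended}, flagging ``no obstructed terms are excited'' as the delicate verification. But that analysis only applies if you already know the backward trajectory admits a generalised power-series expansion at its terminal time; a priori, a backward solution approaching the degenerate locus of $F$ could oscillate, spiral, or fail to have a well-defined tangent direction. Establishing that it does not is the real crux, and Theorem~\ref{thm:Singular:IVP:Extended} gives no help here because it constructs solutions with such expansions rather than classifying limits of arbitrary trajectories. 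The paper sidesteps this entirely: Proposition~\ref{prop:uniquehit} ensures the distinguished trajectory satisfies $ka > (b^2-m^2n^2r_0^6)/\sqrt{(b+m^2r_0^3)(b+n^2r_0^3)}$ for any $k\in(1,2)$, which yields two-sided bounds on $F$, $F_a$, $F_b$ along the trajectory. These force $\sqrt{F}/\mu$ and $\dot a$ to stay bounded below, force $t$ to reach a finite limit, and force $(x_1,x_2,y_1,y_2)$ to converge in $C^1$; a bootstrap then gives smoothness up to $t=0$, and uniqueness of the forward singular IVP from Proposition~\ref{prop:Solutions:Singular:Orbit} identifies the limit with one of the smooth-closure solutions, defining $\beta_{\tu{ac}}$. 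So the hard content is a priori estimates near the degenerate locus, not exclusion of resonant exponents; your plan would need to be supplemented with exactly those estimates, at which point the resonance analysis becomes redundant.
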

Figure \ref{fig:m1n2} illustrates a small number of solution curves $(a,b)$ to the ODE system \eqref{eq:Fundamental:ODE:Brandhuber:U(1)}
illustrating all three cases of Theorem~\ref{thm:ALC:C7:m:n} in the case where $m=1$ and $n=2$.
\begin{figure}
\begin{center} 
\includegraphics[scale=0.75]{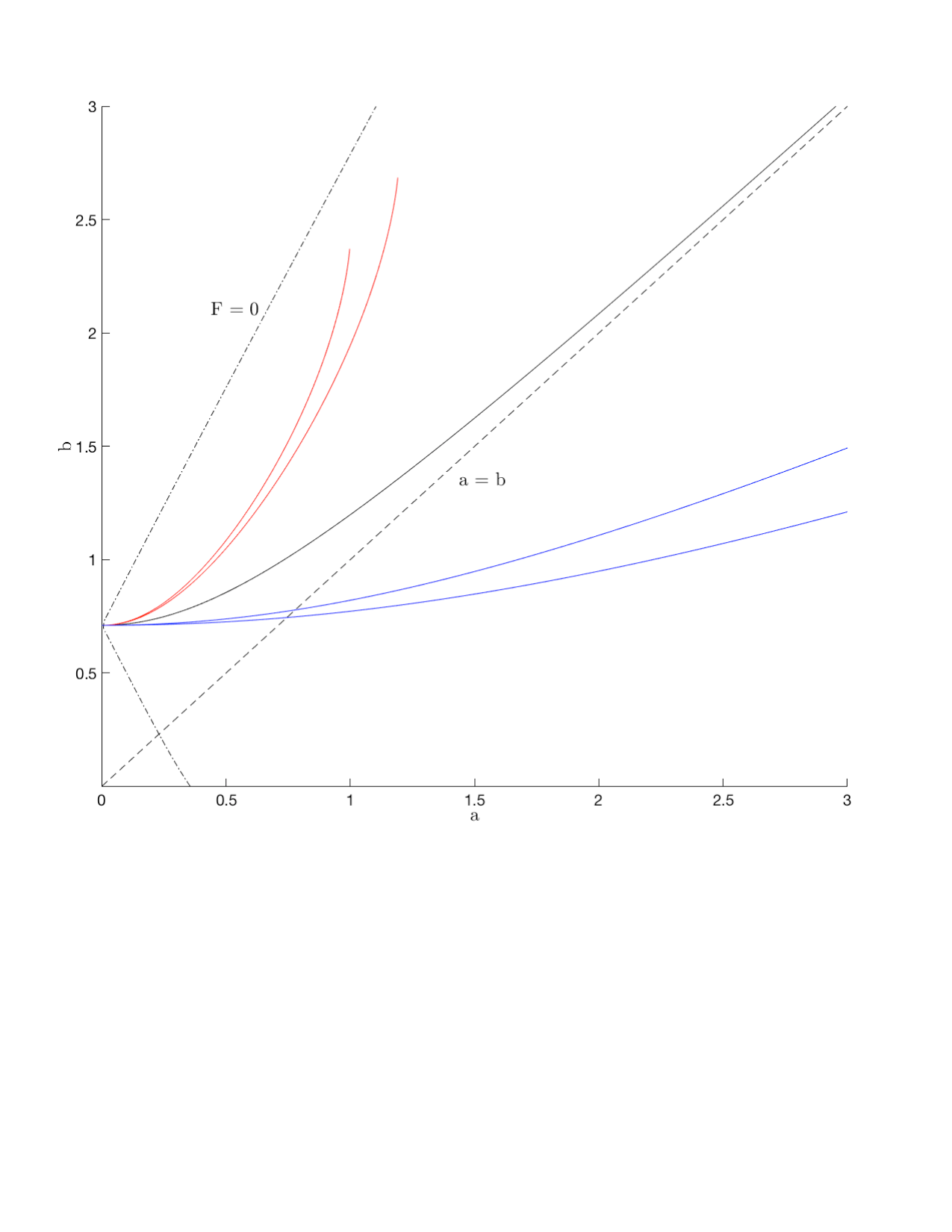}
\end{center}
\vspace{-6.7cm}
\caption{\small \sl Numerical solutions of \eqref{eq:Fundamental:ODE:Brandhuber:U(1)} with $m=1$, $n=2$ satisfying the initial conditions of Proposition \ref{prop:Solutions:Singular:Orbit} (iv): 
There are two incomplete solutions approaching the set F=0, the unique AC solution asymptotic to the diagonal $a=b$ and two complete solutions crossing the diagonal transversally.}
\label{fig:m1n2}
\end{figure}

\begin{remark*}
The existence of the ALC metrics in part (i) of the Theorem when $\beta$ is sufficiently large is guaranteed by our analytic construction of highly collapsed ALC \gtmetric s on circle bundles over AC Calabi--Yau $3$-folds \cite{FHN:ALC:G2:from:AC:CY3}*{Theorem 9.7}.
\end{remark*}

\begin{remark*}
The case $m=n=1$ has also been considered by Baza\u{\i}kin--Bogoyavlenskaya \cite{Bazaikin:Bogoyavlenskaya} and Cveti\v{c}--Gibbons--L\"u--Pope \cite{CGLP:C7}*{\S 3}.
In \cite{Bazaikin:Bogoyavlenskaya} the existence of ALC metrics for every positive value of $\beta$ was claimed. However, there appear to be mistakes in the proof of \cite{Bazaikin:Bogoyavlenskaya}*{Lemma 9}. In \cite{CGLP:C7}*{\S 3} numerical experiments suggested the existence of a full $3$-parameter family of ALC \gtmetric s closing smoothly on the singular orbit $\sunitary{2}\times\sunitary{2}/K_{1,1}$. 
In the collapsed limit this contradicts our analysis in \cite{FHN:ALC:G2:from:AC:CY3}*{Theorem 9.7}. 
\end{remark*}

The most interesting part of Theorem \ref{thm:ALC:C7:m:n} is part (ii). Only three simply connected AC \gtmetric s (up to symmetries and scaling) are currently known \cite{Bryant:Salamon}. Part (ii) of the theorem provides infinitely many new AC \gtmetric s. It is essential that we consider AC manifolds asymptotic to a non-trivial quotient of the \gtwo--cone over $S^3\times S^3$ since, by Karigiannis--Lotay \cite{Karigiannis:Lotay}*{Corollary 6.10}, the Bryant--Salamon metric on the spinor bundle of $S^3$ is the unique (up to scale) AC {\gtmetric} asymptotic to the cone over $S^3\times S^3$.

\begin{remark}\label{rmk:Transitions:AC}
The smooth $7$--manifold $M_{m,n}$ underlying the \gtmetric s constructed in the Theorem depends only on the sum $m+n$. In fact, $M_{m,n}$ can be identified with $H^{2(n+m)}\times S^3$, where $H^{2(n+m)}$ is the total space of the $\R^2$--bundle on $S^2$ with Euler class $2(n+m)$. However, metrics for different choices of $(m,n)$ can never be isometric. Indeed, an isometry would have to respect the $\sunitary{2}\times\sunitary{2}\times\unitary{1}$--orbit structure: otherwise the tangent space at a point would be spanned by Killing vectors and the metric would be homogeneous; the latter is impossible since the metric is Ricci-flat but cannot be flat. The group of $\sunitary{2}\times\sunitary{2}\times\unitary{1}$--equivariant diffeomorphisms of the principal orbit $\sunitary{2}\times\sunitary{2}/\Z_{2(m+n)}$ is $\sunitary{2}\times\sunitary{2}\times \tu{N} \rtimes \Z_2$, where $\tu{N}$ is the normaliser of $\unitary{1}$ in $\sunitary{2}$ acting on the right on $\sunitary{2}\times\sunitary{2}$ and $\Z_2$ is generated by the outer automorphism of $\sunitary{2}\times\sunitary{2}$ that exchanges the two factors. The induced action of $\sunitary{2}\times\sunitary{2}\times \tu{N} \rtimes \Z_2$ on cohomology is generated by the involution that exhanges $e_1\wedge e_2\wedge e_3$ and $e'_1\wedge e'_2\wedge e'_3$. Since the image of the cohomology class of $\varphi$ in the cohomology of the principal orbits depends on the pair $(m,n)$, we conclude that different choices of $(m,n)$ with $0<m\leq n$ and $\gcd (m,n)=1$ give rise to non-isometric metrics. In fact this argument also shows that, modulo the outer automorphism of $\sunitary{2}\times\sunitary{2}$, there is no diffeomorphism between $M_{m,n}$ and $M_{m',n'}$ asymptotic to an isometry of the asymptotic cone. In particular, considering pairs $(m,n)$ with $0<m\leq n$ and fixed (sufficiently large) $m+n$, part (ii) of the Theorem yields different AC \mbox{\gtmetric s} asymptotic to the same $\gtwo$--cone and therefore gives rise to infinitely many new geometric transitions in $\gtwo$--geometry.     
\end{remark}

Before proving Theorem \ref{thm:ALC:C7:m:n} we establish that the complete solutions obtained in Theorems \ref{thm:ALC:B7}, \ref{thm:ALC:D7} and \ref{thm:ALC:C7:m:n} (i) and (ii) are the only complete simply connected $\sunitary{2}\times\sunitary{2}\times\unitary{1}$--invariant \gtmfd s.

\begin{theorem}
\label{thm:Classification:U(1)}
Let $(M,g)$ be a complete $\sunitary{2}\times\sunitary{2}\times\unitary{1}$--invariant {\gtmetric} with $M$ simply connected. Then up to symmetries $(M,g)$ is isometric to one of the complete metrics of Theorems \ref{thm:ALC:B7}, \ref{thm:ALC:D7} and \ref{thm:ALC:C7:m:n}.
\proof
Given the completeness and incompleteness statements in Theorems \ref{thm:ALC:B7}, \ref{thm:ALC:D7} and \ref{thm:ALC:C7:m:n}, it only remains to prove that if $(M,g)$ is an $\sunitary{2}\times\sunitary{2}\times\unitary{1}$--invariant {\gtmetric} closing smoothly on a singular orbit $Q$, then, up to a finite cover and the action of the outer automorphism of $\sunitary{2}^2$, $M$ is described by one of the group diagrams \eqref{eq:group:diagram:D7}, \eqref{eq:group:diagram:C7(m,n)} and \eqref{eq:group:diagram:B7}.

Let
\[
\varphi = p\, e_1\wedge e_2\wedge e_3 + q\, e'_1\wedge e'_2\wedge e'_3 + d\left( a \, (e_1\wedge e'_1 + e_2\wedge e'_2) + b \, e_3\wedge e'_3\right)
\]
be an $\sunitary{2}\times\sunitary{2}\times\unitary{1}$--invariant torsion-free {\gtstr} defined in a neighbourhood of a singular orbit $Q$. Let $t$ be the arc-length parameter along a geodesic meeting all orbits orthogonally and assume that the point $t=0$ lies on the singular orbit $Q$. In particular, $a,b$ are smooth functions defined on $[0,t_0)$ for some $t_0>0$.

In order to determine the behaviour of the functions $a$ and $b$ as $t\ra 0$, observe that $F\ra 0$ as $t\ra 0$ since $\sqrt{F(a,b)}=2\dot{a}^2\dot{b}$ is the orbital volume function. Moreover, the evolution equations for $x_1=\dot{a}\dot{b}$ and $x_2=\dot{a}^2$ in \eqref{eq:Fundamental:ODE:U(1)} show that $(F_a,F_b)\ra 0$ as $t\ra 0$, \ie the point $(a_0,b_0)=(a,b)|_{t=0}$ must be a critical point of $F$ on the level set $F=0$. As an aside, note that we must have $pq\leq 0$ since $F=0=F_a$ force $b^2+pq=0$. Since the coefficients of the autonomous ODE system \eqref{eq:Fundamental:ODE:U(1)} depend real analytically on $a$ and $b$, we conclude that there exist a critical point $(a_0,b_0)$ of $F$ with $F(a_0,b_0)=0$, positive integers $h,k$ and $a_1,b_1\neq 0$ such that $a = a_0 + a_1 t^h +O(t^{h+1})$ and $b = b_0 + b_1 t^k +O(t^{k+1})$. In fact, since with our conventions $\dot{a},\dot{b}>0$ for $t>0$, we must have $a_1,b_1>0$.  

We now consider the metric $g_{\varphi}=dt^2 + g_t$ induced by $\varphi$. Regard $M$ as a cohomogeneity one manifold with group diagram
\[
K_0 \subset K \subset \sunitary{2}\times\sunitary{2},
\] 
where $Q=\sunitary{2}^2/K$, $K_0$ is a finite subgroup of $\sunitary{2}^2$ and $K/K_0$ is a sphere. As $t\ra 0$, $g_t$ converges to a smooth metric $g_0$ on the singular orbit $Q$. Moreover, thinking of $g_0$ as a symmetric endomorphism of $\Lie{su}_2\oplus\Lie{su}_2$, the kernel of $g_0$ coincides with the Lie algebra $\Lie{k}$ of $K$. By studying the behaviour of $g_t$ as $t\ra 0$ we can therefore determine the possibilities for $\Lie{k}$ and therefore the group diagram of $M$ up to finite quotients.

Denote by $\Lie{n}$, $\Lie{n}'$ and $\Lie{t}$ the subspaces of $\Lie{su}_2\oplus\Lie{su}_2$ defined by $\tu{span}(E_1,E_2)$, $\tu{span}(E'_1,E'_2)$ and $\tu{span}(E_3,E'_3)$ respectively. Since $K/K_0$ is a sphere and $K_0$ is finite, we deduce that $\Lie{k}$ cannot contain $\Lie{n}\oplus\Lie{n}'$ nor $\Lie{t}$. Indeed, if $\Lie{n}\oplus\Lie{n}'\subseteq\Lie{k}$ then $K=\sunitary{2}\times\sunitary{2}$ (since $\Lie{n}\oplus\Lie{n}'$ generates $\Lie{su}_2\oplus\Lie{su}_2$ as a Lie algebra) and $K/K_0$ cannot be a sphere; similarly, if $\Lie{t}\subseteq\Lie{k}$ then $K$ is diffeomorphic to a $2$-torus, $S^3\times S^1$ or $S^3\times S^3$, none of which finitely covers a sphere.

Now, in order to study the behaviour of $g_t$ for small $t\geq 0$, regard it as a symmetric endomorphism of $\Lie{su}_2\oplus\Lie{su}_2$ and note that the decomposition $\Lie{su}_2\oplus\Lie{su}_2 = (\Lie{n}\oplus\Lie{n}')  \oplus \Lie{t}$ is $g_t$--orthogonal. By \eqref{eq:Metric} the restriction of $g_t$ to the first factor $\Lie{n}\oplus\Lie{n}'$ is the block matrix 
\[
\frac{2\dot{a}}{\sqrt{F}} \left( \begin{array}{cc}
a(b-p)  & -\tfrac{1}{2} (b^2 +pq) \\
-\tfrac{1}{2} (b^2 +pq) & a(b+q)\end{array}
\right),
\]
and the restriction of $g_t$ to $\Lie{t}$ is
\[
\frac{2\dot{b}}{\sqrt{F}} \left( \begin{array}{cc}
a^2 -pb  & -\tfrac{1}{2} (2a^2 -b^2 +pq) \\
-\tfrac{1}{2} (2a^2 -b^2 +pq) & a^2 + qb\end{array}
\right).
\]

Now, consider first the case where $pq(p+q)\neq 0$. In this case $F$ has only two critical points contained in the level set $F=0$, $(0,\pm \sqrt{-pq})$. In fact, since $\dot{a},\dot{b}>0$ for $t>0$ we must have $b_0=\sqrt{-pq}$, $p<0$ and $q>0$. Indeed, since $a(0)=0$, the sign constraint $\dot{a}>0$ for $t>0$, forces the same sign constraint for $a$. Then the positive definiteness of $g_t$ for $t>0$ forces $-pb_0, qb_0, b_0-p, b_0+q>0$. 

Using $a = a_1 t^h +O(t^{h+1})$ and $b = \sqrt{-pq} + b_1 t^k +O(t^{k+1})$, we now calculate $F=O(t^{m})$ where $m\geq 2h$. Indeed, $h\leq k$ since $F>0$ for $t>0$. Moreover, $m>2h$ if and only if $h=k$ and $a_1, b_1$ are appropriately chosen so that the coefficient of $t^{2h}$ in $F$ vanishes.

At leading order in $t$ as $t\ra 0$, the restriction of $g_t$ to $\Lie{t}$ takes the form
\[
c\, t^{k-1-\frac{m}{2}} \left( \begin{array}{cc} |p| & \sqrt{-pq} \\ \sqrt{-pq}  & |q| \end{array}\right) 
\]
for some $c>0$, while the restriction of $g_t$ to $\Lie{n}\oplus\Lie{n}'$ is of the form
\[
\left( \begin{array}{cc} O(t^{2h-1-\frac{m}{2}}) & O(t^{h+k-1-\frac{m}{2}}) \\ O(t^{h+k-1-\frac{m}{2}})  & O(t^{2h-1-\frac{m}{2}}) \end{array}\right).
\]
Since the kernel $\Lie{k}$ of $g_0$ cannot contain $\Lie{t}$ nor $\Lie{n}\oplus\Lie{n}'$, we deduce that $k-1-\frac{m}{2} = 2h-1-\frac{m}{2} =0$, \ie $h=1, k=2$. Then $\Lie{k}$ is one dimensional, spanned by $\sqrt{|q|}E_3 - \sqrt{|p|}E'_3$. The orbit of this vector field in $\sunitary{2}^2$ is closed if and only if $\sqrt{-\frac{p}{q}}\in\Q$. If this is the case there exist relatively prime positive integers $m,n$ and $r_0>0$ such that $K=K_{m,n}$ up to finite quotients and $p=-m^2 r_0^3$, $q=n^2 r_0^3$.

Finally, since $K=K_{m,n}$ is a circle we must also argue that the principal orbit stabiliser $K_0$ is $K_{m,n}\cap K_{2,-2}$. This is a consequence of the proof of Proposition \ref{prop:Solutions:Singular:Orbit} (iv). Indeed, the proposition parametrises all smooth solutions $(x_1,x_2,y_1,y_2)$ to \eqref{eq:Fundamental:ODE:U(1)} with $p=-m^2 r_0^3$ and $q=n^2 r_0^3$ satisfying $x_1=O(t)$, $x_2=r_0^4\beta +O(t^2)$, $y_1 =O(t)$ and $y_2 = mnr_0^3+O(t^2)$. We have already established that $a=a_1 t + O(t^2)$ and $b=mnr_0^3 + b_1 t^2 +O(t^3)$ for some $a_1,b_1>0$. Hence $(x_1=\dot{a}\dot{b}, x_2=\dot{a}^2, y_1=a, y_2=b)$ coincides with one of the solutions of Proposition \ref{prop:Solutions:Singular:Orbit} (iv).

We now briefly indicate the changes to the proof in the case where $pq(p+q)=0$. If $p+q=0$ the critical locus of $F$ contained in the zero-level set is $\{ (a_0,p), a_0\in\R\} \cup \{ (0,-p) \}$. Consider first the $1$-dimensional component. Boundedness of the restriction of $g_t$ to $\Lie{t}$ as $t\ra 0$ forces $a_0=\pm p$. If $p\neq 0$, consideration of the behaviour of the restriction of $g_t$ to $\Lie{n}\oplus\Lie{n}'$ as $t\ra 0$ then implies that $p>0$ and $\Lie{k}=\triangle\Lie{su}_2$. If $p=0$, one shows instead that it is impossible to find $h,k\geq 1$ so that $g_t$ remains bounded as $t\ra 0$ and $\Lie{k}$ contains neither $\Lie{n}\oplus\Lie{n}'$ nor $\Lie{t}$. The case $p\neq 0$ and $(a_0,b_0)=(0,-p)$ is analysed exactly as in the case $pq(p+q)\neq 0$. When $p\neq 0$ and $q=0$ (the case $p=0, q\neq 0$ can be reduced to this by acting with the outer automorphism of $\sunitary{2}^2$), $F$ has a unique critical point, $(0,0)$, on its zero-level set. Analysis of the behaviour of the restriction of $g_t$ to $\Lie{n}\oplus\Lie{n}'$ as $t\ra 0$ forces $\Lie{n}\oplus\{ 0\}\subseteq \Lie{k}$. The only possibility for $\Lie{k}$ is then $\Lie{su}_2\oplus\{ 0\}$. Finally, since $K$ is $3$-dimensional in all these cases, $K_0$ is automatically trivial. 
\endproof
\end{theorem}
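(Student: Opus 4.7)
The strategy is to reduce the classification to an enumeration of possible singular orbits, and then to invoke the completeness/incompleteness dichotomies already established in Theorems \ref{thm:ALC:B7}, \ref{thm:ALC:D7} and \ref{thm:ALC:C7:m:n}. First I would observe that, since $(M,g)$ is complete, noncompact, irreducible and Ricci-flat, the Splitting Theorem forces the orbit space to be a half-line $[0,\infty)$, so that $M$ has a unique singular orbit $Q=G/K$ with $G=\sunitary{2}\times\sunitary{2}$ and $K_0\subset K\subset G$ its group diagram. (The simply connected hypothesis together with the extra $\unitary{1}$ factor will play the role of pinning down $K_0$ inside the normaliser of $K$, once $K$ is known.) Because $M$ is simply connected, after passing to a suitable cover of the orbits we may assume $K_0$ is precisely the principal isotropy prescribed by our constructions.

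The main step is to identify $K$, or rather its Lie algebra $\Lie{k}\subset\Lie{su}_2\oplus\Lie{su}_2$. Work near $t=0$ with the $\unitary{1}$--enhanced parametrisation $\varphi=p\,e_{123}+q\,e'_{123}+d(a(e_1\wedge e'_1+e_2\wedge e'_2)+b\,e_3\wedge e'_3)$ and note that the orbital volume $2\dot a^2\dot b=\sqrt{F(a,b)}$ must vanish at the singular orbit. The ODE system \eqref{eq:Fundamental:ODE:U(1)} shows moreover that $\nabla F(a_0,b_0)=0$, so $(a_0,b_0):=(a(0),b(0))$ is a critical point of $F$ on the zero-level set. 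This forces $pq\leq 0$ (from $F=F_a=0$) and, splitting into cases according to the signs of $p$ and $q$, restricts $(a_0,b_0)$ to a finite list of points plus, when $p+q=0$, a single $1$-parameter family. A real analytic expansion $a=a_0+a_1t^h+\dots$, $b=b_0+b_1t^k+\dots$ with $a_1,b_1>0$ then lets me read off the leading-order behaviour of $g_t$ on the two $\sorth{3}\times\sorth{3}$--irreducible blocks $\Lie{n}\oplus\Lie{n}'$ and $\Lie{t}$ of $\Lie{su}_2\oplus\Lie{su}_2$ using \eqref{eq:Metric}.

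The key combinatorial input is that $\Lie{k}=\ker g_0$ is the kernel of the limiting symmetric endomorphism, and that $K/K_0$ must be a sphere (so $\Lie{k}$ cannot contain either $\Lie{t}$ or $\Lie{n}\oplus\Lie{n}'$). Matching powers of $t$ in the two blocks so that $g_0$ is finite, nonzero and degenerate in a permitted way forces exactly one of the following: (a) $pq(p+q)\neq 0$ with $b_0^2=-pq$, $h=1$, $k=2$, in which case $\Lie{k}$ is spanned by $\sqrt{|q|}E_3-\sqrt{|p|}E_3'$, whose orbit closes iff $\sqrt{-p/q}\in\Q$, giving $K=K_{m,n}$ up to finite cover (forcing $p=-m^2r_0^3$, $q=n^2r_0^3$); (b) $p+q=0$ and $p>0$, giving $\Lie{k}=\triangle\Lie{su}_2$; (c) $q=0$, $p\neq 0$ (or symmetrically $p=0$, $q\neq 0$), giving $\Lie{k}=\Lie{su}_2\oplus\{0\}$ (resp.\ $\{0\}\oplus\Lie{su}_2$). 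In cases (b) and (c) $K$ is $3$-dimensional so $K_0$ is automatically trivial; in case (a) a comparison with the expansions of Remark \ref{rmk:Solutions:Singular:Orbit:C7} (together with the assumed $\unitary{1}$--enhancement) identifies $K_0$ with $K_{m,n}\cap K_{2,-2}$, matching the group diagram \eqref{eq:group:diagram:C7(m,n)}.

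Having recognised the group diagram, the final step is purely a matching argument: the expansion data $(a,b)$ we have extracted at $t=0$ coincides with the unique smooth solution produced by Proposition \ref{prop:Solutions:Singular:Orbit}, with explicit parameters $(r_0,\alpha_i)$ or $(r_0,\beta)$. Completeness of $(M,g)$ then excludes the incomplete branches of Theorems \ref{thm:ALC:B7}(iii), \ref{thm:ALC:D7}(iii), \ref{thm:ALC:C7:m:n}(iii) and puts $(M,g)$ on the ALC or AC branch listed in those theorems. I expect the main obstacle to be the case-by-case power-matching on the two blocks of $g_t$ (especially when $p+q=0$, where the critical locus of $F$ has a $1$-parameter component and one must rule out $a_0=\pm p$ with $p=0$ and other degenerate combinations); this is essentially elementary but must be done carefully to confirm that no further group diagrams arise and that the outer automorphism swapping the two $\sunitary{2}$ factors is the only ambiguity that survives.
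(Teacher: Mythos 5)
Your proposal follows essentially the same route as the paper's proof: identifying $(a(0),b(0))$ as a critical point of $F$ on its zero-level set (whence $pq\le 0$), reading off $\Lie{k}=\ker g_0$ from the leading-order behaviour of $g_t$ on the blocks $\Lie{n}\oplus\Lie{n}'$ and $\Lie{t}$, matching powers of $t$ case-by-case in $pq(p+q)\neq 0$ versus $p+q=0$ or $q=0$, and then matching with the local solutions of Proposition \ref{prop:Solutions:Singular:Orbit} before invoking the completeness dichotomies of Theorems \ref{thm:ALC:B7}, \ref{thm:ALC:D7} and \ref{thm:ALC:C7:m:n}. The plan is correct and consistent with the paper's argument, including the identification of $K_0$ with $K_{m,n}\cap K_{2,-2}$ via the uniqueness of the smooth local solutions.
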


\begin{remark}\label{rmk:Rigidity:AC}
In particular, we deduce a strong rigidity and uniqueness result for the AC \gtmetric s of Theorem \ref{thm:ALC:C7:m:n} (ii). Indeed, by \cite{Karigiannis:Lotay}*{Propositions 6.3 and 6.8} any complete AC \gtmfd~$(M,g)$ asymptotic to the cone over $S^3\times S^3/\Z_{2(n+m)}$ must be $\sunitary{2}\times\sunitary{2}\times\unitary{1}$--invariant. By Theorem \ref{thm:Classification:U(1)}, up to a finite quotient, any such metric has group diagram either \eqref{eq:group:diagram:D7}, \eqref{eq:group:diagram:C7(m,n)} and \eqref{eq:group:diagram:B7}. The proof of Theorems \ref{thm:ALC:B7}, \ref{thm:ALC:D7} and \ref{thm:ALC:C7:m:n} then shows that $(M,g)$ is either a finite quotient of the Bryant--Salamon AC metric on $S^3\times\R^4$ (if $\Z_{2(m+n)}$ acts freely on $S^3\times\R^4$) or one of the AC metrics of Theorem \ref{thm:ALC:C7:m:n} (ii). 
\end{remark}

In the rest of the section we prove Theorem \ref{thm:ALC:C7:m:n}. We first establish part (ii) of the theorem and the existence of the critical value $\beta_\tu{ac}$, which is not explicit. Our strategy is to consider the AC ends constructed in Proposition \ref{prop:CS:AC:ends} (ii) and study which of these extend \emph{backward} to close smoothly on the singular orbit $\sunitary{2}\times\sunitary{2}/K_{m,n}$. Once part (ii) of Theorem \ref{thm:ALC:C7:m:n} is established, a comparison argument with the AC solution will let us obtain the existence of the ALC \gtmetric s in part (i) and the incompleteness result in part (iii).

\subsection{Extending AC ends backwards}\label{sec:AC:back}

Fix a pair of positive coprime integers $m,n$ and $r_0\in\R$ and set $p=-m^2 r_0^3$, $q=n^2 r_0^3$. We consider pairs of functions $(a,b)$ satisfying the ODE \eqref{eq:Fundamental:ODE:Brandhuber:U(1)}, \ie
\[
2F \left( \dot{a} \ddot{b} - \dot{b} \ddot{a} \right) = -\dot{a}\dot{b} \left( 2\dot{b} F_b - \dot{a} F_a \right),
\]
where
\begin{equation}\label{eq:mn:equation}
F=4a^2 (b+m^2 r_0^3)(b+n^2r_0^3) - (b^2-m^2 n^2 r_0^6)^2.
\end{equation}

By Proposition \ref{prop:CS:AC:ends} (ii) for each $c\in\R$ there exists a solution $(a,b)$ corresponding to an AC end asymptotic to the cone over the homogeneous nearly K\"ahler structure on $\tu{S}^3\times\tu{S}^3$. We now consider the problem of extending these AC ends backwards, \ie to decreasing values of the parameter $t$.

\begin{prop}
\label{prop:evolve_back}
Suppose that $r_0 \geq 0$ and $(a, b)$ is a solution to \eqref{eq:Fundamental:ODE:Brandhuber:U(1)} satisfying
\begin{subequations}\label{eq:acdomain}
\begin{equation}\label{eq:acdomain:1}
\dot{a},\dot{b}>0, \qquad a>0, \qquad b>\max\left( -m^2r_0^3, -n^2 r_0^3\right), \qquad F>0
\end{equation}
and
\begin{equation}
\label{eq:acdomain:2}
b > a, \quad \dot{a} > \dot{b} > 0
\end{equation}
\end{subequations}
at some time $t_0$ (or equivalently, since \eqref{eq:acdomain} are open conditions, on an open interval $(t_1,t_2)$ of existence). Then the solution extends backwards in time, with the conditions
\eqref{eq:acdomain} persisting, until $F(a,b)\ra 0$, \ie until
\[
2a - \frac{|b^2 - m^2n^2r_0^6|}{\sqrt{(b+m^2r_0^3)(b+n^2r_0^3)}} \longrightarrow 0 .
\]
\end{prop}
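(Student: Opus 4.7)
The overall strategy mirrors Lemma~\ref{lem:ALC:chamber}, working in the ``AC chamber'' $\{b>a\}$ rather than the ALC chamber and running the argument backward in time. The plan is first to show that the open conditions \eqref{eq:acdomain:2} are preserved under backward evolution as long as \eqref{eq:acdomain:1} continues to hold, and then to use the first-order formulation \eqref{eq:Fundamental:ODE:U(1)} together with standard ODE extension theory to conclude that backward extension proceeds until $F \to 0$.

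The heart of the argument is the persistence of $\lambda := \dot a/\dot b > 1$. Writing \eqref{eq:Fundamental:ODE:Brandhuber:U(1)} as $2F\dot\lambda = \lambda\dot b(2F_b - \lambda F_a)$, identity (\ref{eq:Identities:F}a) evaluates this at a hypothetical crossing $\lambda = 1$ as
\[
\dot\lambda \,=\, \frac{4\dot b(a-b)\bigl[a(2b + q - p) + b^2 + pq\bigr]}{F}.
\]
The key technical step is to show that the bracketed expression is strictly positive on the region \eqref{eq:acdomain:1} when $p=-m^2r_0^3\le 0\le n^2r_0^3=q$. If $b^2 + pq = b^2 - m^2n^2r_0^6 \ge 0$ this is immediate; otherwise $b < mn r_0^3$ and the strict bound $a > \tfrac{m^2n^2r_0^6 - b^2}{2\sqrt{(b+m^2r_0^3)(b+n^2r_0^3)}}$ coming from $F > 0$ reduces the claim to the elementary inequality $(m^2-n^2)^2 \ge 0$, with the boundary case $m=n$ handled separately using the strict positivity of $F$. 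Combined with $a<b$, this yields $\dot\lambda < 0$ at any hypothetical crossing, ruling out $\lambda$ dropping to $1$ backward. (Simultaneous $a = b$ and $\dot a = \dot b$ is excluded by uniqueness, since the $\sunitary{2}^3$-symmetric conical solution of Example~\ref{ex:Bryant:Salamon} is the only trajectory with that initial data.) The condition $b > a$ is then automatic: $\lambda > 1$ makes $b - a$ strictly decreasing in~$t$, hence strictly increasing as $t$ decreases from $t_0$.

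Backward extension itself proceeds through the first-order system \eqref{eq:Fundamental:ODE:U(1)} for $(x_1, x_2, y_1, y_2) = (\dot a\dot b, \dot a^2, a, b)$, which together with the Hamiltonian constraint $4x_1^2x_2 = F$ is a smooth autonomous system on the open set $\{x_1,x_2 > 0,\, F > 0,\, a > 0,\, b > \max(-m^2r_0^3,-n^2r_0^3)\}$. The preserved conditions keep $(a,b)$ in a bounded set (decreasing monotonically backward from $(a(t_0),b(t_0))$ within $\{b>a>0\}$), and standard ODE extension theory reduces matters to controlling $x_1$ away from $0$: positivity and finiteness of $\dot a$ and $\dot b$ then follow from $x_2 = F/(4x_1^2)$, $\dot a = \sqrt F/(2x_1)$ and $\dot b = 2x_1^2/\sqrt F$.

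The hardest part of the argument will be ruling out $x_1 \to 0$ (equivalently $\lambda \to \infty$) in finite backward time before $F$ vanishes. To handle this I would combine the explicit formula
\[
x_1(b)^3 \,=\, x_1(b_0)^3 - \tfrac{3}{8}\int_b^{b_0} F_a(a(b'), b')\,db',
\]
obtained from $dx_1/db = F_a/(8x_1^2)$ together with $\dot b = 2x_1^2/\sqrt F$ and $\dot x_1 = F_a/(4\sqrt F)$, with the uniform bound on $F_a = 8a(b+m^2r_0^3)(b+n^2r_0^3)$ on the compact image of $(a,b)$ and a comparison argument on the level sets of $F$: on $\{F=0\}$ the slope $da/db = -F_b/F_a$ is finite, so any vertical tangent (i.e.\ $\lambda \to \infty$) of our trajectory is incompatible with remaining in $\{F > 0\}$, forcing the trajectory to hit $\{F = 0\}$ transversally. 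This completes the chain of implications and gives the stated dichotomy.
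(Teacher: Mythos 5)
Your proposal follows the same overall strategy as the paper, so let me focus on the two places where they differ. For the persistence of $\dot a>\dot b$, your algebraic verification is actually more careful than the paper's and fixes a small gap: the paper asserts that $F_a-2F_b$ (a concave function of $a$ for fixed $b$) is non-negative at $a=0$ and $a=b$ and hence positive on $(0,b)$, but at $a=0$ one has $F_a-2F_b=8b(b^2-m^2n^2r_0^6)$, which is \emph{negative} when $0<b<mnr_0^3$ --- and such values of $b$ are not excluded by \eqref{eq:acdomain:1} together with $b>a>0$ (try $m=n=1$, $r_0=1$, $(a,b)=(0.4,0.5)$: then $F>0$, $b>a>0$, but $b<mnr_0^3=1$). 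Your reduction, via the strict lower bound on $a$ coming from $F>0$, to the inequality $(m^2-n^2)^2\ge0$ is the correct way to repair this, and the remark that the boundary case $m=n$ is rescued by the strictness of $a>a_-(b)$ is also right.

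However, the backward-extension step in your proposal is not closed. The identity $x_1^3(b)=x_1^3(b_0)-\frac{3}{8}\int_b^{b_0}F_a\,db'$ shows $x_1^3$ has a well-defined limit as $b$ decreases within the compact region, but does not by itself prevent that limit being zero while $F$ remains bounded away from zero; and the appeal to ``comparison with level sets of $F$'' is too vague as stated --- the finiteness of the slope $-F_b/F_a$ of $\{F=0\}$ does not in itself forbid the \emph{trajectory} developing a vertical tangent at an interior point of $\{F>0\}$. The paper closes this with a cleaner mechanism: reparametrising so that $\dot a=1$, the ODE for $\mu=\dot b/\dot a$ becomes $d\log\mu/da=(F_a-2\mu F_b)/(2F)$, whose right-hand side is uniformly bounded on any compact subset of $\{F\ge\epsilon\}$ (using $0<\mu<1$); since $a$ traverses a bounded interval in the compact region $\mathcal{R}$ bounded by $\{a=b\}$, $\{F=0\}$ and a large circle, $\log\mu$ stays bounded, so $\mu$ --- and hence, via $2\mu\dot a^3=\sqrt F$, the arc-length $\dot a,\dot b$ --- cannot degenerate until $F\to0$. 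You should replace your sketch of the extension step with this argument (or some equivalent quantitative control of $\mu$ away from $0$) to complete the proof.
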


\begin{proof}
Set $\mu := \frac{\dot{b}}{\dot{a}}$. As at the beginning of the proof of Proposition \ref{prop:ALC:growth:a:b}, we rewrite \eqref{eq:Fundamental:ODE:Brandhuber:U(1)} in the form
\begin{equation}
\label{eq:ac_ode}
2F \dot{\mu} = \mu \dot{a} ( F_a - 2\mu F_b).
\end{equation}
The coefficient of $\dot{\mu}$ on the left-hand side of the equation is positive. On the other hand, thanks to our hypotheses \eqref{eq:acdomain:2}, $\mu$ takes values in the interval $(0,1)$ and therefore $F_a - 2\mu F_b$ is greater than the minimum of $F_a$ and $F_a - 2F_b$. Using \eqref{eq:acdomain:1} one can check that the former is always positive and the latter is a concave function
of $a$ which is non-negative both for $a = 0$ and $a = b$.
Thus $F_a - 2\mu F_b$ is positive whenever $b>a>0$.
We conclude that $\dot{\mu} > 0$.

Now, since $\dot{\mu} > 0$, the inequality $\mu < 1$ is preserved as we evolve backwards, and hence $b > a$ is also preserved. It remains to prove that we can extend backwards until $F(a,b)\ra 0$.

Since \eqref{eq:Fundamental:ODE:Brandhuber:U(1)} is equivalent to Hitchin's flow \eqref{eq:Fundamental:ODE:U(1)} for the $4$-tuple $(\dot{a}\dot{b},\dot{a}^2,a,b)$, it is clear that solutions fail to extend only when one of $a,b,\dot{a},\dot{b}$ diverge or when one of the inequalities $\dot{a},\dot{b},F(a,b)>0$ fails to be satisfied. Now, for any $M>0$, the curves $\{ a=b\}$, $\{ F(a,b)=0\}$ and $\{ a^2 + b^2 =M^2\}$ bound a compact region $\mathcal{R}\subset \{ F\geq 0\}$ in the first quadrant in the $(a,b)$--plane. By assumption, the curve $(a,b)$ lies in $\mathcal{R}$ for some $M>0$ and therefore the solution can be extended backward until $F(a,b)\ra 0$ provided we control $\dot{a},\dot{b}$. Since $2\dot{a}^2\dot{b}=\sqrt{F(a,b)}$ and $\dot{b} < \dot{a}$ it is enough to prove that $\dot{b}$ is bounded away from zero until $F(a,b)\ra 0$.

Now, since $\dot{a}>0$, we can reparametrise so that $\dot{a} = 1$. Then \eqref{eq:ac_ode} becomes
\[ \frac{d \log \dot{b}}{da}
= \frac{F_a - 2\mu F_b}{2F} \]
The right-hand side can only blow up as $F(a,b)\ra 0$, so until then
$\log \dot{b}$ remains bounded. Thus $\dot{b}$ is bounded away from zero until $F(a,b)\ra 0$.
\end{proof}

The solutions $(a,b)$ of \eqref{eq:Fundamental:ODE:Brandhuber:U(1)} constructed in Proposition \ref{prop:CS:AC:ends} (ii) always satisfy \eqref{eq:acdomain:1} in the interior of a maximal interval of existence. Moreover, the solutions of Proposition \ref{prop:CS:AC:ends} (ii) satisfy
\[
b-a \approx \tfrac{\sqrt{3}}{54} c\, t^{3-\nu_\infty} 
\]
as $t\ra\infty$ for some $c\in\R$. When $c$ is positive, \eqref{eq:acdomain:2} will then also hold for $t$ sufficiently large. Hence we can apply Proposition \ref{prop:evolve_back} to conclude that the solutions constructed in Proposition \ref{prop:CS:AC:ends} (ii) with $c>0$ extend backward until $F(a,b)\ra 0$. In the limiting case $c=0$, the uniqueness statement in Proposition \ref{prop:CS:AC:ends} (ii) implies that $a=b$.

We will prove that there exists $c_\tu{ac}>0$ such that, after taking the quotient of the principal orbits by $\Z_{2|m+n|}$, the AC {\gtmetric} corresponding to the solution $(a_\tu{ac},b_\tu{ac})$ constructed in Proposition \ref{prop:CS:AC:ends} (ii) with $c=c_\tu{ac}$ extends smoothly over a singular orbit $\sunitary{2}\times\sunitary{2}/K_{m,n}$. The following lemma will be used to show that there exists a unique such value $c_\tu{ac}$.

\begin{lemma}\label{lem:nocross}
Fix a pair of positive coprime integers $m$ and $n$ and suppose that $r_0 \geq 0$. Let $(a_1,b_1)$ and $(a_2,b_2)$ be solutions of \eqref{eq:Fundamental:ODE:Brandhuber:U(1)} satisfying
\begin{equation}\label{eq:nocross:boundary}
b>\max\left( a, mnr_0^3\right), \qquad F(a,b)>0, \qquad \dot{a},\dot{b}>0.
\end{equation}
Parametrise the curves $(a_1,b_1)$ and $(a_2,b_2)$ so that $a_1(s)=s=a_2(s)$.
\begin{enumerate}[leftmargin=*]
\item The inequalities $b_1>b_2$, $\dot{b}_1<\dot{b}_2$ are preserved evolving $(a_1,b_1)$ and $(a_2,b_2)$ backwards until either solution hits the boundary of the region defined by the inequalities \eqref{eq:nocross:boundary}.
\item The inequalities $b_1<b_2$, $\dot{b}_1<\dot{b}_2$ are preserved evolving $(a_1,b_1)$ and $(a_2,b_2)$ forward until either solution hits the the boundary of the region defined by the inequalities \eqref{eq:nocross:boundary}.
\end{enumerate}
\proof
Let $(a,b)$ be a solution \eqref{eq:Fundamental:ODE:Brandhuber:U(1)} parametrised so that $\dot{a}=1$. Then \eqref{eq:Fundamental:ODE:Brandhuber:U(1)} can be rewritten as
\begin{equation}\label{eq:nocross:ODE}
2\ddot{b} = \left( \frac{F_a}{F}-2\dot{b}\frac{F_b}{F} \right) b\dot{b} .
\end{equation}

In order to compare two solutions $(a_1,b_1)$ and $(a_2,b_2)$ we now observe that, for each fixed $a>0$, $\frac{F_a}{F}$ and $-\frac{F_b}{F}$ are strictly increasing functions of $b$ on the range defined by the inequalities $b > \max\left( a, mnr_0^3\right)$ and $F(a,b)>0$. Indeed, the function
\[ \frac{F_a}{F} = \frac{8a}{4a^2 - \frac{(b^2-m^2n^2r_0^6)^2}{(b+m^2r_0^3)(b+n^2r_0^3)}} \]
is increasing in $b$ if and only if
\[ \frac{(b^2-m^2n^2r_0^6)^2}{(b+m^2r_0^3)(b+n^2r_0^3)}
= (b-mnr_0^3)^2
\frac{(b+mnr_0^3)^2}{(b+mnr_0^3)^2 + (m-n)^2 r_0^3 b}
\]
is. Each factor on the right-hand side is increasing precisely when $b > mnr_0^3$. Meanwhile $F_{bb} = 4(2a^2 - 3b^2 + m^2n^2r_0^6) < 0$ in the given range, while $F$ is a priori positive. Thus
\[
-\frac{d}{db}\left( \frac{F_b}{F}\right) = \frac{-F_{bb}F + F_b^2}{F^2} > 0.
\]

Going back to \eqref{eq:nocross:ODE}, we now conclude that at every point where $\dot{b}_1 = \dot{b}_2$, $\ddot{b}_1 - \ddot{b}_2$ has the same sign as $b_1-b_2$. Hence the inequalities $b_1 > b_2$ and
$\dot{b}_1 < \dot{b}_2$ ($b_1 < b_2$ and
$\dot{b}_1 < \dot{b}_2$) are preserved as we evolve backwards (forwards) as long as both solutions remain in the region defined by the inequalities \eqref{eq:nocross:boundary}.
\endproof
\end{lemma}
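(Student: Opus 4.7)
My plan is to fix the parametrisation $\dot a = 1$ on both curves (permitted since $\dot a >0$) and rewrite \eqref{eq:Fundamental:ODE:Brandhuber:U(1)} as a single second-order ODE for $b$ as a function of $a$, namely
\[
2\ddot b \;=\; \dot b \Bigl(\tfrac{F_a}{F} - 2\dot b\,\tfrac{F_b}{F}\Bigr).
\]
The whole argument then reduces to a Sturmian-style comparison: since the two curves $(a_i,b_i)$ lie over the same $a$-interval, I can compare their second derivatives whenever their first derivatives coincide. The key structural input I need is that, viewing $F_a/F$ and $-F_b/F$ as functions of $b$ with $a$ held fixed, both are strictly increasing on the region \eqref{eq:nocross:boundary}. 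Granting this, if at some time $\dot b_1=\dot b_2$ while $b_1>b_2$ and $\dot b_1>0$, then $\ddot b_1 > \ddot b_2$ (and the reverse when $b_1<b_2$), which is precisely the one-sided control needed to prevent the pair of inequalities in either (i) or (ii) from being violated.

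The main computational step is verifying the monotonicity in $b$ of the two coefficients. For $F_a/F$, factoring $F_a = 8a(b+m^2r_0^3)(b+n^2r_0^3)$ and dividing through, monotonicity becomes equivalent to monotonicity of $(b^2-m^2n^2r_0^6)^2/[(b+m^2r_0^3)(b+n^2r_0^3)]$; writing the denominator as $(b+mnr_0^3)^2 + (m-n)^2 r_0^3 b$ exhibits this ratio as a product of two manifestly increasing factors on $b>mnr_0^3$. For $-F_b/F$, since $\tfrac{d}{db}(-F_b/F) = (F_b^2 - F\,F_{bb})/F^2$ with $F>0$, it suffices to show $F_{bb}<0$ in the region; and indeed $F_{bb}=4(2a^2-3b^2+m^2n^2r_0^6)$, which is negative because $b>\max(a,mnr_0^3)$ gives $3b^2 > 2a^2 + m^2n^2r_0^6$. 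I expect this algebraic verification, and in particular matching the various sign conditions to the precise constraints \eqref{eq:nocross:boundary}, to be the most error-prone step, but it is otherwise routine.

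With the monotonicity established, the comparison is standard. For (i), set $w:=b_1-b_2$ with $w(s_0)>0$ and $\dot w(s_0)<0$, and suppose for contradiction that one of the two inequalities first fails at some $s_*<s_0$ while evolving backwards. The preserved-sign chain of reasoning excludes each possibility in turn: if $\dot w(s_*)=0$ with $w(s_*)>0$, the monotonicity gives $\ddot w(s_*)>0$, forcing $\dot w>0$ slightly after $s_*$, which contradicts $\dot w<0$ on $(s_*,s_0]$; and if instead $w(s_*)=0$, the fact that $\dot w<0$ has been maintained on $(s_*,s_0]$ means $w$ is decreasing in $s$, so it cannot have dropped from a positive initial value to $0$ as $s$ decreases. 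Part (ii) is the symmetric argument run forwards in $s$, with the signs of $w$ and $\dot w$ adjusted accordingly. The only subtlety, which is built into the statement, is that the comparison is valid only as long as both solutions stay inside the region \eqref{eq:nocross:boundary}, because the monotonicity of the coefficients and the validity of the reparametrisation $\dot a = 1$ both rely on this.
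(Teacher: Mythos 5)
Your proposal is correct and follows essentially the same route as the paper's proof: reparametrise with $\dot a=1$, rewrite \eqref{eq:Fundamental:ODE:Brandhuber:U(1)} as the scalar second-order ODE for $b$, prove that $F_a/F$ and $-F_b/F$ are strictly increasing in $b$ (with the identical factorisation of the awkward ratio and the observation $F_{bb}<0$ for $-F_b/F$), and close with the standard Sturm-type comparison. The only visible difference is cosmetic — you spell out the first-violation contradiction a bit more explicitly, and you write \eqref{eq:nocross:ODE} without the spurious extra factor of $b$ that appears to be a typo in the paper's displayed equation (harmless, as that factor is positive and plays no role in the sign argument).
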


\begin{prop}
\label{prop:uniquehit} 
For each $r_0>0$ there exists a unique $c_\tu{ac}>0$  such that the solution of \eqref{eq:Fundamental:ODE:Brandhuber:U(1)} constructed in Proposition \ref{prop:CS:AC:ends} (ii) with $c=c_\tu{ac}$ extends backwards until $a \ra 0$ and $b \ra mnr_0^3$.
Moreover, for any $k \in (1,2)$ the solution satisfies
$ka > \frac{|b^2 - m^2n^2r_0^6|}{\sqrt{(b+m^2r_0^3)(b+n^2r_0^3)}}$ and $b>mnr_0^3$ whenever $(a,b)\neq (0,mnr_0^3)$.
\end{prop}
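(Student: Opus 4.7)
The strategy is a shooting argument from the AC ends of Proposition \ref{prop:CS:AC:ends}(ii). For each $c > 0$, the leading-order expansion $b - a \approx \tfrac{\sqrt{3}}{54}\, c\, t^{3 - \nu_\infty}$ together with $\dot a > \dot b > 0$ at large $t$ verifies the hypotheses of Proposition \ref{prop:evolve_back}, so the trajectory extends backwards preserving \eqref{eq:acdomain} until $F \to 0$. I would reparametrise each trajectory as a graph $b = b_c(a)$ on a maximal interval $(a_*(c), \infty)$, with terminal point $(a_*(c), \beta_*(c))$ lying on the curve $\mathcal{C} := \{(a,b) : F(a,b) = 0,\ b \ge a \ge 0\}$, and seek the unique $c$ whose terminal point is the singular point $(0, mnr_0^3)$.

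The next step is monotonicity via Lemma \ref{lem:nocross}(i). The AC expansion shows that for $c_1 > c_2 > 0$ one has $b_{c_1}(a) > b_{c_2}(a)$ and $b_{c_1}'(a) < b_{c_2}'(a)$ for $a$ sufficiently large, and Lemma \ref{lem:nocross}(i) propagates these inequalities backwards as long as both trajectories satisfy $b > \max(a, mnr_0^3)$ and $F > 0$. In particular $\beta_*$ is strictly increasing on $\{c > 0 : \beta_*(c) \ge mnr_0^3\}$, which forces uniqueness of a $c$ with $\beta_*(c) = mnr_0^3$. Standard ODE dependence on parameters, combined with the bound on $d\log \dot b/da$ from the proof of Proposition \ref{prop:evolve_back}, gives continuity of $\beta_*$.

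For existence, the curve $\mathcal{C}$ splits into a lower branch $\{b \in [a_0, mnr_0^3)\}$, the singular point $(0, mnr_0^3)$, and an upper branch $\{b \in (mnr_0^3, \infty)\}$, where $a_0 \in (0, mnr_0^3)$ is the unique positive root of the quartic $F(s,s) = 0$; the inequality $a_0 < mnr_0^3$ follows from $F(mnr_0^3, mnr_0^3) = 4 m^3 n^3 (m+n)^2 r_0^{12} > 0$. As $c \to 0^+$ the trajectory converges to the $\sunitary{2}^3$--symmetric solution $a = b$ of Example \ref{ex:Bryant:Salamon}, whose backward extinction is at $(a_0, a_0)$, so $\beta_*(c) \to a_0 < mnr_0^3$. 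For large $c$, the slope bound $b'(a) < 1$ (from $\dot a > \dot b$) yields $b_c(a_1) < \beta_*(c) + a_1$ at any fixed $a_1$; if $\beta_*(c) \le mnr_0^3$ held uniformly in $c$, then $b_c(a_1)$ would be uniformly bounded in $c$, contradicting the unbounded growth in $c$ of $b_c(a_1)$ suggested by the leading AC asymptotics. By the IVT there is a unique $c_\tu{ac} > 0$ with $\beta_*(c_\tu{ac}) = mnr_0^3$; since $(0, mnr_0^3)$ is the only point of $\mathcal{C}$ with $b = mnr_0^3$, necessarily $a_*(c_\tu{ac}) = 0$.

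For the strengthened bound, $b > mnr_0^3$ along the critical trajectory follows from $\dot b > 0$ and $b \to mnr_0^3$ at backward extinction. For $ka > X$ with $k \in (1,2)$, introduce
\[
G_k(a,b) := k^2 a^2 (b + m^2 r_0^3)(b + n^2 r_0^3) - (b^2 - m^2 n^2 r_0^6)^2,
\]
so that the desired inequality is $G_k > 0$, which reduces to the known $F > 0$ at $k = 2$. I would show that the region $\{G_k > 0\} \cap \{b > a,\, b > mnr_0^3,\, \dot a > \dot b > 0\}$ is invariant under backward evolution along \eqref{eq:Fundamental:ODE:Brandhuber:U(1)}, by computing the sign of $\dot G_k$ on $\{G_k = 0\}$ using the ODE. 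For every $k \in (1,2)$ the AC end lies in this region (since $a/b \to 1$ while $k > 1$), so invariance propagates $G_k > 0$ along the entire critical trajectory, degenerating only at the singular endpoint where Proposition \ref{prop:Solutions:Singular:Orbit}(iv) gives $b - mnr_0^3 = O(a^2)$ and hence $X = O(a^2) \ll ka$. The main obstacle is making rigorous the unboundedness of $b_c(a_1)$ in $c$ used in the IVT step, since the AC expansion has a $c$-dependent interval of convergence and cannot be directly evaluated at a small fixed $t$ for large $c$; this is likely handled by a compactness argument on a renormalised family or by comparison with the local solutions at the singular orbit from Proposition \ref{prop:Solutions:Singular:Orbit}(iv).
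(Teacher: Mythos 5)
Your overall strategy matches the paper's: shoot from the AC ends of Proposition \ref{prop:CS:AC:ends}(ii), extend backwards via Proposition \ref{prop:evolve_back}, use Lemma \ref{lem:nocross}(i) for uniqueness and monotonicity, and locate $c_\tu{ac}$ by an intermediate-value argument. However there are two genuine problems beyond the gap you flag yourself.

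First, and most seriously, your proposed invariance of $\{G_k > 0\}$ is false. The paper's existence argument shows that for $c$ large (after rescaling $r_0 \to 0$ with $c$ fixed) the backward trajectory \emph{does} cross the curve $\gamma_2 = \{ka = \abs{b^2 - m^2n^2r_0^6}/\sqrt{(b+m^2r_0^3)(b+n^2r_0^3)}\}$ at a point away from the corner. Every AC end with $c > 0$ starts in $\{G_k > 0\}$ (since $a/b \to 1$ and $k > 1$), so if this region were backward-invariant no trajectory could cross $\gamma_2$, and the shooting argument would have nothing to shoot between. The strengthened bound $ka > X$, $b > mnr_0^3$ is not obtained by a general monotonicity formula for $G_k$; it is a consequence of the \emph{definition} of $c_\tu{ac}$: the paper stops the backward evolution at the barrier $\gamma = \gamma_1 \cup \gamma_2$ (not at $F = 0$), shows the hit point moves continuously along $\gamma$ except at the corner, and defines $c_\tu{ac} = \sup\{c : \text{hit }\gamma_1\}$. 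The critical trajectory then by construction hits neither arc, so it stays strictly inside $\{ka > X\} \cap \{b > mnr_0^3\}$ and, by Proposition \ref{prop:evolve_back}, can only terminate at the corner.

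Second, using $F = 0$ rather than $\gamma$ as the stopping locus does real damage to the continuity and IVT steps. The $F = 0$ locus in the relevant region is a curve with a singular point at $(0,mnr_0^3)$; the trajectory might hit it tangentially, and as $c$ varies the hit point could jump from the lower branch (near $(a_0,a_0)$) to the upper branch ($b > mnr_0^3$) in a way that is not obviously continuous in $c$. Stopping at $\gamma$ with $k \in (1,2)$ keeps $F$ bounded away from zero at the stopping time, so the ODE is regular there, transversality and hence continuous dependence are unproblematic, and the two arcs $\gamma_1, \gamma_2$ give a clean dichotomy for the IVT. Finally, the unboundedness-in-$c$ you leave as an obstacle is handled in the paper precisely by the rescaling you guess at: scale so $r_0 \to 0$ with $c$ fixed, observe that with $p=q=0$ the quantity $b - a$ is positive and strictly increasing forward (hence bounded away from $0$ backward), so the trajectory cannot approach the origin and must exit through $\gamma_2$.
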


\begin{proof}
Fix $k\in (1,2)$. For any $c \geq 0$, evolve
the AC end solution of Proposition \ref{prop:CS:AC:ends} (ii) backwards until we hit the curve $\gamma$ which is the union of the curves 
\[\gamma_1=\{ b = mnr_0^3\} \quad  \text{and} \quad 
\gamma_2 = \left\{ ka = \frac{|b^2 - m^2n^2r_0^6|}{\sqrt{(b+m^2r_0^3)(b+n^2r_0^3)}}\right\}.
\]
Since $\gamma$ is smooth except at the intersection point $(0,mnr_0^3)$ of $\gamma_1$ and $\gamma_2$, the intersection point of the trajectory $(a,b)$ with $\gamma$ depends continuously on $c$ unless $(a,b)$ approaches $(0, mnr_0^3)$.

The uniqueness statement in Proposition \ref{prop:CS:AC:ends} (ii) implies that the solution with $c = 0$ satisfies $a = b$ and therefore it must hit $\gamma$ along the segment $\gamma_1$. In order to analyse the behaviour of the intersection point as $c \to \infty$, we can rescale so that $r_0 \to 0$ and $c>0$ is fixed. This solution must hit the half-line $ka = b, b>0$ away from $(0,0)$ since $a-b$ is strictly increasing for all time and strictly negative along the AC end.

We conclude that for each $r_0>0$ the set $S$ of $c\geq 0$ for which we hit the segment $\gamma_1$
is non-empty and bounded. Set $c_\tu{ac}=\sup{S}$. The solution corresponding to $c=c_\tu{ac}$ can hit neither $\gamma_1$ nor $\gamma_2$, so by Proposition \ref{prop:evolve_back} the
only possibility is that it approaches $(0, mnr_0^3)$.

The uniqueness part of the claim follows from Lemma \ref{lem:nocross} (i). Indeed, if $(a,b)$ is a solution of \eqref{eq:Fundamental:ODE:Brandhuber:U(1)} given by Proposition \ref{prop:CS:AC:ends} (ii) then
\[
a \approx \tfrac{\sqrt{3}}{54}t^3, \qquad b-a \approx \tfrac{\sqrt{3}}{54} c\, t^{3-\nu_\infty}
\]
for some $c\in\R$. Hence if we parametrise $(a,b)$ so that $a(s)=s$ we have
\[
b\approx s + \left( \frac{\sqrt{3}}{54}\right)^{\frac{\nu_\infty}{3}} c\, s^{\frac{3-\nu_\infty}{3}}
\]
as $s\ra \infty$. Since $3-\nu_\infty\approx -6.5$, if $(a_1,b_1)$ and $(a_2,b_2)$ are two solutions corresponding to $c_1>c_2>0$ then
\[
b_1 > b_2, \qquad \dot{b}_1<\dot{b}_2
\]
for large $s$.
\end{proof}

In order to conclude the proof of Theorem \ref{thm:ALC:C7:m:n} (ii) we must show that after a $\Z_{2(m+n)}$--quotient the AC solution with $c=c_\tu{ac}$ singled out by
Proposition \ref{prop:uniquehit} extends smoothly across the singular orbit $\sunitary{2}\times\sunitary{2}/K_{m,n}$.

\begin{prop}
\label{prop:even}
Fix $k\in (1,2)$. Consider a solution $(a,b)$ satisfying \eqref{eq:acdomain}, $b>mnr_0^3$ and
\begin{equation}\label{eq:acdomain:k}
ka > \frac{(b^2 - m^2n^2r_0^6)}{\sqrt{(b+m^2r_0^3)(b+n^2r_0^3)}}.
\end{equation}
Assume that $(a,b)\ra (0,mnr_0^3)$. Then $(a,b)$ defines a cohomogeneity one torsion-free {\gtstr} with principal orbits $\sunitary{2}\times\sunitary{2}/\Z_{2(m+n)}$ and extending smoothly on the singular orbit $\sunitary{2}\times\sunitary{2}/K_{m,n}$. 
\end{prop}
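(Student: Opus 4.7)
\noindent\emph{Proof plan for Proposition \ref{prop:even}:} The strategy is to identify the pair $(a,b)$, reparametrised by arc-length $t$ with the limiting point $(0, mnr_0^3)$ placed at $t = 0$, with the unique smooth solution of Proposition \ref{prop:Solutions:Singular:Orbit}(iv) (or (iii) when $m=n=1$) corresponding to some $\beta > 0$. Smooth extension across the singular orbit then follows from Proposition \ref{prop:Smooth:extension:singular:orbit}(iii).

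First I would establish that the limit $(a,b)\to(0,mnr_0^3)$ is attained in finite arc-length. Hypothesis \eqref{eq:acdomain:k} with $k<2$ gives
\[
F(a,b) \;\geq\; (4-k^2)\,a^2 (b+m^2r_0^3)(b+n^2r_0^3),
\]
so $F \geq c_0\, a^2$ for some $c_0>0$ near the limit point, as the factor $(b+m^2r_0^3)(b+n^2r_0^3)$ is bounded below. Combined with $2\dot{a}^2\dot{b} = \sqrt{F}$ and $\dot{a}>\dot{b}$ from \eqref{eq:acdomain:2}, this yields $\dot{a}^3 \geq \tfrac{1}{2}\sqrt{c_0}\,a$, so $\tfrac{d}{dt}(a^{2/3})$ is bounded below by a positive constant. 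Hence the backward arc-length to the limit is finite, and after translating $t$ we may assume the limit corresponds to $t=0$.

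The crux of the argument is the second step: showing that $a(t) = r_0^2\beta\,t + o(t)$ and $b(t) = mnr_0^3 + O(t^2)$ as $t\to 0^+$, for some $\beta>0$. To this end I would work with the first-order system \eqref{eq:Fundamental:ODE:U(1)} and introduce the rescaled unknowns used in the proof of Proposition \ref{prop:Solutions:Singular:Orbit}(iv),
\[
X_1 = t^{-1}\dot{a}\dot{b}, \qquad X_3 = t^{-2}\!\left(\dot{a}^2 - r_0^4\beta^2\right), \qquad Y_1 = t^{-1}a, \qquad Y_3 = t^{-2}(b - mnr_0^3),
\]
in terms of which the system becomes a regular singular initial value problem at $t=0$ of the form covered by Theorem \ref{thm:Singular:IVP}. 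Hypothesis \eqref{eq:acdomain:k} rules out degenerate approach regimes (in which $a$ and $b-mnr_0^3$ would be of the same order, or the trajectory would be tangent to the critical locus $F=0$) and forces the balance of scales $a\sim t$, $b-mnr_0^3\sim t^2$. Once the positivity of $\beta := r_0^{-2}\lim_{t\to 0^+}\dot{a}$ is established, the uniqueness part of Theorem \ref{thm:Singular:IVP} implies that $(a,b)$ coincides with the smooth local solution of Proposition \ref{prop:Solutions:Singular:Orbit}(iv) for this value of $\beta$. Finally, Proposition \ref{prop:Smooth:extension:singular:orbit}(iii) applied with $p=-m^2 r_0^3$ and $q=n^2 r_0^3$ guarantees that the associated $3$-form $\varphi$ extends smoothly across the singular orbit $\sunitary{2}^2/K_{m,n}$, and the principal orbits of the resulting cohomogeneity one $7$-manifold are $\sunitary{2}^2/(K_{m,n}\cap K_{2,-2}) = \sunitary{2}^2/\Z_{2(m+n)}$, as required.

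The main obstacle is the asymptotic analysis in the second step: convergence $(a,b)\to(0, mnr_0^3)$ alone does not pin down the leading-order behaviour, since a priori the trajectory could approach along an anomalous direction (tangent to the degenerate locus $F=0$, or with $\dot{a}\to 0$). The strict inequality $k<2$ in \eqref{eq:acdomain:k}, which is strictly stronger than $F>0$, is precisely what provides the uniform separation from the degenerate locus required to extract the balance $a\sim t$, $b-mnr_0^3\sim t^2$ and thereby reduce to the singular IVP framework of Section \ref{sec:sing:extension}.
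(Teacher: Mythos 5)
Your overall blueprint is the same as the paper's — show the trajectory reaches the distinguished point $(0, mnr_0^3)$ in finite arc length with the right leading-order behaviour, then match it to one of the local solutions of Proposition \ref{prop:Solutions:Singular:Orbit} and invoke Proposition \ref{prop:Smooth:extension:singular:orbit}(iii) — but the central step is asserted rather than proved, and the estimate you offer in its place is too weak to deliver it.

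Specifically: from $F \geq c_0 a^2$ and $2\dot a^2\dot b = \sqrt{F}$ with $\dot a > \dot b$ you obtain $\dot a^3 \geq \tfrac12\sqrt{c_0}\,a$. This gives finiteness of the arc length, as you say, but it does \emph{not} force $\dot a$ to stay bounded away from zero as $a \to 0$: the differential inequality $\dot a^3 \geq C a$ is compatible with $a(t) \sim c\,t^{3/2}$ and $\dot a \to 0$. So from your estimate alone the balance of scales could be $a \sim t^{3/2}$, not $a \sim t$, and the crucial positivity of $\beta := r_0^{-2}\lim_{t\to 0^+}\dot a$ is exactly what is missing. Your proposal acknowledges this is "the crux" but supplies no argument; the phrase "rules out degenerate approach regimes" is doing the work of a proof without one. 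There is also a circularity in the setup of your step two: the rescaled variable $X_3 = t^{-2}(\dot a^2 - r_0^4\beta^2)$ already presupposes that $\dot a$ converges to $r_0^2\beta$, which is precisely the fact to be established.

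The paper closes this gap by a different and slicker device. Parametrising by $a$, one computes $\tfrac{d}{da}\log\bigl(\sqrt{F}/\mu\bigr) = \tfrac{3\mu F_b}{2F}$, where $\mu = db/da$, and uses the bounds derived from \eqref{eq:acdomain:k} to show this logarithmic derivative is bounded; hence $\sqrt{F}/\mu$ is bounded away from zero as $a \to 0$. Since $\sqrt{F}/\mu = 2\dot a^3$ in arc-length parametrisation, this immediately gives the positive lower bound on $\dot a$. Once $\dot a$ is bounded below, the system \eqref{eq:Fundamental:ODE:U(1)} for $(x_1,x_2,y_1,y_2) = (\dot a\dot b, \dot a^2, a, b)$ is actually \emph{regular} at $t=0$: the right-hand sides $\tfrac{F_a}{4\sqrt F}$ and $\tfrac{F_b}{2\sqrt F}$ stay bounded by \eqref{eq:AC:bound:RHS}, so $x_1, x_2$ converge (to $0$ and to $r_0^4\beta^2 > 0$ respectively), and a standard bootstrap gives smoothness of $(x_1,x_2,y_1,y_2)$ up to $t=0$; uniqueness from Proposition \ref{prop:Solutions:Singular:Orbit} then identifies the trajectory. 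This bypasses the singular IVP machinery of Theorem \ref{thm:Singular:IVP} entirely — no blow-up by powers of $t$ is needed once one sees that in the $(x,y)$ variables the singularity dissolves. Your proposal would likely go through if you supplied the $\sqrt{F}/\mu$ argument (or an equivalent) to get $\dot a$ bounded below; as written, that step is a genuine gap.
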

\begin{proof}
We must show that $(a,b)$ satisfies the boundary conditions of Proposition \ref{prop:Smooth:extension:singular:orbit} (iii).

First of all note that \eqref{eq:acdomain:k} implies that the function $F$ in \eqref{eq:mn:equation} satisfies
\begin{subequations}\label{eq:AC:bound:RHS}
\begin{equation}
(4-k^2) (b+m^2r_0^3)(b+n^2r_0^3) a^2 \leq F(a,b) \leq 4 (b+m^2r_0^3)(b+n^2r_0^3) a^2.
\end{equation}
Differentiating the expression in \eqref{eq:mn:equation} and using \eqref{eq:acdomain:k} and (\ref{eq:AC:bound:RHS}a) we also obtain
\begin{equation}
\sqrt{(b+m^2r_0^3)(b+n^2r_0^3)} \leq \frac{F_a}{4\sqrt{F}} \leq \tfrac{2}{\sqrt{4-k^2}}\sqrt{(b+m^2r_0^3)(b+n^2r_0^3)}
\end{equation}
and
\begin{equation}
-2k b (b+mnr_0^2)  \leq \frac{F_b}{2\sqrt{F}} \leq \frac{2}{\sqrt{4-k^2}}\frac{\left( 2b+\left( m^2+n^2\right) r_0^3\right) a }{\sqrt{(b+m^2r_0^3)(b+n^2r_0^3)}}.
\end{equation}
\end{subequations}

Reparametrise so that $\dot{a} = 1$ and consider the first-order equation \eqref{eq:ac_ode} for $\mu = \frac{db}{da}$. Since $\frac{dF}{da}=F_a + \mu F_b$, we find
\[
\frac{d}{da}\left( \frac{\sqrt{F}}{\mu}\right) = \frac{\sqrt{F}}{\mu} \frac{3\mu F_b}{2F} \leq C\frac{\sqrt{F}}{\mu}
\]
for some $C>0$. Indeed, $\frac{F_b}{F}$ is bounded above by (\ref{eq:AC:bound:RHS}a) and (\ref{eq:AC:bound:RHS}c) and $0<\mu<1$. Thus the logarithm of $\frac{\sqrt{F}}{\mu}$ is a function of $a$ whose derivative is bounded above. It follows that $\frac{\sqrt{F}}{\mu}$ is bounded below away from zero as $a\ra 0$.

Now change parametrisation and introduce the arc-length parameter $t$ along a geodesic meeting all principal orbits orthogonally. Recall that $t$ is defined by the normalisation $2\dot{a}^2\dot{b}=\sqrt{F(a,b)}$. By (\ref{eq:AC:bound:RHS}a) and the hypothesis $\dot{b}<\dot{a}$ in \eqref{eq:acdomain:2} we have
\[
Ca\leq \sqrt{F(a,b)}=2\dot{a}^2\dot{b} < 2\dot{a}^3
\]
for some $C>0$. Since the function $a\mapsto a^{-\frac{1}{3}}$ is integrable near $a=0$, $t$ has a finite limit as $(a,b)\ra (0,mnr_0^3)$. By a time translation we can assume that $t\ra 0$ as $(a,b)\ra (0,mnr_0^3)$. Moreover, since
\[
\frac{\sqrt{F}}{\mu}=\frac{2\dot{a}^2\dot{b}}{\mu} = 2 \dot{a}^3
\]
and $\frac{\sqrt{F}}{\mu}$ is bounded below away from zero, so is $\dot{a}$.

Consider now the ODE system \eqref{eq:Fundamental:ODE:U(1)} for $x_1=\dot{a}\dot{b}$, $x_2 = \dot{a}^2$, $y_1=a$, $y_2=b$. By (\ref{eq:AC:bound:RHS}b) and (\ref{eq:AC:bound:RHS}c), $\dot{x}_1$ and $\dot{x}_2$ remain bounded and therefore $x_1$ and $x_2$ have a well-defined limit as $t\ra 0$. Since $2\dot{b}^3\leq 2\dot{a}^2 \dot{b}=\sqrt{F}\ra 0$ as $t\ra 0$ and $\dot{a}$ is bounded away from zero, $(x_1, x_2)\ra (0,r_0^4 \beta_\tu{ac} ^2)$ as $t\ra 0$ for some $\beta_\tu{ac}>0$. Then the right-hand side of \eqref{eq:Fundamental:ODE:U(1)} is bounded as $t\ra 0$ and a bootstrap argument shows that $x_1,x_2,y_1,y_2$ are smooth functions of $t$ up to $t=0$. Uniqueness of the solutions in Proposition \ref{prop:Solutions:Singular:Orbit} (iii) and (iv) then forces $(a,b)$ to satisfy the conditions of Proposition \ref{prop:Smooth:extension:singular:orbit} (iii).
\end{proof}

\subsection{Existence of ALC metrics}\label{sec:ALC:mn}

We now prove Theorem \ref{thm:ALC:C7:m:n} (i). Theorem \ref{thm:ALC:C7:m:n} (ii) guarantees the existence of $\beta_\tu{ac}>0$ such that the $\sunitary{2}\times\sunitary{2}\times\unitary{1}$--invariant local solution $(a_\tu{ac},b_\tu{ac})$ of Proposition  \ref{prop:Solutions:Singular:Orbit} (iii) (when $m=n=1$) or (iv) with
\[
a_\tu{ac} = r_0^2\beta_\tu{ac} t + O(t^3), \qquad b_\tu{ac} = mn r_0^3 + O(t^2)
\]
exists for all time $t\geq 0$ and gives rise to a complete AC metric. From the proof of Theorem \ref{thm:ALC:C7:m:n} (ii) we also know that
\[
b_\tu{ac} >\max \left( a_\tu{ac}, mnr_0^3\right), \qquad \dot{a}_\tu{ac} > \dot{b}_\tu{ac}>0
\] 
for all $t>0$.

Consider now one of the local solutions $(a,b)$ of Proposition \ref{prop:Solutions:Singular:Orbit} (iii) and (iv) with $\beta>\beta_\tu{ac}$. We want to show that $(a,b)$ eventually satisfies the constraints \eqref{eq:ALC:Chamber} and \eqref{eq:ALC:Chamber:additional:constraint}, \ie
\[
a>b>mnr_0^3, \qquad \dot{a}>\dot{b}, \qquad a\dot{b}-\dot{a}b<0,
\]
so that Propositions \ref{eq:ALC:Forward:Completeness} and \ref{prop:ALC:growth:a:b} can be applied to guarantee that the solution $(a,b)$ is immortal and gives rise to an ALC end as $t\ra\infty$. In fact, it is enough to show that the solution $(a,b)$, initially contained in the region
\[
a<b, \qquad \dot{a}>\dot{b},
\]
will intersect the line $a=b$ with the condition $\dot{a}>\dot{b}$ preserved. For \eqref{eq:ALC:Chamber} and \eqref{eq:ALC:Chamber:additional:constraint} will then be satisfied immediately after the intersection time.

First of all, note that $a>0$ and $b>mnr_0^3$ are preserved as long as the solution exists. From Remark \ref{rmk:Solutions:Singular:Orbit:C7} we know that $a=r_0^2 \beta t + O(t^3)$ and $b=mnr_0^3 + \frac{\sqrt{mn}(m+n)r_0}{2\beta}t^2 + O(t^4)$ as $t\ra 0$. In order to compare $(a,b)$ and $(a_\tu{ac},b_\tu{ac})$, we now reparametrise both solutions so that $a(s)=s=a_\tu{ac}(s)$. Then
\begin{equation}\label{eq:comparison:ac}
b = mnr_0^3 + \frac{\sqrt{mn}(m+n)}{2\beta^3 r_0^3}s^2 + O(s^4), \qquad b_\tu{ac} = mnr_0^3 + \frac{\sqrt{mn}(m+n)}{2\beta_\tu{ac}^3 r_0^3}s^2 + O(s^4)
\end{equation}
as $s\ra 0$. In particular, if $\beta >\beta_\tu{ac}$ we have
\[
b<b_\tu{ac}, \qquad \dot{b}<\dot{b}_\tu{ac}
\]
for $s>0$ sufficiently small. By Lemma \ref{lem:nocross} (ii) these conditions are preserved as long as $b>\max \left( a,mnr_0^3\right)$.

Now, on the one hand the solution $(a,b)$ certainly exists as long as $b>a>0$, because then $(a,b)$ is bounded to stay in the region $\{ b_\tu{ac} > b >a, b>mnr_0^3 \}$ where \eqref{eq:Fundamental:ODE:Brandhuber:U(1)} cannot blow-up. On the other hand, since $b_\tu{ac} \approx a_\tu{ac}=s=a(s)$ for large $s$, for all $\epsilon>0$ there exists $s_0$ such that $b_\tu{ac}<a+\epsilon$ for all $s\geq s_0$. Moreover, as long as $b>\max \left(a,mnr_0^3\right)$, $b-b_\tu{ac}$ is strictly decreasing and therefore bounded above by a definite constant $-\delta_0<0$ depending only on $\beta-\beta_\tu{ac}$. Hence if $b>a$ for all $s\in [0,2s_0]$ we would have
\[
b-a - \epsilon < b-b_\tu{ac} < -\delta_0<0
\]
for all $s\in [s_0,2s_0]$. If $\epsilon$ is chosen small enough we reach a contradiction. We conclude that $(a,b)$ must intersect the boundary of the region $\{ b_\tu{ac} > b >a, b>mnr_0^3 \}$. By Lemma \ref{lem:nocross} (ii) the only possibility is that $(a,b)$ intersects the line $a=b$. Moreover since we must have
\[
\dot{b}\leq \dot{b}_\tu{ac}<1
\]
up to and including the intersection time, at the intersection time we also have $\dot{b}<\dot{a}$.

An application of Propositions \ref{eq:ALC:Forward:Completeness} and \ref{prop:ALC:growth:a:b} now concludes the proof of Theorem \ref{thm:ALC:C7:m:n} (i).

\subsection{Incompleteness results}\label{sec:ALC:mn:in}

We now conclude the proof of Theorem \ref{thm:ALC:C7:m:n} establishing part (iii).

Let $(a,b)$ be one of the local solutions constructed in Proposition \ref{prop:Solutions:Singular:Orbit} (iii) and (iv) with $\beta<\beta_\tu{ac}$. Assume for a contradiction that $(a,b)$ yields a complete \gtmetric. We want to show that the constraints \eqref{eq:death:quadrant}, \ie
\[
0<\frac{\dot{a}}{\dot{b}}<\frac{a}{b}<1,
\]
are eventually satisfied. For Proposition \ref{prop:death:quadrant} would then give a contradiction.

We compare $(a,b)$ and $(a_\tu{ac},b_\tu{ac})$: parametrise both solutions so that $a(s)=s=a_\tu{ac}(s)$ and use \eqref{eq:comparison:ac} to conclude that since $\beta<\beta_{ac}$ we have
\[
b>b_{ac}, \qquad \dot{b}>\dot{b}_{ac}
\]
by Lemma \ref{lem:nocross} (ii). Since $b_{ac}>a_{ac}=s$ we already conclude that $b>a=s$ for all time. 

We will also need to know that $b$ is unbounded and exists for all $s\geq 0$. For this note that $2\dot{b}=\sqrt{F}$ is the orbital volume function, which must be bounded below if $(a,b)$ is complete. Hence $b$ is unbounded. By \eqref{eq:a:unbounded}, the condition $F>0$ forces the ratio $\frac{a}{b}=\frac{s}{b}$ to be bounded below and therefore $s\ra \infty$ as $b\ra\infty$.

Now consider the quantities $R=s\dot{b}-b$ and $R_{ac}=s\dot{b}_{ac}-b_{ac}$ and, taking into account the parametrisation $a(s)=s$, note that $R>0$ is equivalent to
\[
\frac{\dot{a}}{\dot{b}}<\frac{a}{b}.
\]
By \eqref{eq:comparison:ac}, $R$ and $R_{ac}$ are both initially strictly negative, but $R_{ac}\ra 0$ along the AC end. We make two further crucial observations about $R$ and $R_{ac}$:
\begin{enumerate}
\item $R-R_{ac}$ is strictly increasing: indeed, $\dot{R}-\dot{R}_{ac}=s\left( \ddot{b}-\ddot{b}_{ac}\right)$ and $\ddot{b}-\ddot{b}_{ac}>0$ for $b,b_{ac}>s$ is proved in Lemma \ref{lem:nocross};
\item $R-R_{ac}>0$ for small $s>0$, by \eqref{eq:comparison:ac}.
\end{enumerate}
Hence by (i) and (ii), $R - R_{ac}$ has a strictly positive lower bound. Since $R_{ac}$ converges to zero as $s\ra\infty$, we conclude that $R$ must eventually be positive.

\bibliographystyle{amsinitial}
\bibliography{Coho1_ALC}

\end{document}